\newcolumntype{L}[1]{>{\raggedright\let\newline\\\arraybackslash\hspace{0pt}}m{#1}}
\newcolumntype{C}[1]{>{\centering\let\newline\\\arraybackslash\hspace{0pt}}m{#1}}
\newcolumntype{R}[1]{>{\raggedleft\let\newline\\\arraybackslash\hspace{0pt}}m{#1}}
\def\P{{\mathbb{P}}} 
\def\E{{\mathbb{E}}} 
\newcommand{\tr}{\textrm{tr}}
\newcommand{\R}[0]{\mathbb{R}}
\newcommand{\znote}[1]{}
\newcommand{\pnote}[1]{}
\newcommand{\snote}[1]{}
\newcommand{\mnote}[1]{}
\renewcommand{\znote}[1]{\textcolor{blue}{\textbf{[ZF: #1]}}}
\renewcommand{\pnote}[1]{\textcolor{red}{\textbf{[PR: #1]}}}
\renewcommand{\snote}[1]{\textcolor{green}{\textbf{[SZ: #1]}}}
\renewcommand{\mnote}[1]{\textcolor{orange}{\textbf{[MU: #1]}}}
\newcommand{\gammaMax}{\gamma_{u}^{\rho,\textup{max}}}
\newcommand{\gammaRho}{\gamma^{\rho}_u}
\newcommand{\bigO}{\mathcal O}
\setlist[enumerate]{leftmargin=.5in}
\setlist[itemize]{leftmargin=.5in}
\crefname{hypothesis}{Hypothesis}{Hypotheses}
\crefname{parameter}{Preconditioner: theoretical hyperparameter settings}{Preconditioner: theoretical hyperparameter settings}
\title{SketchySGD: Reliable Stochastic Optimization via Randomized Curvature Estimates\thanks{Submitted to the editors December 22nd, 2023.
\funding{The authors gratefully acknowledge support from NSF Award 2233762, the Office of Naval Research, and the Alfred P. Sloan Foundation.}}}
\author{Zachary Frangella\thanks{Department of Management Science and Engineering, Stanford University, Stanford, CA
  (\email{zfran@stanford.edu}, \email{udell@stanford.edu})}, 
\and Pratik Rathore\thanks{Department of Electrical Engineering, Stanford University, Stanford, CA 
  (\email{pratikr@stanford.edu})},
\and Shipu Zhao\thanks{Department of Systems Engineering, Cornell University, Ithaca, NY (\email{sz533@cornell.edu})}.
\and Madeleine Udell\footnotemark[2]}
\newcommand*{\addFileDependency}[1]{
  \typeout{(#1)}
  \@addtofilelist{#1}
  \IfFileExists{#1}{}{\typeout{No file #1.}}
}
\newcommand*{\myexternaldocument}[1]{%
    \externaldocument{#1}%
    \addFileDependency{#1.tex}%
    \addFileDependency{#1.aux}%
}
\newif\ifpreprint
\begin{document}

\maketitle

\begin{abstract}
  We introduce SketchySGD, a stochastic quasi-Newton method that uses sketching to approximate the curvature of the loss function. 
  SketchySGD improves upon existing stochastic gradient methods in machine learning by using randomized low-rank approximations to the subsampled Hessian and by introducing an automated stepsize that works well across a wide range of convex machine learning problems.
  We show theoretically that SketchySGD with a fixed stepsize converges linearly to a small ball around the minimum. 
  Further, in the ill-conditioned setting we show SketchySGD converges at a faster rate than SGD for least-squares problems.
  We validate this improvement empirically with ridge regression experiments on real data. 
  Numerical experiments on both ridge and logistic regression problems with dense and sparse data, show that SketchySGD equipped with its default hyperparameters can achieve comparable or better results than popular stochastic gradient methods, even when they have been tuned to yield their best performance.  
  In particular, SketchySGD is able to solve an ill-conditioned logistic regression problem with a data matrix that takes more than $840$GB RAM to store, while its competitors, even when tuned, are unable to make any progress.  
  SketchySGD's ability to work out-of-the box with its default hyperparameters and excel on ill-conditioned problems is an advantage over other stochastic gradient methods, most of which require careful hyperparameter tuning (especially of the learning rate) to obtain good performance and degrade in the presence of ill-conditioning.
\end{abstract}

\begin{keywords}
  stochastic gradient descent, stochastic optimization, preconditioning, stochastic quasi-Newton, Nystr{\"o}m approximation, randomized low-rank approximation
\end{keywords}

\begin{AMS}
  90C15, 90C25, 90C53
\end{AMS}

\ifpreprint
\section{Introduction}
\label{section:Introduction}
Modern large-scale machine learning requires stochastic optimization:
evaluating the full objective or gradient even once 
is too slow to be useful.
Instead, stochastic gradient descent (SGD) and
variants are the methods of choice \cite{robbins1951stochastic,moulines2011non,johnson2013accelerating,defazio2014saga,allenzhu2018katyusha}.
However, stochastic optimizers sacrifice stability 
for their improved speed. 
Parameters like the learning rate are difficult to choose and important to get right, with slow convergence or divergence looming on either side of the best parameter choice \cite{nemirovski2009robust}.
Worse, most large-scale machine learning problems are ill-conditioned: typical condition numbers among standard test datasets range from $10^{4}$ to $10^{8}$ (see \cref{table:datasets}) or even larger, resulting in painfully slow convergence even given the optimal learning rate.
This paper introduces a method, SketchySGD, that uses a principled theory to address ill-conditioning  and yields a theoretically motivated learning rate that 
robustly works for modern machine learning problems.
\cref{fig:intro} depicts the performance of stochastic optimizers using learning rates tuned for each optimizer on a ridge regression problem with the E2006-tfidf dataset (see \Cref{section:Experiments}).
SketchySGD improves the objective substantially, while the other stochastic optimization methods (SGD, SVRG, SAGA, and Katyusha) do not. 
\begin{figure}[htbp]
    \centering
    \begin{subfigure}[b]{0.45\textwidth}
        \includegraphics[width=\textwidth]{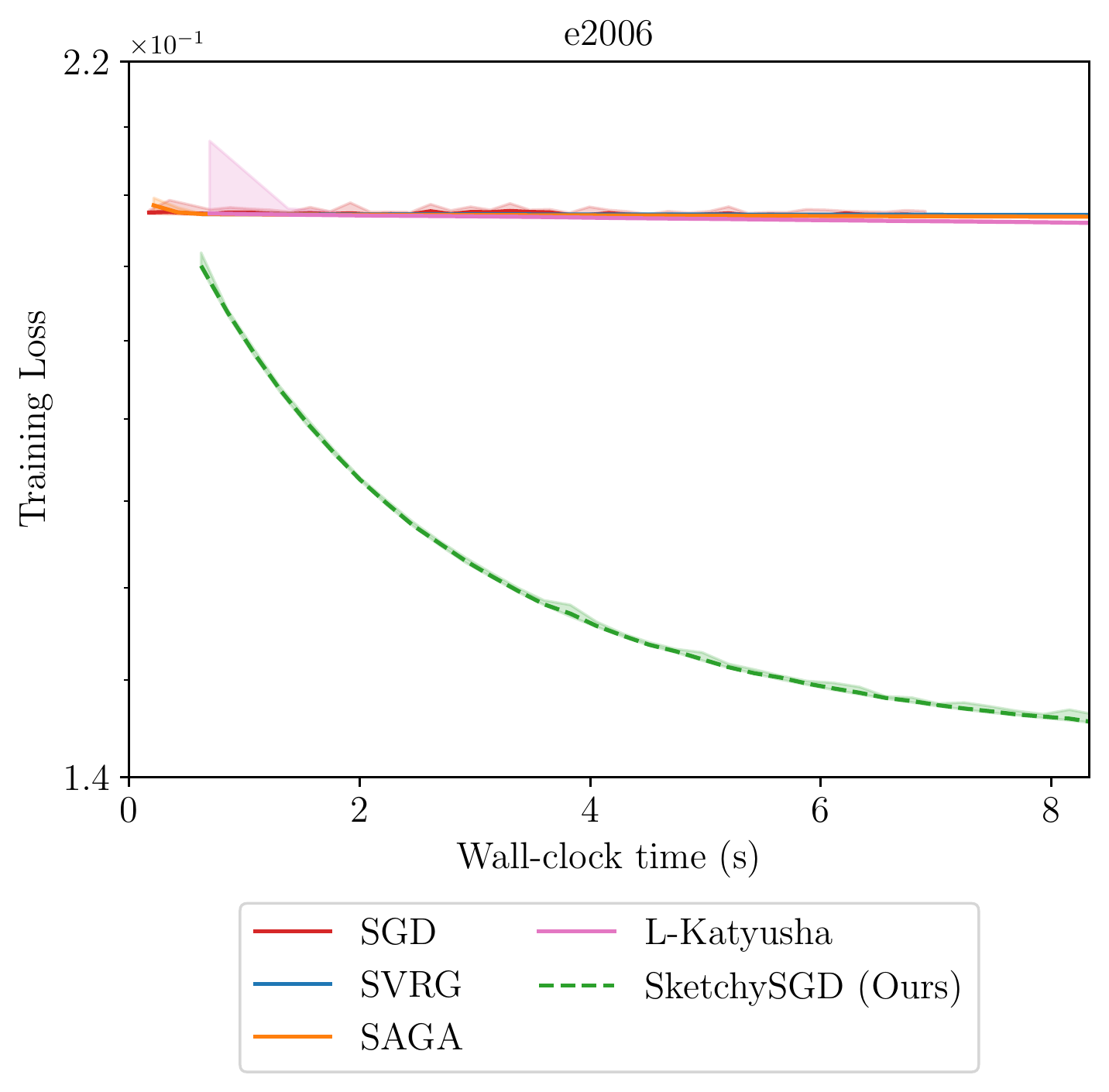}
        \label{fig:e2006_tuned_time_intro}
    \end{subfigure}
  \caption{SketchySGD outperforms standard stochastic gradient optimizers, even when their parameters are tuned for optimal performance. Each optimizer was allowed 40 full data passes.}
  \label{fig:intro}
\end{figure}

Second-order optimizers based on the Hessian, such as Newton's method and quasi-Newton methods,
are the classic remedy for ill-conditioning. 
These methods converge at super-linear rates under mild assumptions and are faster than first-order methods both in theory and in practice \cite{nocedal1999numerical, boyd2004convex}. 
Alas, it has proved difficult to design second-order methods that can use stochastic gradients.
This deficiency limits their use in large-scale machine learning.
Stochastic second-order methods using stochastic Hessian approximations but full gradients are abundant in the literature \cite{pilanci2017newton,lacotte2021adaptive,tong2021accelerating}. 
However, a practical second-order stochastic optimizer must replace both the Hessian and gradient by stochastic approximations. 
While many interesting ideas have been proposed, 
existing methods require impractical conditions for convergence: 
for example, a batch size for the gradient and Hessian that grows with the condition number \cite{roosta2019sub}, or that increases geometrically at each iteration \cite{bollapragada2019exact}.
These theoretical conditions are impossible to implement in practice. 
Convergence results without extremely large or growing batch sizes have been established under interpolation, i.e., if the loss is zero at the solution \cite{meng2020fast}. 
This setting is interesting for deep learning, but 
is unrealistic for convex machine learning models.
Moreover, 
most of these methods lack practical guidelines for hyperparameter selection, 
making them difficult to deploy in real-world machine learning pipelines.
Altogether, the literature does not present any 
stochastic second-order method that can function
as a drop-in replacement for SGD, 
despite strong empirical evidence that --- for perfectly chosen parameters --- they yield improved performance. 
One major contribution of this paper is a theory 
that matches how these methods are used in practice,
 and therefore is able to offer practical parameter selection rules that make SketchySGD (and even some previously proposed methods) practical.
We provide a more detailed discussion of how SketchySGD compares to prior stochastic second-order optimizers in \Cref{section:related}. 
  
SketchySGD
accesses second-order information using only 
minibatch Hessian-vector products to form a sketch,
and produces a preconditioner for SGD using the sketch,
which is updated only rarely (every epoch or two).
This primitive is compatible with standard practices of modern large-scale machine learning, as it can be computed by automatic differentiation.
Our major contribution is a tighter theory that enables 
practical choices of parameters that makes SketchySGD 
a drop-in replacement for SGD and variants
that works out-of-the-box, without tuning, 
across a wide variety of problem instances.

How? 
A standard theoretical argument in convex optimization shows that 
the learning rate in a gradient method should be set as the inverse of the smoothness parameter of the objective to guarantee convergence \cite{boyd2004convex,nesterov2018lectures}.
This choice generally results in a tiny stepsize and very slow convergence.
However, in the context of SketchySGD, the preconditioned smoothness constant 
is generally around $1$,
and so its inverse provides a reasonable learning rate.
Moreover, it is easy to estimate, 
again using minibatch Hessian-vector products
to measure the largest eigenvalue of a preconditioned minibatch Hessian.  

Theoretically, 
we establish SketchySGD converges to a small ball around the minimum on for both smooth convex functions and smooth and strongly convex functions, 
which suffices for good test error 
\cite{agarwal2012fast,hardt2016train,loh2017lower}.
By appealing to the modern theory of SGD for finite-sum optimization \cite{gower2019sgd} in our analysis, 
we avoid vacuous or increasing batchsize requirements for the gradient. 
As a corollary of our theory, we obtain that SketchySGD converges linearly to the optimum whenever the model interpolates the data, recovering the result of \cite{meng2020fast}.
In addition, we show that SketchySGD's required number of iterations to converge, improves upon SGD, when the objective is quadratic. 

Numerical experiments verify that SketchySGD yields comparable or superior performance to SGD, SAGA, SVRG, stochastic L-BFGS \cite{moritz2016linearly}, and loopless Katyusha \cite{kovalev2020lkatyusha} equipped with tuned hyperparameters that attain their best performance. 
Experiments also demonstrate that SketchySGD's 
default hyperparameters, including the rank of the preconditioner
and the frequency at which it is updated,
work well across a wide range of datasets.

\subsection{SketchySGD}
\label{subsection:sketchysgd}
\begin{algorithm}[tb]
   \caption{SketchySGD (Practical version)}
   \label{alg:SketchySGD_pract}
    \begin{algorithmic}
       \STATE {\bfseries Input:} initialization $w_0$,  hvp oracle $\mathcal{O}_{H}$, ranks $\{r_j\}$, regularization $\rho$, preconditioner update frequency $u$, stochastic gradient batch size $b_g$, stochastic Hessian batch sizes $\{b_{h_j}\}$
        \FOR {$k = 0,1,\dots, m-1$}
                \STATE Sample a batch $B_k$
                \STATE Compute stochastic gradient $g_{B_{k}}(w_k)$
                \IF[Update preconditioner]{$k \equiv 0 \pmod u$}
                    \STATE Set $j = j+1$ 
                    \STATE Sample a batch $S_j$ \hfill \COMMENT{$|S_j| = b_{h_j}$}
                    \STATE $\Phi = \text{randn}(p, r_j)$ \hfill \COMMENT{Gaussian test matrix}
                    \STATE $Q = $ \texttt{qr\_econ} $(\Phi)$
                    \STATE Compute sketch $ Y = H_{S_j}(w_k)Q$ \hfill \COMMENT{$r$ calls to $\mathcal{O}_{H_{S_j}}$}
                    \STATE $[\hat{V},\hat{\Lambda}] = $ \texttt{RandNysApprox}$(Y,Q,r_j)$
                    \STATE $\eta_j = \texttt{get\_learning\_rate}(\mathcal{O}_{H_{S_j}},\hat{V},\hat{\Lambda},\rho)$
                \ENDIF
            \STATE Compute $v_k = (\hat{H}_{S_j}+\rho I)^{-1}g_{B_k}(w_k)$ via \eqref{eq:NysSMWSolve} 
            \STATE $w_{k+1} = w_{k}-\eta_j v_{k}$ \hfill \COMMENT{Update parameters}
        \ENDFOR
\end{algorithmic}
\end{algorithm} 
SketchySGD finds $w\in \mathbb{R}^p$ to minimize
the convex empirical risk
\begin{equation}
\label{eq:ERM-Prob}
    \mbox{minimize} \quad f(w) := \frac{1}{n}\sum_{i=1}^{n}f_i(w),
\end{equation}
given access to a gradient oracle for each $f_i$.
SketchySGD is formally presented as \cref{alg:SketchySGD}. 
SketchySGD tracks two different sets of indices: $k$, which counts the number of total iterations, 
and $j$, which counts the number of (less frequent) preconditioner updates.
SketchySGD updates the preconditioner (every $u$ iterations) by sampling a minibatch $S_j$ and forming a low-rank $\hat{H}_{S_j}$ using Hessian vector products with the minibatch Hessian $H_{S_j}$ evaluated at the current iterate $w_k$.
Given the Hessian approximation, it uses $\hat H_{S_j}+\rho I$ as a preconditioner, where $\rho>0$ is a regularization parameter.  
Subsequent iterates are then computed as
\begin{equation}
\label{eq:SketchySGDIter}
    w_{k+1} = w_k-\eta_j(\hat{H}_{S_j}+\rho I)^{-1}g_{B_k}(w_k),
\end{equation}
where $g_{B_k}(w_k)$ is the minibatch stochastic gradient and $\eta_{j}$ is the learning rate, which is automatically determined by the algorithm. 

The SketchySGD update may be interpreted as a preconditioned stochastic gradient step with Levenberg-Marquardt regularization \cite{levenberg1944method,marquardt1963algorithm}.
Indeed, let $P_j = \hat{H}_{S_j}+\rho I$ and define the preconditioned function $f_{P_j}(z) = f(P^{-1/2}_{j}z)$, which represents $f$ as a function of the preconditioned variable $z \in \R^p$.
Then \eqref{eq:SketchySGDIter} is equivalent to \footnote{See \Cref{subsection:PSGD} for a proof.}
\[
z_{k+1} = z_k-\eta_j\hat{g}_{P_j}(z_k), \quad w_{k+1} = P_j^{-1/2}z_k,
\]
where $\hat{g}_{P_j}(z_k)$ is the minibatch stochastic gradient as a function of $z$.
Thus, SketchySGD first takes a step of SGD in preconditioned space and then maps back to the original space. 
As preconditioning induces more favorable geometry, SketchySGD chooses better search directions and uses a stepsize adapted to the (more isotropic) preconditioned curvature. 
Hence SketchySGD makes more progress than SGD to ultimately converge faster. 
\paragraph{Contributions}

\begin{enumerate}
    \item We develop a new robust stochastic quasi-Newton algorithm that is fast and generalizes well 
    by accessing only a subsampled Hessian and stochastic gradient. 
    \item We devise an heuristic (but well-motivated) automated learning rate for this algorithm that works well in both ridge and logistic regression. 
    More broadly, we present default settings for all hyperparameters of SketchySGD, which allow it to work out-of-the-box.   
    \item We show that SketchySGD with a fixed learning rate converges to a small ball about the minimum for smooth and convex, and smooth and strongly convex objectives. 
    Additionally, we show SketchySGD converges at a faster rate than SGD for ill-conditioned least-squares problems. We verify this improved convergence in numerical experiments.
    \item We present experiments showing that SketchySGD with its default hyperparameters can match or outperform SGD, SVRG, SAGA, stochastic L-BFGS, and loopless Katyusha on ridge and logistic regression problems.
    \item We present proof-of-concept experiments on tabular deep learning, which
    show \newline SketchySGD scales well and is competitive with other stochastic second-order optimizers in deep learning, potentially providing an avenue for interesting future research.
\end{enumerate}

\subsection{Roadmap}
\Cref{section:SketchySGD} describes the SketchySGD algorithm in detail, explaining how to compute $\hat{H}_{S_k}$ and the update in \eqref{eq:SketchySGDIter} efficiently. \Cref{section:related} surveys previous work on stochastic quasi-Newton methods, particularly in the context of machine learning.
\Cref{section:Theory} establishes convergence of SketchySGD in convex machine learning problems. 
\Cref{section:Experiments} provides numerical experiments showing the superiority of SketchySGD relative to competing optimizers. 

\subsection{Notation}
\label{subsection:Notation}
Throughout the paper $B_{k}$ and $S_{j}$ denote subsets of $\{1,\ldots,n\}$ that are sampled independently and uniformly without replacement.
The corresponding stochastic gradient and minibatch Hessian are given by
\begin{align*}g_{B_k}(w) = \frac{1}{b_{g_k}}\sum_{i\in B_{k}}g_{i}(w), \quad H_{S_j}(w) = \frac{1}{b_{h_j}}\sum_{i\in S_{j}}\nabla^{2} f_{i}(w),
\end{align*}
where $b_{g_k} = |B_k|, b_{h_j} = |S_j|.$
For shorthand we often omit the dependence upon $w$ and simply write $g_{B_k}$ and $H_{S_j}$.
We also define $H(w)$ as the Hessian of the objective $f$ at $w$.
Throughout the paper $\|\cdot\|$ stands for the usual $2$-norm for vectors and operator norm for matrices.
For any matrix $A\in \R^{p\times p}$ and $u,v\in \R^p$, $\|v\|^2_A \coloneqq v^{T}Av$, and $\langle u,v \rangle_{A} = u^{T}Av$.
Given $w\in \R^p$, we define $M(w) = \max_{1\leq i\leq n}\|\nabla^2 f_i(w)\|$ and $G(w) = \max_{1\leq i\leq n}\|\nabla f_i(w)\|.$ 
Given any $\beta >0$, we use the notation $H^{\beta}_{S_j}$ to denote $H_{S_j}+\beta I$.
We abbreviate positive-semidefinite as psd, and positive-definite as pd. 
We denote the convex cone of real $p\times p$ symmetric psd (pd) matrices by $\mathbb{S}^{p}_{+}(\R)(\mathbb{S}^{p}_{++}(\R))$
We denote the Loewner order on the convex cone of psd matrices by $\preceq$, where $A\preceq B$ means $B-A$ is psd.
Given a psd matrix $A\in \mathbb{R}^{p\times p}$, we enumerate its eigenvalues in descending order, $\lambda_{1}(A)\geq \lambda_2(A)\geq \cdots \geq \lambda_p(A)$. 
Finally given a psd matrix $A$ and $\beta>0$ we define the effective dimension by $d^{\beta}_{\textrm{eff}}(A) = \tr(A(A+\beta I)^{-1})$, which provides a smoothed measure of the eigenvalues greater than or equal to $\beta$.
\section{SketchySGD: efficient implementation and hyperparameter selection}
\label{section:SketchySGD}
We now formally describe SketchySGD (\Cref{alg:SketchySGD}) and its efficient implementation. 

\paragraph{Hessian vector product oracle} 
SketchySGD relies on one main computational primitive, a (minibatch) Hessian vector product (hvp) oracle, to compute a low-rank approximation of the (minibatch) Hessian.
Access to such an oracle naturally arises in machine learning problems. In the case of generalized linear models (GLMs), the Hessian is given by $H(w) = \frac{1}{n} A^{T}D(w)A$, where $A\in \mathbb{R}^{n\times p}$ is the data matrix and $D\in \mathbb{R}^{n\times n}$ is a diagonal matrix. Accordingly, an hvp between $H_{S_j}(w)$ and $v \in \R^p$ is given by
\[
H_{S_j}(w)v =  \frac{1}{b_{h_j}}\sum_{i\in S_j}d_{i}(w)a_{i}\left(a_{i}^{T}v\right).
\]
For more complicated losses, an hvp can be computed by automatic differentiation (AD) \cite{pearlmutter1994fast}.
The general cost of $r$ hvps with $H_{S_j}(w)$ is $O(b_{h_j}pr)$.
In contrast, explicitly instantiating a Hessian entails a heavy $O(p^2)$ storage and $O(np^2)$ computational cost.
Further computational gains can be made when the subsampled Hessian enjoys more structure, such as sparsity. 
If $H_{S_j}(w)$ has $s$-sparse rows then the complexity of $r$ hvps enjoys a significant reduction from $O(b_{h_j}pr)$ to  $O(b_{h_j}sr)$. 
Hence, computing hvps with $H_{S_j}(w)$ is extremely cheap in the sparse setting. 



\paragraph{Randomized low-rank approximation} The hvp primitive allows for efficient randomized low-rank approximation to the minibatch Hessian by sketching. 
Sketching reduces the cost of fundamental numerical linear algebra operations without much loss in accuracy \cite{woodruff2014sketching,martinsson2020randomized} by computing the quantity of interest from a \emph{sketch}, or randomized linear image, of a matrix.
In particular, sketching enables efficient computation of a near-optimal low-rank approximation to $H_{S_j}$ \cite{halko2011finding,cohen2015dimensionality,tropp2017practical}. 

SketchySGD computes a sketch of the subsampled Hessian using hvps and returns a randomized low-rank approximation $\hat{H}_{S_j}$ of $H_{S_j}$ in the form of an eigendecomposition $\hat{V}\hat{\Lambda}\hat{V}^{T}$, where $\hat{V}\in \mathbb{R}^{p\times r}$ and $\hat{\Lambda}\in \mathbb{R}^{r\times r}$. 
In this paper, we use the randomized Nystr{\"o}m approximation, following the stable implementation in \cite{tropp2017fixed}.
The resulting algorithm \texttt{RandNysApprox} appears in \Cref{section:RandNysAppx}.
The cost of forming the Nystr{\"o}m approximation is $O(b_{h_j}pr+pr^2)$, as we need to perform $r$ minibatch hvps to compute the sketch, and we must perform a skinny SVD at a cost of $O(pr^2)$.
The procedure is extremely cheap, as we find empirically we can take $r$ to be $10$ or less, so constructing the low-rank approximation has negligible cost. 

\begin{remark}
If the objective for $f$ includes an $l_2$-regularizer $\gamma$, so that the subsampled Hessian has the form $H_{S_j}(w) = \frac{1}{b_{h_j}}\sum_{i\in S_j}\nabla^2f_i(w)+\gamma I$, we do not include $\gamma$ in the computation of the sketch in \cref{alg:SketchySGD}. 
The sketch is only computed using minibatch hvps with $ \frac{1}{b_{h_j}}\sum_{i\in S_j}\nabla^2f_i(w)$.
\end{remark}

\paragraph{Setting the learning rate} 
SketchySGD automatically selects the learning rate $\eta$ whenever it updates the preconditioner.
The learning rate update rule is inspired by the analysis of gradient descent (GD) on smooth convex functions, 
which shows GD converges for a fixed learning rate $ \eta = 1/L$, where $L$ is the smoothness constant. 
In the preconditioned setting, $L$ is replaced by $L_P$, its preconditioned analogue. 
SketchySGD approximates the ideal learning rate $1/L_P$ by setting the learning rate as
\begin{equation}
\label{eq:SSG_LR}
\eta_{\textrm{SketchySGD}} = \frac{\alpha}{\lambda_{1}\left((\hat{H}_{S_j}+\rho I)^{-1/2}H_{S'}(w_j)(\hat{H}_{S_j}+\rho I)^{-1/2}\right)},    
\end{equation}
where $S'$ is a fresh minibatch that is sampled independently of $S_j$, and $\alpha$ is scaling factor whose default value is $1/2$.
The following logic provides intuition for this choice: if $f$ has a Lipschitz Hessian \cref{prop:NysPrecondLem}.
\begin{align*}
&\lambda_1\left((\hat{H}_{S_j}+\rho I)^{-1/2}H_{S'}(w_j)(\hat{H}_{S_j}+\rho I)^{-1/2}\right) \\
&\overset{(1)}{\approx} \lambda_1\left(\hat{H}_{S_j}+\rho I)^{-1/2}H(w_j)(\hat{H}_{S_j}+\rho I\right)^{-1/2} \overset{(2)}{\leq} 1+\zeta \overset{(3)}{\approx} L_{P}^{\textrm{loc}}(w_j),
\end{align*}
where $\zeta\in (0,1)$, and $L_{P}^{\textrm{loc}}(w)$ is the local preconditioned smoothness constant in some appropriately sized ball centered about $w_j$.
Here $(1)$ is due to \cref{lemma:SubsampAppx}, $(2)$ follows from \cref{prop:NysPrecondLem}, and $(3)$ follows $f$ having a Lipschitz Hessian.
Hence SketchySGD 
is expected to work well as long as the local quadratic model at $w_j$ provides an accurate description of the curvature.
Moreover, this step size $\eta_{\textrm{SketchySGD}} = \bigO(1)$ is much larger (in the preconditioned space) than the standard step size of $1/L$ (in the original space) for an ill-conditioned problem,
and should allow faster convergence to the minimum. 
These predictions are verified experimentally in \Cref{section:Experiments}.

Crucially, our proposed learning rate can be efficiently computed via matvecs with $H_{S_j}$ and $(\hat{H}_{S_j}+\rho I)^{-1/2}$ using techniques from randomized linear algebra, such as randomized powering and the randomized Lanczos method \cite{kuczynski1992estimating,martinsson2020randomized}.
As an example, we show how to compute the learning rate using randomized powering in \cref{alg:learning_rate}.
Note for any vector $v$, $(\hat{H}_{S_j}+\rho I)^{-1/2}v$ can be computed efficiently via the formula
\begin{equation}
\label{eq:RootNysSMWSolve}
    (\hat{H}_{S_j}+\rho I)^{-1/2}v = \hat{V}\left(\hat{\Lambda}+\rho I\right)^{-1/2}\hat{V}^{T}v+\frac{1}{\sqrt{\rho}}(v-\hat{V}\hat{V}^{T}v).
\end{equation}

\begin{algorithm}[tb]
   \caption{\texttt{get\_learning\_rate}}
   \label{alg:learning_rate}
    \begin{algorithmic}
       \STATE {\bfseries Input:} hvp oracle $\mathcal{O}_{H}$, Nystr{\"o}m approximation factors $\hat{V}, \hat{\Lambda}$, regularization $\rho$, maximum number of iterations $q$
       \STATE $z$ = \text{randn}$(p,1)$
       \STATE $y_0 = z/\|z\|$
       \FOR{$i=1,\dots,q$}
       \STATE Compute $v = \left(\hat{H}_{S_j}+\rho I\right)^{-1/2}y_{i-1}$ \hfill \COMMENT{Use \eqref{eq:NysSMWSolve}}
       \STATE Compute $v' = H_{S'}v$ by calling oracle $\mathcal{O}_{H_{S_k}}$
       \STATE Compute $y_i = \left(\hat{H}_{S_j}+\rho I\right)^{-1/2}v'$ \hfill \COMMENT{Use \eqref{eq:RootNysSMWSolve}}
       \STATE $\lambda_i = y_{i-1}^{T}y_i$
       \STATE $y_i = y_i/\|y_i\|$
       \ENDFOR
       \STATE Set $\eta = 1/\lambda_q$
       \STATE {\bfseries Return:} $\eta$
    \end{algorithmic}
\end{algorithm} 



\paragraph{Computing the SketchySGD update \eqref{eq:SketchySGDIter} fast}
Given the rank-$r$ approximation $\hat H_{S_j} = \hat V \hat \Lambda \hat V^\top$ to the minibatch Hessian $H_{S_j}$, the main cost of SketchySGD relative to standard SGD is computing the search direction $v_k = (\hat{H}_{S_j}+\rho I)^{-1} g_{B_k}$. This (parallelizable) computation requires $O(pr)$ flops, by the matrix inversion lemma \cite{higham2002accuracy}:
\begin{equation}
\label{eq:NysSMWSolve}
    v_k = \hat{V}\left(\hat{\Lambda}+\rho I\right)^{-1}\hat{V}^{T}g_{B_k}+\frac{1}{\rho}(g_{B_k}-\hat{V}\hat{V}^{T}g_{B_k}),
\end{equation}
The SketchySGD preconditioner is  easy to use, fast to compute, 
and allows SketchySGD to scale to massive problem instances.
  
\paragraph{Default parameters for \cref{alg:SketchySGD}}
We recommend setting the ranks $\{r_j\}$ to a constant value of $10$, the regularization $\rho = 10^{-3}L$, and the stochastic Hessian batch sizes $\{b_{h_j}\}$ to a constant value of $\left\lfloor \sqrt{n_{\mathrm{tr}}} \right\rfloor$, where $n_{\mathrm{tr}}$ is the size of the training set. 
When the Hessian is constant, i.e. as in least-squares/ridge-regression, we recommend using the preconditioner throughout the optimization, which corresponds to $u = \infty$. 
In settings where the Hessian is not constant, we recommend setting $u = \left\lceil \frac{n_{\textrm{tr}}}{b_g} \right\rceil$, which corresponds to updating the preconditioner after each pass through the training set.

\section{Comparison to previous work} 
\label{section:related}
Here we review prior work on stochastic quasi-Newton methods, with particular emphasis on those developed for convex optimization problems, which is the main focus of this paper. 
\paragraph{Stochastic quasi-Newton methods for convex optimization}
Many authors have developed stochastic quasi-Newton methods for large-scale machine learning.  
Broadly, these schemes can be grouped by whether they use a stochastic approximations to both the Hessian and gradient or just a stochastic approximation to the Hessian. 
Stochastic quasi-Newton methods that use exact gradients with a stochastic Hessian approximation constructed via sketching or subsampling include \cite{byrd2011use,erdogdu2015convergence,berahas2016multi,pilanci2017newton,gower2019rsn,ye2021Appx,chen2022san,yuan2022sketched}.
Methods falling into the second group include \cite{moritz2016linearly,bollapragada2018progressive,roosta2019sub,bollapragada2019exact,meng2020fast}.

Arguably, the most common strategy employed by stochastic quasi-Newton methods is to subsample the Hessian as well as the gradient.
These methods either directly apply the inverse of the subsampled Hessian to the stochastic gradient \cite{roosta2019sub,bollapragada2019exact,meng2020fast}, or they do an L-BFGS-style update step with the subsampled Hessian \cite{moritz2016linearly,bollapragada2018progressive,meng2020fast}.
However, the theory underlying these methods requires large or growing gradient batch sizes \cite{roosta2019sub,bollapragada2018progressive,bollapragada2019exact}, periodic full gradient computation \cite{moritz2016linearly}, or interpolation \cite{meng2020fast}, which are unrealistic assumptions for large-scale convex problems.
Further, many of these methods lack practical guidelines for setting hyperparameters such as batch sizes and learning rate, leading to the same tuning issues that plague stochastic first-order methods.

SketchySGD improves on many of these stochastic quasi-Newton methods by providing principled guidelines for selecting hyperparameters and requiring only a modest, constant batch size.
\cref{t-2ndOrdComp} compares SketchySGD with a representative subset of stochastic second-order methods on gradient and Hesssian batch sizes, and iteration complexity required to reach a \emph{fixed suboptimality} $\epsilon>0$.
Notice that SketchySGD is the only method that allows computing the gradient with a small constant batch size!
Hence SketchySGD empowers the user to select the gradient batch size that meets their computational constraints.
Although for fixed-suboptimality $\epsilon$, SketchySGD only converges at rate of $\bigO\left(\log(1/\epsilon)/\epsilon\right)$, this is often sufficient for machine learning optimization, as it is well-known that high-precision solutions do not improve generalization \cite{bottou2007tradeoffs,agarwal2012fast,loh2017lower}.
Moreover, most of the methods that achieve $\epsilon$-suboptimality in $\bigO(\log(1/\epsilon))$ iterations, require full gradients, which result in costly iterations when $n$ and $p$ are large.
In addition to expensive iterations, full gradient methods can also be much slower in yielding a model with good generalization error \cite{amir2021never}. 
Thus, despite converging linearly to the minimum, full gradient methods are expensive and can be slow to reach good generalization error. 

SketchySGD also generalizes subsampled Newton methods; 
by letting the rank parameter $r_j \to \mathrm{rank}(H_{S_j}) \leq p$, SketchySGD reproduces the algorithm of \cite{roosta2019sub,bollapragada2019exact,meng2020fast}.
This follows as the error in the randomized Nystr{\"o}m approximation is identically zero when $r_j = \mathrm{rank}(H_{S_j})$, so that $\hat H_{S_j} = H_{S_j}$ \cite{tropp2017fixed}.
By using a full batch gradient $b_g = n$ (and a regularized exact low-rank approximation to the subsampled Hessian), SketchySGD reproduces the method of \cite{erdogdu2015convergence}. 
In particular, these methods are made practical by the analysis and practical parameter selections in this work.
\begin{table}[!htbp]
	\caption{\textbf{Comparison of stochastic 2nd-order methods.} Fix $\epsilon>0$, and suppose $f$ is of the form \eqref{eq:ERM-Prob}, and is strongly convex. 
    This table compares the required gradient and Hessian batch sizes of various stochastic quasi-Newton methods, and the number of iterations required to output a point satisfying $f(w)-f(w_\star)\leq \epsilon$.
    Here $\gamma,\zeta\in (0,1)$, while $G(w)$ and $M(w)$ are as in \Cref{subsection:Notation}, and $\tau^\rho(H(w))$ denotes the $\rho$-dissimilarity, which is defined in \Cref{def:rho-sim}. 
    The $\rho$-dissimilarity offers the tightest characterization of the required Hessian minibatch size required to ensure a non-trivial approximation of $H^\rho$. In many settings of interest, it is much smaller than $n$, see \cref{prop:tau_rho} and the corresponding discussion. 
    \label{t-2ndOrdComp}} 
	\begin{center}
    \scriptsize
		\begin{tabular}{C{3cm}C{2cm}C{3cm}C{5cm}}
			\hline
			\textbf{Method} & \textbf{Gradient batch size} & \textbf{Hessian batch size} & \textbf{Iteration complexity}\\ 
            \hline
			Newton Sketch \cite{pilanci2017newton,lacotte2021adaptive} & Full & Full & $\bigO\left(\kappa^2\log(1/\epsilon)\right)$\\
			\hline
			Subsampled Newton \newline (Full gradients) \cite{roosta2019sub,ye2021Appx} & Full & $\widetilde{\bigO}\left(\frac{M(w)/\mu}{\zeta^2}\right)$ & $\bigO\left(\kappa^2\log(1/\epsilon)\right)$ \\ 
			\hline
			Subsampled Newton \newline (Stochastic gradients) \cite{roosta2019sub} & $\widetilde{\bigO}\left(G(w)^2/\gamma^{2}\right) $ & $\widetilde{\bigO}\left(\frac{M(w)/\mu}{\zeta^2}\right)$ & $\bigO(\kappa^2\log(1/\epsilon))$ \\ 
			\hline
			Subsampled Newton\newline (Low-rank) \cite{erdogdu2015convergence,ye2021Appx} & Full & $\widetilde{\bigO}\left(\frac{M(w)/\lambda_{r+1}(H(w))}{\zeta^2}\right)$ & $\bigO\left(\kappa^2\log(1/\epsilon)\right)$ \\
			\hline
            SLBFGS \cite{moritz2016linearly} & Full evaluation \newline every epoch & $b_h$ & $\bigO\left(\kappa^2\log(1/\epsilon)\right)$\\
			\hline
			SketchySGD (\cref{alg:SketchySGD}) & $b_g$ & $\widetilde{\bigO}\left(\tau^\rho(H(w))/\zeta^2\right)$ & $\bigO\left(\left[\frac{\mathcal L_P}{\gamma_\ell}\frac{\rho}{\mu}+\frac{\sigma^2/(\gamma_\ell\mu)}{\epsilon}\right]\log(1/\epsilon)\right)$ \\
			\hline
		\end{tabular}
	\end{center}
\end{table}

\paragraph{Stochastic quasi-Newton methods for non-convex optimization}
In the past decade, there has been a surge of interest in stochastic second-order methods for non-convex optimization, primarily driven by deep learning. 
Similar to the convex setting, many of these methods are based on subsampling the Hessian, which is then combined
with cubic regularization \cite{kohler2017sub,tripuraneni2018stochastic,xu2020newton} or trust region methods \cite{yao2021inexact,roosta2022newton}.
Although these methods come with strong theoretical guarantees, they have not proven popular in deep learning, due to subproblems that are expensive to solve.
In order to maintain computational tractability, most stochastic quasi-Newton methods designed for deep learning
forgo theoretical guarantees in favor of scalability and good empirical performance.
Popular stochastic second-order optimizers in deep learning include K-FAC \cite{grosse2016kronecker}, Shampoo \cite{gupta2018shampoo}, and AdaHessian \cite{yao2021adahessian}.
Despite recent advances in stochastic second-order methods for deep learning, the advantage of stochastic quasi-Newton methods over SGD and its variants is unclear, so stochastic first-order methods have remained the most popular optimization algorithms for deep learning.

\section{Theory}
\begin{algorithm}[tb]
   \caption{SketchySGD (Theoretical version)}
   \label{alg:SketchySGD}
    \begin{algorithmic}
       \STATE {\bfseries Input:} initialization $w_0$, learning rate $\eta$, hvp oracle $\mathcal{O}_{H}$, ranks $\{r_j\}$, regularization $\rho$, preconditioner update frequency $u$, stochastic gradient batch size $b_g$, stochastic Hessian batch sizes $\{b_{h_j}\}$
       \FOR{$s = 1,2\dots$}
            \FOR {$k = 0,2,\dots, m-1$}
                \STATE Sample a batch $B^{(s)}_k$
                \STATE Compute stochastic gradient $g_{B^{(s)}_{k}}(w^{(s)}_k)$
                \IF{$sk \equiv 0 \pmod u$}
                    \STATE Set $j = j+1$ 
                    \STATE Sample a batch $S_j$ \hfill \COMMENT{$|S_j| = b_{h_j}$}
                    \STATE $\Phi = \text{randn}(p, r_j)$ \hfill \COMMENT{Gaussian test matrix}
                    \STATE $Q = $ \texttt{qr\_econ} $(\Phi)$
                    \STATE Compute sketch $ Y = H_{S_j}(w_k)Q$ \hfill \COMMENT{$r$ calls to $\mathcal{O}_{H_{S_j}}$}
                    \STATE $[\hat{V},\hat{\Lambda}] = $ \texttt{RandNysApprox}$(Y,Q,r_j)$
                \ENDIF
            \STATE Compute $v^{(s)}_k = (\hat{H}_{S_j}+\rho I)^{-1}g_{B^{(s)}_k}(w^{(s)}_k)$ via \eqref{eq:NysSMWSolve} 
            \STATE $w^{(s)}_{k+1} = w^{(s)}_{k}-\eta v^{(s)}_{k}$ \hfill \COMMENT{Update parameters}
            \ENDFOR
            \STATE Set $\hat w^{(s+1)} = \frac{1}{m}\sum_{k=0}^{m-1}w^{(s)}_k$.
        \ENDFOR
\end{algorithmic}
\end{algorithm} 
In this section we present our main convergence theorems for SketchySGD.
As mentioned in the prequel, we do not directly analyze \cref{alg:SketchySGD_pract}, but a slightly modified version, which we present in \cref{alg:SketchySGD}.
The are two differences between \cref{alg:SketchySGD_pract} and \cref{alg:SketchySGD}. 
First, \cref{alg:SketchySGD_pract} uses an adaptive learning rate strategy, while \cref{alg:SketchySGD} uses a fixed learned rate. 
Second, \cref{alg:SketchySGD} breaks the optimization into \emph{stages} involving periodic averaging. 
At the end of each stage, \cref{alg:SketchySGD} sets the initial iterate for the next stage to be the average of the iterates from the previous stage.
In this sense, \cref{alg:SketchySGD} resembles the SVRG algorithm of \cite{johnson2013accelerating}, except there is no full gradient computation. 
Just as with SVRG, the addition of averaging is needed purely to facilitate analysis; in practice the periodic averaging in \cref{alg:SketchySGD} yields no benefits. 
We recommend always running \cref{alg:SketchySGD_pract} in practice, which is the version we use for all of our experiments (\Cref{section:Experiments}). 


\label{section:Theory} 
\subsection{Assumptions}
\label{sec:assump}
We show convergence of SketchySGD when $f$ is smooth (\Cref{assump:DiffandSmoothness}) and strongly convex (\Cref{assump:StrCvx}).

\begin{assumption}[Differentiability and smoothness]
\label{assump:DiffandSmoothness}
The function $f$ is twice differentiable and $L$-smooth. Further, each $f_i$ is $L_i$-smooth with $L_i \leq L_\textup{max}$ for every $i=1,\ldots,n$. 
\end{assumption}

\begin{assumption}[Strong convexity]
\label{assump:StrCvx}
    The function $f$ is $\mu$-strongly convex for some $\mu > 0$.
\end{assumption}

\subsection{Quadratic Regularity}
Our analysis rests on the idea of relative upper and lower quadratic regularity.
This is a generalization of upper and lower quadratic regularity, which was recently introduced by the authors in \cite{frangella2023promise}, and refines the ideas of relative convexity and relative smoothness introduced in \cite{gower2019rsn}.

\begin{definition}[Relative quadratic regularity]
\label{def:RelSmoothCvx}
Let $f$ be a twice differentiable function and $A(w):\R^{p}\mapsto \mathbb{S}^{p}_{++}(\R)$. 
Then $f$ is said to be \emph{relatively upper quadratically regular} with respect to $A$, if for all $w,w',w''\in \R^p$ there exists $0<\gamma_u<\infty$, such that
\begin{equation}
    f(w')\leq f(w)+\langle g(w), w'-w\rangle +\frac{\gamma_u}{2}\|w'-w\|^2_{A(w'')} \label{eq:up_quad}.
\end{equation}
Similarly, $f$ is said is to be \emph{relatively lower quadratically regular}, if for all $w,w',w''\in \R^p$ there exists $0<\gamma_{\ell}<\infty$, such that
\begin{equation}
    f(w')\geq f(w)+\langle g(w), w'-w\rangle +\frac{\gamma_\ell}{2}\|w'-w\|^2_{A(w'')} \label{eq:low_quad}.
\end{equation}
We say $f$ is \emph{relatively quadratically regular} with respect to $A$ if $\gamma_u<\infty$ and $\gamma_\ell >0$.
\end{definition}
When $A(w) = H(w)$, the Hessian of $f$, relative quadratic regularity reduces to quadratic regularity from \cite{frangella2023promise}.  
Quadratic regularity extends ideas from \cite{gower2019rsn}, by having the Hessian be evaluated at a point $w''\neq w$ in \eqref{eq:up_quad}--\eqref{eq:low_quad}.
This extension, while simple in nature, is essential for establishing convergence under lazy preconditioner updates.

An important thing to note about relative quadratic regularity is it holds for useful settings of $A$, under standard hypotheses, as shown by the following lemma.
\begin{lemma}[Smoothness and strong convexity implies quadratic regularity]
Let $h:\mathcal C\rightarrow \R$, where $\mathcal C$ is a closed convex subset of $\R^p$. Then the following items hold 
\label{lemma:rel_quad}
    \begin{enumerate}
        \item If $h$ is twice differentiable and $L$-smooth, then for any $\rho >0$, $f$ is relatively upper quadratically regular with respect to $\nabla^2h(w)+\rho I$.
        \item If $h$ is twice differentiable, $L$-smooth, and $\mu$-strongly convex, then $h$ is relatively quadratically regular with respect to $\nabla^2h(w)$.
    \end{enumerate}
\end{lemma}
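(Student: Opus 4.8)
The plan is to prove the two items separately, in each case reducing the statement to elementary facts about quadratic upper/lower bounds for twice-differentiable functions, combined with the definition of the weighted norm $\|\cdot\|_A$ appearing in \eqref{eq:up_quad}--\eqref{eq:low_quad}. The key observation is that relative quadratic regularity with respect to $A(w)$ only asks for the inequality with the Hessian replaced by $A(w'')$ evaluated at a \emph{third} point, so it suffices to show that the true Hessian $\nabla^2 h$ is sandwiched, in the Loewner order, between constant multiples of $A(w'')$ uniformly over all points, and then invoke the exact second-order Taylor expansion.

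For item 1, I would start from the second-order Taylor expansion with integral remainder: for any $w, w'$,
\[
h(w') = h(w) + \langle \nabla h(w), w'-w\rangle + \int_0^1 (1-t)\, \|w'-w\|_{\nabla^2 h(w+t(w'-w))}^2 \, dt.
\]
Since $h$ is $L$-smooth, $\nabla^2 h(u) \preceq L I$ for all $u$, so the integrand is bounded by $L\|w'-w\|^2$. It then remains to show $L I \preceq \gamma_u (\nabla^2 h(w'') + \rho I)$ for a finite $\gamma_u$ uniform in $w''$; since $\nabla^2 h(w'') + \rho I \succeq \rho I$, this holds with $\gamma_u = L/\rho$. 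Plugging in and using $\int_0^1 (1-t)\,dt = 1/2$ gives \eqref{eq:up_quad}. (If $h$ is merely smooth but not convex I should note that $\nabla^2 h(u)$ may be indefinite, but $\nabla^2 h(u) \preceq L I$ still holds under $L$-smoothness in the sense $\|\nabla^2 h(u)\| \le L$, which is what is needed.)

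For item 2, I would use the same Taylor expansion, now with the two-sided bound $\mu I \preceq \nabla^2 h(u) \preceq L I$ for all $u$ coming from strong convexity and smoothness. The upper inequality \eqref{eq:up_quad} with $A = \nabla^2 h$ follows from $\nabla^2 h(u) \preceq L I \preceq (L/\mu)\nabla^2 h(w'')$, giving $\gamma_u = L/\mu < \infty$; the lower inequality \eqref{eq:low_quad} follows from $\nabla^2 h(u) \succeq \mu I \succeq (\mu/L)\nabla^2 h(w'')$, giving $\gamma_\ell = \mu/L > 0$. In both directions the constant multiple of $A(w'')$ pulls out of the integral, and the remaining integral again contributes the factor $1/2$. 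This establishes that $h$ is relatively quadratically regular with respect to $\nabla^2 h(w)$.

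I do not anticipate a serious obstacle here — the argument is essentially a careful bookkeeping of Loewner-order inequalities inside the integral Taylor remainder. The one subtle point worth being explicit about is that relative quadratic regularity requires the bound to hold for \emph{all} triples $(w, w', w'')$ simultaneously with a \emph{single} constant, which is exactly why the uniform spectral bounds $\rho I \preceq \nabla^2 h(w'') + \rho I$ (resp. $\mu I \preceq \nabla^2 h(w'') \preceq L I$) are the right tool: they decouple the point $w''$ at which $A$ is evaluated from the points $w, w'$ along which the Taylor expansion runs. A minor caveat is that the lemma is stated for $h$ defined on a closed convex subset $\mathcal{C}$; the segment $w + t(w'-w)$ stays in $\mathcal{C}$ by convexity, so the Taylor expansion is valid, and all the inequalities are stated pointwise on $\mathcal{C}$, so restricting to $\mathcal{C}$ causes no difficulty.
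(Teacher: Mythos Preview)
Your proposal is correct and follows essentially the same route as the paper: the paper also writes the exact Taylor remainder $\int_0^1(1-t)\|v\|_{\nabla^2 h(w+tv)}^2\,dt$, rewrites it as $\tfrac{1}{2}\mathcal{I}\,\|v\|_{A(w'')}^2$ for a scalar ratio $\mathcal{I}$, and then bounds $\mathcal{I}$ using the same spectral sandwiches you describe, obtaining the identical constants $\gamma_u\le L/\rho$ for item~1 and $\mu/L\le\gamma_\ell\le\gamma_u\le L/\mu$ for item~2. One small caution: your lower bound $\nabla^2 h(w'')+\rho I\succeq \rho I$ in item~1 tacitly uses $\nabla^2 h(w'')\succeq 0$, i.e., convexity, which the paper also assumes (``if $h$ is just convex''); your parenthetical about the non-convex case would need $\rho$ large enough to keep $A(w'')$ positive definite, so it is safest to state item~1 under convexity as the paper does.
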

A proof of \cref{lemma:rel_quad} may be found in \cref{subsection:quad_reg_pf}.
Importantly, \eqref{eq:up_quad} and \eqref{eq:low_quad} hold with non-vacuous values of $\gamma_u$ and $\gamma_\ell$.
In the case of least-squares $\gamma_u = \gamma_\ell = 1$. 
More generally, for strongly convex generalized linear models, it can be shown when $A(\cdot) = H(\cdot)$, that $\gamma_u$ and $\gamma_\ell $ are independent of the condition number \cite{frangella2023promise}, similar to the result of \cite{gower2019rsn} for relative smoothness and relative convexity.
Thus, for many popular machine learning problems, the ratio $\gamma_u/\gamma_\ell$ is independent of the conditioning of the data.

\subsection{Quality of SketchySGD preconditioner}
To control the batch size used to form the subsampled Hessian, we introduce \emph{$\rho$-dissimilarity}. 
\begin{definition}[$\rho$-dissimilarity]
\label{def:rho-sim}
    Let $H(w)$ be the Hessian at $w$. The \emph{$\rho$-dissimilarity} is
    \[
    \tau^{\rho}(H(w)) = \max_{1\leq i\leq n}\lambda_1\left((H(w)+\rho I)^{-1/2}(\nabla^2 f_i(w)+\rho I)(H(w)+\rho I)^{-1/2}\right).
    \]
\end{definition}
$\rho$-dissimilarity may be viewed as an analogue of coherence from compressed sensing and low-rank matrix completion \cite{candes2007sparsity,candes2012exact}. 
Similar to how the coherence parameter measures the uniformity of the rows of a matrix, $\tau^{\rho}(H(w))$ measures how uniform the curvature of the sample $\{\nabla^2 f_i(w)\}_{1\leq i\leq n}$ is.
Intuitively, the more uniform the curvature, the better the sample average $H(w)$ captures the curvature of each individual Hessian, which corresponds to smaller $\tau^\rho(H(w))$. 
On the other hand, if the curvature is highly non-uniform, curvature information of certain individual Hessians will be in disagreement with that of $H(w)$, leading to a large value of $\tau^{\rho}(H(w))$.

The following lemma provides an upper bound on the $\rho$-dissimilarity. 
In particular, it shows that the $\rho$-dissimilarity never exceeds $n$.
The proof may be found in \Cref{subsection:tau_rho_lessn_pf}.
\begin{lemma}[$\rho$-dissimilarity never exceeds $n$]
\label{lemma:tau_rho_bnd}
    For any $\rho\geq 0$ and $w \in \R^p$, the following inequality holds
    \[
    \tau^{\rho}(H(w))\leq \min\left\{n,\frac{M(w)+\rho}{\mu+\rho}\right\},
    \]
    where $M(w) = \max_{1\leq i\leq n}\lambda_{1}(\nabla^2f_i(w)).$
\end{lemma}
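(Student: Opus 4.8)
The plan is to bound the $\rho$-dissimilarity by each of the two quantities in the minimum separately, using elementary properties of the Loewner order and the definition of $\tau^\rho$.

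First I would establish the bound $\tau^\rho(H(w)) \le \frac{M(w)+\rho}{\mu+\rho}$. Fix an index $i$ and consider the matrix $(H(w)+\rho I)^{-1/2}(\nabla^2 f_i(w)+\rho I)(H(w)+\rho I)^{-1/2}$. Its largest eigenvalue is the quantity appearing inside the max in \cref{def:rho-sim}. Since $\nabla^2 f_i(w) \preceq M(w) I$ by definition of $M(w)$, we get $\nabla^2 f_i(w)+\rho I \preceq (M(w)+\rho) I$, and conjugating by $(H(w)+\rho I)^{-1/2}$ (which preserves the Loewner order) yields
\[
(H(w)+\rho I)^{-1/2}(\nabla^2 f_i(w)+\rho I)(H(w)+\rho I)^{-1/2} \preceq (M(w)+\rho)(H(w)+\rho I)^{-1}.
\]
Taking $\lambda_1$ of both sides and using that $\lambda_1((H(w)+\rho I)^{-1}) = 1/(\lambda_p(H(w))+\rho) \le 1/(\mu+\rho)$ by \cref{assump:StrCvx}, we conclude $\lambda_1(\cdots) \le \frac{M(w)+\rho}{\mu+\rho}$; since this holds for every $i$, it holds for the max.

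Next I would establish the bound $\tau^\rho(H(w)) \le n$. The key observation is that $H(w) = \frac{1}{n}\sum_{i=1}^n \nabla^2 f_i(w)$, so adding $\rho I$ and using $\rho I = \frac{1}{n}\sum_{i=1}^n \rho I$ gives $H(w)+\rho I = \frac{1}{n}\sum_{i=1}^n (\nabla^2 f_i(w)+\rho I)$. Each summand $\nabla^2 f_i(w)+\rho I$ is psd (in fact pd), so for any fixed $i$ we have $\nabla^2 f_i(w)+\rho I \preceq \sum_{j=1}^n (\nabla^2 f_j(w)+\rho I) = n(H(w)+\rho I)$. Conjugating by $(H(w)+\rho I)^{-1/2}$ gives
\[
(H(w)+\rho I)^{-1/2}(\nabla^2 f_i(w)+\rho I)(H(w)+\rho I)^{-1/2} \preceq n I,
\]
so $\lambda_1(\cdots) \le n$; taking the max over $i$ finishes this case. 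Combining the two bounds gives $\tau^\rho(H(w)) \le \min\{n, \frac{M(w)+\rho}{\mu+\rho}\}$.

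I do not anticipate a genuine obstacle here — the proof is a short chain of Loewner-order manipulations. The only points requiring a little care are: (i) making sure $H(w)+\rho I$ is invertible so the conjugation is well-defined, which follows from $\rho \ge 0$ together with positive-definiteness of $H(w)$ under \cref{assump:StrCvx} (and when $\rho > 0$ it holds regardless); and (ii) noting that conjugation $X \mapsto S^{-1/2} X S^{-1/2}$ by a fixed pd matrix $S$ is order-preserving on symmetric matrices, which is standard. The bound $\tau^\rho \le n$ is slightly subtle in that it discards the factor $1/n$ in the definition of $H(w)$ — this is exactly what makes $n$ (rather than $1$) the worst case, corresponding to all the curvature being concentrated in a single $f_i$.
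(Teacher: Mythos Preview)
Your proof is correct and follows essentially the same argument as the paper: both bounds are obtained by the same Loewner-order manipulations (conjugation by $(H(w)+\rho I)^{-1/2}$ of the inequalities $\nabla^2 f_i(w)+\rho I \preceq (M(w)+\rho)I$ and $\nabla^2 f_i(w)+\rho I \preceq n(H(w)+\rho I)$, respectively). The only cosmetic difference is the order in which you treat the two bounds.
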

\Cref{lemma:tau_rho_bnd} provides a worst-case bound on the $\rho$-dissimilarity---if the curvature of the sample is highly non-uniform or $\rho$ is very small, then the $\rho$-dissimilarity can be large as $n$.
However, \cref{lemma:tau_rho_bnd} neglects the fact that in machine learning, the $f_i$'s are often similar to one another,  so $\tau^\rho(H(w))$ ought to be much smaller than $n$.

Clearly, for arbitrary data distributions the $\rho$-dissimilarity can be large. 
The following proposition shows when $f$ is a GLM, and the data satisfies an appropriate sub-Gaussian condition, the $\rho$-dissmilarity does not exceed the $\rho$-effective dimension of the population Hessian. 
 
\begin{proposition}[$\rho$-dissimilarity is small for GLMs the machine learning setting]
\label{prop:tau_rho}
Let $\ell:\R\mapsto \R$ be a smooth and convex loss, and define $f(w) =\frac{1}{n}\sum_{i=1}^n f_i(w) $ where $f_i(w) = \ell(x_i^Tw)$. 
Fix $w\in \R^p$. 
Assume $x_i$ are drawn i.i.d. from some unknown distribution $\P(x)$ for $i\in\{1,\dots,n\}$. 
Let $H_{\infty}(w) = \E_{x\sim \P}[\ell''(x^{T}w)xx^T]\succ 0$ be the population Hessian matrix, 
and set $\bar{d}^{\rho}_{\textup{eff}}(H_{\infty}(w)) = \max\{d^{\rho}_{\textup{eff}}(H_{\infty}(w)),1\}$. 
Suppose for some constant $\nu$ the following conditions hold:
    \begin{enumerate}
    \item [i.] The random vector $z = H_{\infty}(w)^{-1/2}\sqrt{\ell''(x^{T}w)}x$ is $\nu$ sub-Gaussian.
    \item[ii.] $n \geq C\bar d^{{\rho}}_{\textup{eff}}(H_\infty(w))\log\left(\frac{n}{\delta}\right)\log\left(\frac{d^{\rho}_{\textup{eff}}(H_{\infty}(w))}{\delta}\right)$.
    \end{enumerate}
    Then with probability at least $1-\delta$,
    \[
    \tau^{\rho}(H(w)) = \bigO\left(\bar d^{\rho}_{\textup{eff}}(H_{\infty}(w))\log\left(\frac{n}{\delta}\right)\right).
    \]

\end{proposition}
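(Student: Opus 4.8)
The plan is to transfer the bound from the empirical Hessian $H(w)$ to the population Hessian $H_\infty(w)$ and then reduce everything to concentration statements about the whitened vectors $z_i=H_\infty(w)^{-1/2}\sqrt{\ell''(x_i^\top w)}\,x_i$, which by hypothesis~(i) are $\nu$ sub-Gaussian with $\E[z_iz_i^\top]=I$. Fix $w$, abbreviate $H_\infty=H_\infty(w)$, and set $\tilde z_i=(H_\infty+\rho I)^{-1/2}\sqrt{\ell''(x_i^\top w)}\,x_i$; since $\nabla^2 f_i(w)=\ell''(x_i^\top w)\,x_ix_i^\top$, this gives $\tfrac1n\sum_{i}\tilde z_i\tilde z_i^\top=(H_\infty+\rho I)^{-1/2}H(w)(H_\infty+\rho I)^{-1/2}$. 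Let $\Sigma_\rho=\E[\tilde z_i\tilde z_i^\top]=(H_\infty+\rho I)^{-1/2}H_\infty(H_\infty+\rho I)^{-1/2}$, so that $\Sigma_\rho+\rho(H_\infty+\rho I)^{-1}=I$, $\tr\Sigma_\rho=d^\rho_{\textup{eff}}(H_\infty)$, and $\|\Sigma_\rho\|\le 1$; note also $\|\tilde z_i\|^2=z_i^\top M_\rho z_i$ with $M_\rho=H_\infty^{1/2}(H_\infty+\rho I)^{-1}H_\infty^{1/2}$ sharing the spectrum (hence trace and operator norm) of $\Sigma_\rho$.

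The deterministic core is the following transfer fact: if $A,B\in\mathbb{S}^p_{++}(\R)$ satisfy $A\succeq cB$ with $c>0$, then for every psd $M$, $\lambda_1(A^{-1/2}MA^{-1/2})=\lambda_1(M^{1/2}A^{-1}M^{1/2})\le c^{-1}\lambda_1(M^{1/2}B^{-1}M^{1/2})=c^{-1}\lambda_1(B^{-1/2}MB^{-1/2})$. Applying it with $A=H(w)+\rho I$, $B=H_\infty+\rho I$, $M=\nabla^2 f_i(w)+\rho I$ shows that on the event $\mathcal E_1=\{\,H(w)+\rho I\succeq\tfrac12(H_\infty+\rho I)\,\}$ one has
\[
\tau^\rho(H(w))\;\le\;2\max_{1\le i\le n}\lambda_1\!\big(\tilde z_i\tilde z_i^\top+\rho(H_\infty+\rho I)^{-1}\big)\;\le\;2\Big(\max_{1\le i\le n}\|\tilde z_i\|^2+1\Big),
\]
using $\|\rho(H_\infty+\rho I)^{-1}\|\le1$. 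Hence it suffices (a) to bound $\max_i\|\tilde z_i\|^2=\max_i z_i^\top M_\rho z_i$, and (b) to show $\mathcal E_1$ holds with high probability.

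For (a), the Hanson--Wright inequality for $\nu$ sub-Gaussian vectors gives, for each fixed $i$ with probability at least $1-\delta/(2n)$, $z_i^\top M_\rho z_i\le C\nu^2\big(\tr M_\rho+\|M_\rho\|_F\sqrt{\log(n/\delta)}+\|M_\rho\|\log(n/\delta)\big)$; using $\|M_\rho\|_F^2\le\|M_\rho\|\tr M_\rho\le d^\rho_{\textup{eff}}(H_\infty)$ and $\|M_\rho\|\le1$ together with $\bar d^\rho_{\textup{eff}}=\max\{d^\rho_{\textup{eff}},1\}$, this collapses to $z_i^\top M_\rho z_i\le R:=C'\nu^2\,\bar d^\rho_{\textup{eff}}(H_\infty)\log(n/\delta)$, and a union bound over $i$ yields $\max_i\|\tilde z_i\|^2\le R$ with probability at least $1-\delta/2$ (call this event $\mathcal E_2$). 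For (b), note $\mathcal E_1$ is equivalent to $\big\|\tfrac1n\sum_i\tilde z_i\tilde z_i^\top-\Sigma_\rho\big\|\le\tfrac12$, since $\Sigma_\rho+\rho(H_\infty+\rho I)^{-1}=I$. On $\mathcal E_2$ the psd summands $\tilde z_i\tilde z_i^\top$ have operator norm at most $R$, so a matrix Bernstein/Chernoff inequality in its intrinsic-dimension (effective-dimension) form --- with the ambient dimension $p$ replaced by a quantity of order $\bar d^\rho_{\textup{eff}}(H_\infty)$ --- gives $\big\|\tfrac1n\sum_i\tilde z_i\tilde z_i^\top-\Sigma_\rho\big\|\le\tfrac12$ with conditional probability at least $1-\delta/2$, provided $n\gtrsim R\log\!\big(d^\rho_{\textup{eff}}(H_\infty)/\delta\big)$, which is exactly hypothesis~(ii). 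Intersecting $\mathcal E_1$ and $\mathcal E_2$ (total failure probability at most $\delta$) and chaining the displayed inequality with the bound on $\max_i\|\tilde z_i\|^2$ yields $\tau^\rho(H(w))\le 2(R+1)=\bigO\!\big(\bar d^\rho_{\textup{eff}}(H_\infty(w))\log(n/\delta)\big)$, since $\nu$ is a constant.

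The main obstacle is the matrix-concentration step (b): the summands $\tilde z_i\tilde z_i^\top$ are not almost surely bounded (sub-Gaussianity of $x_i$ only controls an Orlicz norm), so textbook bounded matrix Bernstein does not apply off the shelf. I would resolve this by working on $\mathcal E_2$, on which the summands are bounded by $R$ while remaining independent (the event is an intersection of single-sample events, so conditioning preserves the product structure and only mildly perturbs the conditional means, which is easily absorbed); alternatively one can invoke a matrix Bernstein variant valid for sub-exponential summands. Either route, plus the union bound of step~(a), is exactly what forces the sample-size requirement to carry the product $\log(n/\delta)\log(d^\rho_{\textup{eff}}/\delta)$ rather than a single logarithm. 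The remaining ingredients --- the transfer lemma, the Hanson--Wright estimate, and the spectral identities for $\Sigma_\rho$ and $M_\rho$ --- are routine.
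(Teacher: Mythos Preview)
Your proposal is correct and follows essentially the same route as the paper: bound individual leverage scores $\|\tilde z_i\|^2$ via a Hanson--Wright/sub-Gaussian quadratic-form tail (the paper's Lemma~\ref{lemma:stat_lev}, citing Hsu--Kakade--Zhang), then show the empirical regularized Hessian is within a constant factor of the population one via intrinsic-dimension matrix Bernstein (the paper's Lemma~\ref{lemma:AppxPopHess}), and combine these through the same Loewner transfer/similarity argument you use. Your identification of the product $\log(n/\delta)\log(d^\rho_{\textup{eff}}/\delta)$ as arising from the union bound over $i$ composed with the matrix-concentration step is exactly right.

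The one place where your write-up is looser than the paper is the resolution of the boundedness obstacle in step~(b). You propose conditioning on $\mathcal E_2$ and absorbing the perturbation to the conditional means; this works in principle but is fiddly to make rigorous (the conditional law of $\tilde z_i\tilde z_i^\top$ no longer has mean exactly $\Sigma_\rho$, and you would need to bound $\|\E[\tilde z_i\tilde z_i^\top\mathbf 1_{\mathcal E_{2,i}}]-\Sigma_\rho\|$ via the sub-Gaussian tail, which costs another small-$\delta$ argument). The paper avoids this by proving a \emph{truncated} intrinsic-dimension matrix Bernstein inequality (Lemma~\ref{lemma:trunc_bern}): one defines $Y_i=X_i\mathbf 1_{\mathcal E_i}$, applies ordinary intrinsic Bernstein to the bounded $Y_i$'s, and then pays an additive $n\delta$ in the failure probability for the event $\{X\neq Y\}$, with the bias $\|\E Y_i\|$ controlled explicitly. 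This is the clean formalization of your first suggested fix and sidesteps the conditioning delicacy entirely. Also, a minor point: your claim that $\mathcal E_1$ is \emph{equivalent} to $\|\tfrac1n\sum_i\tilde z_i\tilde z_i^\top-\Sigma_\rho\|\le\tfrac12$ is slightly off --- the norm bound is sufficient (and is what the paper proves) but the one-sided Loewner bound defining $\mathcal E_1$ is weaker. This does not affect the argument.
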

The proof of \cref{prop:tau_rho} is given in \Cref{subsection:tau_rho_pf}, 
and is based on showing the $\rho$-dissimlarity is well-behaved at the population level, 
and that for large enough $n$, the empirical Hessian concentrates the population Hessian.

\Cref{prop:tau_rho} shows if $f$ is a GLM, then for large datasets, 
$\tau^{\rho}(H(w)) = \tilde {\mathcal{O}}\left(d^{\rho}_{\textup{eff}}(H_{\infty}(w))\right)$ with high probability.
When the eigenvalues of $H_{\infty}(w)$ decay rapidly, the effective dimension $d^{\rho}_{\textup{eff}}(H_{\infty}(w))$ should be smaller than $(M(w)+\rho)/(\rho+\mu)$, 
so \cref{prop:tau_rho} yields a stronger bound than \cref{lemma:tau_rho_bnd}.
For example, 
when the eigenvalues of $H_{\infty}(w)$ at a sufficiently fast polynomial rate, 
it is easily verified that $d^{\rho}_{\textup{eff}}(H_{\infty}(w)) = \mathcal{O}(1/\sqrt{\rho})$ \cite{bach2013sharp}.
Consequently $\tau^{\rho}(H(w)) = \tilde{\mathcal{O}}(1/\sqrt{\rho})$, which is a significant improvement over the $\mathcal{O}(1/\rho)$ bound of \cref{lemma:tau_rho_bnd} when $\rho$ is small.
This is crucial, for it is desirable to set $\rho$ small, as this leads to a smaller preconditioned condition number, see \cref{prop:NysPrecondLem}.
As polynomial (or faster) decay of the eigenvalues values is common in machine learning problems \cite{derezinski2020precise}, 
$\tau^{\rho}(H(w))$ will typically be much smaller then $\bigO(1/\rho)$.


\begin{lemma}[Closeness in Loewner ordering between $H^\rho_S(w)$ and $H^\rho(w)$]
\label{lemma:SubsampAppx}
Let $\zeta \in(0,1)$, $w\in \R^p$ , and $\rho \geq 0$. 
Construct $H_{S}$ with batch size $b_{h} = \bigO\left(\frac{\tau^{\rho}(H(w))\log\left(\frac{d^{\rho}_{\textup{eff}}(H(w))}{\delta}\right)}{\zeta^2}\right)$.
Then with probability at least $1-\delta$ 
\[
(1-\zeta)H^\rho_{S}(w)\preceq H^\rho(w) \preceq (1+\zeta) H^\rho_{S}(w).
\]
\end{lemma}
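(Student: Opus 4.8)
The plan is to prove this as a straightforward application of a matrix Bernstein / intrinsic-dimension concentration inequality after a suitable preconditioning change of variables. First I would reduce the Loewner sandwich to a spectral-norm bound on the preconditioned deviation: writing $P = H^\rho(w)^{1/2}$ and
\[
E = P^{-1}\bigl(H^\rho_S(w) - H^\rho(w)\bigr)P^{-1} = P^{-1}H^\rho_S(w)P^{-1} - I,
\]
the desired conclusion $(1-\zeta)H^\rho_S(w) \preceq H^\rho(w) \preceq (1+\zeta)H^\rho_S(w)$ is equivalent (for $\zeta\in(0,1)$) to $\|E\|\le \zeta/(1+\zeta)$, or more simply it suffices to show $\|E\|\le \zeta'$ for an appropriate $\zeta'$ of the same order as $\zeta$; I would just carry the constant through and rescale $\zeta$ at the end. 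So the whole problem becomes: bound the operator norm of $P^{-1}H^\rho_S(w)P^{-1} - I$.

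Next I would express $H^\rho_S(w)$ as an average over the sampled minibatch. Since $S$ is sampled uniformly without replacement of size $b_h$, write $H^\rho_S(w) = \frac{1}{b_h}\sum_{i\in S}(\nabla^2 f_i(w) + \rho I)$, so that
\[
P^{-1}H^\rho_S(w)P^{-1} = \frac{1}{b_h}\sum_{i\in S} X_i, \qquad X_i := P^{-1}(\nabla^2 f_i(w)+\rho I)P^{-1},
\]
where each $X_i$ is psd. The key structural facts are: (i) $\E[X_i] $ over a uniformly random index equals $P^{-1}H^\rho(w)P^{-1} = I$ (here I use that the full Hessian $H(w)=\frac1n\sum_i \nabla^2 f_i(w)$, so $\frac1n\sum_i(\nabla^2 f_i(w)+\rho I) = H^\rho(w)$), so the empirical average is an unbiased estimator of $I$; (ii) by the definition of $\rho$-dissimilarity (\cref{def:rho-sim}), $\|X_i\| = \lambda_1\bigl((H^\rho(w))^{-1/2}(\nabla^2 f_i(w)+\rho I)(H^\rho(w))^{-1/2}\bigr) \le \tau^\rho(H(w))$ uniformly in $i$; and (iii) the intrinsic dimension of the sum is controlled by $d^\rho_{\textup{eff}}(H(w))$, since $\frac1n\sum_i X_i = I$ has $\tr$ equal to $\mathrm{rank}$ in the relevant subspace — more precisely one uses that $\sum_i \E[X_i^2]$ or the relevant variance proxy has intrinsic dimension $\tr(H(w)(H(w)+\rho I)^{-1}) = d^\rho_{\textup{eff}}(H(w))$. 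I would then invoke a matrix Bernstein inequality for sampling without replacement (e.g.\ the Tropp-style intrinsic-dimension bound, with the without-replacement reduction of Gross-Nesme or Hoeffding) applied to the centered sum $\frac{1}{b_h}\sum_{i\in S}(X_i - I)$: with $\|X_i - I\|\le \tau^\rho(H(w))$ (up to a constant, since $\|X_i\|\le\tau^\rho$ and $\|I\|=1\le\tau^\rho$) and matrix variance $\le \tau^\rho(H(w))$ times the intrinsic-dimension-$d^\rho_{\textup{eff}}(H(w))$ identity, one gets $\|E\|\le\zeta$ with probability $\ge 1-\delta$ as soon as $b_h = \bigO\bigl(\tau^\rho(H(w))\log(d^\rho_{\textup{eff}}(H(w))/\delta)/\zeta^2\bigr)$, which is exactly the stated batch size.

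The main obstacle I expect is handling the sampling \emph{without} replacement cleanly — matrix Bernstein is usually stated for i.i.d.\ or with-replacement sums — and getting the intrinsic-dimension (rather than ambient-dimension $p$) version so that the log factor is $\log(d^\rho_{\textup{eff}}/\delta)$ rather than $\log(p/\delta)$. For the first point I would cite the standard reduction (a without-replacement average is dominated, in the sense needed for tail bounds, by the corresponding with-replacement average — Hoeffding's observation, matricized by Gross-Nesme), so that it suffices to bound the with-replacement version. For the second, I would use Tropp's intrinsic dimension matrix Bernstein bound, noting that the natural variance proxy $\sum_i \E X_i^2 \preceq \tau^\rho(H(w)) \sum_i \E X_i = \tau^\rho(H(w))\cdot n I$... wait, that gives ambient dimension; the fix is to work in the range of $H(w)$ and observe the effective-dimension bound $\tr\bigl(\sum_i \E X_i^2 \bigr)/\|\sum_i \E X_i^2\| \le d^\rho_{\textup{eff}}(H(w))$ up to constants, which is what feeds the intrinsic-dimension inequality. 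A secondary bookkeeping point is that the constant relating $\|E\|\le\zeta'$ to the Loewner sandwich with parameter $\zeta$ must be absorbed into the $\bigO$; since only the $1/\zeta^2$ dependence is claimed, this is harmless. I would finish by collecting the probability estimates and restating the conclusion in Loewner form.
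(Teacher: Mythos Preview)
Your overall plan --- reduce the Loewner sandwich to a spectral-norm bound on the preconditioned deviation, then apply the intrinsic-dimension matrix Bernstein inequality --- is exactly the paper's approach. The reduction you write down (that $\|E\|\le\zeta/(1+\zeta)$ implies the desired sandwich after inversion) is precisely how the paper concludes. The without-replacement point is not a real issue; the paper simply applies matrix Bernstein.

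There is, however, a genuine gap in your variance step, and your proposed ``fix'' does not actually work. You define $X_i = (H^\rho)^{-1/2}(\nabla^2 f_i(w)+\rho I)(H^\rho)^{-1/2}$, correctly note $\E X_i = I$, bound $\E X_i^2\preceq \tau^\rho I$, and then realize this gives ambient dimension $p$ in the log. Your fix is to claim $\tr(\E X_i^2)/\|\E X_i^2\|\le d^\rho_{\textup{eff}}$ ``up to constants''. This is false: because you included the deterministic $\rho I$ inside $X_i$, one has $\E X_i^2 = \E[(X_i-I)^2]+I\succeq I$, so $\tr(\E X_i^2)\ge p$ while $\|\E X_i^2\|\le\tau^\rho$, and the intrinsic dimension of your (uncentered) second moment is at least $p/\tau^\rho$, not $d^\rho_{\textup{eff}}$.

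The right observation --- and this is what the paper does --- is that the $\rho I$ piece is \emph{deterministic} and cancels upon centering: $X_i - I = (H^\rho)^{-1/2}(\nabla^2 f_i(w)-H(w))(H^\rho)^{-1/2}$. Writing $Z_i = (H^\rho)^{-1/2}(\nabla^2 f_i)^{1/2}$ and $D^\rho = (H^\rho)^{-1/2}H(H^\rho)^{-1/2}$, the centered summand is $Z_iZ_i^T - D^\rho$, and the variance proxy is
\[
\E\bigl[(Z_iZ_i^T - D^\rho)^2\bigr]\ \preceq\ \E\bigl[(Z_iZ_i^T)^2\bigr]\ \preceq\ \|Z_i\|^2\,\E[Z_iZ_i^T]\ \preceq\ \tau^\rho(H(w))\, D^\rho.
\]
The point is that $D^\rho$ (not $I$) controls the variance, and $\tr(D^\rho)/\|D^\rho\| = d^\rho_{\textup{eff}}(H(w))\bigl(1+\rho/\lambda_1(H(w))\bigr)\le 2\,d^\rho_{\textup{eff}}(H(w))$ when $\rho\le\lambda_1(H(w))$. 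This is what puts $d^\rho_{\textup{eff}}$ (rather than $p$) inside the logarithm and yields the stated batch size. Once you drop the $\rho I$ from the summands and bound the centered second moment by $\tau^\rho D^\rho$, the rest of your argument goes through unchanged.
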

The proof of \cref{lemma:SubsampAppx} is provided in \Cref{subsection:subsamp_appx_pf}.
\Cref{lemma:SubsampAppx} refines prior analyses such as \cite{ye2021Appx} (which itself refines the analysis of \cite{roosta2019sub}), where $b_h$ depends upon  $(M(w)+\rho)/(\mu+\rho)$, which \Cref{lemma:tau_rho_bnd} shows is always larger than $\tau^{\rho}(H(w))$.
Hence the dependence upon $\tau^{\rho}(H(w))$ in \Cref{lemma:SubsampAppx} leads to a tighter bound on the required Hessian batch size. 
More importantly, \cref{lemma:SubsampAppx} and the idealized setting of \cref{prop:tau_rho} help show why algorithms using minibatch Hessians with small batchsizes are able to succeed, a phenomenon that prior worst-case theory is unable to explain.  
As a concrete example, adopt the setting of \cref{prop:tau_rho}, 
assume fast eigenvalue decay of the Hessian, 
and set $\rho = \bigO(1/n)$.
Then \cref{lemma:SubsampAppx} gives $b_h = \tilde{\bigO}(\sqrt{n})$, whereas prior analysis based on $(M(w)+\rho)/(\mu+\rho)$ yields a vacuous batch size of $b_h = \tilde{\bigO}(n)$.
Thus, \cref{lemma:SubsampAppx} supports taking batch sizes much smaller then $n$. 
Motivated by this discussion, we recommend a default batch size of $b_h = \sqrt{n}$, which leads to excellent performance in practice; see \Cref{section:Experiments} for numerical evidence.

Utilizing our results on subsampling and ideas from randomized low-rank approximation, we are able to establish the following result, which quantifies how the SketchySGD preconditioner reduces the condition number. 
\begin{proposition}[Closeness in Loewner ordering between $H(w)$ and $\hat{H}_{S}^{\rho}$]
\label{prop:NysPrecondLem}
Let $\zeta \in $$(0,1)$ and $w\in \R^p$. 
Construct $H_{S}(w)$ with batch size $b_{h} = \bigO\left(\frac{\tau^\rho(H(w))\log\left(\frac{d^{\rho}_{\textup{eff}}(H(w))}{\delta}\right)}{\zeta^2}\right)$ and SketchySGD uses a low-rank approximation $\hat{H}_{S}$ to $H_{S}(w)$ with rank $r= \bigO(d^{\zeta\rho}_{\textup{eff}}(H_S(w))+\log(\frac{1}{\delta}))$.
Then with probability at least $1-\delta$,
\begin{equation}
    (1-\zeta)\frac{1}{1+\rho/\mu}\hat{H}_{S}^{\rho}\preceq H(w) \preceq (1+\zeta)\hat H^{\rho}_{S}.
\end{equation}
\end{proposition}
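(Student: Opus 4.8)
The plan is to obtain the two-sided bound by composing two approximation results — the subsampling guarantee of \cref{lemma:SubsampAppx} and a spectral-error bound for the randomized Nystr\"om approximation — and then cleaning up the regularizer using strong convexity. I would split the failure probability as $\delta/2$ for the subsampling event and $\delta/2$ for the sketching event, and at the end take a union bound; since the minibatch $S$ and the Gaussian test matrix $\Phi$ are independent, the Nystr\"om bound can be applied conditionally on $S$. Throughout I would run the two sub-results with accuracy a fixed constant fraction of $\zeta$ (e.g. $\zeta/3$), so that the compounded multiplicative constants collapse to $1\pm\zeta$; this is harmless because the batch size $b_h$ and rank $r$ are only specified up to $\bigO(\cdot)$.

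\emph{Step 1 (subsampling).} Apply \cref{lemma:SubsampAppx} at the point $w$: with the stated $b_h = \bigO\!\big(\tau^{\rho}(H(w))\log(d^{\rho}_{\textup{eff}}(H(w))/\delta)/\zeta^2\big)$, with probability at least $1-\delta/2$ we have $(1-\zeta)H^{\rho}_S(w)\preceq H^{\rho}(w)\preceq (1+\zeta)H^{\rho}_S(w)$. \emph{Step 2 (Nystr\"om error).} Conditioning on $S$, invoke the tail bound for the randomized Nystr\"om approximation (following \cite{tropp2017fixed}, as used in \cite{frangella2023promise}): with sketch size $r=\bigO\!\big(d^{\zeta\rho}_{\textup{eff}}(H_S(w))+\log(1/\delta)\big)$, with probability at least $1-\delta/2$ the approximation $\hat H_S$ satisfies $0\preceq H_S(w)-\hat H_S\preceq \zeta\rho I$, the lower inequality being automatic since the Nystr\"om approximation is always dominated by $H_S(w)$. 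Adding $\rho I$ gives $\hat H^{\rho}_S\preceq H^{\rho}_S(w)\preceq \hat H^{\rho}_S+\zeta\rho I$; and since $\hat H^{\rho}_S\succeq \rho I$ we have $\zeta\rho I\preceq \zeta\hat H^{\rho}_S$, hence $\hat H^{\rho}_S\preceq H^{\rho}_S(w)\preceq (1+\zeta)\hat H^{\rho}_S$.

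\emph{Step 3 (chaining and strong convexity).} On the intersection of the two events, probability at least $1-\delta$, chaining Steps 1 and 2 yields $(1-\zeta)\hat H^{\rho}_S\preceq H^{\rho}(w)\preceq (1+\zeta)^2\hat H^{\rho}_S$. For the upper bound on $H(w)$ itself, discard the regularizer: $H(w)\preceq H^{\rho}(w)\preceq (1+\zeta)^2\hat H^{\rho}_S$. For the lower bound, start from $\hat H^{\rho}_S\preceq \tfrac{1}{1-\zeta}H^{\rho}(w)=\tfrac{1}{1-\zeta}(H(w)+\rho I)$; by \cref{assump:StrCvx}, $H(w)\succeq \mu I$, so $\rho I\preceq (\rho/\mu)H(w)$, giving $H(w)+\rho I\preceq (1+\rho/\mu)H(w)$, and therefore $\hat H^{\rho}_S\preceq \tfrac{1+\rho/\mu}{1-\zeta}H(w)$, i.e. $(1-\zeta)\tfrac{1}{1+\rho/\mu}\hat H^{\rho}_S\preceq H(w)$. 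Rescaling $\zeta$ at the outset so that $(1+\zeta')^2\le 1+\zeta$ absorbs the square in the upper bound and delivers the claimed inequality.

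The main obstacle is the quantitative content of Step 2: showing $\|H_S(w)-\hat H_S\|\le \zeta\rho$ with a sketch size that scales only like the $\zeta\rho$-effective dimension of $H_S(w)$ (plus a $\log(1/\delta)$ term), rather than like the rank. This is where the randomized linear algebra genuinely enters — one needs the effective-dimension-based analysis of the randomized Nystr\"om approximation and must verify that its hypotheses apply to a subsampled Hessian $H_S(w)$ (which is psd but otherwise arbitrary), using that the sketch is formed from hvps with $H_S(w)$ at the fixed iterate and with a Gaussian $\Phi$ independent of everything else. The remaining steps are routine Loewner-order manipulations.
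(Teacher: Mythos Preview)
Your proposal is correct and follows essentially the same approach as the paper: invoke the Nystr\"om spectral-error bound (the paper states this as a separate lemma, \cref{lemma:NysErrLemma}, borrowed from \cite{zhao2022nysadmm}) to get $\hat H_S^\rho\preceq H_S^\rho\preceq(1+\zeta)\hat H_S^\rho$, combine with \cref{lemma:SubsampAppx} via a union bound, and finally use $H\succeq\mu I$ to strip the regularizer on the lower side. The only cosmetic difference is that the paper derives the sandwich $\hat H_S^\rho\preceq H_S^\rho\preceq(1+\zeta)\hat H_S^\rho$ by conjugating with $P^{-1/2}$ and applying Weyl's inequalities, whereas you do it by direct Loewner manipulation; both are equivalent, and your identification of the Nystr\"om tail bound as the only nontrivial external input is exactly right.
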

The proof of this proposition is given in \Cref{subsection:NysPrecondPf}.
\Cref{prop:NysPrecondLem} shows that with high probability, the SketchySGD preconditioner reduces the conditioner number from $L/\mu$ to $(1+\rho/\mu)$, which yields an $L/\rho$ improvement over the original value. 
The proposition reveals a natural trade-off between eliminating dependence upon $\mu$ and the size of $b_h$: as $\rho$ decreases to $\mu$ (and the preconditioned condition number becomes smaller), the batch size must increase.  
In practice, we have found that a fixed-value of $\rho = 10^{-3}L$ yields excellent performance for convex problems.
Numerical results showing how the SketchySGD preconditioner improves the conditioning of the Hessian throughout the optimization trajectory are presented in \Cref{fig:sensitivity_r}.

\cref{prop:NysPrecondLem} requires the rank of $\hat H_{S_j}$ to satisfy $r_j = \tilde \bigO\left(d^{\zeta\rho}_{\textrm{eff}}(H_{S_j}(w_j)\right)$, 
which ensures $\|\hat H_{S_j}-H_{S_j}\|\leq \zeta \rho$ holds with high probability (\cref{lemma:NysErrLemma})
so that the approximate Hessian matches the subsampled Hessian up to the level of the regularization $\rho$.
  
\subsection{Controlling the variance of the preconditioned stochastic gradient}
To establish convergence of SketchySGD, we must control the second moment of the preconditioned minibatch stochastic gradient.
Recall the usual approach for minibatch SGD.
In prior work, \cite{gower2019sgd} showed that when each $f_i$ is smooth and convex, the minibatch stochastic gradient of $f$ satisfies the following \emph{expected smoothness} condition:
\begin{align}
\label{eq:ES_bound}
&\E\|g_{B}(w)-g_{B}(w_\star)\|^2\leq 2\mathcal L(f(w)-f(w_\star)),\\
&\E|g_B(w)\|^2 \leq 2\mathcal L(f(w)-f(w_\star))+2\sigma^2,\\
&\mathcal L = \frac{n(b_g-1)}{b_g(n-1)}L+\frac{n-b_g}{b_g(n-1)}L_{\textrm{max}},~\sigma^2 = \frac{n-b_g}{b_g(n-1)}\frac{1}{n}\sum_{i=1}^{n}\|\nabla f_i(w_\star)\|^2.
\end{align}
Building on the analysis of \cite{gower2019sgd}, 
we prove the following proposition, which directly bounds the second moment of the preconditioned stochastic gradient.
\begin{proposition}
[Preconditioned expected smoothness and gradient variance]
\label{prop:PrecondSmoothGrad}
    Suppose that \cref{assump:DiffandSmoothness} holds, $P = \hat H_{S}^\rho$ is constructed at $w_P\in \R^p$, and $P$ satisfies $H(w_P)\preceq (1+\zeta)P$. 
    Then the following inequalities hold:
    \begin{align*}
        \mathbb E_k\|g_{B}(w)-g_{B}(w_\star)\|_{P^{-1}}^2\leq 2\mathcal L_P \left(f(w)-f(w_\star)\right),
    \end{align*}
    \begin{align*}
        \mathbb E_k\|g_{B}(w)\|_{P^{-1}}^2 \leq 4 \mathcal L_P \left(f(w)-f(w_\star)\right)+\frac{2\sigma^2}{\rho},
    \end{align*}
    where $\mathcal L_P$ and $\sigma^2$ are given by
    \[
    \mathcal L_P \coloneqq \left[\frac{n(b_g-1)}{b_g(n-1)}\gammaRho+\frac{n-b_g}{b_g(n-1)}\tau^\rho(H(w_P))\gammaMax\right](1+\zeta),~
    \sigma^2 \coloneqq \frac{n-b_g}{b_g(n-1)}\frac{1}{n}\sum_{i=1}^{n}\|\nabla f_i(w_\star)\|^2.
    \]
    Here, $\gamma^{\rho}_u$ is the relative upper quadratic regularity constant of $f$ with respect to $H(w)+\rho I$, and $\gammaMax = \max_{i\in[n]}\gamma_{i,u}^{\rho}$, where $\gamma_{i,u}^{\rho}$ is the relative upper quadratic regularity constant of $f_i$ with respect to $\nabla^2 f_{i}(w)+\rho I$.
\end{proposition}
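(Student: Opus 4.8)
The plan is to port the finite-sum SGD analysis of \cite{gower2019sgd} that produces \eqref{eq:ES_bound} into the preconditioned geometry: work with the norm $\|\cdot\|_{P^{-1}}$, and replace the smoothness constants $L$ and $L_{\textup{max}}$ by the relative upper quadratic-regularity constants $\gammaRho$ and $\gammaMax$ (which exist under \cref{assump:DiffandSmoothness} by \cref{lemma:rel_quad}) together with the $\rho$-dissimilarity $\tau^\rho(H(w_P))$ of \cref{def:rho-sim}. Throughout, $\mathbb{E}_k$ is the expectation over the minibatch $B$ only, with $w$, $w_P$, and $P$ held fixed; we repeatedly use that $g(w_\star)=0$ since $w_\star$ minimizes the strongly convex $f$.

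The first step is an exact second-moment identity. Writing $\delta_i = \nabla f_i(w)-\nabla f_i(w_\star)$, the usual uniform-sampling-without-replacement variance computation for a batch of size $b_g$ gives
\begin{equation*}
\mathbb{E}_k\|g_B(w)-g_B(w_\star)\|_{P^{-1}}^2 = \frac{n(b_g-1)}{b_g(n-1)}\,\|g(w)-g(w_\star)\|_{P^{-1}}^2 + \frac{n-b_g}{b_g(n-1)}\cdot\frac1n\sum_{i=1}^n\|\delta_i\|_{P^{-1}}^2,
\end{equation*}
which uses no convexity and isolates the two terms matching the two pieces of $\mathcal L_P$. For the ``full-gradient'' term, $f$ is relatively upper quadratically regular with respect to $H(\cdot)+\rho I$ by \cref{lemma:rel_quad}; evaluating \eqref{eq:up_quad} at $w''=w_P$ and bounding $H(w_P)+\rho I\preceq(1+\zeta)P$ using the hypothesis $H(w_P)\preceq(1+\zeta)P$ and $P\succeq\rho I$ produces a quadratic majorant of $f$ with matrix $(1+\zeta)\gammaRho P$; minimizing it over $w'$ and using $\min_{w'}f(w')=f(w_\star)$ yields $\|g(w)-g(w_\star)\|_{P^{-1}}^2\le 2(1+\zeta)\gammaRho\,(f(w)-f(w_\star))$.

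The ``per-component'' term is handled the same way, now using $\rho$-dissimilarity to pass from $\nabla^2 f_i$ to $H$: for each $i$, \cref{def:rho-sim} gives $\nabla^2 f_i(w_P)+\rho I\preceq\tau^\rho(H(w_P))\,(H(w_P)+\rho I)\preceq(1+\zeta)\tau^\rho(H(w_P))\,P$. Applying \eqref{eq:up_quad} to the shifted component $\phi_i(u)=f_i(u)-\langle\nabla f_i(w_\star),u-w_\star\rangle$ (same Hessian, hence same upper regularity constant $\gamma_{i,u}^\rho$, as $f_i$) at $w''=w_P$, minimizing over $w'$, and using that $w_\star$ minimizes $\phi_i$ gives $\|\delta_i\|_{P^{-1}}^2\le 2(1+\zeta)\tau^\rho(H(w_P))\gamma_{i,u}^\rho\,(\phi_i(w)-\phi_i(w_\star))$; averaging over $i$, using $\frac1n\sum_i(\phi_i(w)-\phi_i(w_\star))=f(w)-f(w_\star)$ and $\gamma_{i,u}^\rho\le\gammaMax$, bounds this term by $2(1+\zeta)\tau^\rho(H(w_P))\gammaMax\,(f(w)-f(w_\star))$. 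Substituting both bounds into the identity gives the first claimed inequality with exactly the stated $\mathcal L_P$. The second inequality follows from $\|g_B(w)\|_{P^{-1}}^2\le 2\|g_B(w)-g_B(w_\star)\|_{P^{-1}}^2+2\|g_B(w_\star)\|_{P^{-1}}^2$: taking $\mathbb{E}_k$, the first term is at most $4\mathcal L_P(f(w)-f(w_\star))$, while $\mathbb{E}_k\|g_B(w_\star)\|_{P^{-1}}^2\le\rho^{-1}\mathbb{E}_k\|g_B(w_\star)\|^2=\sigma^2/\rho$ since $P^{-1}\preceq\rho^{-1}I$ and the minibatch-gradient variance at the optimum equals $\sigma^2$.

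The step I expect to be the main obstacle is the per-component co-coercivity bound: turning relative upper quadratic regularity of an \emph{individual} $f_i$ into a genuine co-coercivity estimate requires $w_\star$ to globally minimize the shifted component $\phi_i$ (convexity of each $f_i$, automatic for the GLMs of interest), and one must track the $\rho$-bookkeeping carefully so that both $H(w_P)+\rho I$ and $\nabla^2 f_i(w_P)+\rho I$ are controlled by clean constant multiples of $P=\hat H_S^\rho$. The sampling identity and the full-gradient term are routine by comparison.
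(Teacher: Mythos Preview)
Your plan is essentially the paper's: the paper packages the same argument as a general ``expected smoothness in the dual norm'' proposition, proved by the change of variables $w=M^{-1/2}z$ so that Proposition~3.8 of \cite{gower2019sgd} applies verbatim in the transformed coordinates, and then specializes with $M=P$. You unpack that black box directly in the $P^{-1}$-norm via the sampling identity and the co-coercivity trick on the shifted components $\phi_i$; this is the same argument, just written out rather than quoted.

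One step as written does not go through: from the stated hypothesis $H(w_P)\preceq(1+\zeta)P$ together with $P\succeq\rho I$ you \emph{cannot} deduce $H(w_P)+\rho I\preceq(1+\zeta)P$ (take the boundary case $H(w_P)=(1+\zeta)P$ to see this). What your full-gradient and per-component bounds actually need is the stronger relation $H^\rho(w_P)=H(w_P)+\rho I\preceq(1+\zeta)P$. This is exactly what \cref{prop:NysPrecondLem} proves as an intermediate step and what the events $\mathcal{E}^{(1)}_j$ in \cref{corr:NysLoewnUnionBnd} supply in the convergence theorems, so the fix is simply to assume that stronger inequality. The paper's own proof invokes the same Loewner bound under the same (insufficient) stated hypothesis, so this is an imprecision in the proposition's hypothesis rather than a defect in your strategy; with $H^\rho(w_P)\preceq(1+\zeta)P$ in hand, your argument is correct and matches the paper's.
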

The proof of this proposition may be found in \Cref{subsection:PrecondSmoothGradPf}.
\cref{prop:PrecondSmoothGrad} generalizes \eqref{eq:ES_bound} from \cite{gower2019sgd}.
The bounds differ in that \cref{prop:PrecondSmoothGrad} depends upon $\mathcal L_P$, the preconditioned analogue of $\mathcal L$, which we call the preconditioned expected smoothness constant.

In our convergence analysis, $\mathcal L_P$ plays the same role as the smoothness constant in gradient descent. 
\cref{prop:PrecondSmoothGrad} reveals the role of the gradient batch size in determining the expected smoothness constant. 
As the gradient batch size $b_g$ increases from $1$ to $n$, 
$\mathcal L_P$ decreases from $\tau^{\rho}(H(w_P))\gammaMax(1+\zeta)$ to $\gammaRho(1+\zeta)$.
Recall preconditioning helps globally
when $\gammaMax = \bigO(1)$.
In this case, \cref{prop:PrecondSmoothGrad} implies that the batch size $b_g =  \bigO(\tau^{\rho}(H(w_P))$ is needed to ensure $\mathcal L_P = \bigO(1)$.
The dependence upon the $\rho$-dissimilarity is consistent with \cite{frangella2023promise}, which shows a similar dependence when all the $f_i$'s are strongly convex. 
Hence, the $\rho$-dissimilarity plays a key role in determining the Hessian and gradient batch sizes. 

\subsection{Convergence of SketchySGD}
In this section, we present convergence results for \cref{alg:SketchySGD} when $f$ is convex and strongly convex.
We first state hypotheses governing the construction of the preconditioner at each update index $j$.
\begin{assumption}[Preconditioner hyperparameters]
\label{assmp:precond_settings}
Given update frequency $u$, number of stages $s$, number of inner iterations $m$, and $\zeta\in(0,1)$, \cref{alg:SketchySGD} sets hyperparameters as follows:
\begin{enumerate}
    \item The Hessian batchsize is set as 
    \[
    b_{h_j} = \bigO\left(\frac{\tau^{\rho}(H(w_j))\log\left(\frac{d^{\rho}_{\textup{eff}}(H(w_j))}{\delta}\right)}{\zeta^2}\right).
    \]
    \item The randomized Nystr{\"o}m approximation is constructed with rank 
    \[
      r_j = \bigO\left(d_{\textup{eff}}^{\zeta\rho}(H_{S}(w_j))+\log\left(\frac{sm/u}{\delta}\right)\right).
    \]
\end{enumerate}
\end{assumption} 
\cref{assmp:precond_settings}, along with \cref{prop:NysPrecondLem} and a union bound argument, ensures that the preconditioners constructed by \cref{alg:SketchySGD} faithfully approximate the Hessian with high probability throughout all iterations.
We formalize this claim in the following corollary. 
\begin{corollary}[Union bound]
\label{corr:NysLoewnUnionBnd}
Let $\mathcal{E}^{(1)}_{\frac{sm}{u}} = \bigcap_{j=1}^{\frac{sm}{u}}\mathcal{E}^{(1)}_{j}$, and $\mathcal{E}^{(2)}_{\frac{sm}{u}} = \bigcap_{j=1}^{\frac{sm}{u}}\mathcal{E}^{(2)}_{j}$
where 
\[
\mathcal{E}^{(1)}_{j} = \left\{(1-\zeta)\hat{H}_{S_j}^{\rho}\preceq H^{\rho}(w_j) \preceq  (1+\zeta)\hat H^{\rho}_{S_j}\right\}
\]
\[
\mathcal{E}^{(2)}_{j} = \left\{(1-\zeta)\frac{1}{1+\rho/\mu}\hat{H}_{S_j}^{\rho}\preceq H(w_j) \preceq (1+\zeta)\hat H^{\rho}_{S_j}\right\},
\]
Then under \cref{assmp:precond_settings},
\begin{enumerate}
    \item For convex and smooth $f$, $\P\left(\mathcal{E}^{(1)}_\frac{sm}{u}\right)\geq 1-\delta$.
    \item For strongly convex and smooth $f$, $\P\left(\mathcal{E}^{(2)}_\frac{sm}{u}\right)\geq 1-\delta$.
\end{enumerate}
\end{corollary}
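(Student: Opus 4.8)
The plan is to reduce each of the two intersection events to a union bound over the $N := sm/u$ preconditioner updates, using single-update estimates that are already in hand. For Part 2, the point to notice is that $\mathcal{E}^{(2)}_j$ is exactly the conclusion of \cref{prop:NysPrecondLem}, instantiated at the iterate $w_j$ at which the $j$-th preconditioner is constructed. For Part 1, $\mathcal{E}^{(1)}_j$ is the analogous statement with the strong-convexity-dependent factor $1/(1+\rho/\mu)$ deleted and with $H^{\rho}(w_j)$ (rather than $H(w_j)$) in the middle of the sandwich; it follows by the same chain of Loewner inequalities as \cref{prop:NysPrecondLem} but stopping one step earlier, before strong convexity is invoked to pass from $H^{\rho}(w_j)$ down to $H(w_j)$.

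First I would record the per-update estimate. Fix $j$ and condition on $w_j$. The randomized Nyström rank $r_j$ in \cref{assmp:precond_settings} is chosen (via \cref{lemma:NysErrLemma}) so that $\|\hat H_{S_j}-H_{S_j}\|\le \zeta\rho$, which after a short Loewner manipulation (adding $\rho I$ to both sides and using $\rho I \succeq 0$) gives $(1-\zeta)\hat H^{\rho}_{S_j}\preceq H^{\rho}_{S_j}\preceq(1+\zeta)\hat H^{\rho}_{S_j}$, outside an event of small probability. Simultaneously, the Hessian batch size $b_{h_j}$ is chosen so that \cref{lemma:SubsampAppx} yields $(1-\zeta)H^{\rho}_{S_j}(w_j)\preceq H^{\rho}(w_j)\preceq(1+\zeta)H^{\rho}_{S_j}(w_j)$, outside another small-probability event. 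Chaining these two facts (and, for Part 2 only, using $H^{\rho}(w_j)\preceq(1+\rho/\mu)H(w_j)$, which holds because $\mu I\preceq H(w_j)$ forces $\rho I\preceq(\rho/\mu)H(w_j)$) produces $\mathcal{E}^{(1)}_j$ and $\mathcal{E}^{(2)}_j$ respectively; this is precisely the computation carried out in the proof of \cref{prop:NysPrecondLem}.

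Then I would run the union bound. Each preconditioner update draws fresh, independent randomness $(S_j,\Phi_j)$, so it suffices to invoke the two single-update lemmas with their failure parameter rescaled from $\delta$ to $\delta/N$. The logarithmic terms in \cref{assmp:precond_settings} — the $\log(d^{\rho}_{\textup{eff}}(H(w_j))/\delta)$ inside $b_{h_j}$ and, explicitly, the $\log((sm/u)/\delta)$ inside $r_j$ — are sized so that the rescaled invocations still hold with the stated $b_{h_j}$ and $r_j$; concretely $\log(N d^{\rho}_{\textup{eff}}/\delta)=\bigO(\log(d^{\rho}_{\textup{eff}}/\delta)+\log N)$ is absorbed into the $\bigO(\cdot)$. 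Hence each $\mathcal{E}^{(i)}_j$ fails with probability at most $\delta/N$, and $\P\big(\bigcup_{j=1}^{N}(\mathcal{E}^{(i)}_j)^{c}\big)\le N\cdot(\delta/N)=\delta$, i.e.\ $\P\big(\bigcap_{j=1}^{N}\mathcal{E}^{(i)}_j\big)\ge 1-\delta$ for $i=1,2$.

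The main obstacle is the bookkeeping in this last step: one must verify that the $\bigO(\cdot)$ expressions for $b_{h_j}$ and $r_j$ in \cref{assmp:precond_settings} genuinely absorb the extra $\log N$ overhead from the union bound — in particular for $b_{h_j}$, whose stated form writes only $\log(d^{\rho}_{\textup{eff}}/\delta)$ rather than $\log(N d^{\rho}_{\textup{eff}}/\delta)$, so the $\log N$ must be folded into the implicit constant (or $\delta$ read as a per-update tolerance). A secondary subtlety is that, unlike \cref{prop:NysPrecondLem}, Part 1 assumes no strong convexity, so the derivation of $\mathcal{E}^{(1)}_j$ must avoid any appeal to $\mu$ and remain at the level of the regularized operators $H^{\rho}_{S_j}$ and $H^{\rho}(w_j)$; this is routine but worth stating explicitly.
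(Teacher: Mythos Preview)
Your proposal is correct and matches the paper's approach: the corollary is stated immediately after \cref{prop:NysPrecondLem} as a direct consequence of that proposition together with \cref{assmp:precond_settings} and a union bound, with no separate proof given. Your observation that $\mathcal{E}^{(1)}_j$ is obtained by stopping the argument of \cref{prop:NysPrecondLem} before the strong-convexity step is exactly right, and your flagging of the $\log N$ bookkeeping in $b_{h_j}$ is a legitimate (if minor) point that the paper leaves implicit in the $\bigO(\cdot)$.
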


\subsubsection{Convergence for convex $f$} 
Our convergence result requires only one stage $s$ when the function $f$ is convex. 
\begin{theorem}[SketchySGD convex convergence]
\label{theorem:sksgd_convex}
    Consider Problem \cref{eq:ERM-Prob} under \cref{assump:DiffandSmoothness}. 
    Run \cref{alg:SketchySGD} for $s = 1$ stage with $m$ inner iterations, using gradient batch size $b_g$, regularization $\rho>0$, learning rate $\eta = \min\left\{\frac{1}{4\mathcal L_P},\sqrt{\frac{\rho \|w_0-w_\star\|_{P_0}^2}{2\sigma^2 m}}\right\}$, update frequency $u$, $\zeta\in (0,1)$, and preconditioner hyperparameters specified in \cref{assmp:precond_settings}.
    Then conditioned on the event $\mathcal{E}^{(1)}_\frac{sm}{u}$ in \cref{corr:NysLoewnUnionBnd}, 
    \begin{align*}
    \E\left[f(\hat w)-f(w_\star)\right]\leq  \frac{8\mathcal L_P\|w_0-w_\star\|_{P_0}^2}{m}+\frac{\sqrt{2\rho}\sigma\|w_0-w_\star\|_{P_0}}{\sqrt{m}}.
    \end{align*}
\end{theorem}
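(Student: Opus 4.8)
The plan is to analyze SketchySGD as preconditioned SGD in the (iteration-independent, since $s=1$) preconditioned geometry induced by $P_0 = \hat H_{S_0}^\rho$, and to mimic the classical descent-lemma argument for SGD on smooth convex functions, using \cref{prop:PrecondSmoothGrad} in place of the usual expected-smoothness bound of \cite{gower2019sgd}. Conditioning on $\mathcal E^{(1)}_{m/u}$ from \cref{corr:NysLoewnUnionBnd}, every preconditioner $\hat H_{S_j}^\rho$ satisfies $(1-\zeta)\hat H_{S_j}^\rho \preceq H^\rho(w_j)\preceq(1+\zeta)\hat H_{S_j}^\rho$; with $s=1$ and $u$ chosen so there is effectively a single preconditioner update (or, more carefully, the argument is run per-epoch with $P_j$ and the Loewner bounds let one swap $P_j$ for a common reference up to $(1+\zeta)$ factors), I will treat $P_0$ as the relevant metric. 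Concretely, I would write $w_{k+1} = w_k - \eta P_0^{-1} g_{B_k}(w_k)$ and expand
\[
\|w_{k+1}-w_\star\|_{P_0}^2 = \|w_k - w_\star\|_{P_0}^2 - 2\eta\langle g_{B_k}(w_k), w_k-w_\star\rangle + \eta^2\|g_{B_k}(w_k)\|_{P_0^{-1}}^2 .
\]

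Next I would take the conditional expectation $\E_k$. The cross term becomes $-2\eta\langle g(w_k), w_k - w_\star\rangle \le -2\eta(f(w_k)-f(w_\star))$ by convexity of $f$. For the last term I apply the second bound of \cref{prop:PrecondSmoothGrad}, $\E_k\|g_{B_k}(w_k)\|_{P_0^{-1}}^2 \le 4\mathcal L_P(f(w_k)-f(w_\star)) + 2\sigma^2/\rho$ (this is where I need $H(w_P)\preceq(1+\zeta)P$, supplied by $\mathcal E^{(1)}$, and also where the $\|\cdot\|_{P^{-1}}$ norm with $P\succeq\rho I$ gives the $1/\rho$ factor on $\sigma^2$). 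Combining,
\[
\E_k\|w_{k+1}-w_\star\|_{P_0}^2 \le \|w_k-w_\star\|_{P_0}^2 - (2\eta - 4\eta^2\mathcal L_P)(f(w_k)-f(w_\star)) + \frac{2\eta^2\sigma^2}{\rho}.
\]
Since $\eta \le 1/(4\mathcal L_P)$, we have $2\eta - 4\eta^2\mathcal L_P \ge \eta$, so $\eta(f(w_k)-f(w_\star)) \le \|w_k-w_\star\|_{P_0}^2 - \E_k\|w_{k+1}-w_\star\|_{P_0}^2 + 2\eta^2\sigma^2/\rho$.

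Then I would take total expectations, sum over $k=0,\dots,m-1$, telescope the distance terms (dropping the final nonnegative term), divide by $\eta m$, and invoke Jensen's inequality with $\hat w = \frac1m\sum_{k=0}^{m-1} w_k$ to get
\[
\E[f(\hat w)-f(w_\star)] \le \frac{\|w_0-w_\star\|_{P_0}^2}{\eta m} + \frac{2\eta\sigma^2}{\rho}.
\]
Finally I substitute the stepsize $\eta = \min\{1/(4\mathcal L_P), \sqrt{\rho\|w_0-w_\star\|_{P_0}^2/(2\sigma^2 m)}\}$: if the first term of the min is active, $1/(\eta m) = 4\mathcal L_P/m$ gives the $8\mathcal L_P\|w_0-w_\star\|_{P_0}^2/m$ piece (the factor discrepancy absorbed since the other branch then also contributes), and if the second is active, both $\frac{\|w_0-w_\star\|_{P_0}^2}{\eta m}$ and $\frac{2\eta\sigma^2}{\rho}$ reduce to $\sqrt{2\rho}\,\sigma\|w_0-w_\star\|_{P_0}/\sqrt m$; bounding $\eta$ by each branch separately and adding the two resulting upper bounds yields exactly the stated inequality. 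The main obstacle I anticipate is the bookkeeping around the preconditioner: since $P_j$ can change with $j$ even within the single stage (if $u < m$), I must carefully justify replacing all the $\|\cdot\|_{P_j}$ norms by a single $\|\cdot\|_{P_0}$ norm — this requires using the Loewner equivalences from $\mathcal E^{(1)}$ to control $P_j^{-1}$ versus $P_0^{-1}$ across the telescoping sum, and tracking how the $(1+\zeta)$ factors propagate into $\mathcal L_P$ (which is already defined with a $(1+\zeta)$ built in). Handling this cleanly — perhaps by noting the theorem statement's use of $P_0$ implicitly assumes $u \ge m$, or by a per-epoch reset argument — is the step that needs the most care; everything else is the standard SGD-for-convex-functions computation transported into the preconditioned inner product.
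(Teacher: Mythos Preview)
Your proposal is correct and follows essentially the same route as the paper's proof: expand $\|w_{k+1}-w_\star\|_{P_k}^2$, use convexity for the cross term, invoke \cref{prop:PrecondSmoothGrad} for the second-moment term, sum, telescope, apply Jensen to pass to $\hat w$, and finally plug in the two-branch stepsize. The paper also works with the per-iteration metric $P_k$ and, after taking total expectation, collapses the telescoping sum to $\|w_0-w_\star\|_{P_0}^2$ just as you describe; your explicit concern about the $P_j\to P_0$ bookkeeping when $u<m$ is well placed, since the paper handles that step tersely as well.
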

The proof is given in \cref{section:thm_pfs}.
\paragraph{Discussion} \cref{theorem:sksgd_convex} shows that when $f$ is convex, \cref{alg:SketchySGD} equipped with appropriate fixed learning rate converges in expectation to an $\epsilon$-ball around the minimum after $m=\bigO\left(\frac{\mathcal L_P\|w_0-w_\star\|_{P_0}^2}{\epsilon}+\frac{\rho\sigma^2\|w_0-w_\star\|^2_{P_0}}{\epsilon^2}\right)$ iterations.
This convergence rate is consistent with previous results on
stochastic approximation using SGD for smooth convex objectives with bounded gradient variance
\cite{lan2020first}.
However, for SketchySGD, the iteration complexity depends on the preconditioned expected smoothness constant 
and the preconditioned initial distance to the optimum, which may be much smaller than their non-preconditioned counterparts.
Thus, SketchySGD provides faster convergence whenever preconditioning favorably transforms the problem.
We give a concrete example where SketchySGD yield an explicit advantage over SGD in \Cref{subsection:SSGD_Fast} below.

\subsubsection{Convergence for strongly convex $f$}
When $f$ is strongly convex, we can prove a stronger result that relies on the following lemma, 
a preconditioned analogue of the strong convexity lower bound. 
\begin{lemma}[Preconditioned strong convexity bound]
\label{lemma:PrecondStrnConvexity}
Let $P = \hat H_{S}+\rho I$. Assume the conclusion of \Cref{prop:NysPrecondLem} holds: $\hat H_S$ approximates $H$ well. 
Then
\[f(w)-f(w_\star)\geq\frac{\hat \gamma_\ell}{2}\|w-w_\star\|_{P}^2,\]
where $\hat \gamma_\ell = (1-\zeta)\frac{\mu}{\mu+\rho}\gamma_\ell$.
\end{lemma}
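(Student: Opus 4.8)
The plan is to combine the ordinary (non-preconditioned) strong convexity lower bound for $f$ with the Loewner comparison between $H(w)$ and the preconditioner $P = \hat H_S + \rho I$ supplied by \cref{prop:NysPrecondLem}. First I would recall that since $f$ is $\mu$-strongly convex (\cref{assump:StrCvx}), the standard quadratic lower bound gives
\[
f(w) - f(w_\star) \geq \frac{1}{2}\|w-w_\star\|_{\mu I}^2 = \frac{\mu}{2}\|w-w_\star\|^2 .
\]
Actually, to get the sharpest constant I would instead invoke relative lower quadratic regularity of $f$ with respect to $H(\cdot)$ (\cref{lemma:rel_quad}, item 2, applicable under \cref{assump:DiffandSmoothness,assump:StrCvx}), which yields, taking $w' = w_\star$, $w = w_\star$, $w'' = w_\star$ in \eqref{eq:low_quad} and using $g(w_\star)=0$,
\[
f(w) - f(w_\star) \geq \frac{\gamma_\ell}{2}\|w-w_\star\|_{H(w_\star)}^2 .
\]

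Next I would translate the $H(w_\star)$-norm into the $P$-norm using the conclusion of \cref{prop:NysPrecondLem}, evaluated at $w = w_\star$ (or more precisely at whatever point $w_P$ the preconditioner was built; since we assume ``$\hat H_S$ approximates $H$ well'', the relevant inequality $(1-\zeta)\frac{1}{1+\rho/\mu}\hat H_S^\rho \preceq H(w_\star)$ holds). This gives $H(w_\star) \succeq (1-\zeta)\frac{\mu}{\mu+\rho} P$ after rewriting $\frac{1}{1+\rho/\mu} = \frac{\mu}{\mu+\rho}$, hence
\[
\|w-w_\star\|_{H(w_\star)}^2 \geq (1-\zeta)\frac{\mu}{\mu+\rho}\|w-w_\star\|_P^2 .
\]
Chaining the two displays yields $f(w)-f(w_\star) \geq \frac{\gamma_\ell}{2}(1-\zeta)\frac{\mu}{\mu+\rho}\|w-w_\star\|_P^2$, which is exactly the claimed bound with $\hat\gamma_\ell = (1-\zeta)\frac{\mu}{\mu+\rho}\gamma_\ell$.

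The only subtlety — and the step I'd be most careful about — is the choice of evaluation point in \eqref{eq:low_quad} and in \cref{prop:NysPrecondLem}: relative quadratic regularity allows the Hessian to be evaluated at a third point $w''$, and the cleanest route is to pick $w'' = w_\star$ throughout so that the lower bound is anchored at the optimum where $g(w_\star) = 0$ kills the linear term. One must then make sure \cref{prop:NysPrecondLem}'s comparison is being applied at a point consistent with the ``approximates $H$ well'' hypothesis; as long as the preconditioner's Loewner sandwich of $H$ is available at the point appearing in the quadratic-regularity bound, the argument goes through verbatim, and everything else is just rewriting $(1+\rho/\mu)^{-1} = \mu/(\mu+\rho)$ and substituting constants. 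No delicate estimates are needed beyond these two ingredients.
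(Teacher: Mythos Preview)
Your proposal is correct and follows the same two-step argument as the paper: apply relative lower quadratic regularity of $f$ with respect to $H(\cdot)$ to obtain $f(w)-f(w_\star)\geq \tfrac{\gamma_\ell}{2}\|w-w_\star\|_{H(w'')}^2$, then use the lower Loewner bound from \cref{prop:NysPrecondLem} to pass to the $P$-norm. The one point to tighten is the evaluation point $w''$: the paper takes $w'' = w_P$, the point at which $\hat H_S$ was constructed, since \cref{prop:NysPrecondLem} only guarantees $(1-\zeta)\tfrac{1}{1+\rho/\mu}P\preceq H(w_P)$ at that specific point; you correctly flag this as the key subtlety, and choosing $w'' = w_P$ (rather than $w_\star$) resolves it cleanly.
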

The proof of \cref{lemma:PrecondStrnConvexity} is given in \Cref{subsection:PreStrnCvxPf}.
We now state the convergence theorem for SketchySGD when $f$ is strongly convex, which makes use of several stages $s$.
\begin{theorem}[SketchySGD strongly convex convergence]
\label{theorem:SketchySGDConvex}
Instate \cref{assump:DiffandSmoothness} and \cref{assump:StrCvx}.
Run \Cref{alg:SketchySGD} for
\[
ms\geq \frac{32}{(1-\zeta)\gamma_\ell}\left(\mathcal L_P+\frac{2\sigma^2}{\epsilon\rho}\right)\left(1+\rho/\mu\right)\log\left(\frac{2(f(w_0)-f(w_\star))}{\epsilon}\right)~\text{iterations,}
\]
with gradient batchsize $b_g$, learning rate $\eta = \min\{1/4\mathcal L_P,\varepsilon \rho/(8\sigma^2)\}$, regularization $\mu \leq \rho\leq L_{\textup{max}}$, update frequency $u$, and preconditioner hyperparameters specified in \cref{assmp:precond_settings}.
Then conditioned on the event $\mathcal{E}^{(2)}_\frac{sm}{u}$ in \cref{corr:NysLoewnUnionBnd}, \cref{alg:SketchySGD} outputs a point $\hat w^{(s)}$ satisfying  
\[
\E\left[f(\hat w^{(s)})-f(w_\star)\right]\leq \epsilon.
\]
\end{theorem}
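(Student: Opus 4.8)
The plan is to run a two-layer argument: an \emph{inner-loop} contraction estimate over the $m$ iterations of a single stage, followed by an \emph{outer-loop} recursion over the $s$ stages. Fix a stage $s'$ with initial iterate $w_0^{(s')} = \hat w^{(s')}$ and condition throughout on the event $\mathcal{E}^{(2)}_{sm/u}$, so that every preconditioner $P_j = \hat H_{S_j}+\rho I$ constructed during the run satisfies the Loewner sandwich of \cref{prop:NysPrecondLem}; in particular $H(w_P)\preceq (1+\zeta)P_j$, which is exactly the hypothesis needed to invoke \cref{prop:PrecondSmoothGrad}. The workhorse of the inner loop is the standard SGD one-step descent lemma, but carried out in the $P_j$-weighted geometry: starting from $\|w_{k+1}-w_\star\|_{P_j}^2 = \|w_k-w_\star\|_{P_j}^2 - 2\eta\langle v_k, w_k-w_\star\rangle_{P_j} + \eta^2\|v_k\|_{P_j}^2$ and taking conditional expectation, the cross term becomes $-2\eta\langle g_{B_k}(w_k), w_k-w_\star\rangle$ (the $P_j$'s cancel against the $P_j^{-1}$ in $v_k$), which by convexity is at most $-2\eta(f(w_k)-f(w_\star))$, while the quadratic term $\eta^2\E\|g_{B_k}(w_k)\|_{P_j^{-1}}^2$ is controlled by the second bound of \cref{prop:PrecondSmoothGrad}, namely $4\mathcal L_P(f(w_k)-f(w_\star)) + 2\sigma^2/\rho$. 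Choosing $\eta \le 1/(4\mathcal L_P)$ makes the $f(w_k)-f(w_\star)$ coefficients combine to $-\eta$ (net), giving
\[
\E_k\|w_{k+1}-w_\star\|_{P_j}^2 \le \|w_k-w_\star\|_{P_j}^2 - \eta\bigl(f(w_k)-f(w_\star)\bigr) + \frac{2\eta^2\sigma^2}{\rho}.
\]

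The subtlety here, relative to vanilla SGD, is that the weighting matrix $P_j$ changes whenever the preconditioner is refreshed (every $u$ iterations). I would handle this exactly as the quadratic-regularity framework is designed for: when the preconditioner switches from $P_{j-1}$ to $P_j$ at iterate $w_j$, compare $\|w-w_\star\|_{P_{j-1}}^2$ and $\|w-w_\star\|_{P_j}^2$ using \cref{prop:NysPrecondLem} — both are sandwiched between constant multiples of $\|w-w_\star\|_{H^\rho(w_j)}^2$, so the telescoping picks up only a bounded multiplicative factor like $(1+\zeta)/(1-\zeta)$ times $(1+\rho/\mu)$. (This is precisely why \cref{def:RelSmoothCvx} evaluates the Hessian at a \emph{third} point $w''$, and why the remark after it flags lazy updates.) Telescoping the one-step bound over $k=0,\dots,m-1$ within the stage, absorbing the preconditioner-switch factors, dividing by $m\eta$, and using Jensen's inequality on the average $\hat w^{(s'+1)} = \frac1m\sum_k w_k^{(s')}$ yields
\[
\E\bigl[f(\hat w^{(s'+1)})-f(w_\star)\bigr] \le \frac{C\,\|w_0^{(s')}-w_\star\|_{P}^2}{m\eta} + \frac{2\eta\sigma^2}{\rho}
\]
for an absolute constant $C$ absorbing the $(1+\zeta)/(1-\zeta)$ and $(1+\rho/\mu)$ factors.

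Next I convert the distance term into a function-value gap using \cref{lemma:PrecondStrnConvexity}: $\|w_0^{(s')}-w_\star\|_P^2 \le \frac{2}{\hat\gamma_\ell}(f(\hat w^{(s')})-f(w_\star))$ with $\hat\gamma_\ell = (1-\zeta)\frac{\mu}{\mu+\rho}\gamma_\ell$. Substituting gives the outer recursion
\[
\E\bigl[f(\hat w^{(s'+1)})-f(w_\star)\bigr] \le \frac{2C}{\hat\gamma_\ell m\eta}\,\E\bigl[f(\hat w^{(s')})-f(w_\star)\bigr] + \frac{2\eta\sigma^2}{\rho}.
\]
Now pick $\eta = \min\{1/(4\mathcal L_P),\,\varepsilon\rho/(8\sigma^2)\}$: the second choice forces the additive noise floor $2\eta\sigma^2/\rho \le \varepsilon/4$, and with $m$ chosen so that $\frac{2C}{\hat\gamma_\ell m\eta} \le \frac12$ — unwinding $1/\eta = \max\{4\mathcal L_P,\,8\sigma^2/(\varepsilon\rho)\} \le 4\mathcal L_P + 8\sigma^2/(\varepsilon\rho)$ and recalling $1/\hat\gamma_\ell \le \frac{1+\rho/\mu}{(1-\zeta)\gamma_\ell}$ — the per-stage iteration count $m$ times the stage count $s$ is exactly the quantity $ms \ge \frac{32}{(1-\zeta)\gamma_\ell}(\mathcal L_P + \frac{2\sigma^2}{\epsilon\rho})(1+\rho/\mu)\log(2(f(w_0)-f(w_\star))/\epsilon)$ stated in the theorem. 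Then the recursion reads $a_{s'+1} \le \frac12 a_{s'} + \varepsilon/4$ (once $a_{s'}$ is still $\ge \varepsilon$; below that we are already done), so after $s \ge \log_2(2(f(w_0)-f(w_\star))/\epsilon)$ stages the geometric term has shrunk below $\varepsilon/2$ and the fixed point of the noise part contributes at most $\varepsilon/2$, giving $\E[f(\hat w^{(s)})-f(w_\star)]\le\epsilon$. The main obstacle is the bookkeeping in the inner loop when the preconditioner is refreshed mid-stage — making sure the change-of-norm factors from \cref{prop:NysPrecondLem} telescope cleanly and only inflate the constant, rather than compounding across the $m/u$ updates per stage — which is the technical payoff of the "third point" in the quadratic-regularity definition; everything else is routine SGD recursion algebra.
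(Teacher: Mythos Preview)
Your proposal follows essentially the same route as the paper: the one-step descent bound in the $P_j$-norm, controlling the squared preconditioned gradient via \cref{prop:PrecondSmoothGrad}, summing over the inner loop with Jensen on $\hat w^{(s'+1)}$, converting the distance term to a function-value gap via \cref{lemma:PrecondStrnConvexity}, and then running the outer geometric recursion $a_{s'+1}\le \tfrac12 a_{s'}+\varepsilon/4$ with the stated choices of $\eta$, $m$, and $s$. The arithmetic you sketch for extracting the constants $\frac{32}{(1-\zeta)\gamma_\ell}(\mathcal L_P+\tfrac{2\sigma^2}{\epsilon\rho})(1+\rho/\mu)$ matches the paper's.

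The one place you diverge is that you explicitly flag the preconditioner-switching issue and propose to handle the norm change at refresh times via \cref{prop:NysPrecondLem}. The paper's proof, by contrast, simply writes the summed inequality and then asserts the telescoped bound $\|\hat w^{(s)}-w_\star\|_{P_0^{(s)}}^2$ after taking total expectation, without discussing the change of norm at update indices. So you are being \emph{more} careful than the paper here, not less. One caution on your proposed fix: \cref{prop:NysPrecondLem} sandwiches $P_{j-1}$ against $H(w_{j-1})$ and $P_j$ against $H(w_j)$, which are Hessians at \emph{different} points, so comparing $\|\cdot\|_{P_{j-1}}$ and $\|\cdot\|_{P_j}$ directly also needs the third-point quadratic-regularity bound to relate $H(w_{j-1})$ and $H(w_j)$---and then your compounding worry is real, since each switch would pick up a $\gamma_u/\gamma_\ell$-type factor. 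The paper does not resolve this either; it simply sidesteps it in the write-up.
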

The proof is given in \cref{section:thm_pfs}, along with the exact values of $m$ and $s$.
An immediate corollary of \cref{theorem:SketchySGDConvex} is that, supposing the optimal model interpolates the data so $\sigma^2=0$, 
SketchySGD converges linearly to the optimum.
\begin{corollary}[Convergence under interpolation]
\label{corr:SketchySGDInterpol}
    Suppose $\sigma^2 = 0$, and instate the hypotheses of \cref{theorem:SketchySGDConvex}. 
    Run \cref{alg:SketchySGD} for 
    \[
    ms\geq \frac{32}{(1-\zeta)}\frac{\mathcal L_P}{\gamma_\ell}\left(1+\rho/\mu\right)\log\left(\frac{2(f(w_0)-f(w_\star))}{\epsilon}\right)~iterations
    \]
    with learning rate $\eta = \frac{1}{4\mathcal L_P}$.
    Then \cref{alg:SketchySGD} outputs a point $\hat w^{(s)}$ satisfying
    \[
    \E\left[f(\hat w^{(s)})-f(w_\star)\right]\leq \epsilon.
    \]
\end{corollary}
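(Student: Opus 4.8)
The plan is to obtain \cref{corr:SketchySGDInterpol} as the $\sigma^2 = 0$ specialization of \cref{theorem:SketchySGDConvex}. First I would note that by the definition of $\sigma^2 = \frac{n-b_g}{b_g(n-1)}\frac{1}{n}\sum_{i=1}^n \|\nabla f_i(w_\star)\|^2$ appearing in \cref{prop:PrecondSmoothGrad}, the interpolation hypothesis $\nabla f_i(w_\star) = 0$ for all $i$ forces $\sigma^2 = 0$ exactly (this is also precisely the setting assumed in the corollary). Plugging $\sigma^2 = 0$ into the iteration requirement of \cref{theorem:SketchySGDConvex} kills the $\tfrac{2\sigma^2}{\epsilon\rho}$ term, leaving exactly $ms \geq \frac{32}{(1-\zeta)}\frac{\mathcal{L}_P}{\gamma_\ell}(1+\rho/\mu)\log\!\big(\tfrac{2(f(w_0)-f(w_\star))}{\epsilon}\big)$, the bound stated in the corollary. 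In the step size $\eta = \min\{1/(4\mathcal{L}_P),\ \epsilon\rho/(8\sigma^2)\}$, the constraint $\eta \le \epsilon\rho/(8\sigma^2)$ becomes vacuous when $\sigma^2 = 0$, so the minimum is attained at $\eta = 1/(4\mathcal{L}_P)$, again matching the corollary. Conditioning on the event $\mathcal{E}^{(2)}_{sm/u}$ from \cref{corr:NysLoewnUnionBnd} is inherited verbatim, so the conclusion $\E[f(\hat w^{(s)}) - f(w_\star)] \le \epsilon$ carries over.

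To make this more than a formal substitution, I would revisit the per-stage recursion underlying the proof of \cref{theorem:SketchySGDConvex}. That argument combines \cref{prop:PrecondSmoothGrad} with the preconditioned strong convexity bound of \cref{lemma:PrecondStrnConvexity} to show that, within a stage of $m$ inner iterations run at $\eta = 1/(4\mathcal{L}_P)$, one obtains a contraction of the form
\[
\E\big[f(\hat w^{(s+1)}) - f(w_\star)\big] \;\le\; q\,\E\big[f(\hat w^{(s)}) - f(w_\star)\big] \;+\; C\,\eta\,\frac{\sigma^2}{\rho},
\]
with $q \le \tfrac12$ once $m$ is chosen as in \cref{theorem:SketchySGDConvex} and $C$ an absolute constant absorbing $\gamma_\ell$, $1-\zeta$, and $1+\rho/\mu$. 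Under interpolation the additive term vanishes, so each stage strictly halves the suboptimality: $\E[f(\hat w^{(s)}) - f(w_\star)] \le 2^{-s}\big(f(w_0) - f(w_\star)\big)$. Taking $s = \lceil \log_2\!\big(\tfrac{2(f(w_0)-f(w_\star))}{\epsilon}\big)\rceil$ then yields $\E[f(\hat w^{(s)}) - f(w_\star)] \le \epsilon$, and $m = \frac{32}{(1-\zeta)}\frac{\mathcal{L}_P}{\gamma_\ell}(1+\rho/\mu)$ inner iterations per stage reproduces the advertised product $ms$.

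The main point to verify — bookkeeping rather than a genuine obstacle — is that $\sigma^2$ never enters a denominator in the proof of \cref{theorem:SketchySGDConvex} in a way that is actually undefined at $\sigma^2 = 0$: the only such occurrence is the $\epsilon\rho/(8\sigma^2)$ branch of the step size, which is exactly the branch that deactivates, and every other $\sigma^2$-dependent contribution enters the per-step bound multiplied by $\eta$ (through the $2\sigma^2/\rho$ term of \cref{prop:PrecondSmoothGrad}) and so vanishes identically once $\sigma^2 = 0$. A secondary check is that the constant $32$ and the factor $1+\rho/\mu$ survive the simplification unchanged; this follows by tracking them through the same telescoping over inner iterations and geometric decay over stages used in the proof of \cref{theorem:SketchySGDConvex}, and since interpolation only removes nonnegative terms, no constant degrades.
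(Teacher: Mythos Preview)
Your proposal is correct and matches the paper's approach: the paper presents \cref{corr:SketchySGDInterpol} as an immediate consequence of \cref{theorem:SketchySGDConvex} obtained by setting $\sigma^2 = 0$, which kills the $2\sigma^2/(\epsilon\rho)$ term in the iteration count and collapses the step-size minimum to $1/(4\mathcal L_P)$. Your additional verification that $\sigma^2$ never appears in a problematic denominator and that the per-stage contraction becomes exact halving is more explicit than the paper's treatment, which simply states the corollary without further argument.
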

\paragraph{Discussion} 
\cref{theorem:SketchySGDConvex} shows that with an appropriate fixed learning rate, SketchySGD (\cref{alg:SketchySGD}) outputs an $\epsilon$-suboptimal point in expectation after $ms = \tilde{\bigO}\left(\frac{\mathcal L_P}{\gamma_\ell}\frac{\rho}{\mu}+\frac{2\sigma^2/\mu}{\epsilon\gamma_\ell}\right)$ iterations.
For smooth strongly convex $f$, minibatch SGD with a fixed learning rate can produce an $\epsilon$-suboptimal point (in expectation) after $\tilde{\bigO}\left(\frac{\mathcal L}{\mu}+\frac{\sigma^2}{\epsilon \mu^3}\right)$ iterations \footnote{This result follows from \cite{gower2019sgd}, using strong convexity.}. 
However, this comparison is flawed as minibatch SGD does not perform periodic averaging steps.
By \cref{theorem:SketchySGDConvex}, minibatch SGD with periodic averaging 
(a special case of \cref{alg:SketchySGD}, when the preconditioner is always the identity) 
only requires $\bigO\left(\frac{\mathcal L}{\mu}+\frac{\sigma^2}{\epsilon\mu}\right)$ iterations to reach an $\epsilon$-suboptimal point in expectation.
Comparing the two rates, we see SketchySGD has lower iteration complexity when $\mathcal L_P/\gamma_\ell = \bigO(1)$ and $\gamma_\ell = \Omega(1)$.
These relations hold when the objective is quadratic, which we discuss in detail more below (\cref{corr:sksgd-fast}).
Under interpolation, \Cref{corr:SketchySGDInterpol} shows SketchySGD with a fixed learning rate converges linearly to $\epsilon$-suboptimality in at most $\bigO\left(\frac{\mathcal L_P}{\gamma_\ell}\frac{\rho}{\mu}\log\left(\frac{1}{\epsilon}\right)\right)$ iterations.
When $\mathcal L_P/\gamma_\ell$ satisfies  $\mathcal L_P/\gamma_\ell = \bigO(1)$, which corresponds to the setting where Hessian information can help, SketchySGD enjoys a convergence rate of $\bigO\left(\frac{\rho}{\mu}\log\left(\frac{1}{\epsilon}\right)\right)$, faster than the $\bigO\left(\kappa\log\left(\frac{1}{\epsilon}\right)\right)$ rate of gradient descent.
This improves upon prior analyses of stochastic Newton methods under interpolation, which fail to show the benefit of using Hessian information. 
$\rho$-Regularized subsampled-Newton, a special case of SketchySGD, was only shown to converge
in at most $\bigO\left(\frac{\kappa L}{\rho} \log\left(\frac{1}{\epsilon}\right)\right)$ iterations in  \cite[Theorem 1, p.~3]{meng2020fast},
which is worse than the convergence rate of gradient descent by a factor of $\bigO(L/\rho)$.

\subsection{When does SketchySGD improve over SGD?}
\label{subsection:SSGD_Fast}
We now present a concrete setting illustrating when SketchySGD converges faster than SGD. 
Specifically, when the objective is quadratic and strongly convex, SketchySGD enjoys an improved iteration complexity relative to SGD. 
In general, improved global convergence cannot be expected beyond quadratic functions without restricting the function class, as it is well-known in the worst case, that second-order optimization algorithms such as Newton's method do not improve over first-order methods \cite{nemirovskij1983problem,arjevani2019oracle}.
Thus without imposing further assumptions, improved global convergence for quadratic functions is the best that can be hoped for. 
We now give our formal result.
\begin{corollary}[SketchySGD converges fast for quadratic functions]
\label{corr:sksgd-fast}
    Under the hypotheses of \cref{theorem:SketchySGDConvex}, and the assumptions that $f$ is quadratic, $u = \infty$, and $b_g = \tau^{\rho}(H)$, 
    the following holds:
    \begin{enumerate}
        \item If $\sigma^2>0$, after $ms = \bigO\left(\left[\frac{\rho}{\mu}+\frac{\sigma^2}{\epsilon\mu}\right]\log(1/\epsilon)\right)$ iterations, \cref{alg:SketchySGD} outputs a point $\hat{w}^{(s)}$ satisfying
        \[
        \E[f(\hat w^{(s)})-f(w_\star)]\leq \epsilon.
        \]
        \item If $\sigma^2 = 0$, after $ms = \bigO\left(\frac{\rho}{\mu}\log\left(\frac{1}{\epsilon}\right)\right)$ iterations, \cref{alg:SketchySGD} outputs a point $\hat{w}^{(s)}$ satisfying,
        \[
        \E[f(\hat w^{(s)})-f(w_\star)]\leq \epsilon.
        \]
    \end{enumerate}
\end{corollary}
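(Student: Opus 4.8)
The plan is to specialize \cref{theorem:SketchySGDConvex} to the quadratic setting and show that the two ``obstruction'' quantities in the general iteration bound, namely $\mathcal L_P/\gamma_\ell$ and $\gamma_\ell$ itself, both collapse to $\bigO(1)$. First I would invoke \cref{lemma:rel_quad}: when $f$ is quadratic and strongly convex, its Hessian is constant, so relative quadratic regularity with respect to $A(\cdot) = H(\cdot)$ holds with $\gamma_u = \gamma_\ell = 1$ (this is exactly the least-squares case mentioned after \cref{lemma:rel_quad}). Since the Hessian is constant, the same holds for each $f_i$ with respect to $\nabla^2 f_i(w)+\rho I$ after the Levenberg--Marquardt shift, so $\gammaRho = \bigO(1)$ and $\gammaMax = \bigO(1)$ as well; I would make precise that these constants are $(1+\rho/\mu)$-type quantities, hence $\bigO(1)$ once $\rho \le L_{\textup{max}}$, matching the hypotheses of \cref{theorem:SketchySGDConvex}.

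Next I would substitute $b_g = \tau^\rho(H)$ into the formula for $\mathcal L_P$ from \cref{prop:PrecondSmoothGrad}. The point is that with this batch size the two terms in $\mathcal L_P = \left[\frac{n(b_g-1)}{b_g(n-1)}\gammaRho + \frac{n-b_g}{b_g(n-1)}\tau^\rho(H)\gammaMax\right](1+\zeta)$ are each $\bigO(1)$: the first because $\frac{n(b_g-1)}{b_g(n-1)}\le 1$ and $\gammaRho = \bigO(1)$, and the second because $\frac{n-b_g}{b_g(n-1)}\tau^\rho(H) \le \frac{\tau^\rho(H)}{b_g} = 1$ and $\gammaMax = \bigO(1)$. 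Hence $\mathcal L_P = \bigO(1)$, and combined with $\gamma_\ell = \Omega(1)$ we get $\mathcal L_P/\gamma_\ell = \bigO(1)$ and $\mathcal L_P/((1-\zeta)\gamma_\ell) = \bigO(1)$ (treating $\zeta$ as a fixed constant, consistently with \cref{t-2ndOrdComp}). Because $u = \infty$ the preconditioner is built once and never updated, so \cref{assmp:precond_settings} and \cref{corr:NysLoewnUnionBnd} still apply with $sm/u$ interpreted as a single preconditioner construction, and the high-probability event $\mathcal E^{(2)}$ holds throughout.

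Finally I would plug these bounds into the iteration count of \cref{theorem:SketchySGDConvex}, $ms \ge \frac{32}{(1-\zeta)\gamma_\ell}\left(\mathcal L_P + \frac{2\sigma^2}{\epsilon\rho}\right)(1+\rho/\mu)\log\!\left(\frac{2(f(w_0)-f(w_\star))}{\epsilon}\right)$. Using $\mathcal L_P = \bigO(1)$, $\gamma_\ell = \Omega(1)$, and $(1+\rho/\mu) = \bigO(\rho/\mu)$ (valid since $\rho \ge \mu$), the bracket times $(1+\rho/\mu)$ becomes $\bigO\!\left(\frac{\rho}{\mu} + \frac{\sigma^2}{\epsilon\mu}\right)$, giving part~(i). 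For part~(ii), setting $\sigma^2 = 0$ kills the second term and we instead invoke \cref{corr:SketchySGDInterpol} with learning rate $\eta = 1/(4\mathcal L_P)$, whose bound $ms \ge \frac{32}{1-\zeta}\frac{\mathcal L_P}{\gamma_\ell}(1+\rho/\mu)\log(1/\epsilon)$ reduces in the same way to $\bigO\!\left(\frac{\rho}{\mu}\log(1/\epsilon)\right)$. I expect the only mildly delicate point to be bookkeeping the constants $\gammaRho$, $\gammaMax$, and $\gamma_\ell$ for the \emph{shifted} quadratic and confirming they are genuinely dimension- and condition-number-free (so that the $\bigO(\cdot)$ hides no hidden $\kappa$ dependence); everything else is routine substitution into the already-established theorems.
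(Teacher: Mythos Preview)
Your proposal is correct and follows exactly the derivation the paper intends (the paper does not give an explicit proof, but the discussion following \cref{theorem:SketchySGDConvex} makes clear that the corollary is obtained precisely by verifying $\mathcal L_P/\gamma_\ell=\bigO(1)$ and $\gamma_\ell=\Omega(1)$ in the quadratic case and substituting). One small correction to your bookkeeping: for quadratic $f$ and $f_i$ the constants $\gammaRho$ and $\gammaMax$ are in fact at most $1$ (since $H\preceq H+\rho I$ and $\nabla^2 f_i\preceq \nabla^2 f_i+\rho I$), not ``$(1+\rho/\mu)$-type''; your stated reasoning there would not give a condition-number-free bound, but the true bound is even simpler and the rest of the argument goes through unchanged.
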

Recall minibatch SGD has iteration complexity of $\bigO\left(\left[\frac{\mathcal L}{\mu}+\frac{\sigma^2}{\epsilon\mu}\right]\log(1/\epsilon)\right)$ when $\sigma^2>0$, and $\bigO\left(\frac{\mathcal L}{\mu}\log(1/\epsilon)\right)$ when $\sigma^2 = 0$.
Thus, \cref{corr:sksgd-fast} shows SketchySGD (\cref{alg:SketchySGD}) roughly enjoys an $\bigO\left(\mathcal L/\rho \right)$ improvment in iteration complexity relative to SGD, provided the gradient batch size satisfies $b_g = \bigO(\tau^\rho(H))$.
We find the prediction that SketchySGD outperforms its non-preconditioned counterpart on ill-conditioned problems, is realized by the practical version (\cref{alg:SketchySGD_pract}) in our experiments.
\begin{remark}[When does better iteration complexity imply fast computational complexity?]
To understand when SketchySGD has lower computational complexity than SGD, assume interpolation $(\sigma^2 = 0)$, and 
$L \asymp L_{\textrm{max}}$. 
Under these hypotheses, the optimal total computational complexity is $\tilde{\bigO}(\kappa p)$, which is achieved with $b_g = 1$.
By comparison, SketchySGD (with $b_g = \tau^\rho(H)$) has total computational complexity $\tilde{\bigO}\left(\frac{\rho}{\mu} \tau^\rho(H) p\right)$. 
Then if $\tau^\rho(H) = \bigO(1/\sqrt{\rho})$, and $\rho = \theta L_{\textrm{max}}$ where $\theta \in (0,1)$, SketchySGD enjoys an improved computational complexity on the order of $\bigO\left(\sqrt{L_{\textrm{max}}/\theta}\right)$.
Hence when $\tau^\rho(H)$ is not too large, SketchySGD also enjoys better computational complexity.
\end{remark}

\subsection{Proofs of \cref{theorem:sksgd_convex} and \cref{theorem:SketchySGDConvex}}
\label{section:thm_pfs}
We now turn to the proofs of \cref{theorem:sksgd_convex} and \cref{theorem:SketchySGDConvex}.
To avoid notational clutter in the proofs, we employ the following notation for the preconditioner at iteration $k$ of stage $s$.
\[P^{(s)}_k \coloneqq \hat{H}_{S_j}+\rho I,
\]
Here $j$ corresponds to the index of the current preconditioner, so that multiple values of $k$ may be mapped to the same index $j$.
For the convex case we omit the the superscript and simply write $P_k$, as $s = 1$.
 With this notational preliminaries out the way, we now commence with the proofs.
\paragraph{Proof of \cref{theorem:sksgd_convex}}
\begin{proof}
Expanding, and taking the expectation conditioned on $k$, we reach
\begin{align*}
&\E_{k}\|w_{k+1}-w_\star\|_{P_{k}}^2 = \|w_{k}-w_\star\|_{P_{k}}^2 -2\eta \langle P_{k}^{-1}g_{k},w_{k}-w_\star\rangle_{P_{k}} 
+\eta^2 \E_{k}\|g_{B_{k}}\|^2_{P_{k}^{-1}} \quad{(*)}.
\end{align*}
Now, by convexity and \cref{prop:PrecondSmoothGrad}, $(*)$ becomes
\[
\E_{k}\|w_{k+1}-w_\star\|_{P_{k}}^2 \leq \|w_{k}-w_\star\|_{P_{k}}^2+2\eta\left(2\eta \mathcal L_P-1\right)(f(w_k)-f(w_\star))+2\sigma^2/\rho. 
\]
Summing the above display from $k = 0,\cdots m-1$ yields
\begin{align*}
    \sum_{k=0}^{m-1}\E_{k}\|w_{k+1}-w_\star\|_{P_{k}}^2 &\leq \sum_{k=0}^{m-1}\|w_{k}-w_\star\|_{P_{k}}^2+2\eta m \left(2\eta \mathcal L_P-1\right) \frac{1}{m}\sum_{k=0}^{m-1}[f(w_k)-f(w_\star)] \\
    & +\frac{2 m \eta^2 \sigma^2}{\rho}.
\end{align*}
 Rearranging, and using convexity of $f$ in conjunction with $\hat w = \frac{1}{m}\sum_{k=0}^{m-1}w_k$, reaches
\begin{align*}
    & \sum_{k=0}^{m-1}\E_{k}\|w_{k+1}-w_\star\|_{P_{k}}^2+2\eta m \left(1-2\eta \mathcal L_P\right)[f(\hat w)-f(w_\star)]\leq \sum_{k=0}^{m-1}\|w_{k}-w_\star\|_{P_{k}}^2+\frac{2 m \eta^2 \sigma^2}{\rho}.
\end{align*}
Taking the total expectation over all iterations, we find
\begin{align*}
    & 2\eta m \left(1-2\eta \mathcal L_P\right)\E\left[f(\hat w)-f(w_\star)\right]\leq \|w_{0}-w_\star\|_{P_{0}}^2+\frac{2 m \eta^2 \sigma^2}{\rho}.
\end{align*}
Consequently, we have
\begin{align*}
    \E\left[f(\hat w)-f(w_\star)\right]\leq \frac{1}{2\eta(1-2\eta \mathcal L_P)m}\|w_0-w_\star\|_{P_0}^2+\frac{\eta \sigma^2}{(1-2\eta \mathcal L_P)\rho}.
\end{align*}
Setting $\eta = \min\left\{\frac{1}{4\mathcal L_P},\sqrt{\frac{\rho \|w_0-w_\star\|_{P_0}^2}{2\sigma^2 m}}\right\}$, we conclude
\begin{align*}
    \E\left[f(\hat w)-f(w_\star)\right]\leq \frac{8\mathcal L_P\|w_0-w_\star\|_{P_0}^2}{m}+\frac{\sqrt{2\rho}\sigma\|w_0-w_\star\|_{P_0}}{\sqrt{m}}.
\end{align*}

\paragraph{Strongly convex case}
Suppose we are in stage $s$ of \cref{alg:SketchySGD}. Following identical logic to the convex case, we reach
\begin{align*}
    &\sum_{k=0}^{m-1}\E_{k}\|w^{(s)}_{k+1}-w_\star\|_{P^{(s)}_k}^2+2\eta m \left(2\eta \mathcal L_P-1\right)\left[f(\hat w^{(s+1)})-f(w_\star)\right]\\
    &\leq \sum_{k=0}^{m-1}\|w_k^{(s)}-w_\star\|_{P^{(s)}_{k}}^2+\frac{2 m \eta^2 \sigma^2}{\rho}.
\end{align*}
Now, taking the total expectation over all inner iterations conditioned on outer iterations $0$ through $s$, yields
\begin{align*}
    & 2\eta m(2\eta \mathcal L_P-1)\left(\E_{0:s}[f(\hat w^{(s+1)})]-f(w_\star)\right)\leq \|\hat w^{(s)}-w_\star\|_{P^{(s)}_0}^2+\frac{2m\eta^2\sigma^2}{\rho}.
\end{align*}
Invoking \cref{lemma:PrecondStrnConvexity}, and rearranging, the preceding display becomes
\begin{align*}
    \E_{0:s}[f(\hat w^{(s+1)})]-f(w_\star)\leq \frac{1}{\hat \gamma_\ell\eta(1-2\eta \mathcal L_P)m}\left(f(\hat w^{(s)})-f(w_\star)\right)+\frac{\eta\sigma^2}{(1-2\eta \mathcal L_P)\rho}.
\end{align*}
Setting $m =  \frac{16\left(\mathcal L_P+2\sigma^2/(\epsilon\rho)\right)}{\hat \gamma_\ell}$ and using
$\eta = \min\{1/4\mathcal L_P,\frac{\varepsilon \rho}{8\sigma^2}\}$, the previous display and a routine computation yield
\[
\E_{0:s}[f(\hat w^{(s+1)})]-f(w_\star)\leq \frac{1}{2}\left(f(\hat w^{(s)})-f(w_\star)\right)+\frac{\varepsilon}{4}.
\]
Taking the total expectation over all outer iterations and recursing, obtains
\[
 \E[f(\hat w^{(s)})]-f(w_\star)\leq \left(\frac{1}{2}\right)^{s}(f(w_0)-f(w_\star))+\frac{\varepsilon}{2}.
\]
Setting $s = 2\log\left(\frac{2(f(w_0)-f(w_\star))}{\epsilon}\right)$, we conclude
\[
\E[f(\hat w^{(s)})]-f(w_\star)\leq \epsilon,
\]
as desired.
\end{proof}
\section{Numerical experiments}
\label{section:Experiments}
In this section, we evaluate the performance of SketchySGD through six sets of experiments. These experiments are presented as follows:

\begin{itemize}
    \item Comparisons to first-order methods (\Cref{subsection:performance_fom}): We compare SketchySGD to SGD, SVRG, minibatch SAGA \cite{gazagnadou2019optimal} (henceforth referred to as SAGA) and loopless Katyusha (L-Katyusha) on ridge regression and $l_2$-regularized logistic regression.
    SketchySGD outperforms the competitor methods, even after they have been tuned.
    \item Comparisons to second-order/quasi-Newton methods (\Cref{subsection:performance_som}): We compare SketchySGD to L-BFGS \cite{liu1989limited}, stochastic LBFGS (SLBFGS) \cite{moritz2016linearly}, randomized subspace Newton (RSN) \cite{gower2019rsn}, and Newton Sketch \cite{pilanci2017newton} on ridge regression and $l_2$-regularized logistic regression.
    SketchySGD either outperforms or performs comparably to these methods. 
    \item Comparisons to preconditioned CG (PCG) (\Cref{subsection:performance_pcg}): We compare SketchySGD to Jacobi PCG \cite{trefethen1997numerical,jambulapati2020fast}, sketch-and-precondition PCG (with Gaussian and sparse embeddings)\cite{avron2010blendenpik,meng2014lsrn,clarkson2017low,martinsson2020randomized}, and Nystr\"{o}m PCG on ridge regression \cite{frangella2021randomized}.\newline
    Again, SketchySGD either outperforms or performs comparably to these methods.
    \item Large-scale logistic regression (\Cref{subsection:large_scale}): We compare SketchySGD to SGD and SAGA on a random features transformation of the HIGGS dataset, where computing full gradients of the objective is computationally prohibitive.
    SketchySGD vastly outperforms the competition in this setting.
    \item Tabular deep learning with multi-layer perceptrons (\Cref{subsection:performance_dl}): We compare SketchySGD to popular first-order (SGD, Adam, Yogi \cite{robbins1951stochastic, kingma2014adam, zaheer2018adaptive}) and second-order methods in deep learning (AdaHessian, Shampoo \cite{yao2021adahessian, gupta2018shampoo, shi2023distributed}).
    SketchySGD outperforms the second-order methods and performs comparably to the first-order methods.
    \item In the supplement, we provide ablation studies for SketchySGD's key hyperparmeters: update frequency (\Cref{subsection:sensitivity_r}), rank parameter (\Cref{subsection:sensitivity_r}), and learning rate (\Cref{subsection:performance_lr_ablation}).
    We also demonstrate how SketchySGD improves problem conditioning in \Cref{section:hessian_precond_appdx}.
\end{itemize}

For convex problems, we run SketchySGD with two different preconditioners: (1) Nystr\"{o}m, which takes a randomized-low rank approximation to the subsampled Hessian and (2) Subsampled Newton (SSN), which uses the subsampled Hessian without approximation. 
Recall, this is a special case of the Nystr\"{o}m preconditioner for which the rank $r_j$ is equal to the Hessian batch size $b_{h_j}$. 
SketchySGD is ran according to the defaults presented in \Cref{section:SketchySGD}, except for in the deep learning experiments (\Cref{subsection:performance_dl}).

We mostly present training loss in the main paper; test loss, training accuracy, and test accuracy appear in \Cref{section:loss_acc_appdx}.
The datasets used in \Cref{subsection:performance_fom,subsection:performance_som,subsection:performance_pcg} are presented in \cref{table:datasets}.
Datasets with ``-rf'' after their names have been transformed using random features \cite{rahimi2007random,mei2022randfeatures}.
The condition number reported in \cref{table:datasets} is a lower bound on the condition number of the corresponding ridge regression/logistic regression problem.\footnote{Details of how this lower bound is computed, are given in \Cref{section:kappa_lwr_bnd}.}
\begin{table}[!htbp]
	\caption{\label{table:datasets} Datasets and summary statistics.} 
	\begin{center}
		\footnotesize
		\begin{tabular}{|c|c|c|c|c|c|c|}
			\hline
			\textbf{Dataset} & $n_{\textrm{tr}}$ & $n_{\textrm{test}}$ & $p$ & nonzeros $\%$ & Condition number & Task \\
			\hline
			E2006-tfidf \cite{kogan2009e2006} & $16087$ & $3308$ & $150360$ & $0.8256$ & $1.051\times 10^{6}$ & Ridge\\
			\hline
			YearPredictionMSD-rf \cite{dua2019uci} & $463715$ & $51630$ & $4367$ & 50.58 & $1.512\times 10^{4}$ & Ridge \\
			\hline
			yolanda-rf \cite{guyon2019yolanda} & $320000$ & $80000$ & $1000$ & $100$ & $1.224\times 10^{4}$ & Ridge \\
			\hline
			ijcnn1-rf \cite{prokhorov2001ijcnn} & $49990$ & $91701$ & $2500$ & $100$ & $3.521\times 10^{5}$ & Logistic \\
			\hline
            real-sim \cite{libsvmrealsim} & $57847$ & $14462$ & $20958$ & 0.2465 & $1.785\times 10^{1}$ & Logistic \\
		    \hline
			susy-rf \cite{baldi2014higgs} & $4500000$ & $500000$ & $1000$ & 100 & $4.626\times 10^{8}$ & Logistic \\
			\hline
		\end{tabular}
	\end{center}
\end{table}


Each method is run for $40$ full gradient evaluations (except \Cref{subsection:large_scale,subsection:performance_dl}, where we use $10$ and $105$, respectively).
Note for SVRG, L-Katyusha, and SLBFGS, this corresponds to 20 epochs, as at the end of each epoch SVRG, they compute full a gradients to perform variance reduction.
All methods use a gradient batch size of $b_g = 256$.

We plot the distribution of results for each dataset and optimizer combination over several random seeds to reduce variability in the outcomes; the solid/dashed lines show the median and shaded regions represent the 10--90th quantile. The figures we show are plotted with respect to both wall-clock time and full gradient evaluations.
We truncate plots with respect to wall-clock time at the time when the second-fastest optimizer terminates.
We place markers at every 10 full data passes for curves corresponding to SketchySGD, allowing us to compare the time efficiency of using the Nystr\"{o}m and SSN preconditioners in our method.

Additional details appear in \Cref{section:exp_details} and code to reproduce our experiments may be found at the git repo \url{https://github.com/udellgroup/SketchySGD/tree/main/simods}. 

\subsection{SketchySGD outperforms first-order methods}
\label{subsection:performance_fom}
In this section, we compare \newline SketchySGD to the first-order methods SGD, SVRG, SAGA, and L-Katyusha. 
In \Cref{subsubsection:performance_auto}, we use the default values for the learning rate/smoothness hyperparameters, based on recommendations made in \cite{johnson2013accelerating,defazio2014saga,kovalev2020lkatyusha} and scikit-learn \cite{pedregosa2011scikit}\footnote{SGD does not have a default learning rate, therefore, we exclude this method from this comparison.} (see \Cref{section:exp_details} for more details).
In \Cref{subsubsection:performance_tuned}, we tune the learning rate/smoothness hyperparameter via grid search.
Across both settings, SketchySGD outperforms the competition.

\subsubsection{First-order methods --- defaults}
\label{subsubsection:performance_auto}
\cref{fig:performance_auto_logistic,fig:performance_auto_least_squares} compare SketchySGD to first-order methods run with their defaults. 
SketchySGD (Nystr{\"o}m) and SketchySGD (SSN) uniformly outperform their first-order counterparts, sometimes dramatically. 
In the case of the E2006-tfidf and ijcnn1-rf datasets, SVRG, SAGA, and L-Katyusha make no progress at all. 
Even for datasets where SVRG, SAGA, and L-Katyusha do make progress, their performance lags significantly behind SketchySGD (Nystr{\"o}m) and SketchySGD (SSN).
Second-order information speeds up SketchySGD without significant computational costs: the plots show both variants of SketchySGD converge faster than their first-order counterparts.

The plots also show that SketchySGD (Nystr{\"o}m) and SketchySGD (SSN) exhibit similar performance, despite SketchySGD (Nystr{\"o}m) using much less information than SketchySGD (SSN).
For the YearPredictionMSD-rf and yolanda-rf datasets, SketchySGD (Nystr{\"o}m) performs better than SketchySGD (SSN).
We expect this gap to become more pronounced as $n$ grows, for SketchySGD (SSN) requires $O(\sqrt{n}p)$ flops to apply the preconditioner, while SketchySGD (Nystr{\"o}m) needs only $O(rp)$ flops.
This hypothesis is validated in \Cref{subsection:large_scale}, where we perform experiments on a large-scale version of the HIGGS dataset.

\begin{figure}[t]
    \centering
    \includegraphics[scale=0.5]{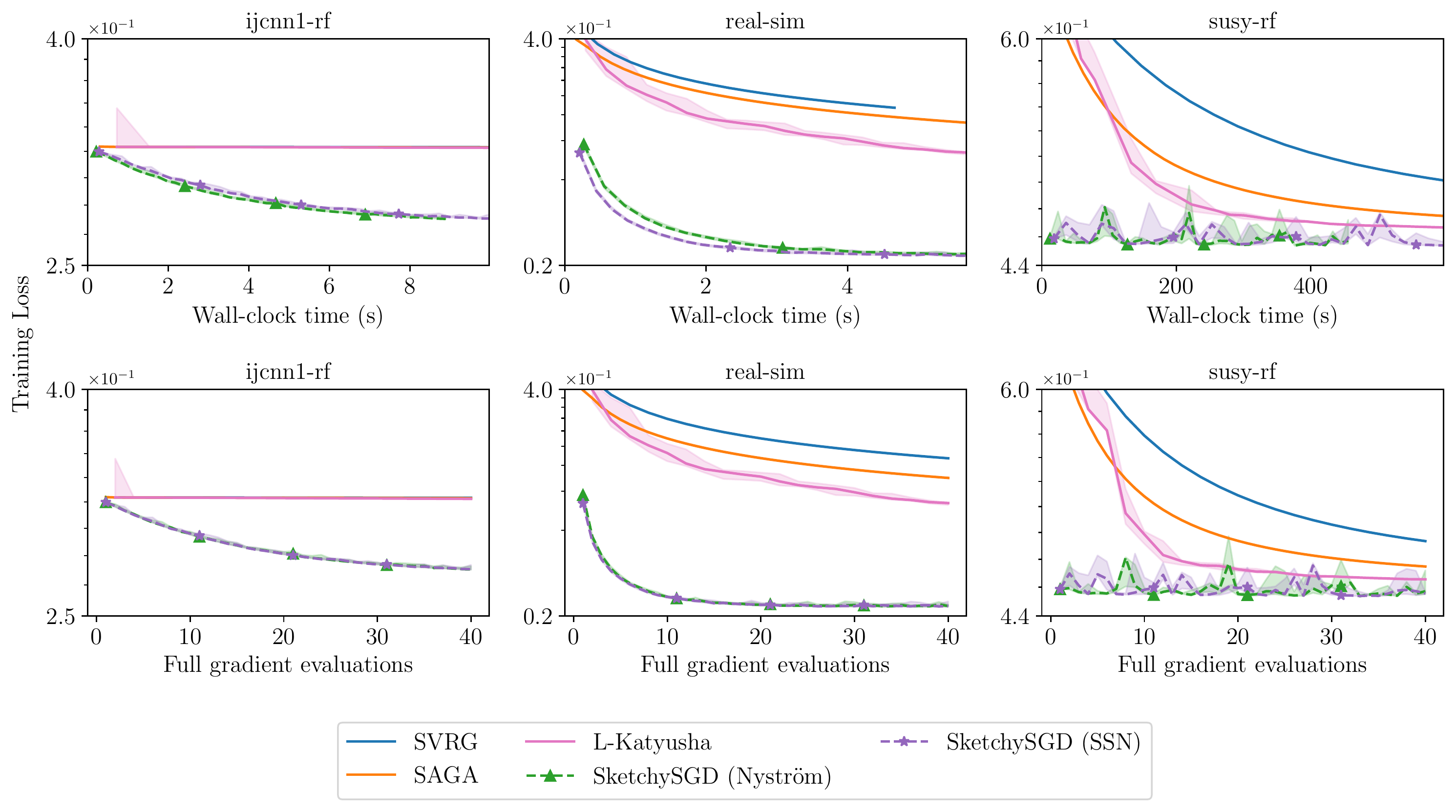}
    \caption{Comparisons to first-order methods with default learning rates (SVRG, SAGA) and smoothness parameters (L-Katyusha) on $l_2$-regularized logistic regression.}
    \label{fig:performance_auto_logistic}
\end{figure}

\begin{figure}[t]
    \centering
    \includegraphics[scale=0.5]{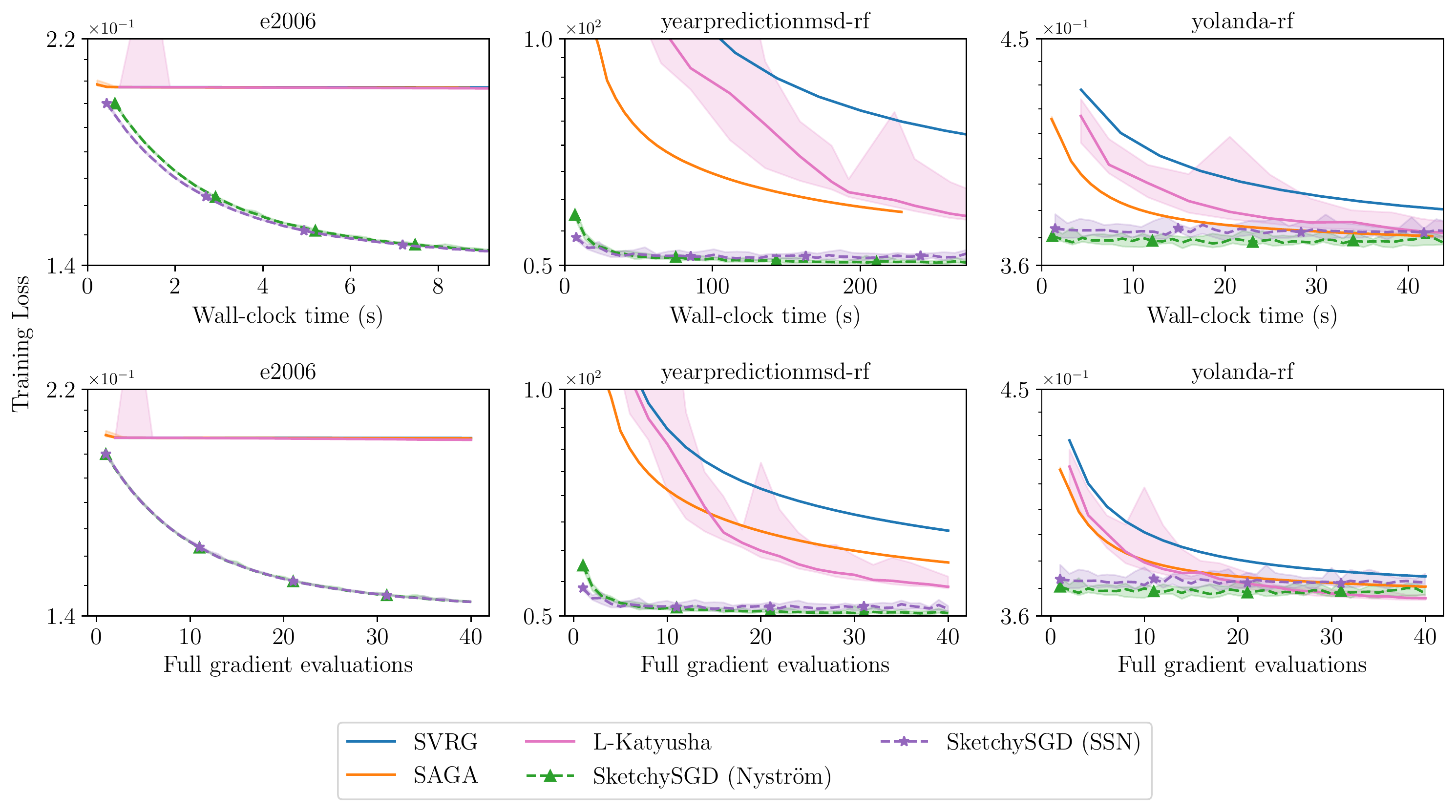}
    \caption{Comparisons to first-order methods with default learning rates (SVRG, SAGA) and smoothness parameters (L-Katyusha) on ridge regression.}
    \label{fig:performance_auto_least_squares}
\end{figure}


\subsubsection{First-order methods --- tuned}
\label{subsubsection:performance_tuned}
\cref{fig:performance_tuned_logistic,fig:performance_tuned_least_squares} show that SketchySGD (Nystr{\"o}m) and SketchySGD (SSN) generally match or outperform tuned first-order methods.
For the tuned first-order methods, we only show the curve corresponding to the lowest attained training loss.
 SketchySGD (Nystr{\"o}m) and SketchySGD (SSN) outperform the competitor methods on E2006-tfidf and YearPredictionMSD-rf, while performing comparably on both yolanda-rf and susy-rf. 
On real-sim, we find SGD and SAGA perform better than SketchySGD (Nystr{\"o}m) and SketchySGD (SSN) on wall-clock time, but perform similarly on gradient evaluations. 

\begin{figure}[htbp]
    \centering
    \includegraphics[scale=0.5]{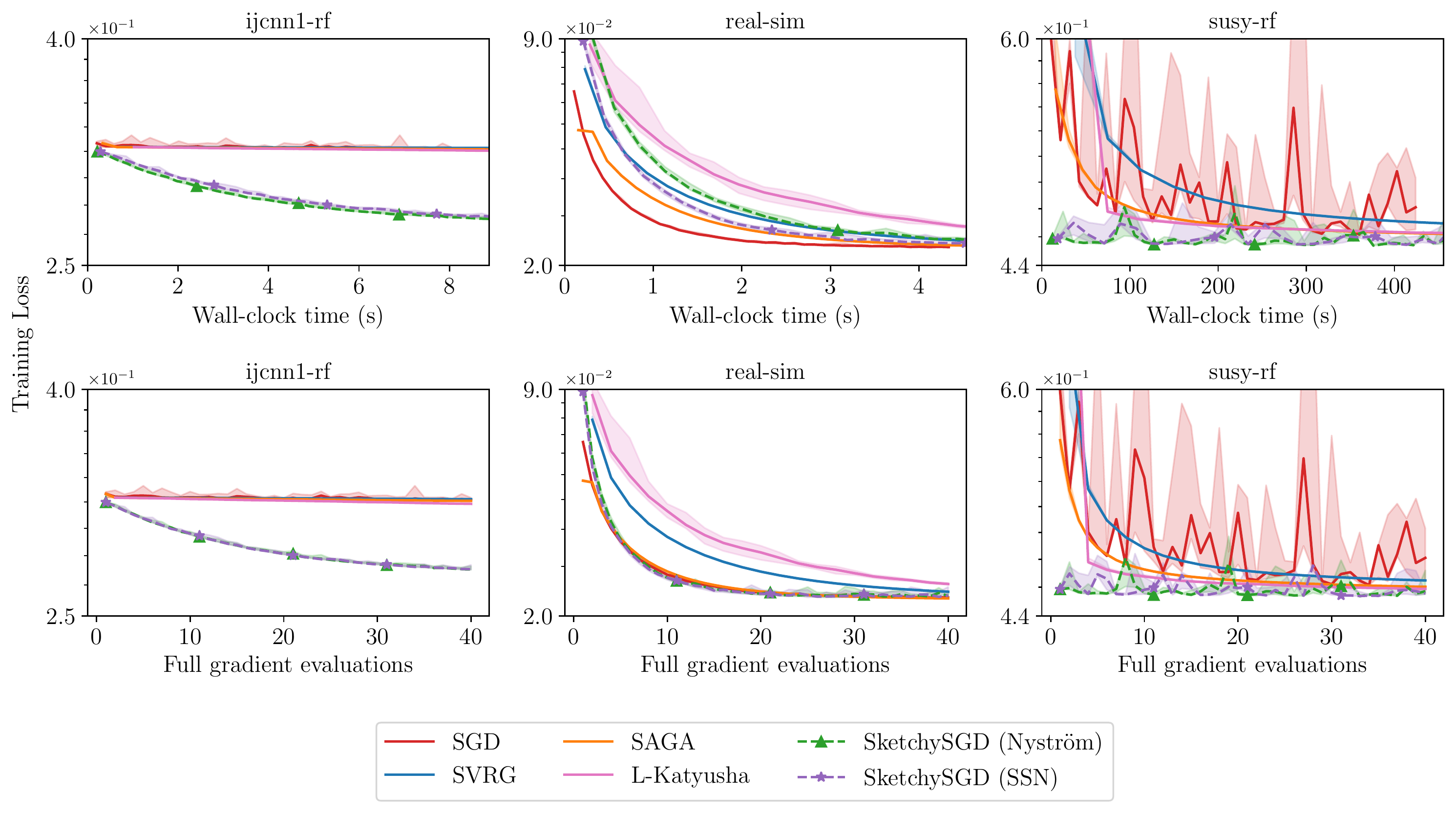}
    \caption{Comparisons to first-order methods with tuned learning rates (SGD, SVRG, SAGA) and smoothness parameters (L-Katyusha) on $l_2$-regularized logistic regression.}
    \label{fig:performance_tuned_logistic}
\end{figure}

\begin{figure}[t]
    \centering
    \includegraphics[scale=0.5]{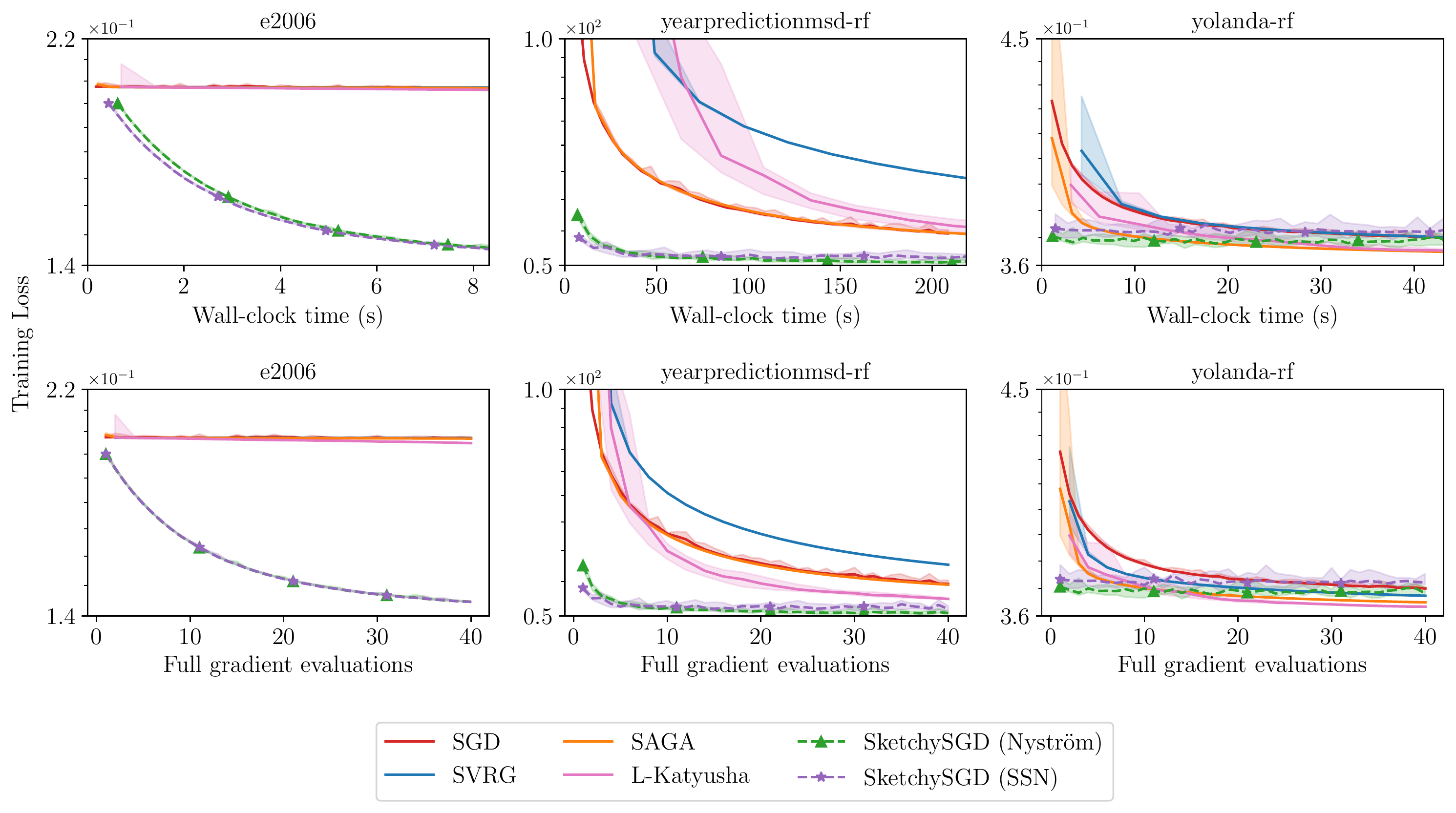}
    \caption{Comparisons to first-order methods with tuned learning rates (SGD, SVRG, SAGA) and smoothness parameters (L-Katyusha) on ridge regression.}
    \label{fig:performance_tuned_least_squares}
\end{figure}


\subsection{SketchySGD (usually) outperforms quasi-Newton methods}
\label{subsection:performance_som}
We compare \\ 
SketchySGD to the second-order methods L-BFGS (using the implementation in SciPy), SLBFGS, RSN, and Newton Sketch. 
For SLBFGS, we tune the learning rate; we do not tune learning rates for L-BFGS, RSN, or Newton Sketch since these methods use line search.
The results for logistic and ridge regression are presented in \cref{fig:performance_som_logistic,fig:performance_som_least_squares}, respectively.
In several plots, SLBFGS cuts off early because it tends to diverge at the best learning rate obtained by tuning.
L-BFGS also terminates early on ijcnn1-rf and real-sim because it reaches a high-accuracy solution in under $40$ iterations.
We are generous to methods that use line search --- we do not account for the number of function evaluations performed by L-BFGS, RSN, and Newton Sketch, nor do we account for the number of additional full gradient evaluations performed by L-BFGS to satisfy the strong Wolfe conditions.

Out of all the methods, SketchySGD provides the most consistent performance.
When considering wall-clock time performance, SketchySGD is only outperformed by SLBFGS (although it eventually diverges) and Newton Sketch on ijcnn1-rf, L-BFGS on real-sim, and RSN on yolanda-rf.
On larger, dense datasets, such as YearPredictionMSD-rf and susy-rf, SketchySGD is the clear winner.
We expect the performance gap between SketchySGD and the quasi-Newton methods to grow as the datasets become larger, and we show this is the case in \Cref{subsection:scaling_qn}.


\begin{figure}[t]
    \centering
    \includegraphics[scale=0.5]{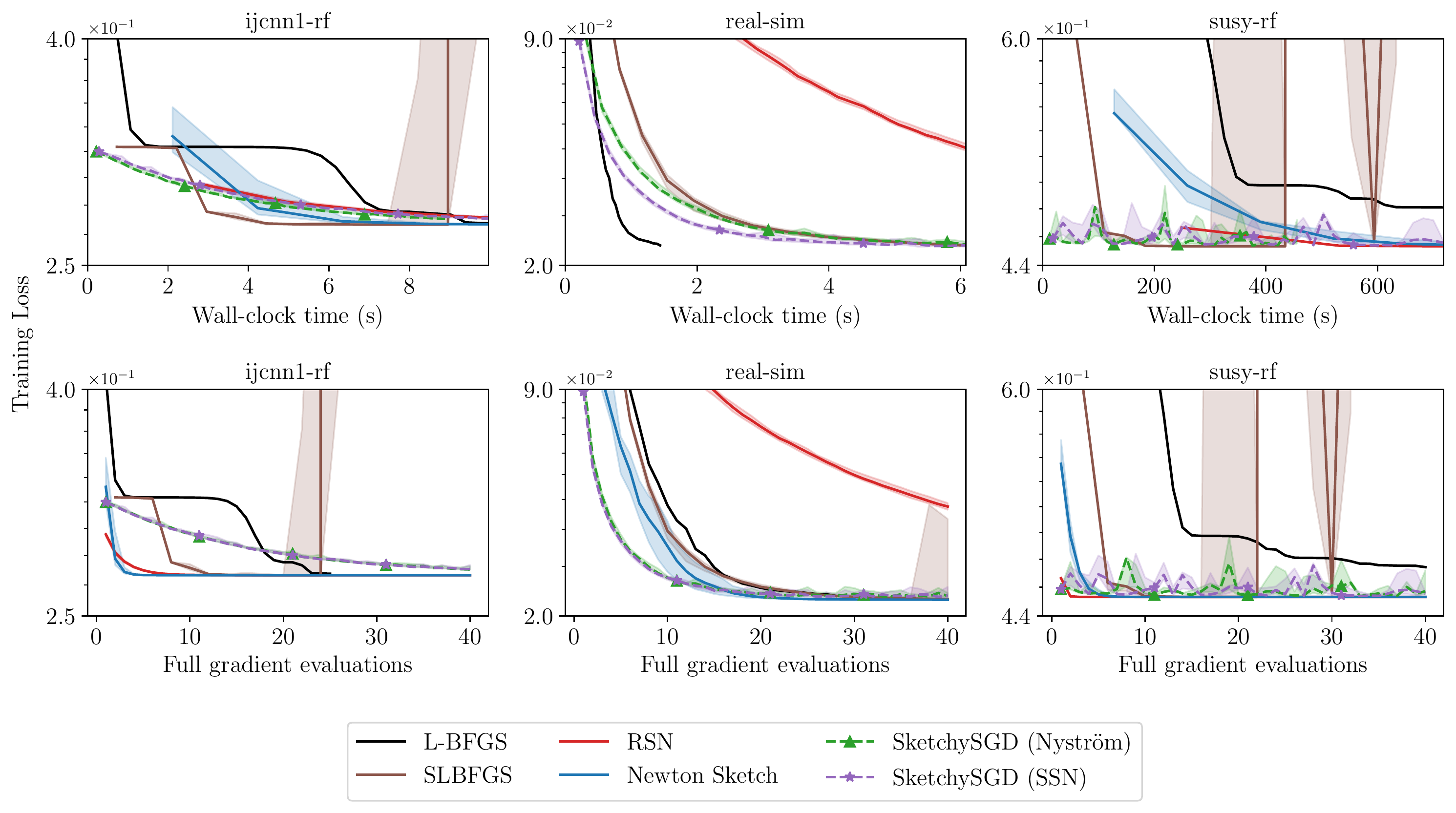}
    \caption{Comparisons to quasi-Newton methods (L-BFGS, SLBFGS, RSN, Newton Sketch) on $l_2$-regularized logistic regression.}
    \label{fig:performance_som_logistic}
\end{figure}

\begin{figure}[t]
    \centering
    \includegraphics[scale=0.5]{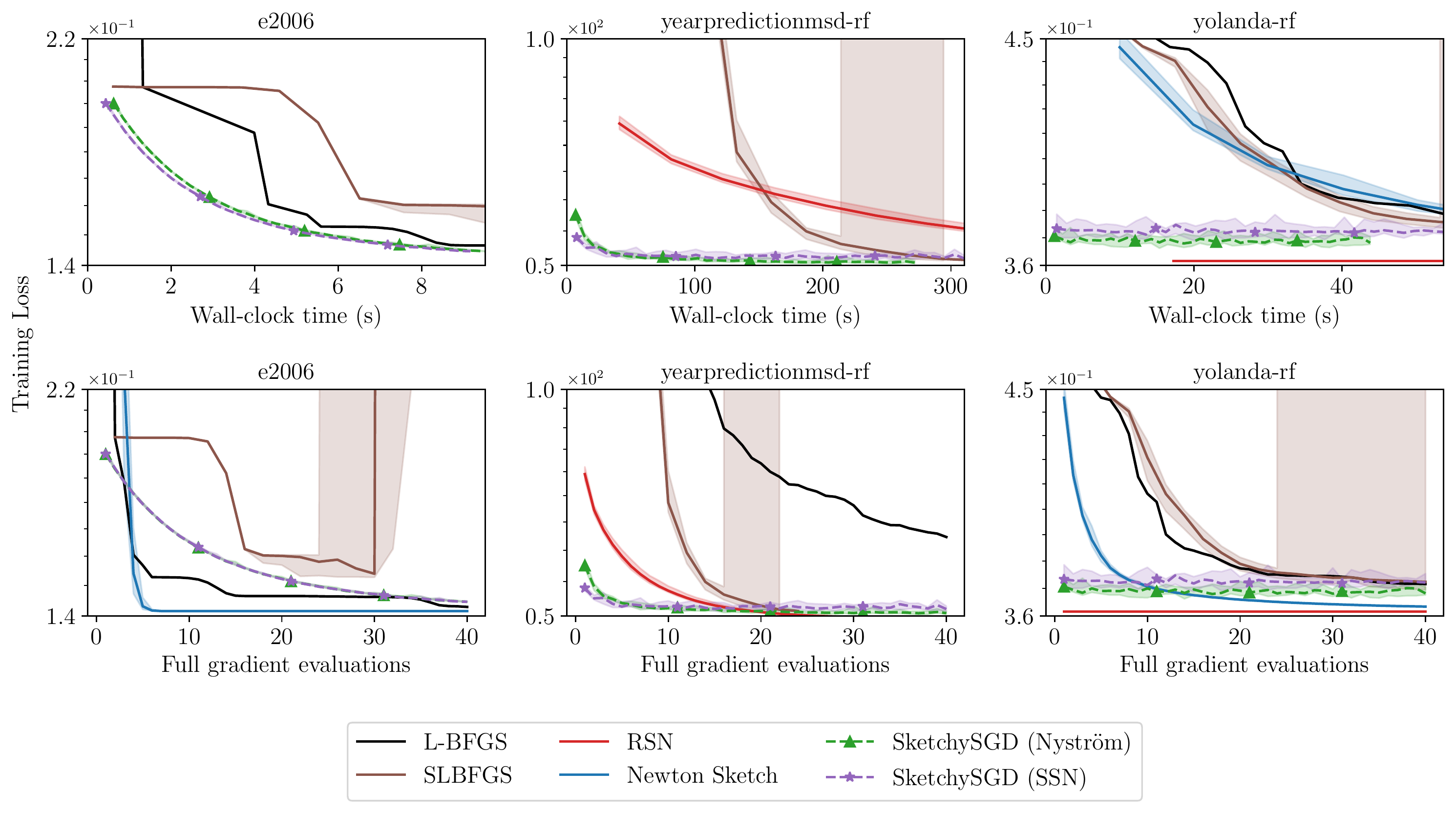}
    \caption{Comparisons to quasi-Newton methods (L-BFGS, SLBFGS, RSN, Newton Sketch) on ridge regression.}
    \label{fig:performance_som_least_squares}
\end{figure}

\subsection{SketchySGD (usually) outperforms PCG}
\label{subsection:performance_pcg}
We compare SketchySGD to PCG with Jacobi, Nystr\"{o}m, and sketch-and-precondition (Gaussian and sparse embeddings) preconditioners.
The results for ridge regression are presented in \Cref{fig:performance_pcg_least_squares}.
For PCG, full gradient evaluations refer to the total number of iterations.

Similar to the results in \Cref{subsection:performance_som}, SketchySGD provides the most consistent performance.
On wall-clock time, SketchySGD is eventually outperformed by JacobiPCG on E2006-tfidf and Nystr\"{o}mPCG on yolanda-rf.
However, SketchySGD (Nystr\"{o}m) already reaches a reasonably low training loss within 10 seconds and 2 seconds on YearPredictionMSD-rf and yolanda-rf, respectively, while the PCG methods take much longer to reach this level of accuracy.
Furthermore, SketchySGD outperforms the sketch-and-precondition methods on all datasets.
We expect the performance gap between SketchySGD and PCG to grow as the datasets become larger, and we show this is the case in \Cref{subsection:scaling_pcg}.

\begin{figure}[t]
    \centering
    \includegraphics[scale=0.5]{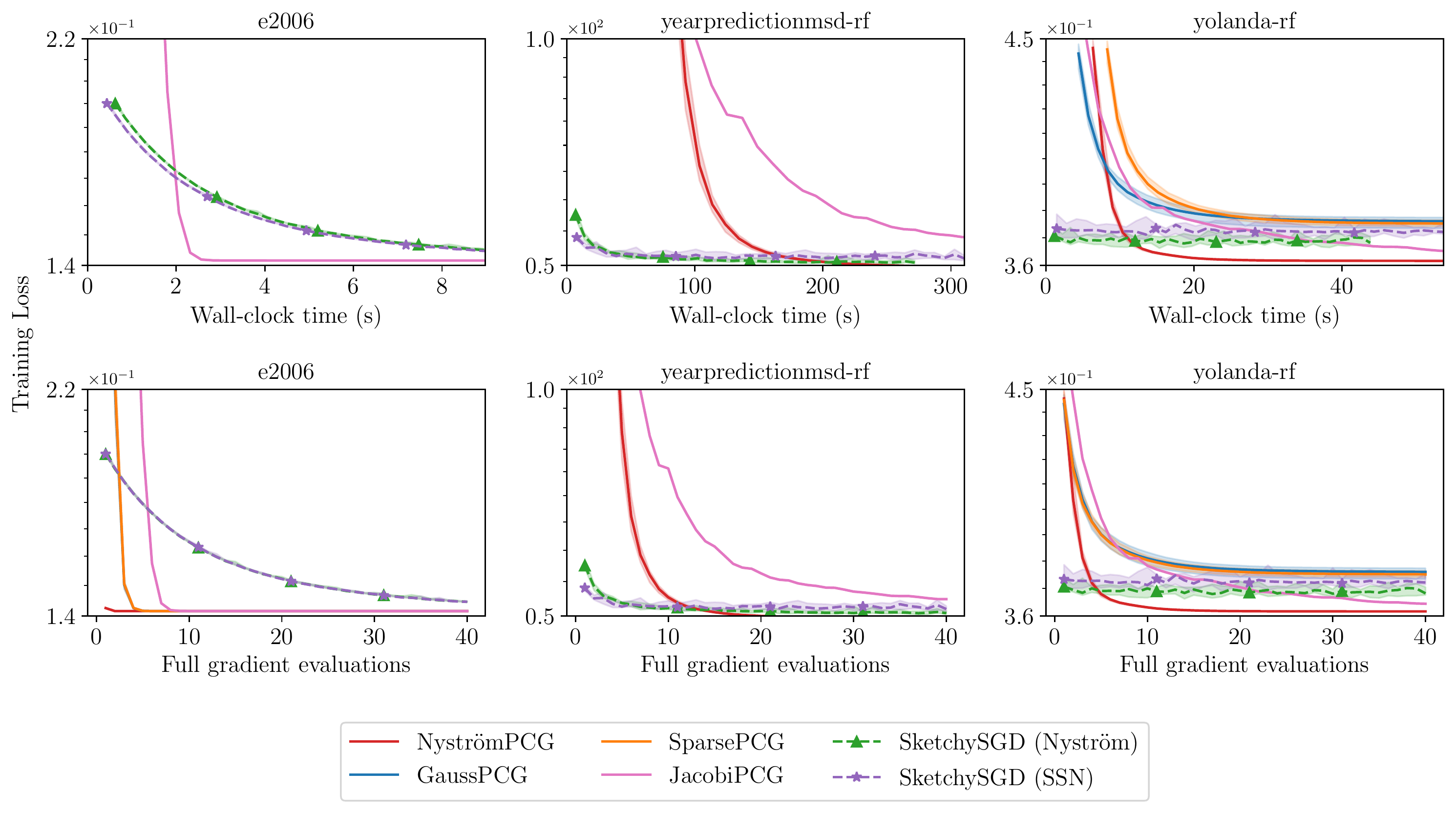}
    \caption{Comparisons to PCG (Jacobi, Nystr\"{o}m, sketch-and-precondition w/ Gaussian and sparse embeddings) on ridge regression.}
    \label{fig:performance_pcg_least_squares}
\end{figure}
\clearpage

\subsection{SketchySGD outperforms competitor methods on large-scale data}
\label{subsection:large_scale}
We apply random Fourier features 
to the HIGGS dataset, for which $(n_{\mathrm{tr}}, p) = (1.05 \cdot 10^7, 28)$, 
to obtain a transformed dataset with size $(n_{\mathrm{tr}}, p) = (1.05 \cdot 10^7, 10^4)$. 
This transformed dataset is $840$ GB, 
larger than the hard drive and RAM capacity of most computers. 
To optimize, we load the original HIGGS dataset in memory 
and at each iteration, form a minibatch of the transformed dataset
by applying the random features transformation to a minibatch of HIGGS.
In this setting, computing a full gradient of the objective is 
computationally prohibitive. 
We exclude SVRG, L-Katyusha, and SLBFGS since they require full gradients. 

We compare SketchySGD to SGD and SAGA with both default learning rates (SAGA only) and tuned learning rates (SGD and SAGA) via grid search. 

\cref{fig:higgs_auto,fig:higgs_tuned} show these two sets of comparisons\footnote{The wall-clock time in \cref{fig:higgs_tuned} cuts off earlier than in \cref{fig:higgs_auto} due to the addition of SGD, which completes $10$ epochs faster than the other methods.}.
We only plot test loss, as computing the training loss is as expensive as computing a full gradient.
The wall-clock time plots only show the time taken in optimization; they do not include the time taken in repeatedly applying the random features transformation. 
We find SGD and SAGA make little to no progress in decreasing the test loss, even after tuning. However, both SketchySGD (Nystr\"{o}m) and SketchySGD (SSN) are able to decrease the test loss significantly. 
Furthermore, SketchySGD (Nystr\"{o}m) is able to achieve a similar test loss to SketchySGD (SSN) while taking less time, confirming that SketchySGD (Nystr\"{o}m) can be more efficient than SketchySGD (SSN) in solving large problems.

\begin{figure}[t]
    \centering
    \includegraphics[scale=0.4]{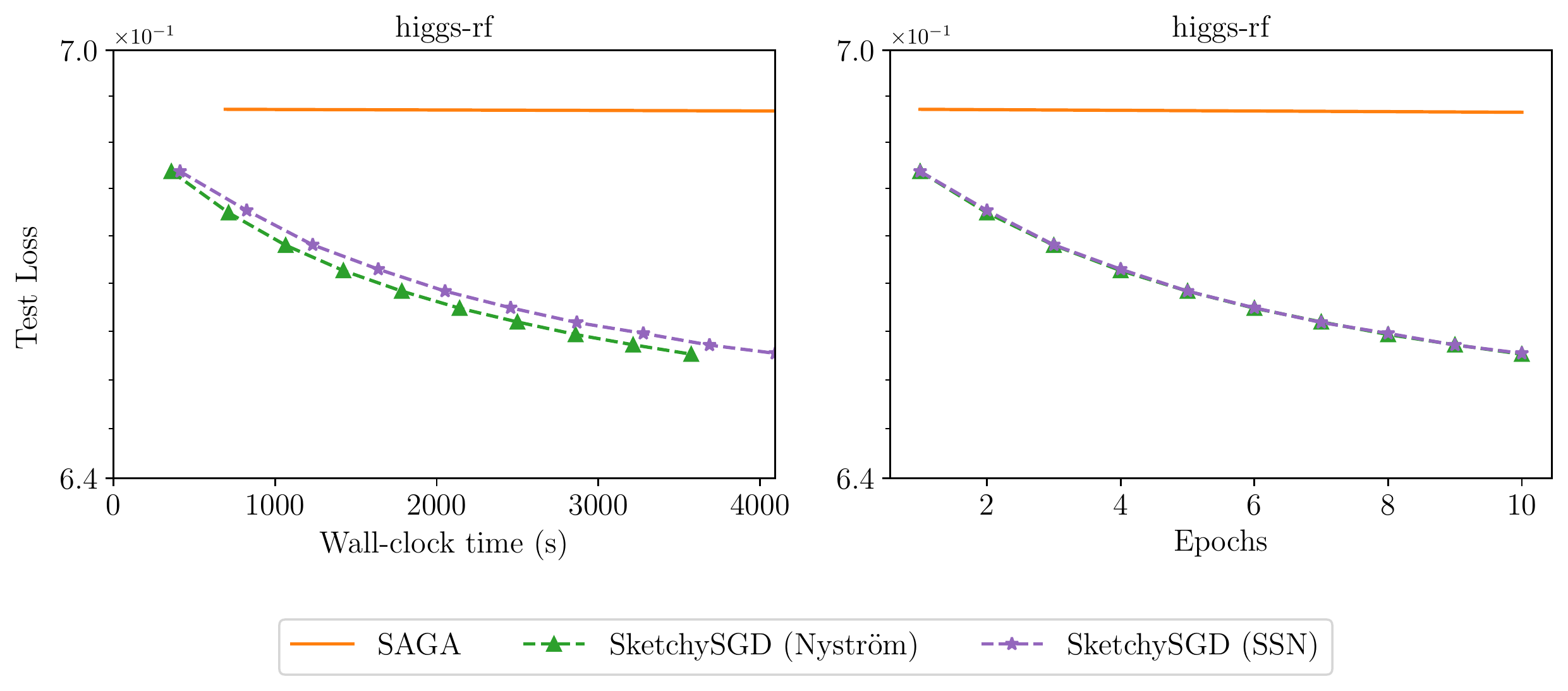}
    \caption{Comparison between SketchySGD and SAGA with default learning rate.}
    \label{fig:higgs_auto}
\end{figure}

\begin{figure}[t]
    \centering
    \includegraphics[scale=0.4]{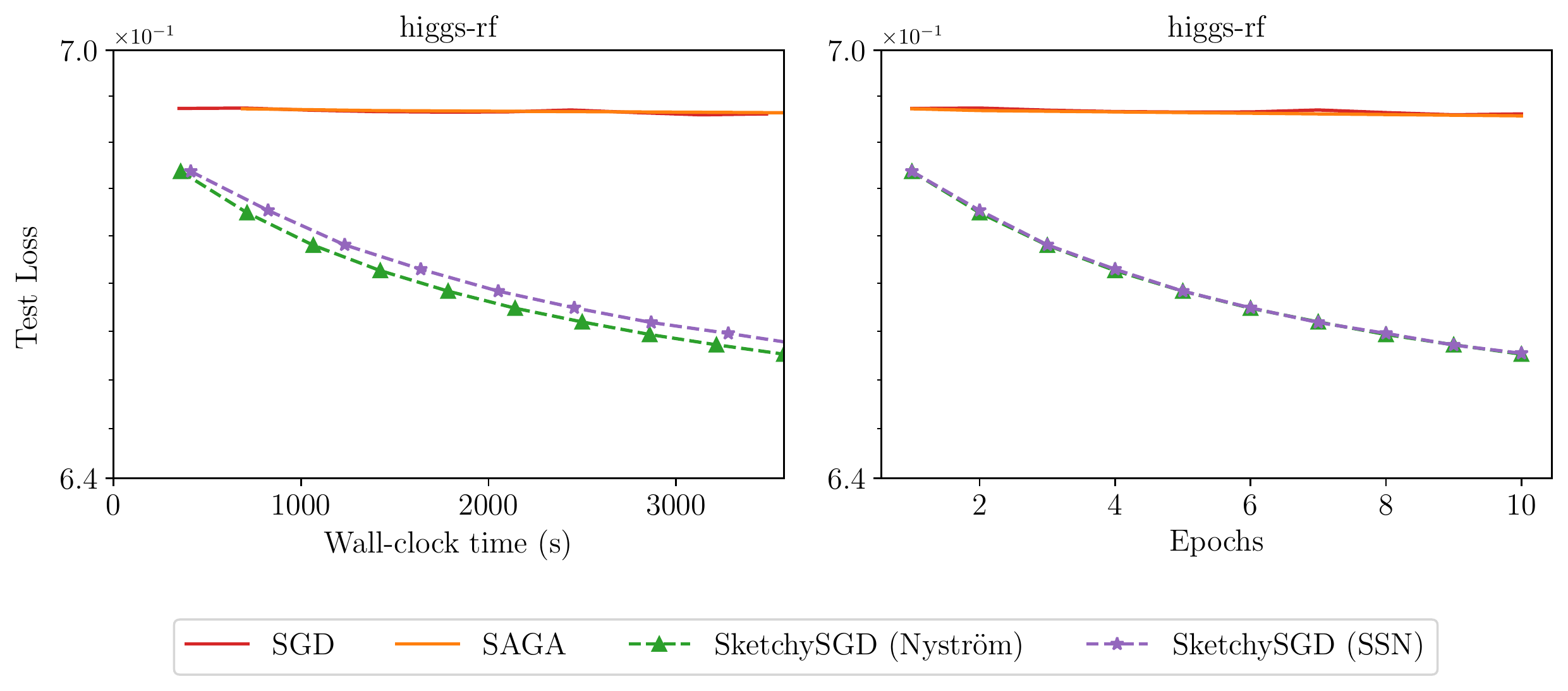}
    \caption{Comparison between SketchySGD, SGD and SAGA with tuned learning rates.}
    \label{fig:higgs_tuned}
\end{figure}

\subsection{Tabular deep learning with multilayer perceptrons}
\label{subsection:performance_dl}
Our experiments closely follow the setting in \cite{kadra2021welltuned}.
We compare SketchySGD (Nystr\"{o}m) to SGD, Adam, AdaHessian, Yogi, and Shampoo, which are popular optimizers for deep learning. 
For experiments in this setting, we modify SketchySGD (Nystr\"{o}m) to use momentum and gradient debiasing, as in Adam (\cref{section:mod_dl_appdx}); we also set $\rho = 10^{-1}$ since it provides better performance.
We use a 9-layer MLP with 512 units in each layer, and cosine annealing for learning rate scheduling.
We run the methods on the Fashion-MNIST, Devnagari-Script, and volkert datasets from OpenML \cite{vanschoren2013openml}.
Throughout, we use the weighted cross-entropy loss and balanced accuracy as evaluation metrics.
We only tune the initial learning rate for the methods and do so via random search.
We form a 60/20/20 training/validation/test split of the data, and select the learning rate with the highest balanced validation accuracy to generate the results reported in this section.

Test accuracy curves for each optimizer are presented in \Cref{fig:deep_learning_test_acc}, and final test accuracies with quantiles are given in \Cref{table:deep_learning_test_acc}.
We see SketchySGD consistently outperforms Shampoo, which is an optimizer designed to approximate full-matrix AdaGrad \cite{duchi2011adaptive}.
Furthermore, SketchySGD tends to be more stable than SGD after tuning.
However, it is unclear whether SketchySGD performs better than SGD, Adam, or Yogi, which are all first-order optimizers.
The reasons for this lack of improvement are unclear, providing an interesting direction for future work.

\begin{figure}[t]
    \centering
    \includegraphics[scale=0.5]{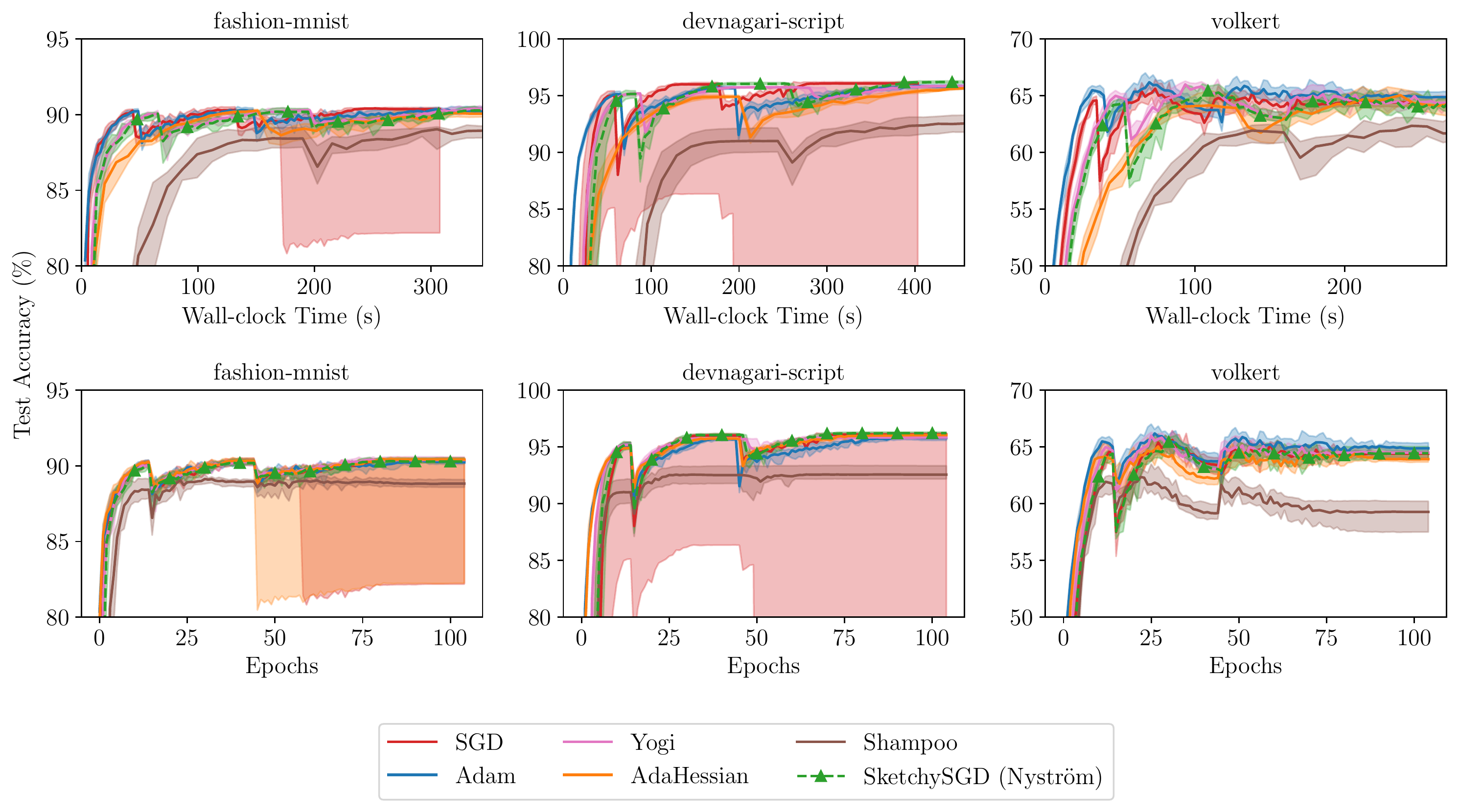}
    \caption{Test accuracies for SketchySGD and competitor methods on tabular deep learning tasks.}
    \label{fig:deep_learning_test_acc}
\end{figure}

\begin{table}[!htbp]
	\caption{\label{table:deep_learning_test_acc} 10th and 90th quantiles for final test accuracies.} 
	\begin{center}
		\scriptsize
        \begin{tabular}{C{1.5cm}C{1.5cm}C{2cm}C{1.5cm}C{2cm}C{1.5cm}C{2cm}C{1.25cm}}
            \hline
            Dataset\textbackslash \newline Optimizer & SGD & Adam & Yogi & AdaHessian & Shampoo & SketchySGD \\
            \hline
            Fashion-MNIST & (82.19, 90.47) & (90.10, 90.51) & (90.26, 90.58) & (82.25, 90.53) & (88.59, 89.10) & (90.19, 90.50) \\
            \hline
            Devnagari-Script & (2.17, 96.23) & (95.57, 95.98) & (95.67, 95.91) & (95.90, 96.17) & (92.18, 93.34) & (96.11, 96.30) \\
            \hline
            volkert & (64.03, 64.58) & (64.61, 65.36) & (64.08, 64.84) & (63.66, 64.52) & (57.50, 60.22) & (63.90, 65.01) \\
            \hline
        \end{tabular}
	\end{center}
\end{table}
\section{Conclusion}
\label{section:conclusion}
In this paper, we have presented SketchySGD, a fast and robust stochastic quasi-Newton method for convex machine learning problems.
SketchySGD uses subsampling and randomized low-rank approximation to improve conditioning by approximating the curvature of the loss.
Furthermore, SketchySGD uses a novel automated learning rate and comes with default hyperparameters that enable it to work out of the box without tuning.

SketchySGD has strong benefits both in theory and in practice.
For quadratic objectives, our theory shows SketchySGD converges to $\epsilon$-suboptimality at a faster rate than SGD, and our experiments validate this improvement in practice. 
SketchySGD with its default hyperparameters outperforms or matches the performance of SGD, SAGA, SVRG, SLBFGS, and L-Katyusha (the last four of which use variance reduction), even when optimizing the learning rate for the competing methods using grid search.

\section*{Acknowledgments}
We would like to acknowledge helpful discussions with John Duchi, Michael Mahoney, Mert Pilanci, and Aaron Sidford.

\bibliographystyle{siamplain}
\bibliography{references}

\appendix
\section{RandNysApprox}
\label{section:RandNysAppx}
In this section we provide the pseudocode for the \texttt{RandNysApprox} algorithm mentioned in \Cref{section:SketchySGD}, which SketchySGD uses to construct the low-rank approximation $\hat{H}_{S_j}$.
\begin{algorithm}[] 
   \caption{\texttt{RandNysApprox}}
   \label{alg:RandNysAppx}
    \begin{algorithmic}
       \STATE {\bfseries Input:} sketch $Y\in \R^{p\times r_j}$ of $H_{S_j}$, orthogonalized test matrix $Q\in \R^{p\times r_j}$,
       \STATE $\nu = \sqrt{p} \text{eps}(\text{norm}(Y, 2))$ \hfill \COMMENT{Compute shift}
       \STATE $Y_{\nu} = Y + \nu Q$ \hfill \COMMENT{Add shift for stability}
       \STATE $C = \text{chol}(Q^TY_\nu)$ \hfill \COMMENT{Cholesky decomposition: $C^{T}C = Q^{T}Y_\nu$}
       \STATE $B = YC^{-1}$ \hfill \COMMENT{Triangular solve}
       \STATE $[\hat V, \Sigma, \sim] = \text{svd}(B, 0)$ \hfill \COMMENT{Thin SVD}
       \STATE $\hat \Lambda = \text{max}\{0, \Sigma^2 - \nu I\}$ \hfill \COMMENT{Compute eigs, and remove shift with element-wise max}
       \STATE {\bfseries Return:} $\hat V, \hat \Lambda$
    \end{algorithmic}
\end{algorithm} 

\cref{alg:RandNysAppx} follows Algorithm 3 from \cite{tropp2017fixed}. 
$\text{eps}(x)$ is defined as the positive distance between $x$ and the next largest floating point number of the same precision as $x$. 
The test matrix $Q$ is the same test matrix used to generate the sketch $Y$ of $H_{S_k}$. 
The resulting Nystr\"{o}m approximation $\hat{H}_{S_k}$ is given by $\hat V \hat \Lambda \hat V^T$.
The resulting Nystr{\"o}m approximation is psd but may have eigenvalues that are equal to $0$.
In our algorithms, this approximation is always used in conjunction with a regularizer to ensure positive definiteness.

\section{Modifications for deep learning}
\label{section:mod_dl_appdx}
We make modifications to the randomized Nystr\"{o}m approximation and SketchySGD for deep learning.
\cref{alg:RandNysAppxMod} adapts \cref{alg:RandNysAppx} to the non-convex (e.g., deep learning) setting, and ensures that the Randomized Nystr\"{o}m approximation remains positive definite.
The main difference between \cref{alg:RandNysAppxMod} and \cref{alg:RandNysAppx} is that \cref{alg:RandNysAppxMod} comes with a fail-safe step (colored red in the algorithm block) in case the subsampled Hessian is indefinite and the resulting Cholesky decomposition fails.
When this failure occurs, the fail-safe step shifts the spectrum of $C$ by its smallest eigenvalue to ensure it is positive definite.
When there is no failure, it is easy to see that \cref{alg:RandNysAppxMod} gives the exact same output as \cref{alg:RandNysAppx}.

\begin{algorithm}[] 
   \caption{\texttt{RandNysApproxMod}}
   \label{alg:RandNysAppxMod}
    \begin{algorithmic}
       \STATE {\bfseries Input:} sketch $Y\in \R^{p\times r_j}$ of $H_{S_j}$, orthogonalized test matrix $Q\in \R^{p\times r_j}$,
       \STATE $\nu = \sqrt{p} \text{eps}(\text{norm}(Y, 2))$ \hfill \COMMENT{Compute shift}
       \STATE $Y_{\nu} = Y + \nu Q$ \hfill \COMMENT{Add shift for stability}
       \STATE $\lambda = 0$ \hfill \COMMENT{Additional shift may be required for positive definiteness}
       \STATE $C = \text{chol}(Q^TY_\nu)$ \hfill \COMMENT{Cholesky decomposition: $C^{T}C = Q^{T}Y_\nu$}
       \textcolor{red}{
       \IF {chol fails}
        \STATE Compute $[W, \Gamma] = \mathrm{eig}(Q^T Y_\nu)$ \hfill \COMMENT{$Q^T Y_\nu$ is small and square}
        \STATE Set $\lambda = \lambda_{\min}(Q^T Y_\nu)$
        \STATE $R = W(\Gamma + |\lambda| I)^{-1/2} W^T$
        \STATE $B = YR$ \hfill \COMMENT{$R$ is psd}
       \ELSE
        \STATE $B = YC^{-1}$ \hfill \COMMENT{Triangular solve}
       \ENDIF
       }
       \STATE $[\hat V, \Sigma, \sim] = \text{svd}(B, 0)$ \hfill \COMMENT{Thin SVD}
       \STATE $\hat \Lambda = \text{max}\{0, \Sigma^2 - (\nu + |\lambda| I)\}$ \hfill \COMMENT{Compute eigs, and remove shift with element-wise max}
       \STATE {\bfseries Return:} $\hat V, \hat \Lambda$
    \end{algorithmic}
\end{algorithm} 

\cref{alg:SketchySGD_dl} adds momentum (which is a hyperparameter $\beta$) and gradient debiasing (similar to Adam) to the practical version of SketchySGD \cref{alg:SketchySGD_pract}.
These changes to the algorithm are shown in red.
In addition, \cref{alg:SketchySGD_dl} does not use the automated learning rate; the learning rate is now an input to the algorithm.

\begin{algorithm}[tb]
   \caption{SketchySGD (Deep learning version)}
   \label{alg:SketchySGD_dl}
    \begin{algorithmic}
       \STATE {\bfseries Input:} initialization $w_0$, learning rate $\eta$, momentum parameter $\beta$, hvp oracle $\mathcal{O}_{H}$, ranks $\{r_j\}$, regularization $\rho$, preconditioner update frequency $u$, stochastic gradient batch size $b_g$, stochastic Hessian batch sizes $\{b_{h_j}\}$
        \STATE Initialize $z_{-1} = 0$ \hfill \COMMENT{Initialize momentum vector}
        \FOR {$k = 0,1,\dots, m-1$}
                \STATE Sample a batch $B_k$
                \STATE Compute stochastic gradient $g_{B_{k}}(w_k)$
                \IF{$k \equiv 0 \pmod u$}
                    \STATE Set $j = j+1$ 
                    \STATE Sample a batch $S_j$ \hfill \COMMENT{$|S_j| = b_{h_j}$}
                    \STATE $\Phi = \text{randn}(p, r_j)$ \hfill \COMMENT{Gaussian test matrix}
                    \STATE $Q = $ \texttt{qr\_econ} $(\Phi)$
                    \STATE Compute sketch $ Y = H_{S_j}(w_k)Q$ \hfill \COMMENT{$r$ calls to $\mathcal{O}_{H_{S_j}}$}
                    \STATE $[\hat{V},\hat{\Lambda}] = $ \texttt{RandNysApproxMod}$(Y,Q,r_j)$
                \ENDIF
            \textcolor{red}{
            \STATE $z_k = \beta z_{k - 1} + (1 - \beta) g_{B_k}(w_k)$ \hfill \COMMENT{Update biased first moment estimate}}
            \textcolor{red}{
            \STATE $\hat{z}_k = z_k / (1 - \beta^{k + 1})$ \hfill \COMMENT{Compute bias-corrected first moment estimate}
            }
            \STATE Compute $v_k = (\hat{H}_{S_j}+\rho I)^{-1}\hat{z}_k$ via \eqref{eq:NysSMWSolve} 
            \STATE $w_{k+1} = w_{k}-\eta v_{k}$ \hfill \COMMENT{Update parameters}
        \ENDFOR
\end{algorithmic}
\end{algorithm} 


\section{Proofs not appearing in the main paper}
In this section, we give the proofs of claims that are not present in the main paper. 
\subsection{Proof that SketchySGD is SGD in preconditioned space}
\label{subsection:PSGD}
Here we give the proof of \eqref{eq:SketchySGDIter} from \Cref{section:Introduction}.
As in the prequel, we set $P_j = \hat{H}^\rho_{S_j}$ in order to avoid notational clutter. 
Recall the SketchySGD update is given by
\[w_{k+1}=w_k-\eta_j P_j^{-1}g_{B_k},\]
where $\mathbb{E}_{B_k}[g_{B_k}] = g_k$.
We start by making the following observation about the SketchySGD update.
\begin{lemma}[SketchySGD is SGD in preconditioned space]
\label{lemma:UpdateLemma}
At outer iteration $j$ define $f_{P_j}(z) = f(P_j^{-1/2}z)$, that is define the change of variable $w = P_j^{-1/2}z$.
Then,
\begin{align*}
    & g_{P_j}(z) = P_j^{-1/2}g(P_j^{-1/2}z) = P^{-1/2}_jg(w) \\
    & H_{P_j}(z) = P_j^{-1/2}H(P_j^{-1/2}z)P_j^{-1/2} = P_j^{-1/2}H(w)P^{-1/2}_j. 
\end{align*}
Hence the SketchySGD update may be realized as
\begin{align*}
    & z_{k+1} = z_k-\eta_j \hat{g}_{P_j}(z_k)\\
    & w_{k+1} = P_j^{-1/2}z_{k+1},
\end{align*}
where $\hat{g}_{P_j}(z_k) = P_j^{-1/2}g_{B_k}(P_j^{-1/2}z_{k})$ is the stochastic gradient in preconditioned space.
\end{lemma}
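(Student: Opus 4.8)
The plan is to obtain the gradient and Hessian identities by a direct application of the chain rule, using crucially that $P_j = \hat H^{\rho}_{S_j}$ is symmetric positive definite so that $P_j^{-1/2}$ is well defined and symmetric; the reformulation of the update is then a short algebraic manipulation.

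First I would compute $\nabla_z f_{P_j}(z)$. Writing $w = P_j^{-1/2}z$, the map $z\mapsto w$ is linear with Jacobian $P_j^{-1/2}$, so the chain rule gives $\nabla_z f_{P_j}(z) = (P_j^{-1/2})^{\top}\nabla f(w) = P_j^{-1/2}g(w)$, where the last step uses $(P_j^{-1/2})^{\top} = P_j^{-1/2}$. Differentiating once more in $z$ yields $\nabla^2_z f_{P_j}(z) = (P_j^{-1/2})^{\top}\nabla^2 f(w)\,P_j^{-1/2} = P_j^{-1/2}H(w)P_j^{-1/2}$, again by symmetry of $P_j^{-1/2}$. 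Applying the same reasoning to each $f_i$, and hence to the minibatch average $g_{B_k}$ which is linear in the $f_i$, shows that $\hat{g}_{P_j}(z) = P_j^{-1/2}g_{B_k}(P_j^{-1/2}z)$ is precisely the minibatch stochastic gradient of $f_{P_j}$ at $z$, and it is unbiased in preconditioned space since $\mathbb{E}_{B_k}[\hat{g}_{P_j}(z)] = P_j^{-1/2}\mathbb{E}_{B_k}[g_{B_k}(w)] = P_j^{-1/2}g(w) = g_{P_j}(z)$.

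Next I would recast the iteration. The SketchySGD update reads $w_{k+1} = w_k - \eta_j P_j^{-1}g_{B_k}(w_k)$. Define $z_k = P_j^{1/2}w_k$, equivalently $w_k = P_j^{-1/2}z_k$, and left-multiply the update by $P_j^{1/2}$:
\begin{align*}
z_{k+1} &= P_j^{1/2}w_{k+1} = P_j^{1/2}w_k - \eta_j P_j^{1/2}P_j^{-1}g_{B_k}(w_k) \\
&= z_k - \eta_j P_j^{-1/2}g_{B_k}(P_j^{-1/2}z_k) = z_k - \eta_j \hat{g}_{P_j}(z_k),
\end{align*}
together with $w_{k+1} = P_j^{-1/2}z_{k+1}$, which is exactly the claimed form. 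There is no real obstacle here; the only point requiring care is the symmetry of $P_j^{-1/2}$, which is what turns the generic chain-rule conjugation by the Jacobian and its transpose into the clean two-sided product $P_j^{-1/2}(\cdot)P_j^{-1/2}$, and everything else is bookkeeping.
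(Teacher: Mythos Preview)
Your proposal is correct and follows essentially the same approach as the paper: the paper simply notes that the first display follows from the change of variable and the chain rule, and that the update reformulation follows from the definition of the SketchySGD update combined with the first display. Your write-up is just a more explicit version of this, carefully spelling out the role of the symmetry of $P_j^{-1/2}$ and the left-multiplication by $P_j^{1/2}$.
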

\begin{proof}
The first display of equations follow from the definition of the change of variable and the chain rule, while the last display follows from definition of the SketchySGD update and the first display. 
\end{proof}

\subsection{Proof of \Cref{lemma:rel_quad}}
\label{subsection:quad_reg_pf}
\begin{proof}
    Let $w,w',w''\in \mathcal C$ and set $v = w'-w$. Then by Taylor's theorem and smoothness and convexity of $h$, it holds that
    \begin{align*}
        & h(w') =  h(w)+\langle \nabla f(w),v\rangle+\left(\int_{0}^{1}(1-t)\frac{\|v\|^2_{\nabla^2 h(w+tv)}}{\|v\|_{A(w'')}^2}dt\right)\|v\|_{A(w'')}^2 \\
        & =  h(w)+\langle \nabla h(w),v\rangle+\frac{\mathcal I}{2}\|v\|_{A(w'')}^2, \\
        & \text{where} ~\mathcal I = \int_{0}^{1}(1-t)\frac{\|v\|^2_{\nabla^2 h(w+tv)}}{\|v\|_{A(w'')}^2}.
    \end{align*}
    Now, if $h$ is just convex and $A(w'') = \nabla^2 h(w'')+\rho I$, a routine calculation shows that $\mathcal I \leq L/\rho$, which by definition 
    implies $\gamma_u(\mathcal C)\leq L/\rho$. 
    This gives the first statement. 
    For the second statement, note that when $h$ is $\mu$-strongly convex and $A(w'') = \nabla^2 h(w'')$, we have $\mu/L\leq \mathcal I\leq L/\mu$, which implies the second claim with $\mu/L \leq \gamma_\ell(\mathcal C) \gamma_u(\mathcal C)\leq L/\mu$.
\end{proof}
\subsection{Proof of \Cref{lemma:tau_rho_bnd}}
Below we provide the proof of \Cref{lemma:tau_rho_bnd}.
\label{subsection:tau_rho_lessn_pf}
\begin{proof}
By convexity of the $f_i$'s and the finite sum structure of $f$, it is easy to see that 
\[
\nabla^2 f_i(w)+\rho I \preceq n\left(H(w)+\rho I\right)~\forall i\in [n].
\]
Conjugating both sides by $(H^{\rho})^{-1/2}$, we reach
\[
\left(H(w)+\rho I\right)^{-1/2}\left(\nabla^2 f_i(w)+\rho I\right)\left(H(w)+\rho I\right)^{-1/2} \preceq n I.
\]
It now immediately follows that $\tau^{\rho}(H(w)) \leq n $.
On the other hand, for all $i\in\{1,\dots n\}$ we have 
\[
\nabla^2 f_i(w)+\rho I \preceq \left[\lambda_1 \left(\nabla^2 f_i(w)\right)+\rho \right] I \preceq \left(M(w)+\rho \right) I,  
\]
where the last relation follows by definition of $M(w)$.
So, conjugating by $\left(H^{\rho}(w)\right)^{-1/2}$ we reach
\[
\left(H^\rho(w)\right)^{-1/2}\left(\nabla^2 f_i(w)+\rho I\right)\left(H^\rho(w)\right)^{-1/2} \preceq \left(M(w)+\rho\right) \left(H(w)+\rho I\right)^{-1} \preceq \frac{M(w)+\rho}{\mu+\rho} I,
\]
which immediately yields $\tau^{\rho}(H(w)) \leq \frac{M(w)+\rho}{\mu+\rho}$.
Combining both bounds, we conclude
\[
\tau^\rho\left(H(w)\right) \leq \min \left\{n,\frac{M(w)+\rho}{\mu+\rho}\right \}.
\]
\end{proof}
\subsection{Proof of \cref{prop:tau_rho}}
\label{subsection:tau_rho_pf}
 The proof of \cref{prop:tau_rho} is the culmination of several lemmas.
 We begin with a \emph{truncated} intrinsic dimension Matrix Bernstein Inequality discussed, only requires bounds on the first and second moments that hold with some specified probability.
 It refines \cite{hopkins2016fast}, who established a similar result for the vanilla Matrix Bernstein Inequality. 
\begin{lemma}[Truncated Matrix Bernstein with intrinsic dimension]
\label{lemma:trunc_bern}
    Let $\{X_i\}_{i\in [n]}$ be a sequence of independent mean zero random matrices of the same size. Let $\beta \geq 0$ and $\{V_{1,i}\}_{i\in[n]},\{V_{2,i}\}_{i\in[n]}$ be sequences of matrices with $V_{1,i},V_{2,i}\succeq 0$ for all $i$. 
    Consider the event $\mathcal E_i = \left\{\|X_i\|\leq \beta,  X_iX_i^{T}\preceq V_{1,i}, X_i^{T}X_i\preceq V_{2,i}\right\}$. 
    Define $Y_i = X_i1_{\mathcal E_i}$, $Y = \sum^{n}_{i=1}Y_i$.
    Suppose that the following conditions hold
    \[
     \P\left(\mathcal E_i\right) \geq 1-\delta \quad \text{for all}~i \in[n], 
    \]
    \[
    \left\|\E[Y_i]\right\|\leq q.
    \]
    Set $V_1 = \sum^{n}_{i=1}V_{1,i}, V_2 = \sum^{n}_{i=1}V_{2,i}$, and define 
    \[\mathcal V = 
    \begin{bmatrix}
        V_1 & 0 \\
        0   &  V_2
    \end{bmatrix},~ \varsigma^2 = \max\{\|V_1\|,\|V_2\|.\}
    \]
    \newline
    Then for all $t\geq q+\varsigma+\frac{\beta}{3}$, $X = \sum^{n}_{i=1}X_i$ satisfies 
    \[
    \P\left(\|X\|\geq t\right) \leq n\delta+4\frac{\textup{trace}(\mathcal V)}{\|\mathcal V\|}\exp\left(\frac{-(t-q)^2/2}{\varsigma^2+\beta(t-q)/3}\right).
    \]
\end{lemma}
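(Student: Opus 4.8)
The plan is to reduce the statement to the standard intrinsic-dimension Matrix Bernstein inequality (e.g. Tropp, \emph{An Introduction to Matrix Concentration Inequalities}, Theorem 7.3.1) applied to the truncated summands $Y_i = X_i \mathbf{1}_{\mathcal E_i}$, and then transfer the bound back to $X = \sum_i X_i$ by a union bound over the failure events $\mathcal E_i^c$. The key point is that on the event $\bigcap_i \mathcal E_i$, which has probability at least $1 - n\delta$, we have $X = Y$, so $\P(\|X\| \geq t) \leq \P(\|Y\| \geq t) + n\delta$; the whole game is therefore to control $\P(\|Y\| \geq t)$.

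First I would record the deterministic consequences of truncation. On $\mathcal E_i$ we have $\|X_i\| \leq \beta$, hence $\|Y_i\| \leq \beta$ almost surely. Also $Y_i Y_i^T = X_i X_i^T \mathbf{1}_{\mathcal E_i} \preceq V_{1,i}$ and likewise $Y_i^T Y_i \preceq V_{2,i}$ almost surely (these hold trivially on $\mathcal E_i^c$ since $Y_i = 0$, and by hypothesis on $\mathcal E_i$). Consequently the centered variables $\bar Y_i := Y_i - \E Y_i$ satisfy $\|\bar Y_i\| \leq \beta + q \leq 2\beta$-type bounds — more carefully, I would just use $\|\bar Y_i\| \leq \|Y_i\| + \|\E Y_i\|$ and fold the $\|\E Y_i\|$ contribution into the shift, or absorb it into the effective uniform bound; the cleanest route is to apply Matrix Bernstein to $\bar Y = Y - \E Y$ with uniform bound $\beta$ on $\|\bar Y_i\|$ after noting $\E Y_i$ is itself small. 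Second, for the matrix variance I would use that, by the standard sub-additivity, $\sum_i \E[\bar Y_i \bar Y_i^T] \preceq \sum_i \E[Y_i Y_i^T] \preceq \sum_i V_{1,i} = V_1$ (dropping the $-\E Y_i (\E Y_i)^T$ term, which is $\preceq 0$), and similarly for the transpose side, so the relevant block-diagonal variance proxy is dominated by $\mathcal V$, with $\|\mathcal V\| = \varsigma^2$ and $\mathrm{trace}(\mathcal V)/\|\mathcal V\|$ the intrinsic dimension. The bound $\|\E Y\| = \|\sum_i \E Y_i\| \leq \sum_i \|\E Y_i\| \leq q$ (this is the hypothesis as literally stated, reading $q$ as the bound on $\|\E Y\|$ rather than on each $\|\E Y_i\|$ — I would flag the convention) lets me replace the threshold $t$ for $\|Y\|$ by the threshold $t - q$ for $\|\bar Y\|$ via the triangle inequality $\|Y\| \leq \|\bar Y\| + \|\E Y\|$.

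Putting these together: $\P(\|X\| \geq t) \leq n\delta + \P(\|\bar Y\| \geq t - q)$, and the intrinsic-dimension Matrix Bernstein inequality gives, for $t - q \geq \varsigma + \beta/3$ (which is exactly the stated range $t \geq q + \varsigma + \beta/3$),
\[
\P(\|\bar Y\| \geq t - q) \leq 4\,\frac{\mathrm{trace}(\mathcal V)}{\|\mathcal V\|}\,\exp\!\left(\frac{-(t-q)^2/2}{\varsigma^2 + \beta(t-q)/3}\right),
\]
which yields the claim. The main obstacle — really the only subtle point — is the bookkeeping around $\E Y_i$: truncation destroys the mean-zero property, since $\E Y_i = \E[X_i \mathbf 1_{\mathcal E_i}] = -\E[X_i \mathbf 1_{\mathcal E_i^c}] \neq 0$ in general, so one must carefully track how the small but nonzero bias $q$ enters, ensure the uniform norm bound and variance bound on the \emph{centered} truncated variables are only mildly degraded, and confirm the threshold range stated in the lemma is precisely what the centered Bernstein bound requires. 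Everything else is a direct invocation of the known inequality plus a union bound.
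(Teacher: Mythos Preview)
Your proposal is correct and follows essentially the same route as the paper: truncate, verify the variance and uniform-bound hypotheses for the centered $Y_i$'s, apply the intrinsic-dimension Matrix Bernstein inequality (Tropp, Theorem~7.3.1) to $Y-\E[Y]$, and transfer the bound to $X$ via the union bound $\P(X\neq Y)\leq n\delta$. You also correctly flag the ambiguity in whether $q$ bounds each $\|\E Y_i\|$ or $\|\E Y\|$ --- the paper's own proof uses the latter reading.
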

\begin{proof}
    The argument consists of relating $\P\left(\|X\|\geq t\right)$ to $\P\left(\|Y\|\geq t\right)$, the latter of which is easily bounded. 
    Indeed, from the definition of the $\mathcal E_i$'s, it is easily seen that
    \[
    \|Y_i\|\leq \beta,~\E\left[(Y-\E[Y])(Y-\E[Y])^{T}\right]\preceq V_1,~\E\left[(Y-\E[Y])^{T}(Y-\E[Y])\right]\preceq V_2. 
    \]
    Consequently, the intrinsic dimension Matrix Bernstein inequality \cite[Theorem~7.3.1]{tropp2015introduction} implies for any $s\geq \varsigma+\beta/3$, that
    \begin{equation}
    \label{eq:trunc_bern_bound}
    \P\left(\|Y-\E[Y]\|\geq s\right) \leq 4\frac{\textup{trace}(\mathcal V)}{\|\mathcal V\|}\exp\left(\frac{-s^2/2}{\varsigma^2+\beta s/3}\right).
    \end{equation}
     
    We now relate the tail probability of $\|X\|$ to the tail probability of $\|Y\|$. 
    To this end, the law of total probability implies
    \begin{align*}
        &\P\left(\|X\|\geq t\right) = \P\left(\|Y\|\geq t|X = Y\right)\mathbb{P}(X = Y)\\
        &+\P\left(\|X\|\geq t|X \neq Y\right)\mathbb{P}(X \neq Y) \\
        &\leq \P\left(\|Y\|\geq t|X = Y\right)+\P\left(\bigcup_{i=1}^{n}\mathcal E^{\complement}_i\right)\\ 
        &\leq \P\left(\|X\|\geq t|X = Y\right)+n\delta, 
    \end{align*}
    where the third inequality follows from $\{X\neq Y\}\subset \bigcup_{i=1}^{n}\mathcal E^{\complement}_i$, and the last inequality uses $\P\left(\bigcup_{i=1}^{n}\mathcal E^{\complement}_i\right)\leq \sum_{i=1}^{n}\left(1-\P(\mathcal E_i)\right)\leq n \delta$. 
    To bound $\P\left(\|X\|\geq t|X = Y\right)$, observe that
    \[
    \|X\| \leq \|X-\E[Y]\|+\|\E[Y]\| \leq \|X-\E[Y]\|+q,
    \]
    which implies
    \begin{align*}
    &\P\left(\|X\|\geq t|X = Y\right) \leq \P\left(\|X-\E[Y]\|+q\geq t|X = Y\right)\\
        & = \P\left(\|Y-\E[ Y]\|\geq t-q|X = Y\right).
    \end{align*}
    Inserting this last display into our bound for $\P\left(\|X\|\geq t\right)$ , we find
    \begin{align*}
        \P\left(\|X\|\geq t\right) &\leq n\delta+\P\left(\|Y-\E[ Y]\|\geq t-q|X = Y\right). \\
    \end{align*}
    To conclude, we apply $\eqref{eq:trunc_bern_bound}$ with $s = t-q$ to obtain
    \begin{align*}
        \P\left(\|X\|\geq t\right)\leq n\delta +4\frac{\textup{trace}(\mathcal V)}{\|\mathcal V\|}\exp\left(\frac{-(t-q)^2/2}{\varsigma^2+\beta(t-q)/3}\right),
    \end{align*}
    for all $t\geq q+\varsigma+\beta/3$.
\end{proof}

\begin{lemma}[Bounded statistical leverage]
\label{lemma:stat_lev}
    Let $D_{\infty}^{\rho} = H_{\infty}(w)^{1/2}H^{\rho}_{\infty}(w)^{-1}H_{\infty}(w)^{1/2}$, and set $\bar d^{\rho}_{\textup{eff}}(H_{\infty}^{\rho}(w)) = \max\{d^{\rho}_{\textup{eff}}(H_{\infty}^{\rho}(w)),1\}$. Then for some absolute constant $C>0$, the random vector $z$ satisfies
    \[
    \mathbb{P}\left\{\left\|\sqrt{\ell''(x^{T}w)}x\right\|_{H^{\rho}_{\infty}(w)^{-1}}^2>C\bar d^{\rho}_{\textup{eff}}(H_{\infty}(w))\log\left(\frac{1}{\delta}\right)\right\}\leq \delta.
    \]
\end{lemma}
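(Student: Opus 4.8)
The plan is to recognize the left-hand quantity as a quadratic form in the isotropic sub-Gaussian vector $z$ and to control it with a tail inequality for quadratic forms of sub-Gaussian vectors. First I would rewrite the quantity of interest: since $\sqrt{\ell''(x^{T}w)}\,x = H_\infty(w)^{1/2}z$ by the definition of $z$,
\[
\bigl\|\sqrt{\ell''(x^{T}w)}\,x\bigr\|_{H^{\rho}_\infty(w)^{-1}}^{2}
= z^{T} H_\infty(w)^{1/2}H^{\rho}_\infty(w)^{-1}H_\infty(w)^{1/2} z
= z^{T} D_{\infty}^{\rho} z ,
\]
so the event in the statement is exactly $\bigl\{\, z^{T} D_{\infty}^{\rho} z > C\bar d^{\rho}_{\textup{eff}}(H_\infty(w))\log(1/\delta)\,\bigr\}$. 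Next I would record the moments that enter the tail bound. Because $H_\infty(w) = \E_{x\sim\P}[\ell''(x^{T}w)xx^{T}]$, the vector $z$ is isotropic, $\E[zz^{T}] = H_\infty(w)^{-1/2}H_\infty(w)H_\infty(w)^{-1/2} = I$, so $\E\bigl[z^{T} D_{\infty}^{\rho} z\bigr] = \tr(D_{\infty}^{\rho}) = \tr\bigl(H_\infty(w)H^{\rho}_\infty(w)^{-1}\bigr) = d^{\rho}_{\textup{eff}}(H_\infty(w))$. Moreover $D_{\infty}^{\rho}$ is similar to $H_\infty(w)H^{\rho}_\infty(w)^{-1}$, whose eigenvalues are $\lambda_i(H_\infty(w))/(\lambda_i(H_\infty(w))+\rho)\in[0,1)$; hence $0\preceq D_{\infty}^{\rho}\preceq I$, so $\|D_{\infty}^{\rho}\|\le 1$ and $\|D_{\infty}^{\rho}\|_{F}^{2} = \sum_i \bigl(\tfrac{\lambda_i(H_\infty(w))}{\lambda_i(H_\infty(w))+\rho}\bigr)^{2}\le \sum_i \tfrac{\lambda_i(H_\infty(w))}{\lambda_i(H_\infty(w))+\rho} = d^{\rho}_{\textup{eff}}(H_\infty(w))$.

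With these estimates I would invoke a standard tail inequality for quadratic forms of sub-Gaussian vectors (Hsu--Kakade--Zhang): for a $\nu$-sub-Gaussian vector $z$ and $A$ with $\Sigma = A^{T}A$, one has, for every $t>0$, $\P\bigl(\|Az\|^{2} > \nu^{2}(\tr\Sigma + 2\sqrt{\tr(\Sigma^{2})\,t} + 2\|\Sigma\|t)\bigr)\le e^{-t}$ (absorbing a universal constant into $\nu$ to handle a possible nonzero mean of $z$). Applying this with $A = (D_{\infty}^{\rho})^{1/2}$, so that $\|Az\|^{2} = z^{T}D_{\infty}^{\rho}z$ and $\Sigma = D_{\infty}^{\rho}$, and using the bounds from the previous paragraph, gives
\[
\P\Bigl(z^{T} D_{\infty}^{\rho} z > \nu^{2}\bigl(d^{\rho}_{\textup{eff}}(H_\infty(w)) + 2\sqrt{d^{\rho}_{\textup{eff}}(H_\infty(w))\,t} + 2t\bigr)\Bigr)\le e^{-t}.
\]
Choosing $t = \log(1/\delta)$ and bounding $2\sqrt{d^{\rho}_{\textup{eff}}(H_\infty(w))\,t}\le d^{\rho}_{\textup{eff}}(H_\infty(w)) + t$, the threshold is at most $\nu^{2}\bigl(2d^{\rho}_{\textup{eff}}(H_\infty(w)) + 3\log(1/\delta)\bigr)\le 5\nu^{2}\,\bar d^{\rho}_{\textup{eff}}(H_\infty(w))\log(1/\delta)$, where the last inequality uses $\bar d^{\rho}_{\textup{eff}}(H_\infty(w))\ge\max\{d^{\rho}_{\textup{eff}}(H_\infty(w)),1\}$ together with $\log(1/\delta)\ge 1$. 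Taking $C = 5\nu^{2}$ yields the claim. (For $\delta$ not bounded away from $1$ one replaces $\log(1/\delta)$ by $\log(2/\delta)$ throughout, which is bounded below by $\log 2$, and adjusts $C$ accordingly.)

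The only genuine subtlety is the concentration step: the coordinates of $z$ need not be independent, so classical Hanson--Wright does not apply verbatim, and I would rely instead on the quadratic-form bound for general sub-Gaussian vectors (equivalently, a decoupling/$\varepsilon$-net argument over the eigenbasis of $D_{\infty}^{\rho}$, treating $z^{T}D_{\infty}^{\rho}z = \sum_i \mu_i\langle v_i,z\rangle^{2}$ as a weighted sum of squared sub-Gaussian, hence sub-exponential, coordinates). The remaining work is purely the bookkeeping that collapses $d^{\rho}_{\textup{eff}} + 2\sqrt{d^{\rho}_{\textup{eff}}\,t} + 2t$ into a clean multiple of $\bar d^{\rho}_{\textup{eff}}\log(1/\delta)$; the appearance of the truncated effective dimension $\bar d^{\rho}_{\textup{eff}}$ rather than $d^{\rho}_{\textup{eff}}$ is exactly to absorb the regime $d^{\rho}_{\textup{eff}}(H_\infty(w)) < 1$.
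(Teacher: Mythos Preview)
Your proof is correct and follows essentially the same approach as the paper: rewrite the quantity as $z^{T}D_{\infty}^{\rho}z$, note $\tr(D_{\infty}^{\rho})=d^{\rho}_{\textup{eff}}(H_{\infty}(w))$ and $\|D_{\infty}^{\rho}\|\le 1$, and apply the Hsu--Kakade--Zhang tail inequality for quadratic forms of sub-Gaussian vectors to conclude with $C=5\nu^{2}$. The only cosmetic difference is that the paper plugs in $t=\bar d^{\rho}_{\textup{eff}}(H_{\infty}(w))\log(1/\delta)$ directly, whereas you take $t=\log(1/\delta)$ and then absorb the effective-dimension factor afterwards; both routes yield the same constant.
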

\begin{proof}
Recall that $\sqrt{\ell''(x^{T}w)}x = H^{1/2}_{\infty}(w)z$, so that $\|\sqrt{\ell''(x^{T}w)}x\|_{H^{\rho}_{\infty}(w)^{-1}}^2 = \|z\|_{D^{\rho}_{\infty}}^2$.
As $z$ is $\nu$-sub-Gaussian and $\textrm{trace}(D_{\infty}^{\rho}) = d^{\rho}_{\textup{eff}}(H_{\infty}(w))$, Theorem 2.1 of \cite{hsu2012tail} with $\Sigma = D_{\infty}^{\rho}$ implies that
    \[
    \mathbb{P}\left\{\|z\|_{D_{\infty}^{\rho}}^2>\nu^2\left(d^{\rho}_{\textup{eff}}(H_{\infty}(w))+2\sqrt{d^{\rho}_{\textup{eff}}(H(w))t}+2t\right)\right\}\leq \exp(-t).
    \]
Setting $t = \bar d^{\rho}_{\textup{eff}}(H_{\infty}^{\rho}(w))\log(1/\delta)$, we obtain the desired claim with $C = 5\nu^2$.
\end{proof}

\begin{lemma}[Empirical Hessian concentration]
\label{lemma:AppxPopHess}
    Suppose $n = \tilde{\Omega}\left[\bar d^{\rho}_{\textup{eff}}(H_{\infty}(w))\log(n/\delta)\right]$, then 
    \[
    \left\|H_{\infty}^{\rho}(w)^{-1/2}\left[H(w)-H_{\infty}(w)\right]H_{\infty}^{\rho}(w)^{-1/2}\right\|\leq 1/2,
    \]
    with probability at least $1-\delta/n$.
\end{lemma}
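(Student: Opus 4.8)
The plan is to reduce to the GLM setting of \cref{prop:tau_rho} and apply the truncated intrinsic‑dimension matrix Bernstein inequality of \cref{lemma:trunc_bern}. Writing $f_i(w)=\ell(x_i^Tw)$ so that $H(w)=\frac1n\sum_{i=1}^n\ell''(x_i^Tw)x_ix_i^T$ and $H_\infty(w)=\E_{x\sim\P}[\ell''(x^Tw)xx^T]$, conjugate by $H^{\rho}_{\infty}(w)^{-1/2}$ and set $v_i=H^{\rho}_{\infty}(w)^{-1/2}\sqrt{\ell''(x_i^Tw)}\,x_i$. Since the $\rho I$ terms cancel, $H^{\rho}_{\infty}(w)^{-1/2}\big(H(w)-H_\infty(w)\big)H^{\rho}_{\infty}(w)^{-1/2}=\frac1n\sum_{i=1}^n\big(v_iv_i^T-D^{\rho}_\infty\big)=:X=\sum_{i=1}^n X_i$, where $D^{\rho}_\infty=H_\infty(w)^{1/2}H^{\rho}_\infty(w)^{-1}H_\infty(w)^{1/2}$ as in \cref{lemma:stat_lev}, $\E[v_1v_1^T]=D^{\rho}_\infty\preceq I$, and $X_i=\frac1n(v_iv_i^T-D^{\rho}_\infty)$ are i.i.d.\ and mean zero. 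The goal is $\|X\|\le 1/2$ with probability at least $1-\delta/n$.

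Next I would build the truncation. Apply \cref{lemma:stat_lev} with failure probability $\delta':=\delta/(2n^2)$: with probability at least $1-\delta'$, $\|v_i\|^2=\|\sqrt{\ell''(x_i^Tw)}x_i\|^2_{H^{\rho}_{\infty}(w)^{-1}}\le R:=C\bar d^{\rho}_{\textup{eff}}(H_\infty(w))\log(2n^2/\delta)$. On this event $\|X_i\|\le R/n=:\beta$, and the relevant second moment is controlled: $\E\big[\|v_1\|^2 v_1v_1^T\,\mathbf 1_{\{\|v_1\|^2\le R\}}\big]\preceq R\,\E[v_1v_1^T]=R\,D^{\rho}_\infty$, so after centering $\E[X_iX_i^T\mathbf 1]\lesssim \frac{R}{n^2}D^{\rho}_\infty$. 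Accordingly I take $\mathcal E_i$ to be the leverage event intersected with the Loewner conditions $\{X_iX_i^T\preceq V_{1,i},\,X_i^TX_i\preceq V_{2,i}\}$, with $V_{1,i}=V_{2,i}$ chosen of operator norm $\bigO(R/n^2)$ so that $\varsigma^2=\max\{\|V_1\|,\|V_2\|\}=\bigO(R/n)$ and $\operatorname{trace}(\mathcal V)/\|\mathcal V\|=\bigO(\operatorname{trace}(D^{\rho}_\infty)/\|D^{\rho}_\infty\|)=\bigO(d^{\rho}_{\textup{eff}}(H_\infty(w)))$; this is the step that replaces the ambient dimension $p$ by the effective dimension. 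Finally the truncation bias is negligible: since $\E[X_i]=0$, $q=\|\E[Y_i]\|=\|\E[X_i\mathbf 1_{\mathcal E_i^{\complement}}]\|\le\sqrt{\E\|X_i\|^2}\,\sqrt{\P(\mathcal E_i^{\complement})}$, and $\E\|X_i\|^2$ is finite and at most polynomial in $n$ (by $\nu$‑sub‑Gaussianity of $z$), while $\P(\mathcal E_i^{\complement})\le\delta'$ is tiny, so $q\le 1/4$ once $n$ is moderately large.

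Now invoke \cref{lemma:trunc_bern} with $t=1/2$. For $n=\tilde{\Omega}\big(\bar d^{\rho}_{\textup{eff}}(H_\infty(w))\log(n/\delta)\big)$, all of $\beta,\varsigma,q$ are $o(1)$, so $t=1/2\ge q+\varsigma+\beta/3$ holds, and the lemma gives $\P(\|X\|\ge 1/2)\le n\delta'+4\,\tfrac{\operatorname{trace}(\mathcal V)}{\|\mathcal V\|}\exp\!\big(\tfrac{-(1/2-q)^2/2}{\varsigma^2+\beta(1/2-q)/3}\big)$. The first term equals $\delta/(2n)$ by the choice of $\delta'$. In the exponential, the numerator is $\ge 1/32$ and the denominator is $\bigO(R/n)=\bigO(\bar d^{\rho}_{\textup{eff}}(H_\infty(w))\log(n/\delta)/n)$, so the exponent is $\le -c\,n/(\bar d^{\rho}_{\textup{eff}}(H_\infty(w))\log(n/\delta))$; multiplying by the intrinsic‑dimension prefactor $\bigO(d^{\rho}_{\textup{eff}}(H_\infty(w)))$ and forcing the product below $\delta/(2n)$ yields exactly $n=\tilde{\Omega}\big(\bar d^{\rho}_{\textup{eff}}(H_\infty(w))\log(n/\delta)\big)$, the extra $\log\!\big(d^{\rho}_{\textup{eff}}(H_\infty(w))\,n/\delta\big)$ from inverting the exponential being absorbed into the $\tilde{\Omega}$. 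Adding the two contributions gives $\P(\|X\|\ge 1/2)\le\delta/n$, which is the claim.

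The main obstacle is the truncation and variance bookkeeping, not the concentration itself. One must choose the events $\mathcal E_i$ and the proxy matrices $V_{1,i},V_{2,i}$ so that simultaneously (i) the per‑sample operator‑norm bound $\|X_i\|\le\beta$ holds on $\mathcal E_i$ with $\beta=\bigO(R/n)$, (ii) the variance $\varsigma^2=\bigO(R/n)$ and the intrinsic dimension $\operatorname{trace}(\mathcal V)/\|\mathcal V\|=\bigO(d^{\rho}_{\textup{eff}}(H_\infty(w)))$ — using the rank‑one structure of $v_iv_i^T$ together with the regularized‑leverage bound of \cref{lemma:stat_lev}, so that the effective dimension appears in place of $p$ — and (iii) the truncation bias $q$ remains negligible, which is what forces $\P(\mathcal E_i^{\complement})=\tilde O(\delta/n^2)$ and is the source of the $\log(n/\delta)$ factor in the stated sample complexity. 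Once these are in hand, the remainder is the routine self‑referential solve for $n$ in a matrix‑Bernstein tail bound.
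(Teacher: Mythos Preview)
Your proposal is correct and follows essentially the same route as the paper: decompose the conjugated difference as $\sum_i X_i$ with $X_i=\tfrac1n(v_iv_i^T-D^{\rho}_\infty)$, truncate on the regularized-leverage event from \cref{lemma:stat_lev} at level $\delta/(2n^2)$, use $V_{1,i}=V_{2,i}\propto D^{\rho}_\infty$ so that $\operatorname{trace}(\mathcal V)/\|\mathcal V\|=\bigO(d^{\rho}_{\textup{eff}}(H_\infty(w)))$, and invoke \cref{lemma:trunc_bern}. The only notable difference is that you explicitly bound the truncation bias $q$ via Cauchy--Schwarz and sub-Gaussianity, whereas the paper silently absorbs this into the choice of $t$; your treatment is if anything more careful on that point.
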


\begin{proof}
We begin by writing
\begin{align*}
    H_{\infty}^{\rho}(w)^{-1/2}\left[H(w)-H_{\infty}(w)\right]H_{\infty}^{\rho}(w)^{-1/2} = \frac{1}{n}\sum_{i=1}^{n}\left(Z_iZ_i^{T}-D_{\infty}^{\rho}\right).
\end{align*}
where $Z_i = H_{\infty}^{\rho}(w)^{-1/2}\sqrt{\ell''(x^{T}w)}x_i$ and $D_{\infty}^{\rho} =  H_{\infty}^{\rho}(w)^{-1/2} H_{\infty}(w)H_{\infty}^{\rho}(w)^{-1/2}$.
Set $X_i = \frac{1}{n}\left(Z_iZ_i^{T}-D_{\infty}^{\rho}\right)$, and observe that $\E[X_i] = 0$.
We seek to apply \cref{lemma:trunc_bern}, to this end observe that
\cref{lemma:stat_lev} implies
\[
    \max_{i\in [n]} \mathbb{P}\left(\left\|\sqrt{\ell''(x_i^{T}w)}x_i\right\|_{H^{\rho}_{\infty}(w)^{-1}}^2>C\bar d^{\rho}_{\textup{eff}}(H_{\infty}(w))\log\left(\frac{2n}{\delta}\right)\right)\leq \frac{\delta}{2n^2}.
\]
Consequently, we obtain the following bounds on $\|X_i\|$ and $\E[X_i^2]$:
 \begin{align*}
 & \|X_i\| = \frac{1}{n}\max\left\{\lambda_{\textrm{max}}\left(Z_iZ_i^{T}-D^{\rho}_{\infty}\right),-\lambda_{\textrm{min}}\left(Z_iZ_i^{T}-D^{\rho}_{\infty}\right)\right\}\\ 
 &\leq \frac{1}{n}\max\left\{\|Z_i\|^2, \lambda_{\textrm{max}}(D_\infty^{\rho})\right\}\leq \frac{1}{n}\max\{\|Z_i\|^2,1\} \\
 &= \frac{1}{n}\max\left\{\left\|\sqrt{\ell''(x^{T}w)}x_i\right\|^2_{H^{\rho}_{\infty}(w)^{-1}},1\right\}\leq \frac{C\bar d^{\rho}_{\textup{eff}}(H_{\infty}(w))\log\left(\frac{2n}{\delta}\right)}{n}, 
 \end{align*}
 and 
 \begin{align*}
 \E[X_i^2] = \frac{1}{n^2}\E[\|Z_i\|^2Z_iZ_i^{T}]\preceq \frac{C\bar d^{\rho}_{\textup{eff}}(H_{\infty}(w))\log\left(\frac{2n}{\delta}\right)}{n^2}D^{\rho}_{\infty} ,    
 \end{align*}
 Hence setting $\beta = C\bar d^{\rho}_{\textup{eff}}(H_{\infty}(w))\log\left(\frac{2n}{\delta}\right)$ and $V_{i} = \beta D^{\infty}_{\rho}$, it follows immediately from the preceding considerations that
\[
\max_{i\in [n]} \P\left(\|X_i\|\leq \beta/n, \E[X_i^{2}]\preceq \frac{1}{n^2}V_i \right) \geq 1-\delta/(2n^2).
\]
As $V_1 = V_2 = V = \beta D^{\rho}_{\infty}/n$, it follows that $\|\mathcal V\|\leq \beta/n$. 
Moreover,
\begin{align*}
& \textrm{trace}(\mathcal V)/\|\mathcal V\| = \textrm{trace}(V)/\|V\| = \textrm{trace}(D_{\infty}^{\rho})/\|D_{\infty}^{\rho}\| \\
&= d^{\rho}_{\textrm{eff}}(H_{\infty}(w))\left[1+\rho/\lambda_{1}(H_{\infty}(w))\right]\leq 2 d^{\rho}_{\textrm{eff}}(H_{\infty}(w)),
\end{align*}
where the last inequality follows as $\rho\leq \lambda_{1}(H_{\infty}(w))$.
Thus, we can invoke \cref{lemma:trunc_bern} with
\[
t =  C\left(\sqrt{\frac{\beta\log\left(\frac{d^{\rho}_{\textrm{eff}}(H_{\infty}(w))}{\delta}\right)}{n}}+\frac{\beta\log\left(\frac{d^{\rho}_{\textrm{eff}}(H_{\infty}(w))}{\delta}\right)}{n}\right),
\]
to reach with probability at least $1-\delta/(2n)$ that
\begin{align*}
&\left\|H_{\infty}^{\rho}(w)^{-1/2}\left[H(w)-H_{\infty}(w)\right]H_{\infty}^{\rho}(w)^{-1/2}\right\| \\
&\leq C\left(\sqrt{\frac{\beta\log\left(\frac{d^{\rho}_{\textrm{eff}}(H_{\infty}(w))}{\delta}\right)}{n}}+\frac{\beta\log\left(\frac{d^{\rho}_{\textrm{eff}}(H_{\infty}(w))}{\delta}\right)}{n}\right).
\end{align*}
Recalling $\beta = \bigO\left(\bar d^{{\rho}}_{\textrm{eff}}(H_\infty(w))\log\left(\frac{n}{\delta}\right)\right)$ and $n = \Omega\left(\bar d^{{\rho}}_{\textrm{eff}}(H_\infty(w))\log\left(\frac{d^{\rho}_{\textrm{eff}}(H_{\infty}(w))}{\delta}\right)\log\left(\frac{n}{\delta}\right)\right)$, we conclude from the last display that 
\[
\P\left(\left\|H_{\infty}^{\rho}(w)^{-1/2}\left[H(w)-H_{\infty}(w)\right]H_{\infty}^{\rho}(w)^{-1/2}\right\|\leq 1/2\right)\geq 1-\delta/(2n).
\]
\end{proof}

\paragraph{Proof of \cref{prop:tau_rho}}
\begin{proof}
    Let $Z_i = H^{\rho}_{\infty}(w)^{-1/2}\sqrt{\ell''(x_i^{T}w)}x_i$ and observe
    the hypotheses on $n$, combined with \cref{lemma:stat_lev} and \cref{lemma:AppxPopHess} imply that
    \begin{align*}
    &\P\left(\max_{i\in [n]}\|Z_i\|^2 \leq C\bar d^{{\rho}}_{\textrm{eff}}(H_\infty(w))\log\left(\frac{n}{\delta}\right), \quad \frac{1}{2}H_{\infty}^{\rho}(w)\preceq H^{\rho}(w)\preceq \frac{3}{2}H_{\infty}^{\rho}(w)\right)\\
    &\geq 1-\delta/n.
    \end{align*}
    Combining the previous relation with matrix similarity we find,
    \begin{align*}
    & \lambda_1\left(H^{\rho}(w)^{-1/2}\nabla^2f^{\rho}_i(w)H^{\rho}(w)^{-1/2}\right) = \left(\nabla^2f^{\rho}_i(w)^{1/2}H^{\rho}(w)^{-1}\nabla^2f^{\rho}_i(w)^{1/2}\right) \\
    & \leq 2 \left(\nabla^2f^{\rho}_i(w)^{1/2}H_{\infty}^{\rho}(w)^{-1}\nabla^2f^{\rho}_i(w)^{1/2}\right) = 2\lambda_1\left(H_{\infty}^{\rho}(w)^{-1/2}\nabla^2f^{\rho}_i(w)H_{\infty}^{\rho}(w)^{-1/2}\right) \\
    &\leq 2+2\lambda_1\left(H_{\infty}^{\rho}(w)^{-1/2}\nabla^2f_i(w)H_{\infty}^{\rho}(w)^{-1/2}\right) = 2+2\lambda_1(Z_iZ_i^{T}) = 2+2\left\|Z_i\right\|^2\\
    &\leq C\bar d^{{\rho}}_{\textrm{eff}}(H_\infty(w))\log\left(\frac{n}{\delta}\right),
    \end{align*}
    Recalling that $\tau^{\rho}(H(w)) = \max_{i\in[n]} \lambda_1\left(H^{\rho}(w)^{-1/2}\nabla^2f^{\rho}_i(w)H^{\rho}(w)^{-1/2}\right)$, the last display and a union bound yield
    \[
    \P\left(\tau^{\rho}(H(w))\leq C\bar d^{{\rho}}_{\textrm{eff}}(H_\infty(w))\log\left(\frac{n}{\delta}\right)\right)\geq 1-\delta,
    \]
    as desired.
\end{proof}

\subsection{Proof of \cref{lemma:SubsampAppx}}
\label{subsection:subsamp_appx_pf}
\begin{proof}
The result is a consequence of a standard application of the intrinsic dimension Matrix Bernstein inequality.
Indeed, let $D^{\rho}= (H(w)+\rho I)^{-1/2}H(w)(H(w)+\rho I)^{-1/2}$ and $X_i = \frac{1}{b_h}\left(Z_{i}Z_{i}^{T}-D^{\rho}\right)$, where $Z_i = (H(w)+\rho I)^{-1/2}\nabla^2 f_i(w)^{1/2}$. 
Observe that $\E[X_i] = 0$, and set $X = \sum_{i} X_i$. 
To see that the conditions of the intrinsic dimension Matrix Bernstein inequality are met, note that $X_i$ and $\E[X^2]$ satisfy 
\begin{align*}
    & \|X_i\| = \frac{1}{b_h}\|Z_i\|^2 \leq \frac{\tau^{\rho}(H(w))}{b_h}, \\
    &  \E[X^2] \preceq \frac{1}{b^2_h}\sum_i\E[\|z_i\|^2 z_iz_{i}^{T}] \preceq \frac{\tau^{\rho}(H(w))}{b_h}D^{\rho} \coloneqq \mathcal V. 
\end{align*}
Moreover as $\rho \leq \lambda_1(H(w))$,
\[
\textrm{tr}(\mathcal V)/\|\mathcal V\|\leq 2 d_{\textrm{eff}}^{\rho}(H(w)).
\]
Thus, the intrinsic dimension Matrix Bernstein inequality \cite[Theorem~7.3.1]{tropp2015introduction} implies
\[
\P\left\{\|X\|\geq t \right\}\leq 8d_{\textrm{eff}}^{\rho}(H(w))\exp\left(-\frac{b_ht^2/2}{\tau^{\rho}(H(w))\left(1+t/3\right)}\right),
\]
for all $t\geq \sqrt{\tau^{\rho}(H(w))/b_h}+\tau^{\rho}(H(w))/(3b_h)$.
So, setting  
\[
t = \sqrt{\frac{4\tau^{\rho}(H(w))\log\left(\frac{8d_{\textrm{eff}}^{\rho}(H(w))}{\delta}\right)}{b_h}}+\frac{4}{3b_h}\tau^{\rho}(H(w))\log\left(\frac{8d_{\textrm{eff}}^{\rho}(H(w))}{\delta}\right),
\]
and $b_h = \bigO\left(\frac{\tau^{\rho}(H(w))\log\left(\frac{d_{\textrm{eff}}^{\rho}(H(w))}{\delta}\right)}{\zeta^2}\right)$, it holds that 
\[
\P\left(\|X\|\leq \frac{\zeta}{1+\zeta}\right)\geq 1-\delta.
\]
This last display immediately implies that
\[
\left(1-\frac{\zeta}{1+\zeta}\right)H^{\rho}(w)\preceq H^{\rho}_S(w) \preceq \left(1+\frac{\zeta}{1+\zeta}\right) H^{\rho}(w),
\]
which is equivalent to
    \[
    \left(1+\frac{\zeta}{1+\zeta}\right)^{-1}H_S^{\rho}\preceq H^{\rho}\preceq \left(1-\frac{\zeta}{1+\zeta}\right)^{-1}H^{\rho}_{S}. 
    \]
    The desired claim now follows from the last display, upon observing that 
    \[
    1-\zeta\leq \left(1+\frac{\zeta}{1+\zeta}\right)^{-1}\leq \left(1-\frac{\zeta}{1+\zeta}\right)^{-1}=1+\zeta.
    \]
\end{proof}
\subsection{Proof of \cref{prop:NysPrecondLem} and \cref{corr:NysLoewnUnionBnd}}
\label{subsection:NysPrecondPf}
In order to relate $\hat H^\rho_{S_k}$ to $H_{S_k}$, we need to control the error resulting from the low-rank approximation.  
To accomplish this, we recall the following result from \cite{zhao2022nysadmm}.
\begin{lemma}[Controlling low-rank approximation error, Lemma A.7 Zhao et al. (2022) \cite{zhao2022nysadmm}]
	\label{lemma:NysErrLemma}
	Let $\tau>0$ and $E = H_{S}-\hat{H}_{S}$. Construct a randomized Nystr{\"o}m approximation from a standard Gaussian random matrix $\Omega$ with rank $r = \bigO(d^{\tau}_{\textup{eff}}(H_{S})+\log(\frac{1}{\delta}))$. Then 
	\begin{equation}
		\mathbb P \left(\|E\|\leq \tau\right)\geq 1-\delta.
	\end{equation}
\end{lemma}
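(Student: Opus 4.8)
The statement is the standard operator-norm error bound for the randomized Nyström approximation, and I would prove it by combining a \emph{deterministic structural bound} with Gaussian concentration, choosing the split rank according to the effective dimension. Write $A := H_{S}$ and $\hat A := \hat H_{S} = (A\Omega)(\Omega^{\top}A\Omega)^{\dagger}(A\Omega)^{\top}$ for the rank-$r$ Nyström approximation generated by the Gaussian test matrix $\Omega$. Because $\hat A$ is the restriction of $A$ onto $\range(A\Omega)$ in the $A$-geometry, the error $E = A-\hat A$ is positive semidefinite, so $\|E\| = \lambda_1(E)$ and it suffices to produce an upper Loewner bound on $E$.

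The first step is deterministic. Fix any split rank $\ell<r$ and partition the eigendecomposition $A = U\Lambda U^{\top}$ as $\Lambda = \diag(\Lambda_1,\Lambda_2)$, $U=[U_1\ U_2]$, with $\Lambda_1$ holding the top $\ell$ eigenvalues. Setting $\Omega_1 = U_1^{\top}\Omega$ and $\Omega_2 = U_2^{\top}\Omega$, the Gittens--Mahoney structural bound gives, on the event that $\Omega_1$ has full row rank,
\[
\|E\| \le \lambda_{\ell+1}(A) + \bigl\|\Lambda_2^{1/2}\Omega_2\Omega_1^{\dagger}\bigr\|^2 \le \lambda_{\ell+1}(A) + \frac{\|\Lambda_2^{1/2}\Omega_2\|^2}{\sigma_{\min}(\Omega_1)^2}.
\]
Next I would tie the split rank to the effective dimension by choosing $\ell = \lceil 2\,d^{\tau}_{\textup{eff}}(A)\rceil$. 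Two elementary spectral calculations from $d^{\tau}_{\textup{eff}}(A) = \sum_i \lambda_i/(\lambda_i+\tau)$ show that at most $2\,d^{\tau}_{\textup{eff}}(A)$ eigenvalues exceed $\tau$, hence $\lambda_{\ell+1}(A)\le\tau$, and that the tail trace obeys $\tr(\Lambda_2)=\sum_{j>\ell}\lambda_j(A)\le 2\tau\,d^{\tau}_{\textup{eff}}(A)\le\tau\ell$. This is precisely where the effective dimension enters: it simultaneously caps the threshold eigenvalue and the residual tail energy at the scale $\tau$.

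The third step is probabilistic. Since $\Omega$ is Gaussian, $\Omega_1\in\R^{\ell\times r}$ is a wide Gaussian matrix, so $\sigma_{\min}(\Omega_1)\ge \sqrt r-\sqrt\ell - t$ fails with probability at most $e^{-t^2/2}$ (which also secures the full-row-rank event presupposed above), and $\|\Lambda_2^{1/2}\Omega_2\|^2$ concentrates around $r\lambda_{\ell+1}(A)+\tr(\Lambda_2)$ with deviations controlled by a Gaussian/intrinsic-dimension matrix concentration inequality. Taking the oversampling $r=\bigO(\ell+\log(1/\delta))=\bigO(d^{\tau}_{\textup{eff}}(A)+\log(1/\delta))$ guarantees both events with probability at least $1-\delta$ and forces $\sigma_{\min}(\Omega_1)^2=\Omega(r)$. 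Substituting the bounds from the second step then yields $\|\Lambda_2^{1/2}\Omega_2\Omega_1^{\dagger}\|^2\lesssim (r\tau+\tau\ell)/r=\bigO(\tau)$, whence $\|E\|=\bigO(\tau)$; absorbing the absolute constants into the choice of $r$ (equivalently, rescaling the split point) delivers $\|E\|\le\tau$ with probability at least $1-\delta$.

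The main obstacle is the third step: securing the correct \emph{additive} $\log(1/\delta)$ dependence in $r$ rather than a multiplicative one. The numerator $\|\Lambda_2^{1/2}\Omega_2\|^2$ couples the entire tail spectrum with the Gaussian matrix, so a naive operator-norm bound overcounts the many small tail eigenvalues; the remedy is to bound it through its mean $r\lambda_{\ell+1}(A)+\tr(\Lambda_2)$ plus a sub-exponential deviation whose tail depends on $\tr(\Lambda_2)$ and $\lambda_{\ell+1}(A)$ \emph{separately}, so that $\log(1/\delta)$ enters only through this deviation and through the smallest-singular-value estimate for $\Omega_1$. Carefully tracking these two sources of randomness, and verifying the full-row-rank event on which the structural bound rests, is the delicate part; the remainder is bookkeeping with the effective-dimension inequalities.
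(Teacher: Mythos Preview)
The paper does not actually prove this lemma: it is quoted verbatim as Lemma~A.7 of Zhao et al.~(2022) and invoked as a black box in the proof of \cref{prop:NysPrecondLem}. So there is no ``paper's own proof'' to compare against.

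That said, your sketch is the standard route to this kind of Nystr\"om error bound and is essentially correct. The ingredients---the Gittens--Mahoney structural inequality $\|E\|\le \lambda_{\ell+1}+\|\Lambda_2^{1/2}\Omega_2\Omega_1^\dagger\|^2$, the two effective-dimension facts $\lambda_{\ell+1}\le\tau$ and $\tr(\Lambda_2)\le 2\tau\,d^{\tau}_{\textup{eff}}$ when $\ell=\lceil 2d^{\tau}_{\textup{eff}}\rceil$, Davidson--Szarek for $\sigma_{\min}(\Omega_1)$, and a Gaussian operator-norm bound $\|\Lambda_2^{1/2}\Omega_2\|\lesssim \sqrt{r\lambda_{\ell+1}}+\sqrt{\tr(\Lambda_2)}$ for the numerator---are exactly the ones used in the literature (including the cited reference), and your accounting of how $\log(1/\delta)$ enters additively through the two tail events is accurate. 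The only place to be careful is the one you already flag: the concentration for $\|\Lambda_2^{1/2}\Omega_2\|$ must go through its intrinsic-dimension form (e.g.\ via Gordon/Latala or a Hanson--Wright argument) rather than a crude $\|\Lambda_2^{1/2}\|\|\Omega_2\|$ split, or the tail eigenvalues will spoil the constant. With that in hand the bookkeeping closes as you describe.
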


\paragraph{Proof of \cref{prop:NysPrecondLem}}
\begin{proof}
Let $E =  H_{S}-\hat{H}_{S}$, and note by the properties of the Nystr{\"o}m approximation that $E\succeq 0$ \cite{tropp2017fixed,frangella2021randomized}. 
Now by our hypothesis on $r$, it follows from \cref{lemma:NysErrLemma}, that  
\[\mathbb P\left(\|E\|\leq \zeta \rho/4\right)\geq 1-\delta/2.\]
Using the decomposition $H_{S}^{\rho} = P+E$, we apply Weyl's inequalities to find 
\begin{align*}
&\lambda_{1}(P^{-1/2} H_{S}^{\rho} P^{-1/2}) \leq  \lambda_{1}\left(P^{-1/2}\hat{H}^{\rho}_{S}P^{-1/2}\right)+\lambda_{1}\left(P^{-1/2}EP^{-1/2}\right) = \\
& 1+\|P^{-1/2}E P^{-1/2}\| \leq 1+\|P^{-1}\|\|E\|\leq 1+\frac{\|E\|}{\rho}\leq 1+\zeta/4.
\end{align*}
Moreover as $E\succeq 0$ we have $H^{\rho}_S\succeq P$, so conjugation yields $P^{-1/2} H_{S}^{\rho} P^{-1/2}\succeq I_{p}$.
The preceding inequality immediately yields $\lambda_{p}(P^{-1/2} H_{S}^{\rho} P^{-1/2})\geq 1.$
Hence
\[1\leq\lambda_{p}(P^{-1/2} H_{S}^{\rho} P^{-1/2})\leq \lambda_{1}(P^{-1/2} H_{S}^{\rho} P^{-1/2})\leq 1+\zeta/4.\]
As an immediate consequence of this last display, we obtain the Loewner ordering relation
\[P\preceq H_{S}^{\rho} \preceq (1+\zeta/4)P.\]
Now, \cref{lemma:SubsampAppx} and a union bound implies that
\[
\P\left(P\preceq H_{S}^{\rho} \preceq (1+\zeta/4)P,~(1-\zeta/4)H_S^{\rho}\preceq H^{\rho}\preceq (1+\zeta/4)H^{\rho}_{S}\right)\geq 1-\delta.
\]
Combining these relations and using that $(1+\zeta/4)^2\leq 1+\zeta$ for $\zeta\in(0,1)$, we find
\[
(1-\zeta)P\preceq H^{\rho}\preceq (1+\zeta)P.
\]
To conclude, note that $(1+\rho/\mu)H^{\rho}\preceq H\preceq H^{\rho}$, which combined with the last display implies
\[
(1-\zeta)\frac{1}{1+\rho/\mu}P\preceq H\preceq (1+\zeta) P.
\]
\end{proof}


\subsection{Proof of \cref{lemma:PrecondStrnConvexity}}
\label{subsection:PreStrnCvxPf}
\begin{proof}
The function $f$ is smooth and strongly convex, so it is quadratically regular.
Consequently, 
\[
f(w)\geq f(w_\star)+\frac{\hat \gamma_\ell}{2}\|w-w_\star\|_{H(w_P)}^2.
\]
Hence we have
\begin{align*}
    &f(w)-f(w_\star) \geq \frac{\gamma_\ell}{2}(1-\zeta)\frac{1}{1+\rho/\mu}\|w-w_\star\|^2_{P},
\end{align*}
where in the last inequality we have used the hypothesis that the conclusion of \cref{prop:NysPrecondLem} holds.
The claim now follows by recalling that $\hat \gamma_{\ell} = \frac{(1-\zeta)\mu}{\mu+\rho}\gamma_\ell$.
\end{proof}



\subsection{Proof of \cref{prop:PrecondSmoothGrad}}
\label{subsection:PrecondSmoothGradPf}
In this subsection we prove \cref{prop:PrecondSmoothGrad}, which controls the variance of the preconditioned minibatch gradient.
We start by proving the following more general result, from which \cref{prop:PrecondSmoothGrad} follows immediately. 
\begin{proposition}[Expected smoothness in the dual-norm]
\label{prop:dual_ES}
Suppose that each $f_i$ is convex and satisfies
\[
f_i(w+h)\leq f_i(w)+\langle g_i(w),h\rangle +\frac{L_i}{2}\|h\|_{M_i}^2, \quad \forall w, h \in \R^{p},
\]
for some symmetric positive definite matrix $M_i$.
Moreover, let $f$ satisfy
\[
f(w+h)\leq f(w)+\langle g(w),h\rangle +\frac{L}{2}\|h\|_{M}^2, \quad \forall w, h \in \R^{p},
\]
where $M = \frac{1}{n}\sum_{i=1}^{n}M_i$.
Define $\tau(M) \coloneqq \max_{1\leq i\leq n}\lambda_1\left(M^{-1/2}M_iM^{-1/2}\right)$.
Further, suppose we construct the gradient sample $g_{B}(w)$ with batch-size $b_g$. 
Then for every $w\in \R^p$
\begin{equation*}
\mathbb{E}_{B} \|g_{B}(w)-g_{B}(w')\|_{M^{-1}}^2 \le 2\mathcal{L}_M (f(w) - f(w')-\langle g(w'),w-w'\rangle),
\end{equation*}
\begin{equation*}
    \mathbb{E}_{B} \|g_{B}(w)\|_{M^{-1}}^2 \le 4\mathcal{L}_M (f(w) - f(w_{\star}))+2\sigma_M^2,
\end{equation*}
where
\[
\mathcal{L}_{M} = \frac{n(b_g-1)}{b_g(n-1)}L+\tau(M)\frac{n-b_g}{b_g(n-1)}\textup{max}_{1\leq i\leq n}L_i,\]
and 
\[
\sigma_M^2 = \frac{n-b_g}{b_{g}(n-1)}\frac{1}{n}\sum_{i=1}^{n}\|\nabla f_i(w_\star)\|_{M^{-1}}^2.
\]
\end{proposition}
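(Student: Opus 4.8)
The plan is to adapt the expected-smoothness analysis of \cite{gower2019sgd} to the non-Euclidean geometry induced by $M$, combining three ingredients: a co-coercivity inequality for functions smooth relative to a positive definite matrix, an exact second-moment identity for the without-replacement minibatch gradient, and the Loewner comparison $M^{-1}\preceq\tau(M)M_i^{-1}$. First I would establish the following auxiliary lemma: if $\phi$ is convex and satisfies $\phi(w+h)\leq\phi(w)+\langle\nabla\phi(w),h\rangle+\tfrac{L}{2}\|h\|_N^2$ for some $N\succ 0$, then $\|\nabla\phi(w)-\nabla\phi(w')\|_{N^{-1}}^2\leq 2L\bigl(\phi(w)-\phi(w')-\langle\nabla\phi(w'),w-w'\rangle\bigr)$. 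The proof is the standard shifted-function argument: fix $w'$ and set $\psi(y)=\phi(y)-\langle\nabla\phi(w'),y\rangle$, which is convex, obeys the same relative-smoothness bound, and is globally minimized at $w'$ since $\nabla\psi(w')=0$; plugging $h=-\tfrac1L N^{-1}\nabla\psi(w)$ into the smoothness bound and using $\psi(w')\leq\psi(w+h)$ gives $\psi(w')\leq\psi(w)-\tfrac{1}{2L}\|\nabla\psi(w)\|_{N^{-1}}^2$, which is the claim once we note $\nabla\psi(w)=\nabla\phi(w)-\nabla\phi(w')$ and that $\psi(w)-\psi(w')$ is the Bregman divergence $D_\phi(w,w')$. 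Applying this to each $f_i$ with $(N,L)=(M_i,L_i)$ and to $f$ with $(N,L)=(M,L)$, and using that $M_i\preceq\tau(M)M$ forces $M^{-1}\preceq\tau(M)M_i^{-1}$, yields $\|g_i(w)-g_i(w')\|_{M^{-1}}^2\leq 2\tau(M)L_iD_{f_i}(w,w')$ and $\|g(w)-g(w')\|_{M^{-1}}^2\leq 2L\,D_f(w,w')$.

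Next I would compute the minibatch variance exactly. Writing $a_i=g_i(w)-g_i(w')$ and $g_B(w)-g_B(w')=\tfrac1n\sum_i v_i a_i$ with $v_i=\tfrac nb\,\mathbf{1}\{i\in B\}$ (so $\mathbb{E}v_i=1$, $\mathbb{E}v_i^2=n/b$, and $\mathbb{E}[v_iv_j]=\tfrac{n(b-1)}{b(n-1)}$ for $i\neq j$ under uniform sampling without replacement), expanding the squared $M^{-1}$-norm, collecting off-diagonal terms via $\sum_{i\neq j}\langle a_i,a_j\rangle_{M^{-1}}=\|\sum_i a_i\|_{M^{-1}}^2-\sum_i\|a_i\|_{M^{-1}}^2$, and using $\sum_i a_i=n(g(w)-g(w'))$ together with $\tfrac nb-\tfrac{n(b-1)}{b(n-1)}=\tfrac{n(n-b)}{b(n-1)}$, I obtain
\[
\mathbb{E}_B\|g_B(w)-g_B(w')\|_{M^{-1}}^2=\frac{n-b}{b(n-1)}\cdot\frac1n\sum_{i=1}^n\|a_i\|_{M^{-1}}^2+\frac{n(b-1)}{b(n-1)}\|g(w)-g(w')\|_{M^{-1}}^2.
\]
Plugging in the two co-coercivity bounds, using $L_i\leq\max_j L_j$, and using $\tfrac1n\sum_i D_{f_i}(w,w')=D_f(w,w')$ gives precisely $2\mathcal{L}_M D_f(w,w')$, the first claimed inequality.

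For the second inequality I would use $\|g_B(w)\|_{M^{-1}}^2\leq 2\|g_B(w)-g_B(w_\star)\|_{M^{-1}}^2+2\|g_B(w_\star)\|_{M^{-1}}^2$, take expectations, apply the first inequality at $(w,w_\star)$ where $g(w_\star)=0$ so that $D_f(w,w_\star)=f(w)-f(w_\star)$, and observe that the identity above applied to the centered sum $\tfrac1n\sum_i v_i g_i(w_\star)$ --- whose second (full-gradient) term vanishes since $\sum_i g_i(w_\star)=n\,g(w_\star)=0$ --- gives $\mathbb{E}_B\|g_B(w_\star)\|_{M^{-1}}^2=\sigma_M^2$. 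This produces $\mathbb{E}_B\|g_B(w)\|_{M^{-1}}^2\leq 4\mathcal{L}_M(f(w)-f(w_\star))+2\sigma_M^2$.

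The main obstacle is essentially bookkeeping: it is crucial that $\tau(M)$ multiply only the $\tfrac1n\sum_i\|a_i\|_{M^{-1}}^2$ term and not the full-gradient term in the variance identity, so that $\mathcal{L}_M$ does not inherit a spurious $\tau(M)$ as $b\to n$; this forces a careful split of diagonal and off-diagonal contributions in the second-moment computation, and requires applying co-coercivity in the $M_i^{-1}$-norm native to each $f_i$ before transporting it to $M^{-1}$. Once \cref{prop:dual_ES} is in hand, \cref{prop:PrecondSmoothGrad} follows by choosing $M_i=\nabla^2 f_i(w_P)+\rho I$, $L_i=\gamma_{i,u}^{\rho}$, $M=H(w_P)+\rho I$, $L=\gamma_u^{\rho}$ (so $\tau(M)=\tau^{\rho}(H(w_P))$), and then passing from the $M^{-1}$-norm to the $P^{-1}$-norm via $H(w_P)\preceq(1+\zeta)P$ and $P\succeq\rho I$.
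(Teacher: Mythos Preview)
Your argument is correct and complete. The route differs from the paper's, though the underlying mathematics is the same. You work natively in the $M^{-1}$-geometry: you prove a weighted co-coercivity lemma for each $f_i$ with respect to $M_i$, write out the without-replacement second-moment identity explicitly, and transport the individual bounds to the common $M^{-1}$-norm via $M^{-1}\preceq\tau(M)M_i^{-1}$. The paper instead performs the linear change of variable $w=M^{-1/2}z$, observes that in the new coordinates $f_{i,M}$ is $\tau(M)L_i$-smooth and $f_M$ is $L$-smooth in the Euclidean norm, invokes Proposition~3.8 of \cite{gower2019sgd} as a black box, and then uses \cref{lemma:UpdateLemma} to translate the conclusion back into $M^{-1}$-norms. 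Your approach is more self-contained and makes transparent exactly where $\tau(M)$ enters (only through the diagonal term of the variance decomposition, as you note); the paper's approach is shorter but outsources the combinatorics and co-coercivity to the cited reference. Either way one lands on the same $\mathcal{L}_M$, and the derivation of the second moment bound from the first via $\|g_B(w)\|_{M^{-1}}^2\le 2\|g_B(w)-g_B(w_\star)\|_{M^{-1}}^2+2\|g_B(w_\star)\|_{M^{-1}}^2$ is identical.
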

\begin{proof}
Introduce the change of variable $w = M^{-1/2}z$, so that $f_i(w) = f_i(M^{-1/2}z) = f_{i,M}(z)$ and $f(w) = f(M^{-1/2}z)=f_{M}(z)$. Then by our hypotheses on the $f_i$'s, $f$, and the definition of $\tau(M)$, we have that for all $z, h\in \R^{p}$
\[
f_{i,M}(z+h)\leq f_{i,M}(z)+\langle g_{i,M}(z),h\rangle+\frac{\tau(M) L_i}{2}\|h\|^2,
\]
\[
f_{M}(z+h)\leq f_{M}(z)+\langle g_{M}(z),h\rangle+\frac{L}{2}\|h\|^2.
\]
Consequently, Proposition 3.8 of \cite{gower2019sgd} implies
\[
\E\|g_{B,M}(z)-g_{B,M}(z')\|^2\leq 2\mathcal L_M\left(f_{M}(z)-f_{M}(z)-\langle g_{M}(z'),z-z'\rangle\right),
\]
\[
\E\|g_{B,M}(z_\star)\|^{2}=\frac{n-b_g}{b_g(n-1)}\frac{1}{n}\sum^{n}_{i=1}\|g_{M,i}(z_\star)\|^2,
\]
where $\mathcal L_{M} = \frac{n(b_g-1)}{b_g(n-1)}L+\tau(M)\frac{n-b_g}{b_g(n-1)}\textup{max}_{1\leq i\leq n}L_i$.
Invoking \cref{lemma:UpdateLemma}, the above displays become
\[
\E\|g_{B}(w)-g_{B}(w')\|_{M^{-1}}^2\leq 2\mathcal L_{M}\left(f(w)-f(w')-\langle g(w'),w-w'\rangle\right),
\]
\[
\E\|g_{B}(w_\star)\|_{M^{-1}}^{2}=\frac{n-b_g}{b_g(n-1)}\frac{1}{n}\sum^{n}_{i=1}\|g_{i}(w_\star)\|_{M^{-1}}^2 = \sigma^2.
\]
The last portion of the desired claim now follows by combining the preceding displays with $w' = w_\star$, and the identity $\E\|a\|_{M^{-1}}^2\leq 2\E\|a-b\|_{M^{-1}}^2+2\E\|b\|_{M^{-1}}^2$.

\paragraph{Proof of \cref{prop:PrecondSmoothGrad}}
\begin{proof}
    Set $M_i = P$ so that $M = H^{\rho}(w_P)$. 
    Now, by the assumption that $H(w_P)\preceq (1+\zeta)P$ and item 1 of \cref{lemma:rel_quad} it holds that each $f_i$ is smooth with respect to $M_i$ with $L_i = (1+\zeta)\tau^{\rho}H(w_P)\gamma_{i,u}^{\rho}$, while $f$ is smooth with respect to $M$ with $L = (1+\zeta)\gamma^{\rho}_{u}$. 
    Noting that $\tau(M) = 1$, the claim follows from \cref{prop:dual_ES}.
\end{proof}

\section{Lower bound on condition number in \cref{table:datasets}}
\label{section:kappa_lwr_bnd}
Recall the condition number $\kappa$ is given by
\[
\kappa = \frac{\sup_{w\in \R^p}\lambda_{1}(H(w))}{\inf_{w\in \R^p}\lambda_{p}(H(w))}.
\]
Consequently when $f$ is a the least-squares or logistic loss with data matrix $A$, and $l^2$-regularization $\mu\geq 0$, it holds for any $r\geq p$ that
\[
\kappa \geq \frac{\lambda_{1}(H(0))}{\lambda_p(H(0))}\geq \frac{\lambda_{1}(H(0))}{\lambda_r(H(0))} = \frac{\sigma_{1}^2(A)/n+\mu}{\sigma_r^2(A)/n+\mu}.
\]
Hence $\kappa$ is lower bounded by $(\sigma_{1}^2(A)/n+\mu)/(\sigma_{r}^2(A)/n+\mu)$. 
\cref{table:datasets} gives the numerical value for this lower bound for $r = 100$. 

\end{proof}

\section{Experimental details}
\label{section:exp_details}
Here we provide more details regarding the experiments in \Cref{section:Experiments}.

\paragraph{Regularization}
For convex problems (\Cref{subsubsection:performance_auto,subsubsection:performance_tuned,subsection:performance_som,subsection:performance_pcg,subsection:large_scale,subsection:performance_lr_ablation}), we set the $l_2$-regularization to $10^{-2}/n_{\mathrm{tr}}$, where $n_{\mathrm{tr}}$ is the size of the training set.
For deep learning (\Cref{subsection:performance_dl}), we use no regularization or weight decay, as the experiments are proof-of-concept.

\paragraph{Ridge regression datasets} 
The ridge regression experiments are run on the datasets described in the main text.
E2006-tfidf and YearPredictionMSD's rows are normalized to have unit-norm, while we standardize the features of yolanda.
For YearPredictionMSD we use a ReLU random features transformation that gives us $4367$ features in total. 
For yolanda we use a random features transformation with bandwidth $1$ that gives us $1000$ features in total, and perform a random 80-20 split to form a training and test set. 
In \Cref{table:ridgedata}, we provide the dimensions of the datasets, where $n_{\textrm{tr}}$ is the number of training samples, $n_{\textrm{test}}$ is the number of testing samples, and $p$ is the number of features.

\begin{table}[!htbp]
	\caption{\label{table:ridgedata} Dimensions of Ridge Regression Datasets} 
	\begin{center}
		\footnotesize
		\begin{tabular}{|c|c|c|c|}
			\hline
			\textbf{Dataset} & $n_{\textrm{tr}}$ & $n_{\textrm{test}}$ & $p$\\
			\hline
			E2006-tfidf & $16087$ & $3308$ & $150360$\\
			\hline
			YearPredictionMSD & $463715$ & $51630$ & $4367$\\
			\hline
			yolanda & $320000$ & $80000$ & $1000$\\
			\hline
		\end{tabular}
	\end{center}
\end{table}

\paragraph{Logistic regression datasets}
The logistic regression experiments are run on the datasets described in the main text. 
All datasets' rows are normalized so that they have unit norm. 
For ijcnn1 and susy we use a random features transformation with bandwidth $1$ that gives us $2500$ and $1000$ features, respectively.
For real-sim, we use a random 80-20 split to form a training and test set. 
For HIGGS, we repeatedly apply a random features transformation with bandwidth $1$ to obtain $10000$ features, as described in \Cref{subsection:large_scale}.
In \Cref{table:logdata}, we provide the dimensions of the datasets, where $n_{\textrm{tr}}$ is the number of training samples, $n_{\textrm{test}}$ is the number of testing samples, and $p$ is the number of features.

\begin{table}[!htbp]
	\caption{\label{table:logdata} Dimensions of Logistic Regression Datasets} 
	\begin{center}
		\footnotesize
		\begin{tabular}{|c|c|c|c|}
			\hline
			\textbf{Dataset} & $n_{\textrm{tr}}$ & $n_{\textrm{test}}$ & $p$\\
			\hline
			ijcnn1 & $49990$ & $91701$ & $2500$\\
			\hline
			susy & $4500000$ & $500000$ & $1000$\\
			\hline
			real-sim & $57847$ & $14462$ & $20958$\\
			\hline
			HIGGS & $10500000$ & $500000$ & $10000$\\
			\hline
		\end{tabular}
	\end{center}
\end{table}

\paragraph{Deep learning datasets}
The deep experiments are run on the datasets described in the main text. 
We download the datasets using the OpenML-Python connector \cite{vanschoren2019openmlpython}.
Each dataset is standardized to have zero mean and unit variance, and the statistics for standardization are calculated using only the training split.
In \Cref{table:dldata}, we provide the dimensions of the datasets, where $n$ is the number of samples (before splitting into training, validation, and test sets), $p$ is the number of features, and ID is the unique identifier of the dataset on OpenML.

\paragraph{Dataset augmentation for scaling (\Cref{subsection:performance_som,subsection:performance_pcg})}
We perform data augmentation before any additional preprocessing steps (e.g., normalization, standardization, random features). 
To increase the samples by a factor of $k$, we duplicate the dataset a total of $k - 1$ times. 
For each duplicate, we generate a random Gaussian matrix, where each element has variance $0.02$.
For sparse datasets, this Gaussian matrix is generated to have the same number of nonzeros as the original dataset.
Each duplicate and Gaussian matrix is summed; the resulting sums are stacked to form the augmented dataset.

\begin{table}[!htbp]
	\caption{\label{table:dldata} Dimensions of Deep Learning Datasets} 
	\begin{center}
		\footnotesize
		\begin{tabular}{|c|c|c|c|}
			\hline
			\textbf{Dataset} & $n$ & $p$ & ID\\
			\hline
            Fashion-MNIST & $70000$ & $784$ & $40996$\\
            \hline
            Devnagari-Script & $92000$ & $1024$ & $40923$\\
            \hline
            volkert & $58310$ & $180$ & $41166$\\
			\hline
		\end{tabular}
	\end{center}
\end{table}

\paragraph{Random seeds}
In \Cref{subsection:performance_fom,subsection:performance_lr_ablation,section:sensitivity_appdx} we run all experiments with $10$ random seeds, with the exception of susy, for which we use $3$ random seeds.

We use the same number of random seeds in \Cref{subsection:performance_som,subsection:performance_som}, except for the scaling experiments.
For the scaling experiments, we only use $3$ random seeds.

In \Cref{subsection:large_scale} we use only $1$ random seed due to the sheer size of the problem.

In \Cref{subsection:performance_dl} we use only $1$ random seed for each learning rate given by random search.
However, we use $10$ random seeds to generate the results with the tuned learning rate.

\paragraph{Additional hyperparameters (\Cref{subsection:performance_fom,subsection:large_scale})} 
For SVRG we perform a full gradient computation at every epoch. 

For L-Katyusha, we initialize the update probability $p_{\mathrm{upd}} = b_g / n_{\mathrm{tr}}$ to ensure the average number of iterations between full gradient computations is equal to one epoch. 
We follow \cite{kovalev2020lkatyusha} and set $\mu$ equal to the $\ell^2$-regularization parameter, $\sigma = \frac{\mu}{L}, \theta_1 = \min\{\sqrt{2\sigma n_{\mathrm{tr}}/3}, \frac{1}{2} \}$, and $\theta_2 = \frac{1}{2}$. 

All algorithms use a batch size of $256$ for computing stochastic gradients, except on the HIGGS dataset. For the HIGGS dataset, SGD, SAGA, and SketchySGD are all run with a batch size of $4096$.

\paragraph{Additional hyperparameters (\Cref{subsection:performance_som})}
For SLBFGS we perform a full gradient computation at every epoch. 
Furthermore, we update the inverse Hessian approximation every epoch and set the Hessian batch size to $\sqrt{n_{\mathrm{tr}}}$, which matches the Hessian batch size hyperparameter in SketchySGD.
In addition, we follow \cite{moritz2016linearly} and set the memory size of SLFBGS to $10$. 
We use a batch size of $256$ for computing stochastic gradients.

We use the defaults for L-BFGS provided in the \href{https://docs.scipy.org/doc/scipy/reference/generated/scipy.optimize.fmin_l_bfgs_b.html}{SciPy implementation}, only tuning the ``factr'' parameter when necessary to avoid early termination.

For RSN, we fix the sketch size to $250$.

For Newton Sketch, we set the sketch size to $2 \cdot 10^{-3} \cdot \min(n_{\mathrm{tr}}, p)$.

For RSN and Newton Sketch, we follow the original publications' suggestions \cite{gower2019rsn,pilanci2017newton} for setting the line search parameters.

\paragraph{Additional hyperparameters (\Cref{subsection:performance_pcg})}
We set the sketch size to $2 \cdot 10^{-3} \cdot \min(n_{\mathrm{tr}}, p)$ for all three of Nystr\"{o}mPCG, GaussPCG, and SparsePCG.

\paragraph{Additional hyperparameters (\Cref{subsection:performance_dl})}
For all of the competitor methods (except Shampoo), we use the default hyperparameters.
For Shampoo, we modify the preconditioner update frequency to occur every epoch, similar to SketchySGD.
For SketchySGD, we set the momentum parameter $\beta$ to $0.9$, just as in Adam.
We compute stochastic gradients using a batch size of $128$.
For learning rate scheduling, we use cosine annealing with restarts.
For the restarts, we use an initial budget of 15 epochs, with a budget multiplier of 2.

\paragraph{Default hyperparameters for SAGA/SVRG/L-Katyusha} 
The theoretical analysis of SVRG, SAGA, and L-Katyusha all yield recommended learning rates that lead to linear convergence.
In practical implementations such as scikit-learn \cite{pedregosa2011scikit}, these recommendations are often taken as the default learning rate. 
For SAGA, we follow the scikit-learn implementation, which uses the following learning rate:
\[
\eta = \max\left\{\frac{1}{3L}, \frac{1}{2\left(L+n_{\textrm{tr}}\mu\right)}\right\},
\]
where $L$ is the smoothness constant of $f$ and $\mu$ is the strong convexity constant.
The theoretical analysis of SVRG suggests a step-size of $\eta = \frac{1}{10\mathcal L}$, where $\mathcal L$ is the expected-smoothness constant. 
We have found this setting to pessimistic relative to the SAGA default, so we use the same default for SVRG as we do for SAGA. 
For L-Katyusha the hyperparameters $\theta_1$ and $\theta_2$ are controlled by how we specify $L^{-1}$, the reciprocal of the smoothness constant. Thus, the default hyperparameters for all methods are controlled by how $L$ is specified. 

Now, standard computations show that the smoothness constants for least-squares and logistic regression satisfy 
\[
L_{\textrm{least-squares}} \leq \frac{1}{n_{\textrm{tr}}}\sum_{i=1}^{n_{\textrm{tr}}}\|a_i\|^2,  
\]
\[
L_{\textrm{logistic}} \leq \frac{1}{4n_{\textrm{tr}}}\sum_{i=1}^{n_{\textrm{tr}}}\|a_i\|^2.
\]
The scikit-learn software package uses the preceding upper-bounds in place of $L$ to set $\eta$ in their implementation of SAGA.
We adopt this convention for setting the hyperparameters of SAGA, SVRG and L-Katyusha. 
We display the defaults for the three methods in \cref{table:param_auto}.

\begin{table}[!htbp]
    \caption{\label{table:param_auto} Default hyperparameters for SVRG/SAGA/L-Katyusha} 
    \centering
    \resizebox{\columnwidth}{!}{
        \scriptsize
		\begin{tabular}{|c|c|c|c|c|c|c|c|}
			\hline
			\textbf{Method\textbackslash Dataset} & E2006-tfidf & YearPredictionMSD & yolanda & ijcnn1 & real-sim & susy & HIGGS\\
			\hline
            SVRG/SAGA & $4.95 \cdot 10^{-1}$ & $1.01 \cdot 10^0$ & $4.96 \cdot 10^{-1}$ & $1.91 \cdot 10^0$ & $1.93 \cdot 10^0$ & $1.87 \cdot 10^0$ & $1.93 \cdot 10^0$\\
			\hline
            L-Katyusha & $1.00 \cdot 10^0$ & $4.85 \cdot 10^{-1}$ & $9.98 \cdot 10^{-1}$ & $2.52 \cdot 10^{-1}$ & $2.50 \cdot 10^{-1}$ & $2.57 \cdot 10^{-1}$ & N/A\\
            \hline
		\end{tabular}
    }
\end{table}

\paragraph{Grid search parameters (\Cref{subsubsection:performance_tuned,subsection:performance_som})} We choose the grid search ranges for SVRG, SAGA, and L-Katyusha to (approximately) include the default hyperparameters across the tested datasets (\cref{table:param_auto}). 
For ridge regression, we set $[10^{-3}, 10^{2}]$ as the search range for the learning rate in SVRG and SAGA, and $[10^{-2}, 10^{0}]$ as the search range for the smoothness parameter $L$ in L-Katyusha. 
Similar to SVRG and SAGA, we set $[10^{-3}, 10^{2}]$ as the search range for SGD. 
For SLBFGS, we set the search range to be $[10^{-5}, 10^{0}]$ in order to have the same log-width as the search range for SGD, SVRG, and SAGA. 
In logistic regression, the search ranges for SGD/SVRG/SAGA, L-Katyusha, and SLBFGS become $[4 \cdot 10^{-3}, 4 \cdot 10^{2}], [2.5 \cdot 10^{-3}, 2.5 \cdot 10^{-1}]$, and $[4 \cdot 10^{-5}, 4 \cdot 10^{0}]$, respectively. 
The grid corresponding to each range samples $10$ equally spaced values in log space.
The tuned hyperparmeters for all methods across each dataset are presented in \Cref{table:param_tuned}.
\begin{table}[!htbp]
    \caption{\label{table:param_tuned} Tuned hyperparameters for competitor methods} 
	\begin{center}
    \scriptsize
		\begin{tabular}{|c|c|c|c|c|c|c|}
			\hline
			\textbf{Method\textbackslash Dataset} & E2006-tfidf & YearPredictionMSD-rf & yolanda-rf & ijcnn1-rf & real-sim & susy-rf\\
			\hline
			SGD & $5.99 \cdot 10^{-1}$ & $2.15 \cdot 10^0$ & $5.99 \cdot 10^{-1}$ & $8.62 \cdot 10^0$ & $4 \cdot 10^2$ & $8.62 \cdot 10^0$\\
			\hline
            SVRG & $5.99 \cdot 10^{-1}$ & $2.15 \cdot 10^0$ & $2.15 \cdot 10^0$ & $8.62 \cdot 10^0$ & $4 \cdot 10^2$ & $8.62 \cdot 10^0$\\
			\hline
            SAGA & $5.99 \cdot 10^{-1}$ & $2.15 \cdot 10^0$ & $2.15 \cdot 10^0$ & $8.62 \cdot 10^0$ & $4 \cdot 10^2$ & $8.62 \cdot 10^0$\\
			\hline
            L-Katyusha & $2.15 \cdot 10^{-1}$ & $1.29 \cdot 10^{-1}$ & $2.15 \cdot 10^{-1}$ & $1.94 \cdot 10^{-2}$ & $2.5 \cdot 10^{-3}$ & $3.32 \cdot 10^{-2}$\\
			\hline
            SLBFGS & $7.74 \cdot 10^{-2}$ & $2.15 \cdot 10^{-2}$ & $1.67 \cdot 10^{-3}$ & $1.11 \cdot 10^0$ & $8.62 \cdot 10^{-2}$ & $8.62 \cdot 10^{-2}$\\
            \hline
		\end{tabular}
	\end{center}
\end{table}

\paragraph{Grid search parameters (\Cref{subsection:large_scale})} Instead of using a search range of $[4 \cdot 10^{-3}, 4 \cdot 10^{2}]$ for SGD/SAGA, we narrow the range to $[4 \cdot 10^{-2}, 4 \cdot 10^{1}]$ and sample $4$ equally spaced values in log space. 
The reason for reducing the search range and grid size is to reduce the total computational cost of running the experiments on the HIGGS dataset. 
Furthermore, we find that $4 \cdot 10^{0}$ is the best learning rate for HIGGS, while $4 \cdot 10^{1}$ leads to non-convergent behavior, meaning these search ranges are appropriate.

\paragraph{Random search parameters (\Cref{subsection:performance_dl})} We tune the learning rate for each optimizer using 30 random search trials with log-uniform sampling in the range $[10^{-3}, 10^{-1}]$. The tuning is performed with Optuna \cite{akiba2019optuna}.

\section{Sensitivity experiments}
\label{section:sensitivity_appdx}
In this section, we investigate the sensitivity of SketchySGD to the rank hyperparameter $r$ (\Cref{subsection:sensitivity_r}) and update frequency hyperparameter $u$ (\Cref{subsection:sensitivity_u}). 
In the first set of sensitivity experiments, we select ranks $r \in \{1, 2,5, 10, 20, 50\}$ while holding the update frequency fixed at $u = \left\lceil \frac{n_{\textrm{tr}}}{b_g} \right\rceil$ (1 epoch)\footnote{If we set $u = \infty$ in ridge regression, which fixes the preconditioner throughout the run of SketchySGD, the potential gain from a larger rank $r$ may not be realized due to a poor initial Hessian approximation.}.
In the second set of sensitivity experiments, we select update frequencies $u \in \left\{0.5\left\lceil \frac{n_{\textrm{tr}}}{b_g} \right\rceil, 
\left\lceil \frac{n_{\textrm{tr}}}{b_g} \right\rceil, 
2\left\lceil \frac{n_{\textrm{tr}}}{b_g} \right\rceil, 
5\left\lceil \frac{n_{\textrm{tr}}}{b_g} \right\rceil,
\infty \right\}$, while holding the rank fixed at $r = 10$. 
We use the datasets from \Cref{table:datasets}. 
Each curve is the median performance of a given $(r, u)$ pair across $10$ random seeds (except for susy, which uses $3$ seeds), run for $40$ epochs.

\subsection{Effects of changing the rank}
\label{subsection:sensitivity_r}
Looking at \cref{fig:sensitivity_r}, we see two distinct patterns: either increasing the rank has no noticeable impact on performance (E2006-tfidf, real-sim), or increasing the rank leads to faster convergence to a ball of noise around the optimum (YearPredictionMSD-rf). 
We empirically observe that these patterns are related to the spectrum of each dataset, as shown in \cref{fig:spectrums}. 
For example, the spectrum of E2006-tfidf is highly concentrated in the first singular value, and decays rapidly, increased rank does not improve convergence. 
On the other hand, the spectrum of YearPredictionMSD-rf is not as concentrated in the first singular value, but still decays rapidly, so convergence improves as we increase the rank  from $r = 1$  to $r = 10$, after which performance no longer improves, in fact it  slightly degrades.
The spectrum of real-sim decays quite slowly in comparison to E2006-tfidf or YearPredictionMSD-rf, 
so increasing the rank up to $50$ does not capture enough of the spectrum to improve convergence.
One downside in increasing the rank is that the quantity $\eta_{\textrm{SketchySGD}}$ \eqref{eq:SSG_LR} can become large, leading to SketchySGD taking a larger step size.
As a result, SketchySGD oscillates more about the optimum, as seen in YearPredictionMSD-rf (\cref{fig:sensitivity_r}).
Last, \cref{fig:sensitivity_r} shows $r=10$ delivers great performance across all datasets, supporting its position as the recommended default rank. 
Rank sensitivity plots for yolanda-rf, ijcnn1-rf, and susy-rf appear in \Cref{section:sensitivity_appdx}.

\begin{figure}[htbp]
    \centering
    \includegraphics[scale=0.5]{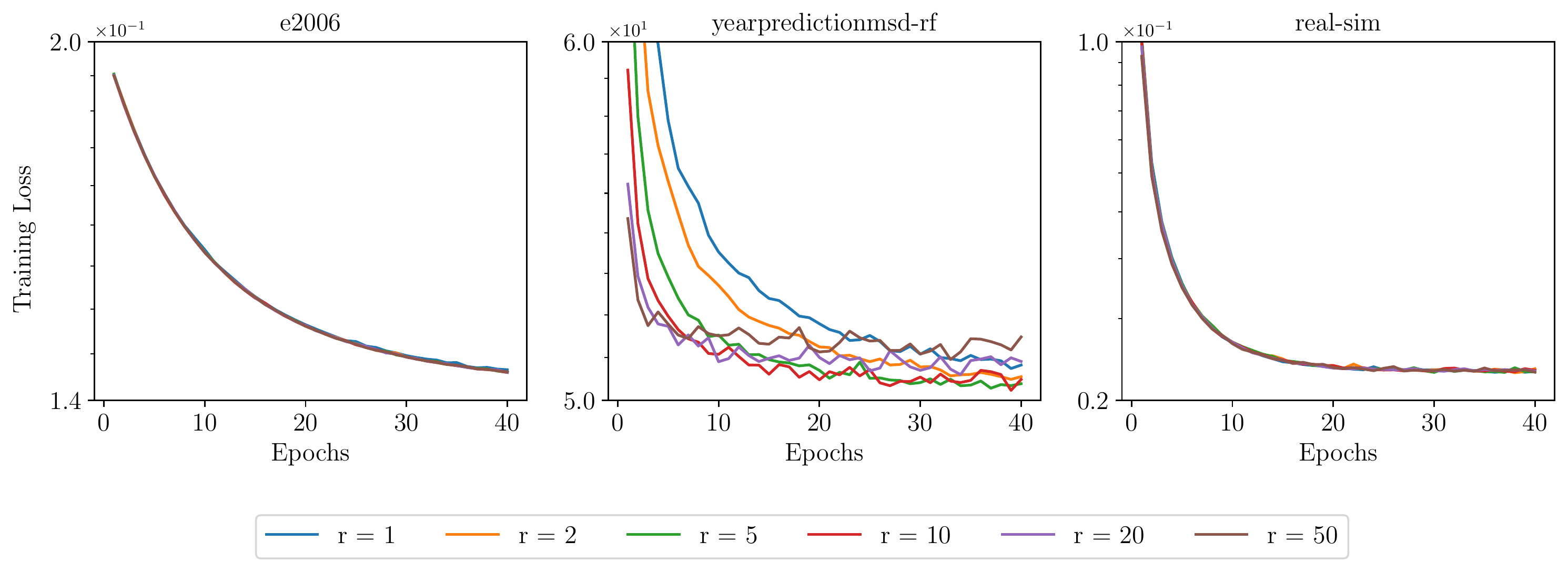}
    \caption{Sensitivity of SketchySGD to rank $r$.}
    \label{fig:sensitivity_r}
\end{figure}

\begin{figure}[htbp]
    \centering
    \includegraphics[scale=0.5]{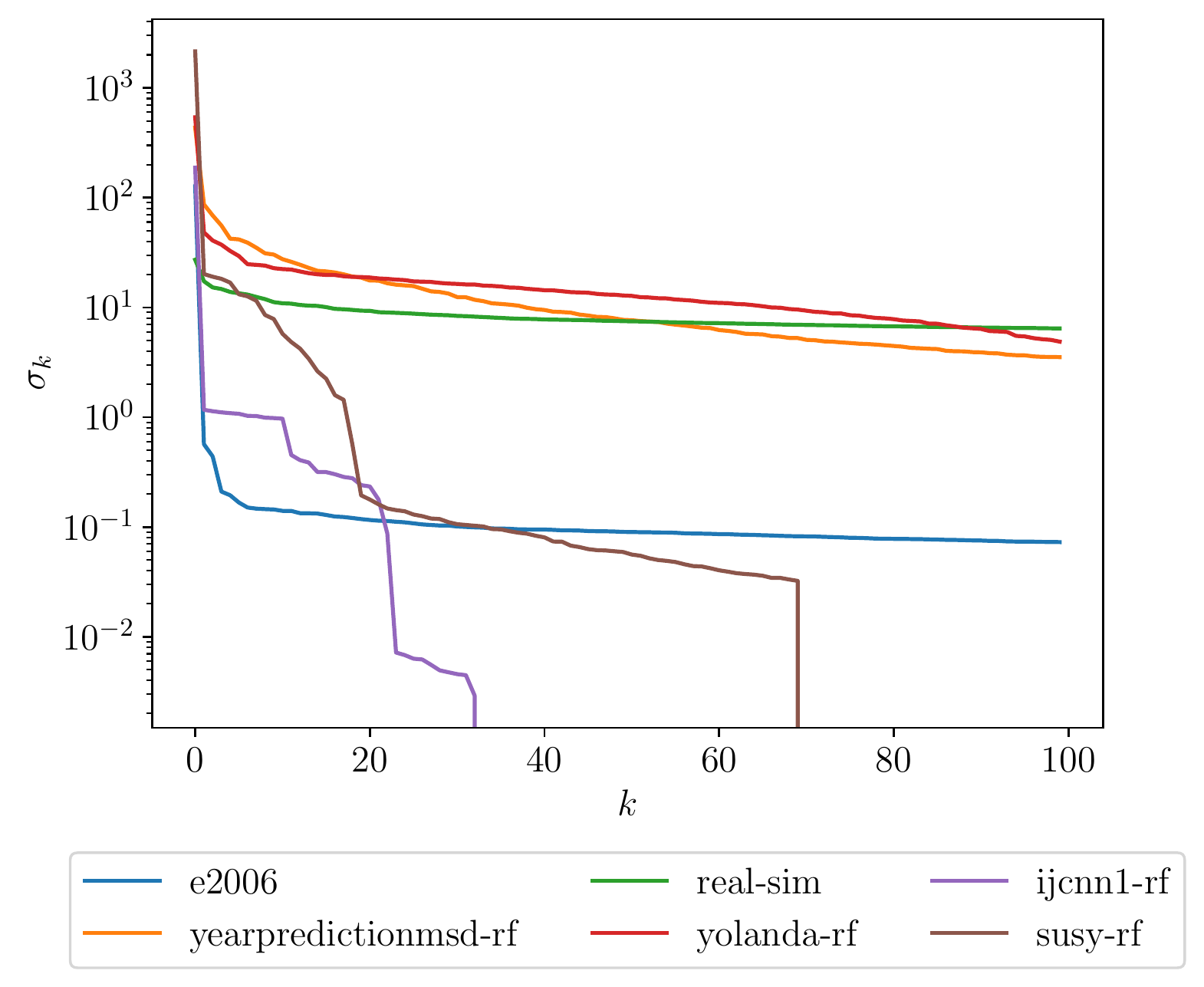}
    \caption{Top $100$ singular values of datasets after preprocessing.}
    \label{fig:spectrums}
\end{figure}

\subsection{Effects of changing the update frequency}
\label{subsection:sensitivity_u}
In this section, we display results only for logistic regression (\Cref{fig:sensitivity_u}), since there is no benefit to updating the preconditioner for a quadratic problem such as ridge regression (\Cref{section:sensitivity_appdx}):
the Hessian in ridge regression is constant for all $w \in \R^p$.
The impact of the update frequency depends on the spectrum of each dataset.
The spectra of ijcnn1-rf and susy-rf are highly concentrated in the top $r = 10$ singular values and decay rapidly (\cref{fig:spectrums}), 
so even the initial preconditioner approximates the curvature of the loss well throughout optimization. 
On the other hand, the spectrum of real-sim decays quite slowly, 
and the initial preconditioner does not capture most of the curvature information in the Hessian.
Hence for real-sim it is beneficial to update the preconditioner, however only infrequent updating is required, as an update frequency of 5 epochs yields almost identical performance to updating every half epoch.
So, increasing the update frequency of the preconditioner past a certain threshold does not improve performance, it just increases the computational cost of the algorithm.
Last, $u = \lceil \frac{n_{\textrm{tr}}}{b_g}\rceil$ exhibits consistent excellent performance across all datasets, supporting the recommendation that it be the default update frequency.

\begin{figure}[htbp]
    \centering
    \includegraphics[scale=0.5]{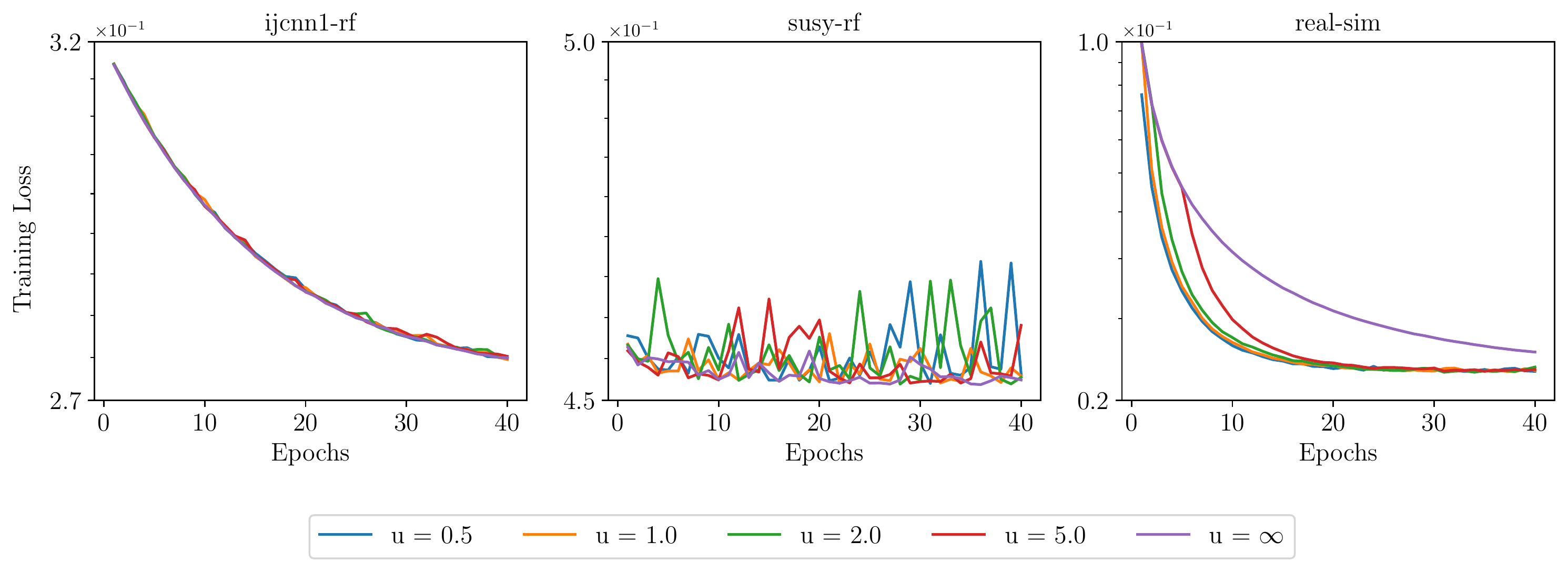}
    \caption{Sensitivity of SketchySGD to update frequency $u$.}
    \label{fig:sensitivity_u}
\end{figure}

\subsection{SketchySGD default learning rate ablation}
\label{subsection:performance_lr_ablation}
It is natural to ask how much of SketchySGD's improved performance relative to SGD stems from preconditioning. 
Indeed, it may be the case that SketchySGD's gains arise from how it sets the learning rate, and not from using approximate second-order information.
To test this, we employ SGD with the same learning rate selection strategy as SketchySGD, but with the preconditioned minibatch Hessian replaced by the minibatch Hessian. 
We refer to this algorithm as \emph{Adaptive SGD} (AdaSGD).

\cref{fig:lr_ablation} shows the results of AdaSGD and SketchySGD on the E2006-tfidf and ijcnn1-rf datasets.
Adaptive SGD performs significantly worse then SketchySGD on these two problems, which shows that SketchySGD's superior performance over SGD  is due to employing preconditioning.
This result is not too surprising.
To see why, let us consider the case of the least-squares loss. 
In this setting, if the subsampled Hessian is representative of the true Hessian, then $\eta_{\textup{AdaSGD}}\approx \bigO(1/L)$. 
Hence when the problem is ill-conditioned, the resulting stepsize will result in poor progress,  
which is precisely what is observed in \cref{fig:lr_ablation}.

\begin{figure}[t]
    \centering
    \begin{subfigure}[b]{0.4\textwidth}
        \centering
        \includegraphics[width=\textwidth]{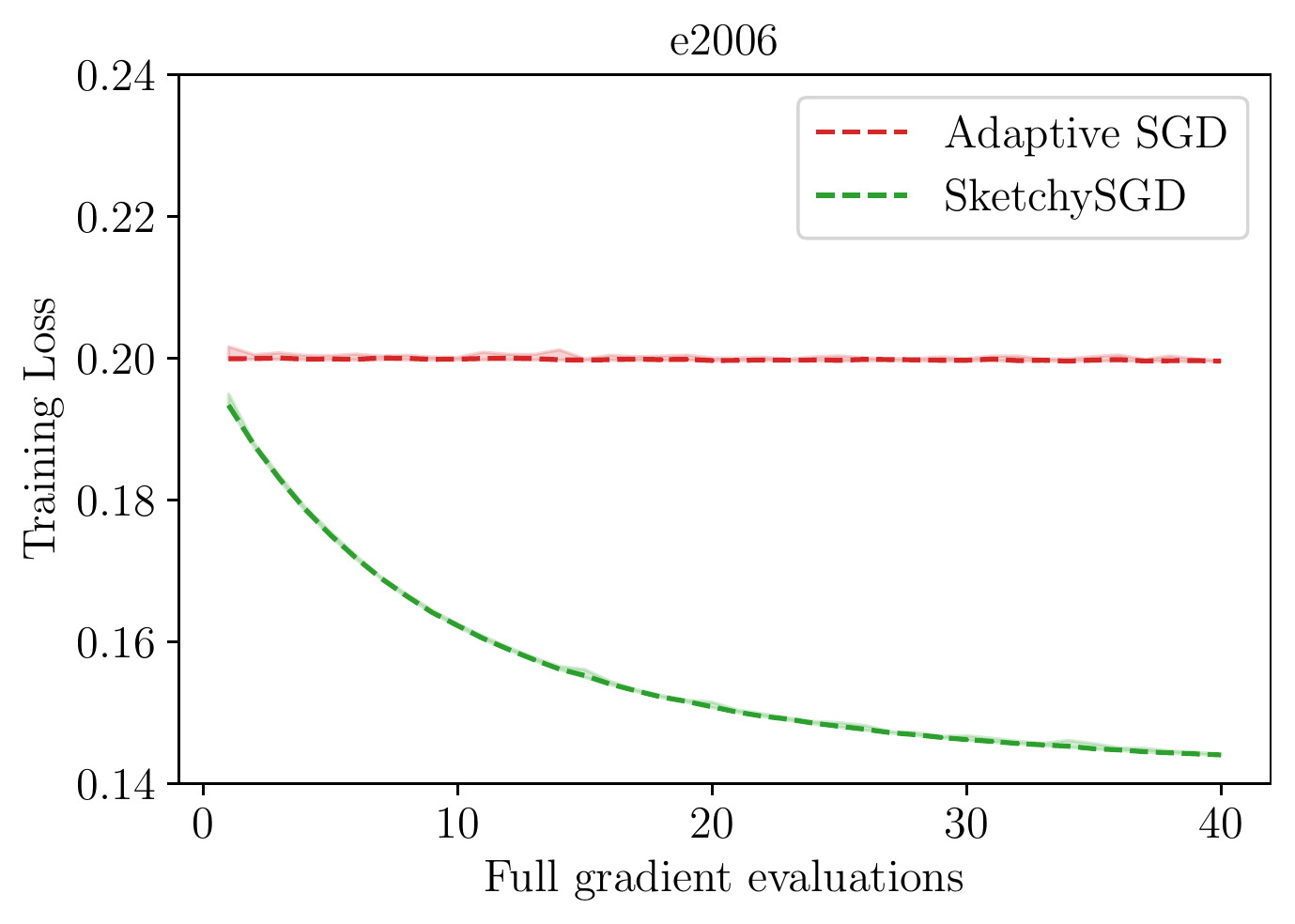}
    \end{subfigure}
    ~
    \begin{subfigure}[b]{0.4\textwidth}
        \centering
        \includegraphics[width=\textwidth]{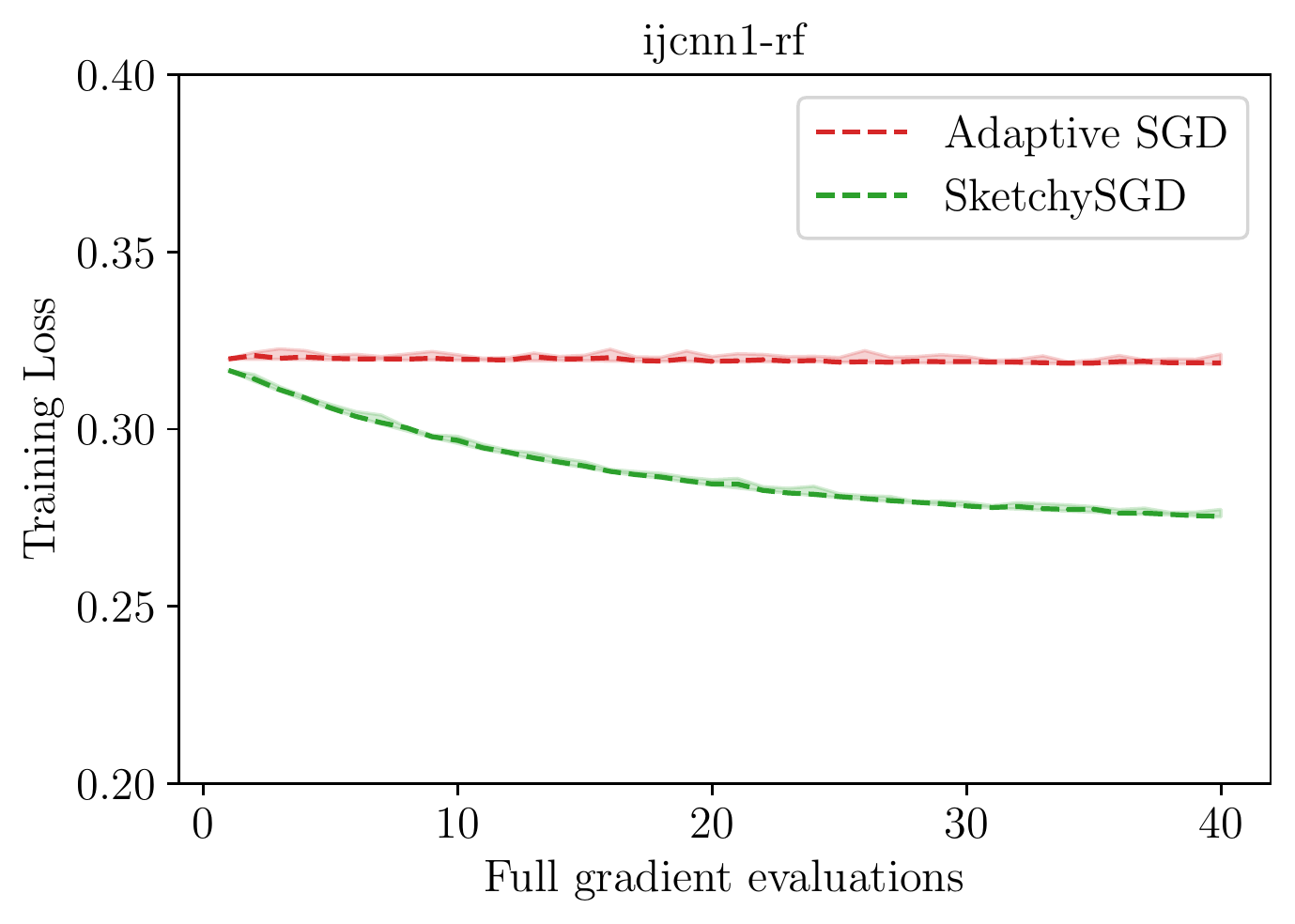}
    \end{subfigure}
    \caption{Results for Adaptive SGD vs. SketchySGD. Adaptive SGD performs much worse then SketchySGD on these two problems, despite employing the same learning rate strategy as SketchySGD. Thus, SketchySGD's improved performance over SGD comes from incorporating preconditioning, and not from how it sets the learning rate.}
    \label{fig:lr_ablation}
\end{figure}

\section{SketchySGD improves the conditioning of the Hessian}
\label{section:hessian_precond_appdx}
In \Cref{subsection:performance_fom}, we showed that SketchySGD generally converges faster than other first-order stochastic optimization methods. 
In this section, we examine the conditioning of the Hessian before and after preconditioning to understand why SketchySGD displays these improvements.

Recall from \Cref{subsection:sketchysgd} that SketchySGD is equivalent to performing SGD in a preconditioned space induced by $P_j = \hat{H}_{S_j} + \rho I$. 
Within this preconditioned space, the Hessian is given by $P_j^{-1/2} H P_j^{-1/2}$, where $H$ is the Hessian in the original space. 
Thus, if $\kappa(P_j^{-1/2} H P_j^{-1/2}) \ll \kappa(H)$, we know that SketchySGD is improving the conditioning of the Hessian, which allows SketchySGD to converge faster.

\Cref{fig:spectrum_logistic_0,fig:spectrum_least_squares_0} display the top $500$ eigenvalues (normalized by the largest eigenvalue) of the Hessian $H$ and the preconditioned Hessian $P_j^{-1/2} H P_j^{-1/2}$ at the initialization of SketchySGD (Nystr\"{o}m) for both logistic and ridge regression.
With the exception of real-sim, SketchySGD (Nystr\"{o}m) improves the conditioning of the Hessian by several orders of magnitude. 
This improved conditioning aligns with the improved convergence that is observed on the ijcnn1-rf, susy-rf, E2006-tfidf, YearPredictionMSD-rf, and yolanda-rf datasets in \Cref{subsection:performance_fom}. 

\begin{figure}[htbp]
    \begin{subfigure}[b]{\textwidth}
    \centering
    \includegraphics[width=\textwidth]{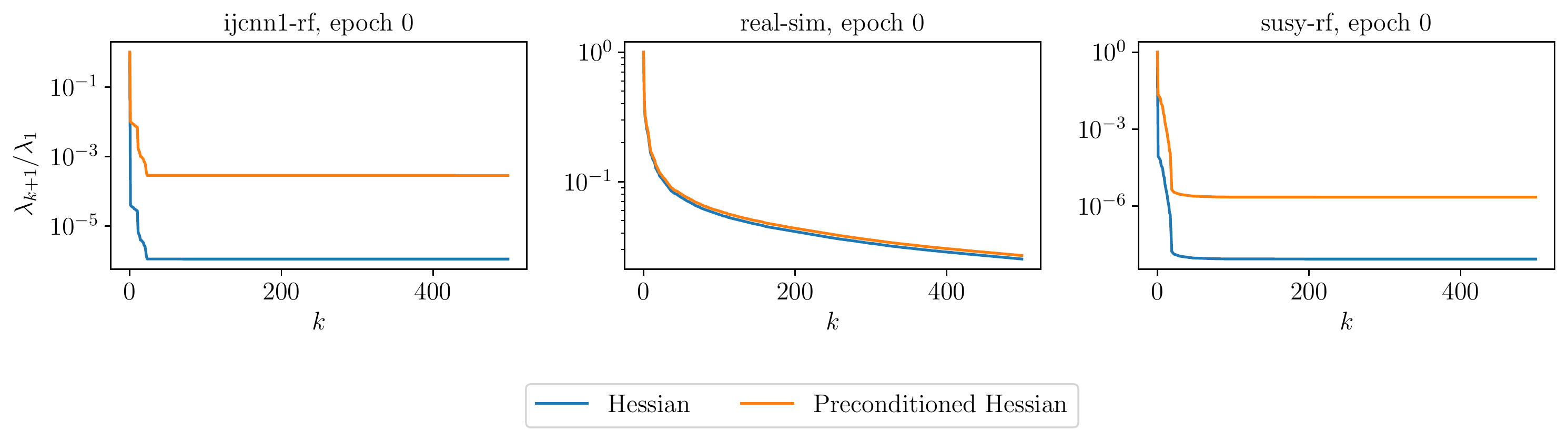}
    \end{subfigure}
    
    \begin{subfigure}{\textwidth}
     \includegraphics[width=\textwidth]{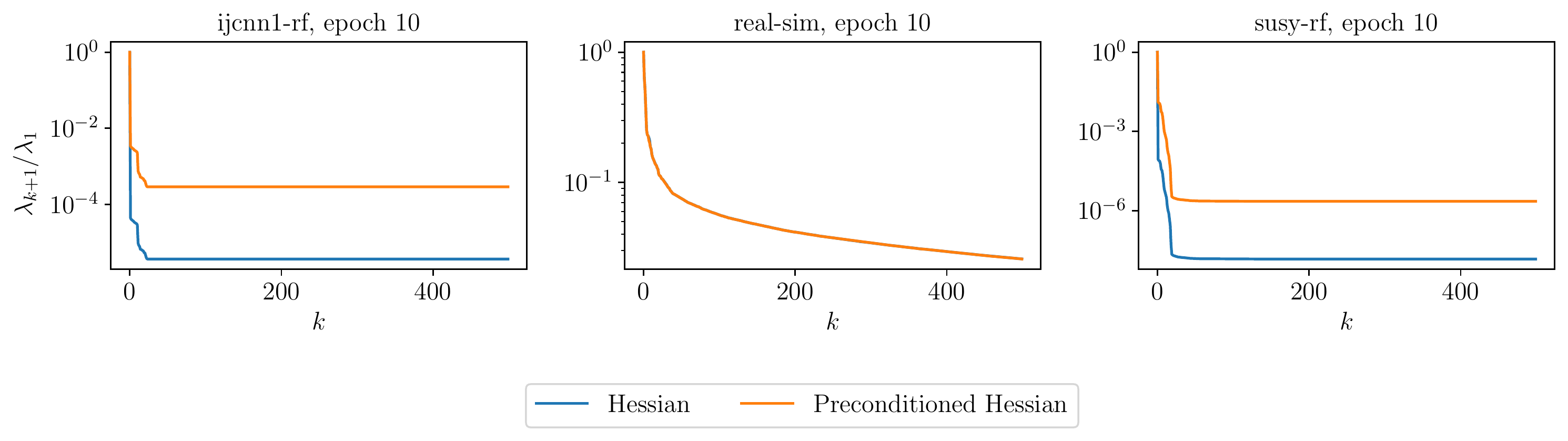} 
    \end{subfigure}
    
    \begin{subfigure}{\textwidth}
    \centering
       \includegraphics[width=\textwidth]{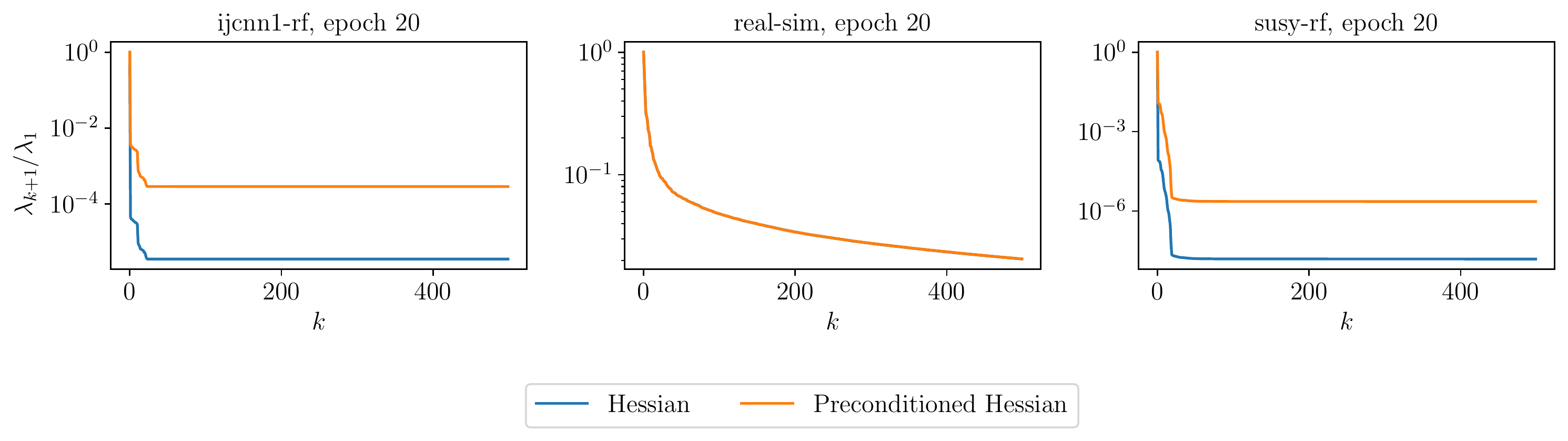}  
    \end{subfigure}
   
    \begin{subfigure}{\textwidth}
    \centering
       \includegraphics[width=\textwidth]{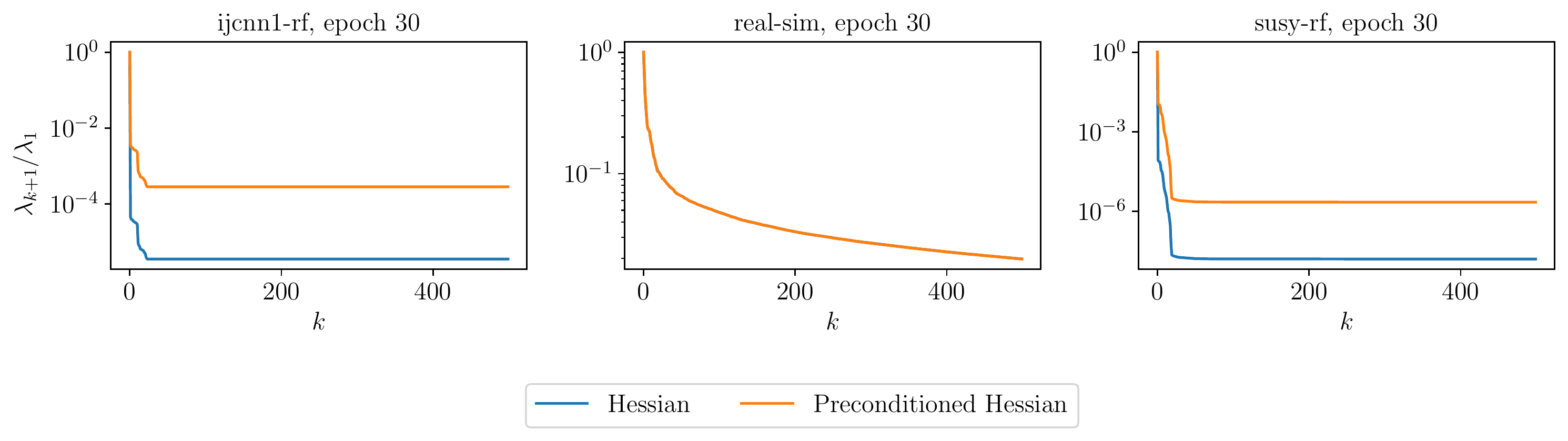}  
    \end{subfigure}
    \caption{Spectrum of the Hessian at epochs $0,10,20,30$  before and after preconditioning in $l_2$-regularized logistic regression.}
    \label{fig:spectrum_logistic_0}
\end{figure}

\begin{figure}[htbp]
    \centering
    \includegraphics[scale=0.5]{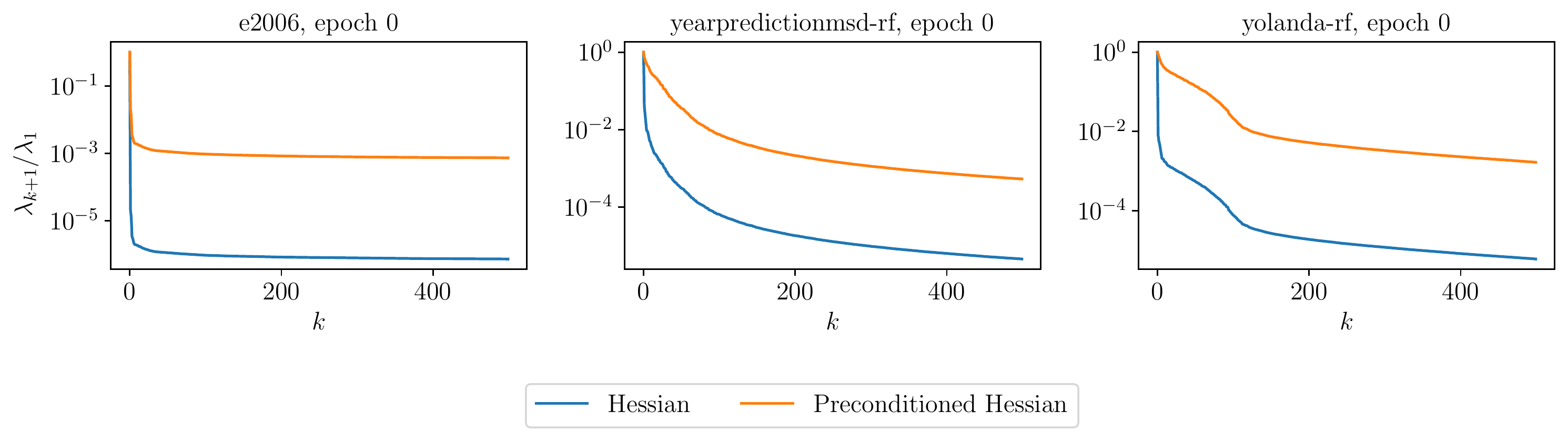}
    \caption{Normalized spectrum of the Hessian before and after preconditioning in ridge regression.}
    \label{fig:spectrum_least_squares_0}
\end{figure}




\section{Scaling experiments}
\subsection{Quasi-Newton}
\label{subsection:scaling_qn}
For larger datasets, we expect the performance gap between SketchySGD and the selected quasi-Newton methods to grow even larger, since these methods require full-gradient computations.
We increase the number of samples for each dataset in \Cref{table:datasets} (with the exception of susy-rf) by a factor of 3 using data augmentation with Gaussian random noise, i.e., a dataset of size $n_{\mathrm{tr}} \times p$ now has size $3 n_{\mathrm{tr}} \times p$.
The results are shown in \cref{fig:performance_som_logistic_scaled,fig:performance_som_least_squares_scaled}.
When looking at performance with respect to wall-clock time, SketchySGD is outperformed less often by the quasi-Newton methods; it is only outperformed by SLBFGS (before it diverges) on ijcnn1-rf and RSN on yolanda-rf.
Again, SketchySGD performs much better than the competition on YearPredictionMSD-rf, which is a larger, dense dataset.

\begin{figure}[t]
    \centering
    \includegraphics[scale=0.5]{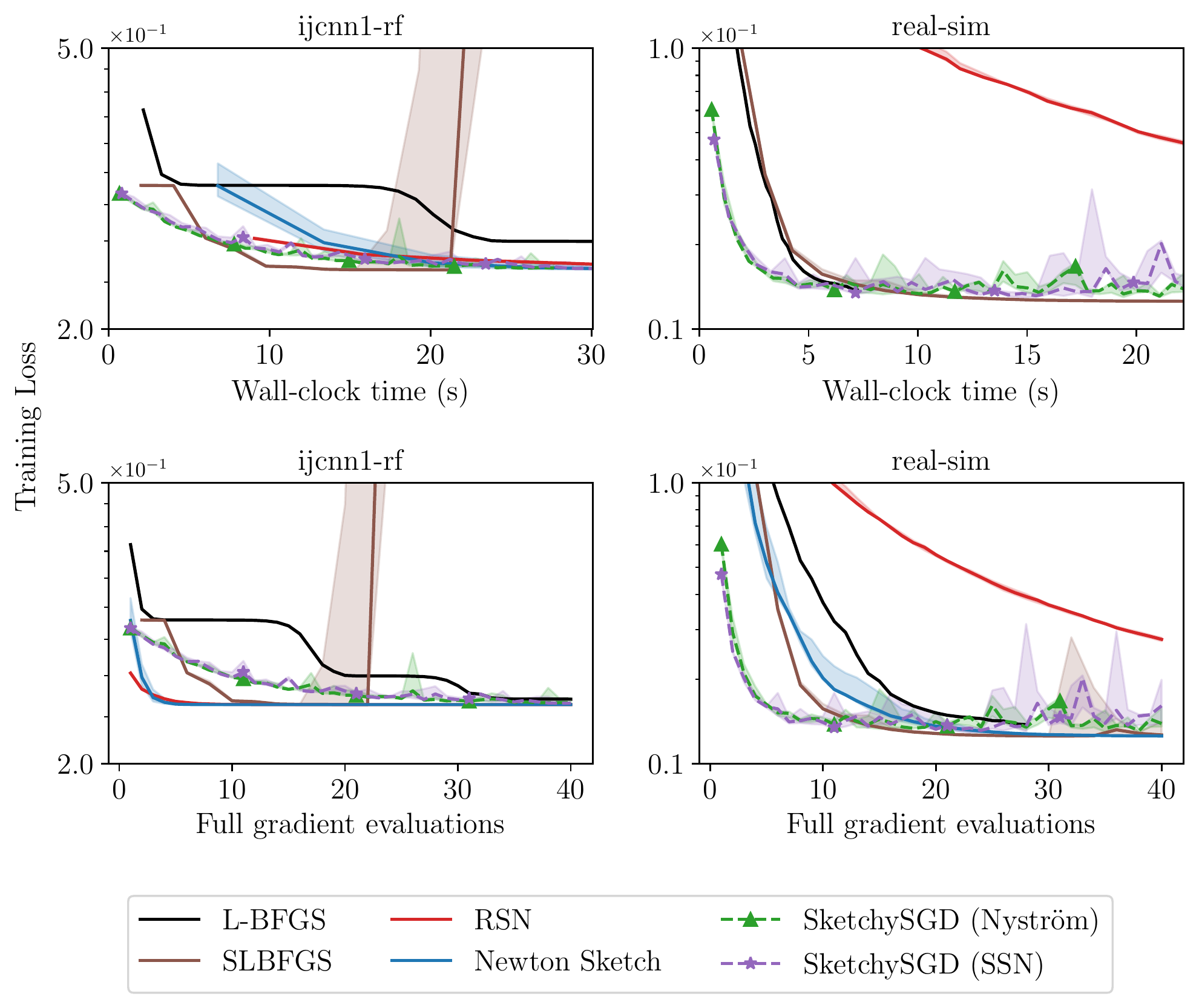}
    \caption{Comparisons to quasi-Newton methods (L-BFGS, SLBFGS, RSN, Newton Sketch) on $l_2$-regularized logistic regression with augmented datasets.}
    \label{fig:performance_som_logistic_scaled}
\end{figure}

\begin{figure}[t]
    \centering
    \includegraphics[scale=0.5]{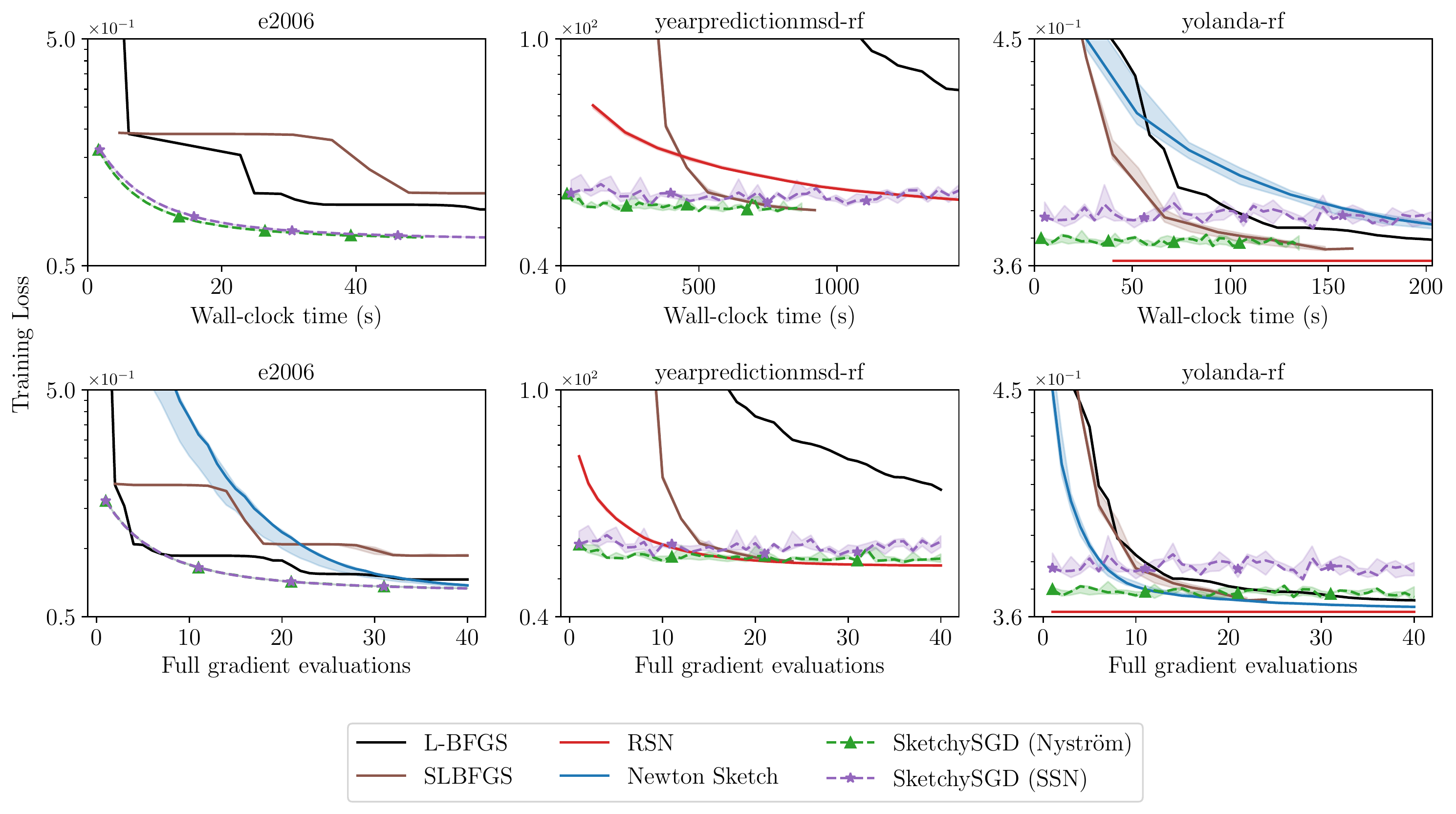}
    \caption{Comparisons to quasi-Newton methods (L-BFGS, SLBFGS, RSN, Newton Sketch) on ridge regression with augmented datasets.}
    \label{fig:performance_som_least_squares_scaled}
\end{figure}

\subsection{PCG}
\label{subsection:scaling_pcg}
The main costs of PCG are generally in (i) computing the preconditioner and (ii) performing matrix-vector products with the data matrix. 
For larger datasets, we expect both of these costs to increase, which should close the performance gap between SketchySGD and PCG.
We increase the number of samples for each ridge regression dataset by a factor of 3 as in \Cref{subsection:performance_som}; the results on these augmented datasets are presented in \Cref{fig:performance_pcg_least_squares_scaled}.
We see that the performance gap closes slightly --- on E2006-tfidf, SketchySGD now performs comparably to JacobiPCG.
In addition, the PCG methods now take significantly more wall-clock time to reach the training loss attained by SketchySGD.

\begin{figure}[t]
    \centering
    \includegraphics[scale=0.5]{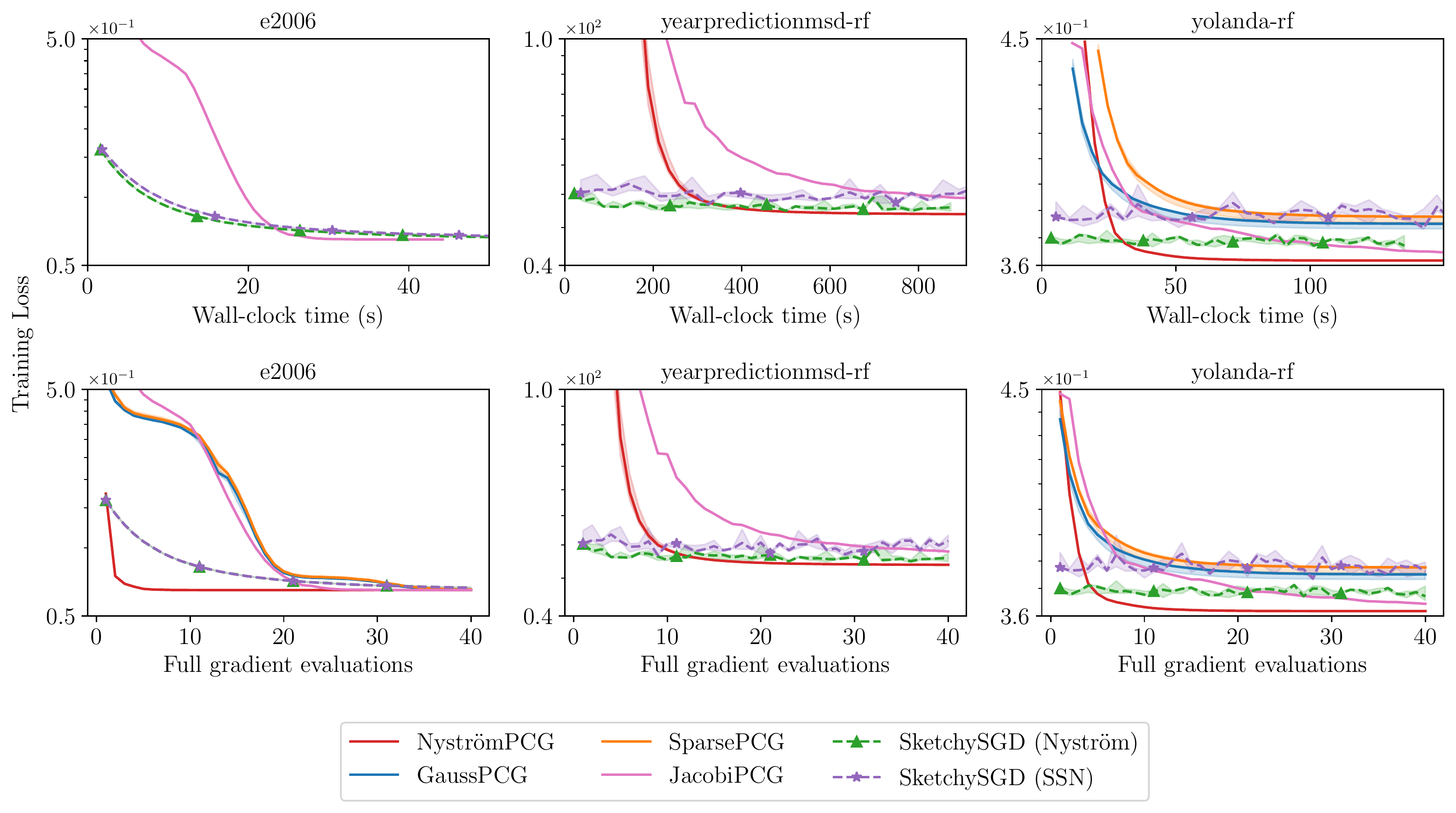}
    \caption{Comparisons to PCG (Jacobi, Nystr\"{o}m, sketch-and-precondition w/ Gaussian and sparse embeddings) on ridge regression with augmented datasets.}
    \label{fig:performance_pcg_least_squares_scaled}
\end{figure}

\section{Comparisons omitted from the main paper}
\label{section:loss_acc_appdx}
We present training/test loss and training/test accuracy comparisons that were omitted from \Cref{subsubsection:performance_auto,subsubsection:performance_tuned,subsection:performance_som,subsection:performance_pcg,subsection:large_scale,subsection:performance_dl}. 

\paragraph{Comparison to first-order methods with defaults} \Cref{fig:performance_auto_logistic_test_loss,fig:performance_auto_least_squares_test_loss,fig:performance_auto_logistic_train_acc,fig:performance_auto_logistic_test_acc} display test loss/training accuracy/test accuracy plots corresponding to \Cref{subsubsection:performance_auto}, where we compare SketchySGD to first-order methods with their default hyperparameters.

\begin{figure}[htbp]
    \centering
    \includegraphics[scale=0.5]{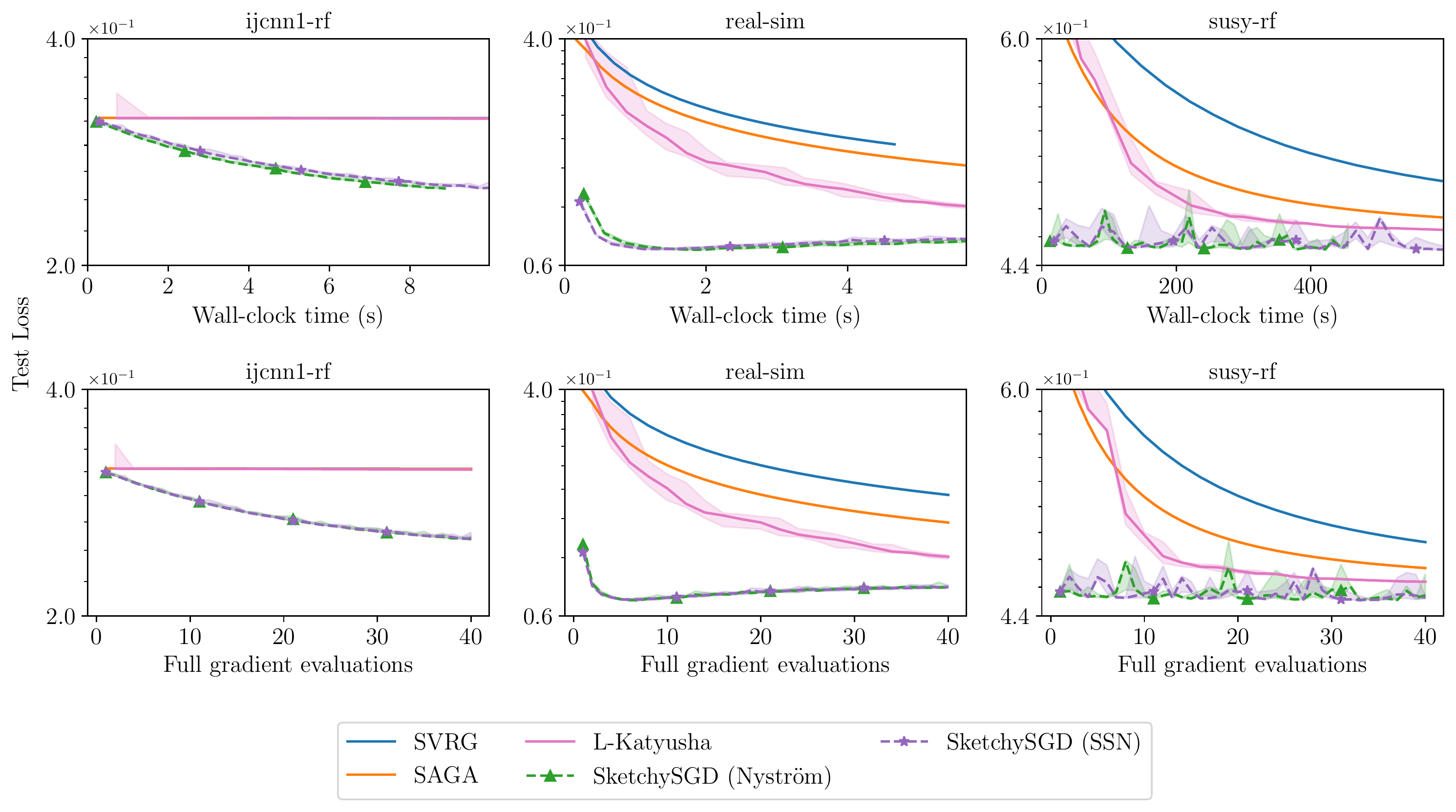}
    \caption{Test losses of first-order methods with default learning rates (SVRG, SAGA) and smoothness parameters (L-Katyusha) on $l_2$-regularized logistic regression.}
    \label{fig:performance_auto_logistic_test_loss}
\end{figure}

\begin{figure}[htbp]
    \centering
    \includegraphics[scale=0.5]{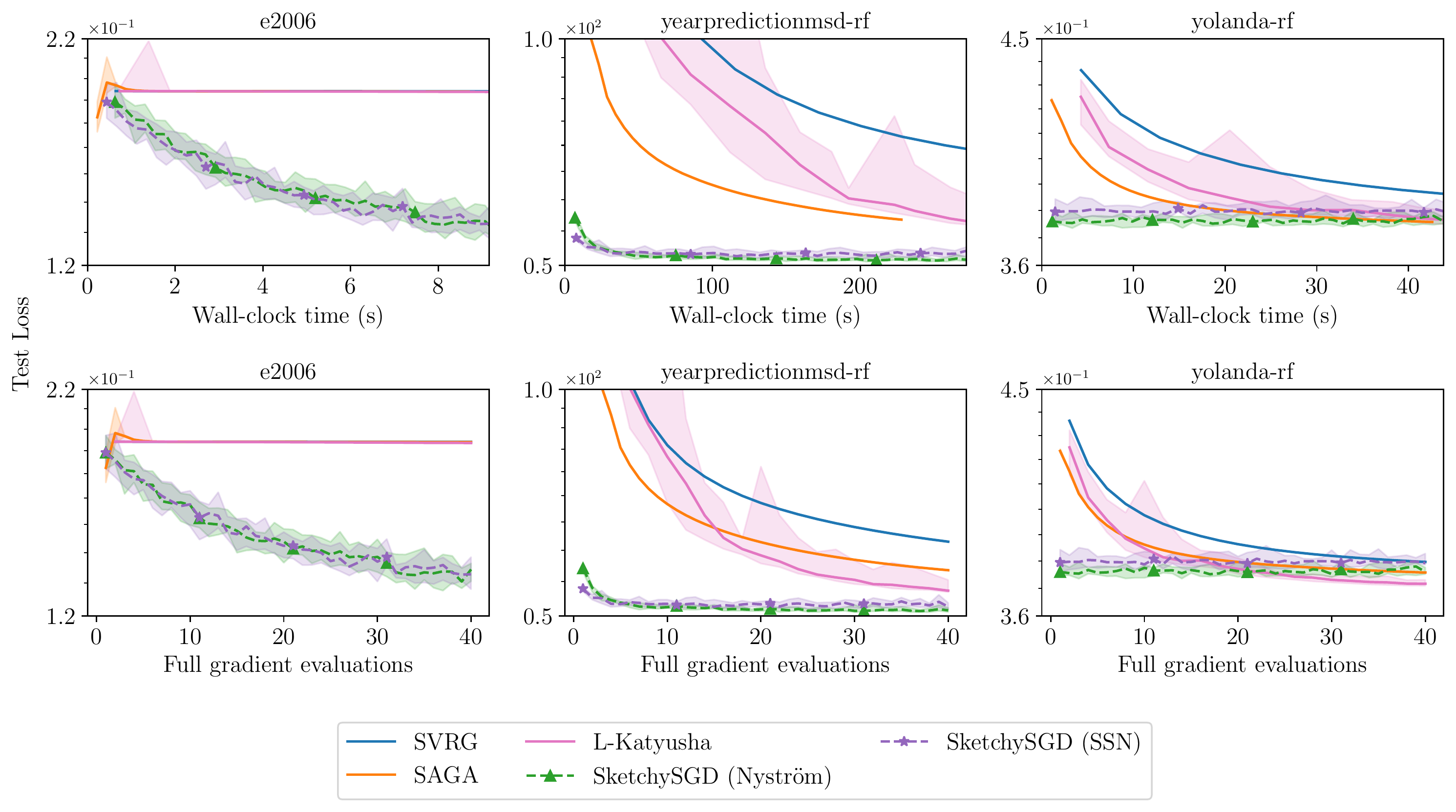}
    \caption{Test losses of first-order methods with default learning rates (SVRG, SAGA) and smoothness parameters (L-Katyusha) on ridge regression.}
    \label{fig:performance_auto_least_squares_test_loss}
\end{figure}

\begin{figure}[htbp]
    \centering
    \includegraphics[scale=0.5]{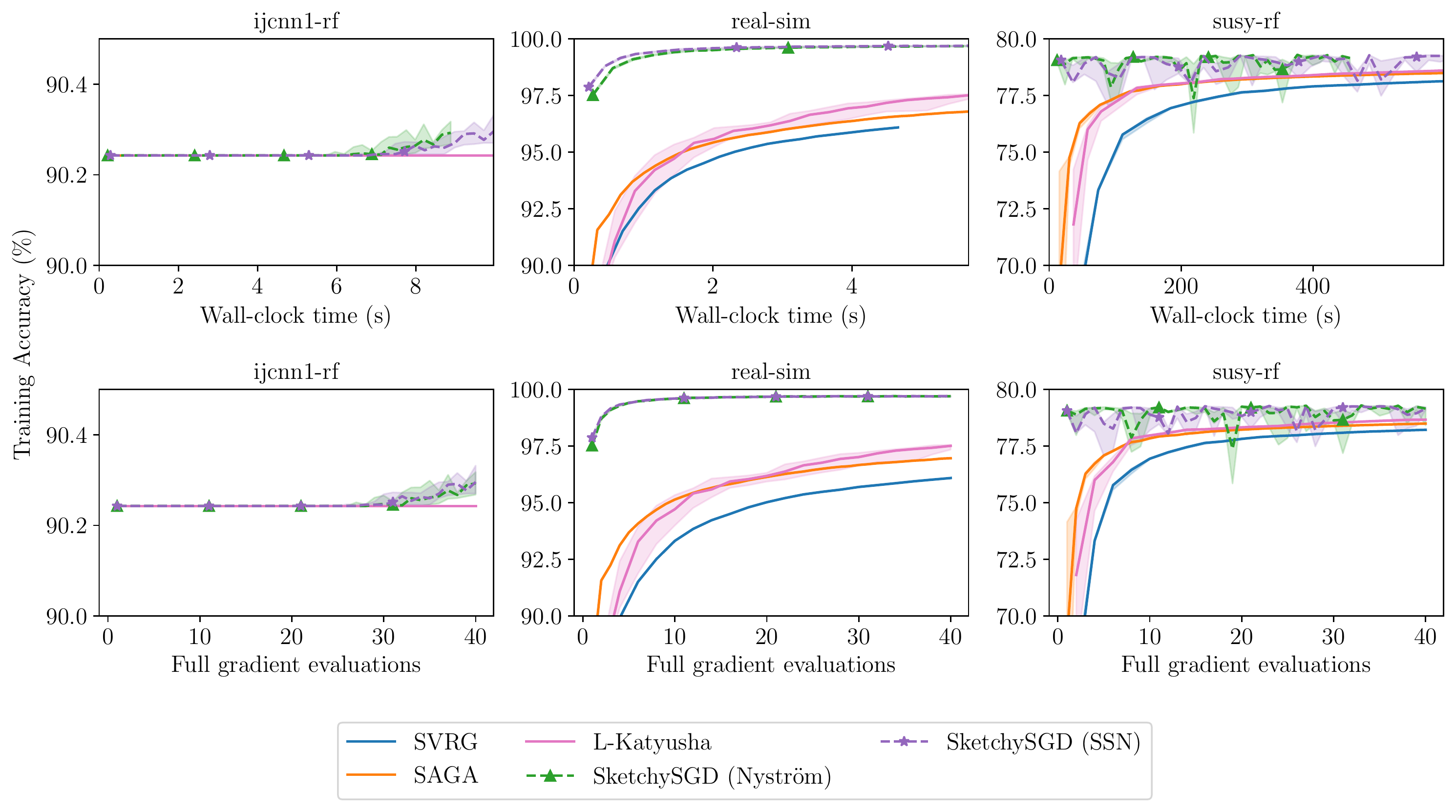}
    \caption{Training accuracies of first-order methods with default learning rates (SVRG, SAGA) and smoothness parameters (L-Katyusha) on $l_2$-regularized logistic regression.}
    \label{fig:performance_auto_logistic_train_acc}
\end{figure}

\begin{figure}[htbp]
    \centering
    \includegraphics[scale=0.5]{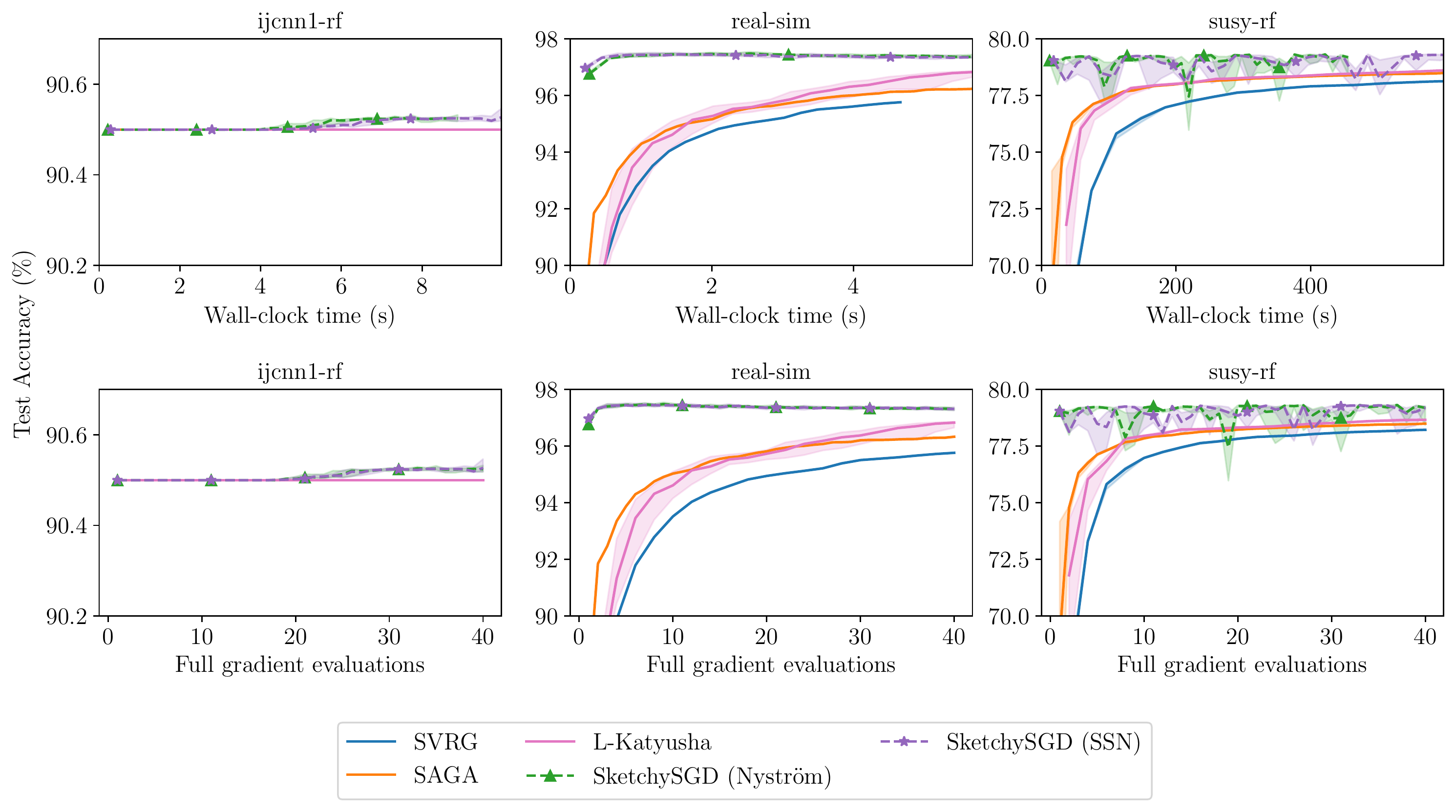}
    \caption{Test accuracies of first-order methods with default learning rates (SVRG, SAGA) and smoothness parameters (L-Katyusha) on $l_2$-regularized logistic regression.}
    \label{fig:performance_auto_logistic_test_acc}
\end{figure}

\paragraph{Comparisons to first-order methods with tuning} 
\Cref{fig:performance_tuned_logistic_test_loss,fig:performance_tuned_least_squares_test_loss,fig:performance_tuned_logistic_train_acc,fig:performance_tuned_logistic_test_acc} display test loss/training accuracy/test accuracy plots corresponding to \Cref{subsubsection:performance_tuned}, where we compare SketchySGD to first-order methods with tuned hyperparameters. 

\begin{figure}[htbp]
    \centering
    \includegraphics[scale=0.5]{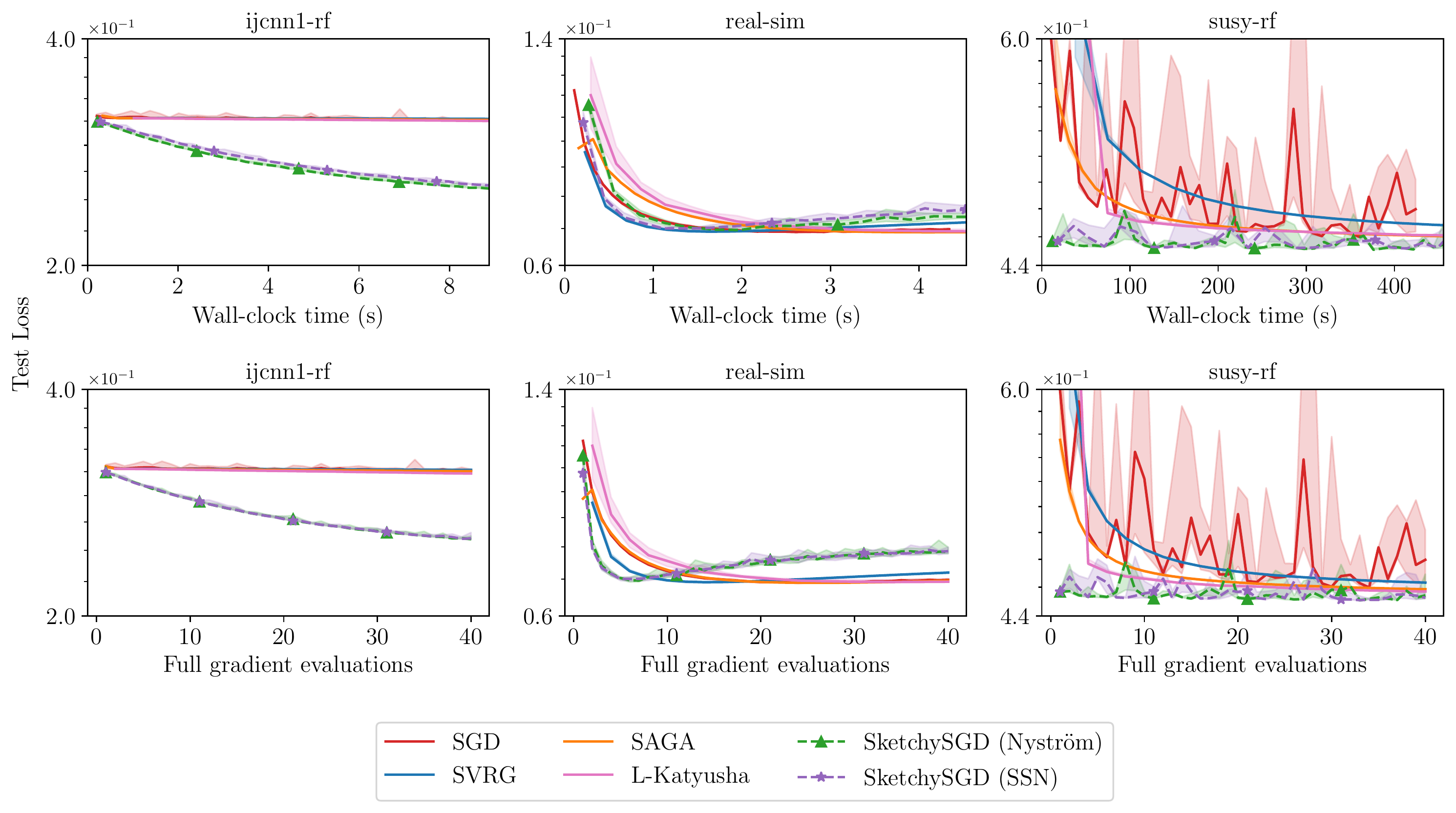}
    \caption{Test losses of first-order methods with tuned learning rates (SGD, SVRG, SAGA) and smoothness parameters (L-Katyusha) on $l_2$-regularized logistic regression.}
    \label{fig:performance_tuned_logistic_test_loss}
\end{figure}

\begin{figure}[htbp]
    \centering
    \includegraphics[scale=0.5]{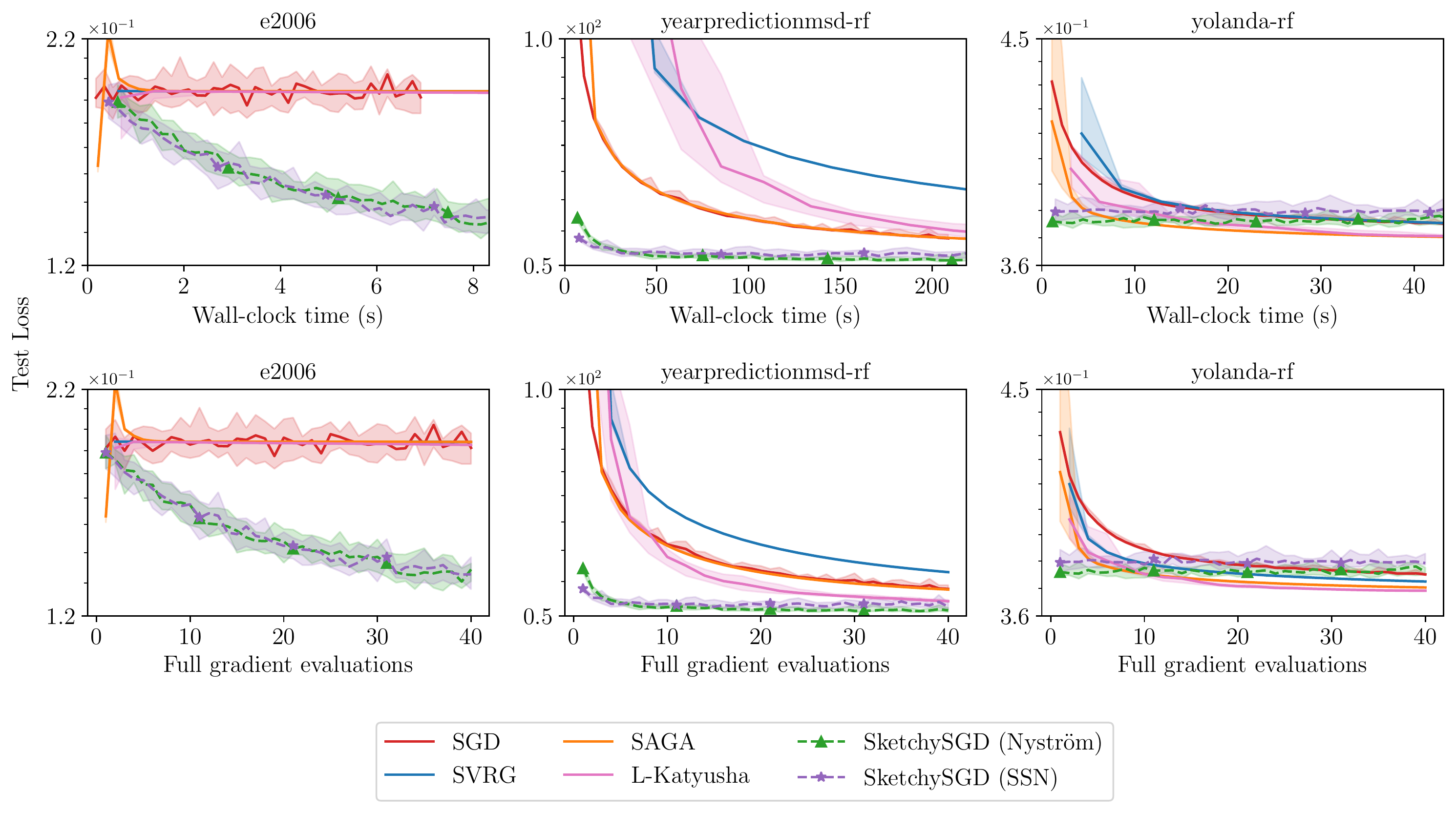}
    \caption{Test losses of first-order  methods with tuned learning rates (SGD, SVRG, SAGA) and smoothness parameters (L-Katyusha) on  ridge regression.}
    \label{fig:performance_tuned_least_squares_test_loss}
\end{figure}

\begin{figure}[htbp]
    \centering
    \includegraphics[scale=0.5]{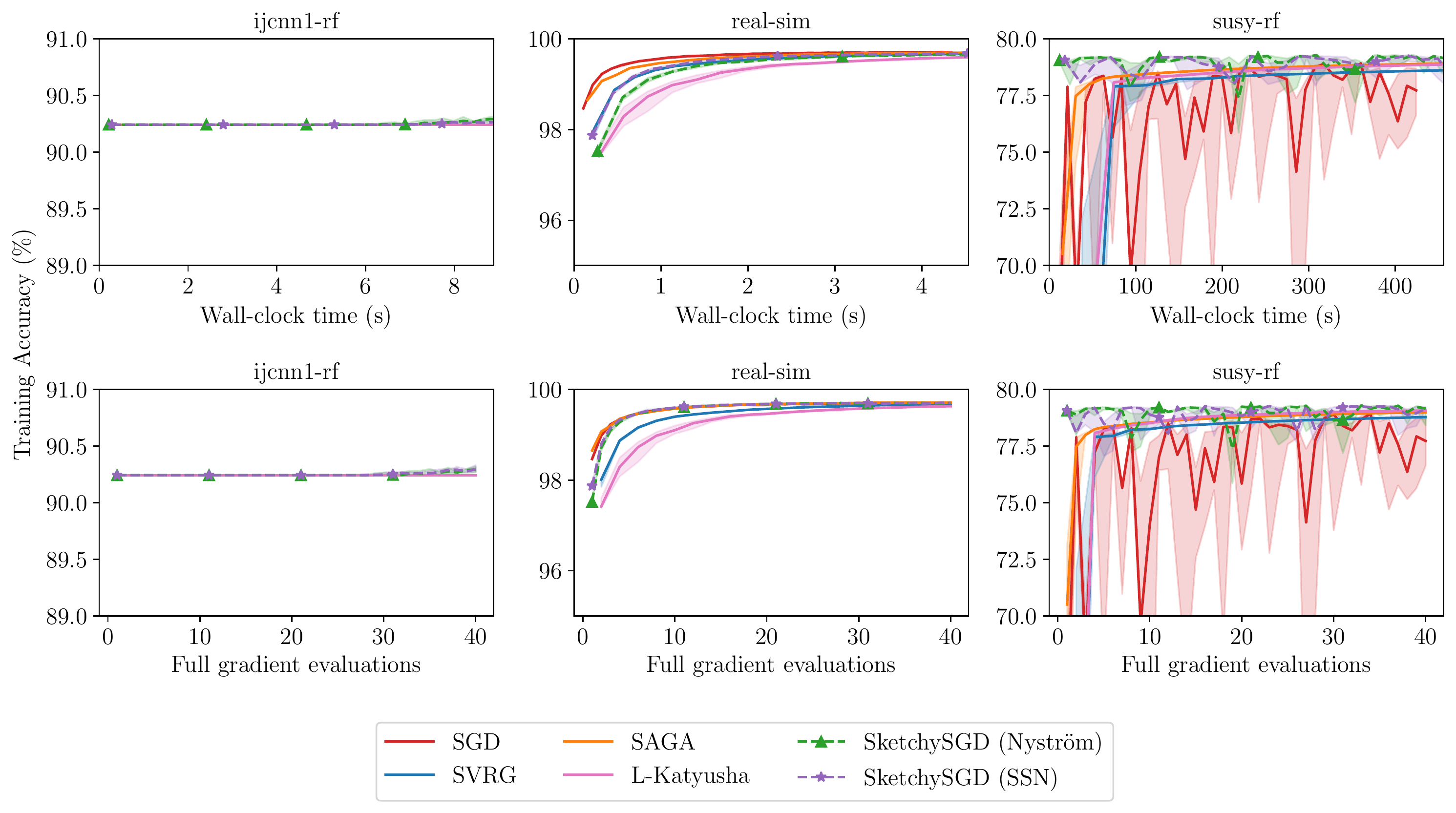}
    \caption{Training accuracies of first-order methods with tuned learning rates (SGD, SVRG, SAGA) and smoothness parameters (L-Katyusha) on $l_2$-regularized logistic regression.}
    \label{fig:performance_tuned_logistic_train_acc}
\end{figure}

\begin{figure}[htbp]
    \centering
    \includegraphics[scale=0.5]{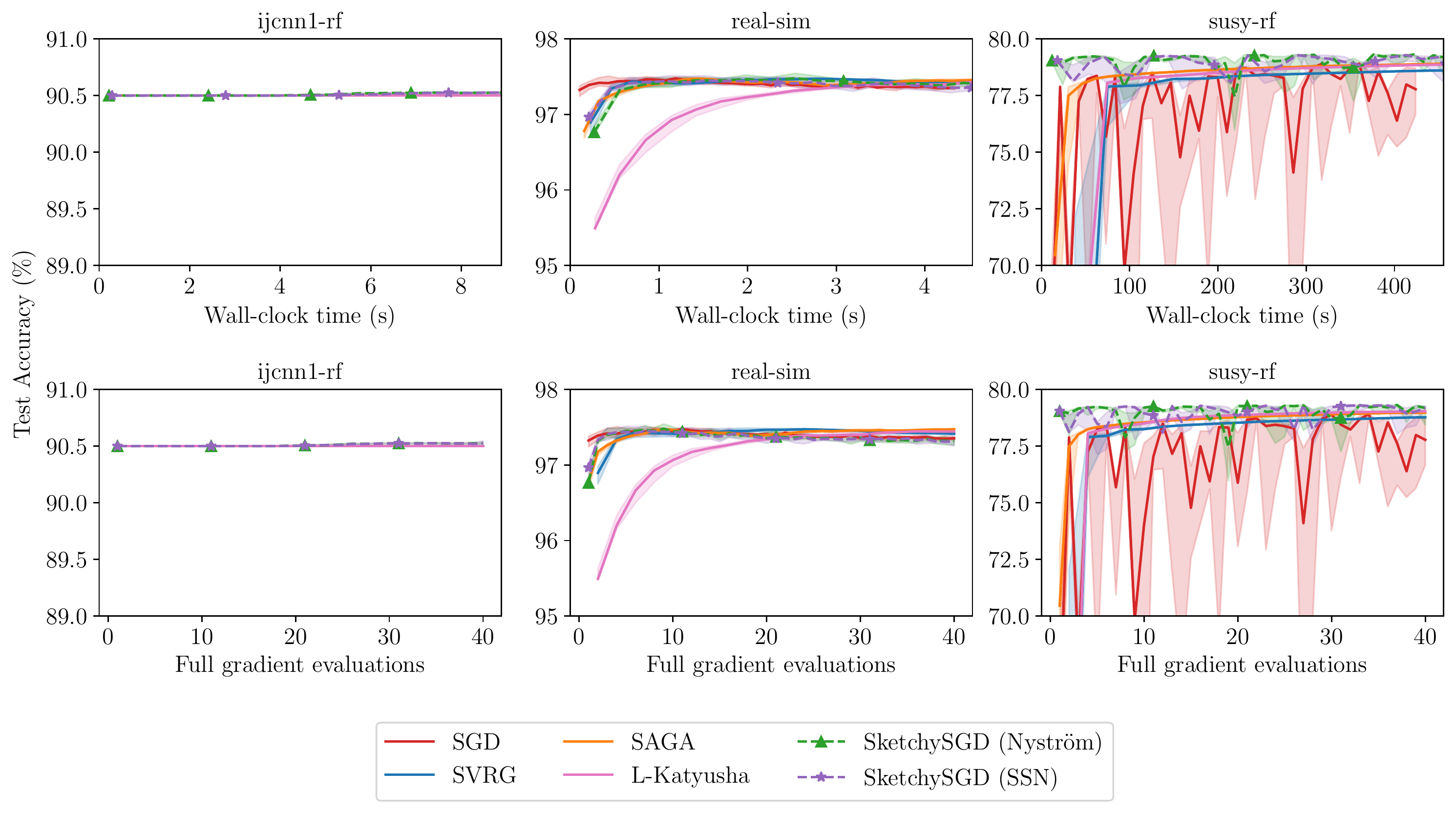}
    \caption{Test accuracies of first-order methods with tuned learning rates (SGD, SVRG, SAGA) and smoothness parameters (L-Katyusha) on $l_2$-regularized logistic regression.}
    \label{fig:performance_tuned_logistic_test_acc}
\end{figure}

\paragraph{Comparisons to quasi-Newton methods}
\Cref{fig:performance_som_logistic_test_loss,fig:performance_som_least_squares_test_loss,fig:performance_som_logistic_train_acc,fig:performance_som_logistic_test_acc,fig:performance_som_logistic_scaled_test_loss,fig:performance_som_least_squares_scaled_test_loss,fig:performance_som_logistic_scaled_train_acc,fig:performance_som_logistic_scaled_test_acc} display test loss/training accuracy/test accuracy plots corresponding to \Cref{subsection:performance_som}, where we compare SketchySGD to quasi-Newton methods.

\begin{figure}[htbp]
    \centering
    \includegraphics[scale=0.5]{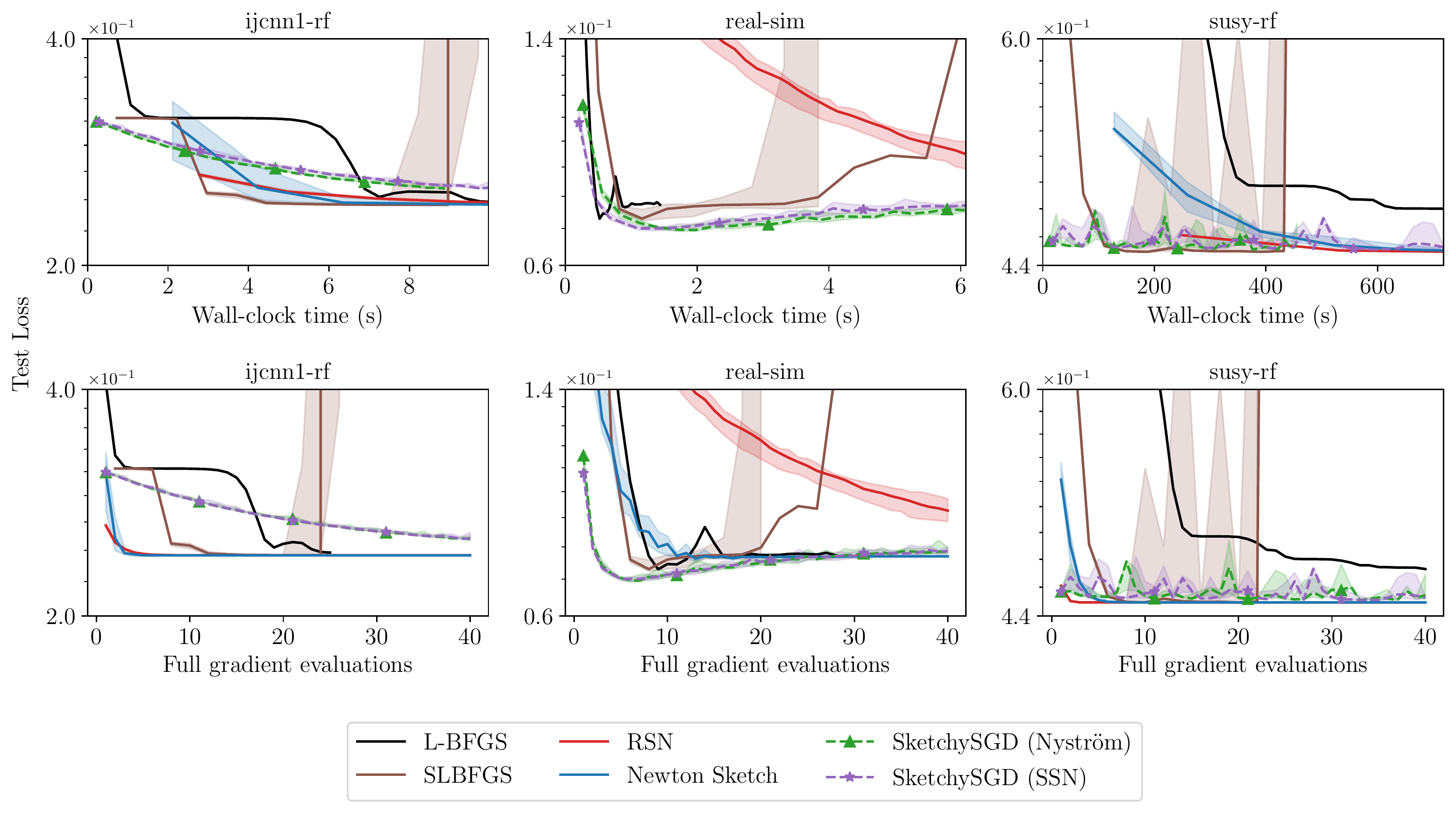}
    \caption{Test losses of quasi-Newton methods (L-BFGS, SLBFGS, RSN, Newton Sketch) on $l_2$-regularized logistic regression.}
    \label{fig:performance_som_logistic_test_loss}
\end{figure}

\begin{figure}[htbp]
    \centering
    \includegraphics[scale=0.5]{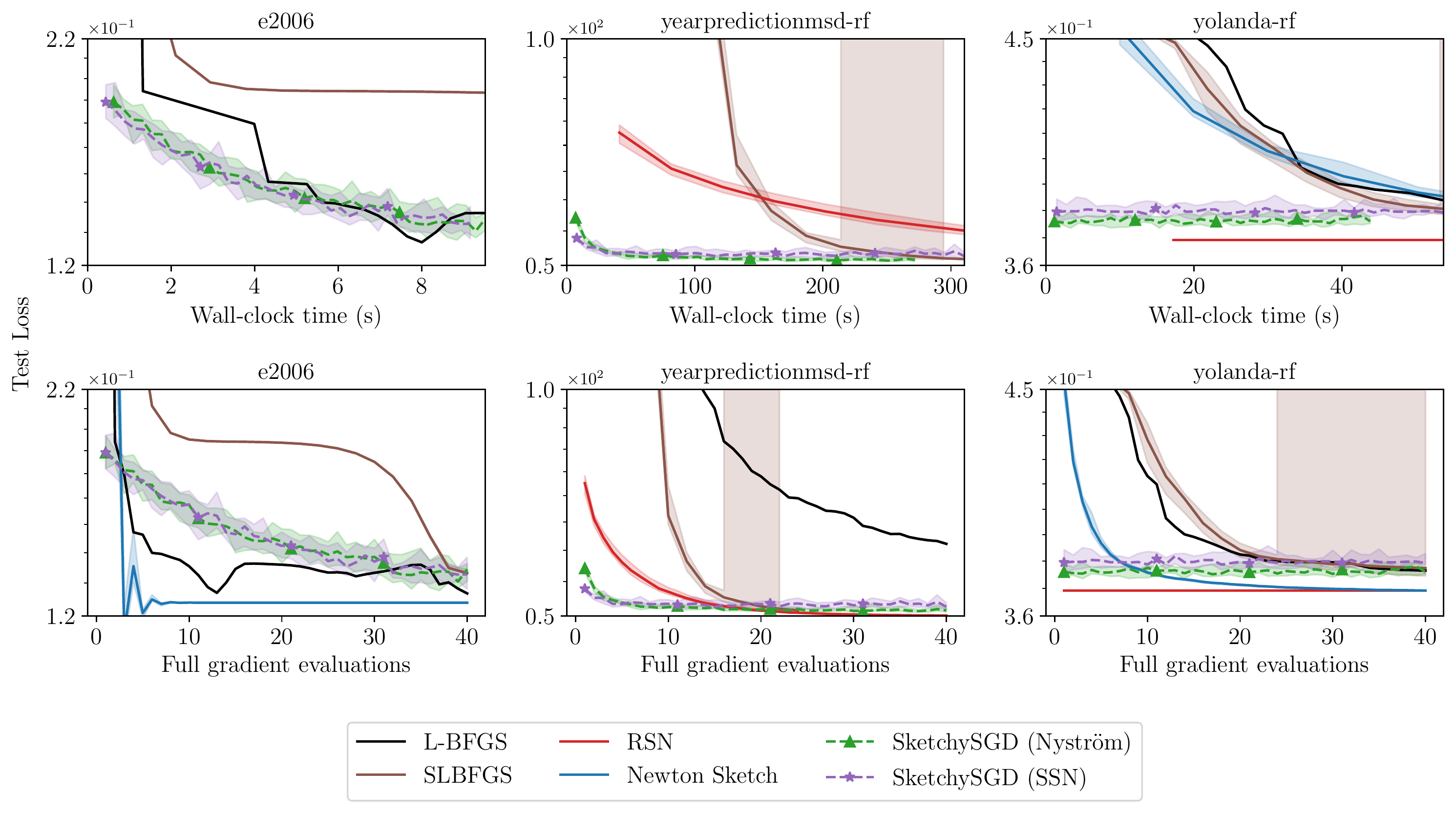}
    \caption{Test losses of quasi-Newton methods (L-BFGS, SLBFGS, RSN, Newton Sketch) on ridge regression.}
    \label{fig:performance_som_least_squares_test_loss}
\end{figure}

\begin{figure}[htbp]
    \centering
    \includegraphics[scale=0.5]{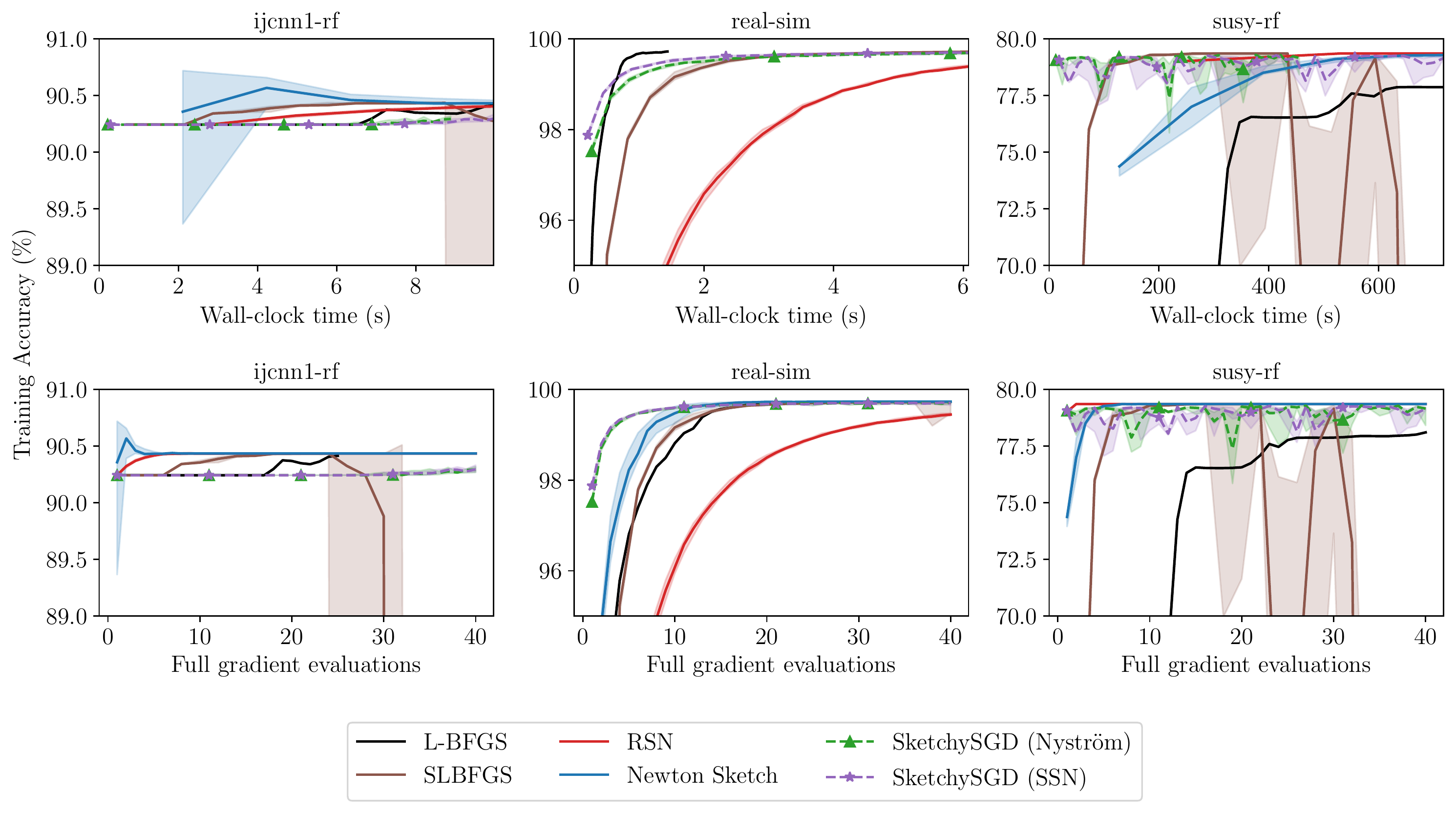}
    \caption{Training accuracies of quasi-Newton methods (L-BFGS, SLBFGS, RSN, Newton Sketch) on $l_2$-regularized logistic regression.}
    \label{fig:performance_som_logistic_train_acc}
\end{figure}

\begin{figure}[htbp]
    \centering
    \includegraphics[scale=0.5]{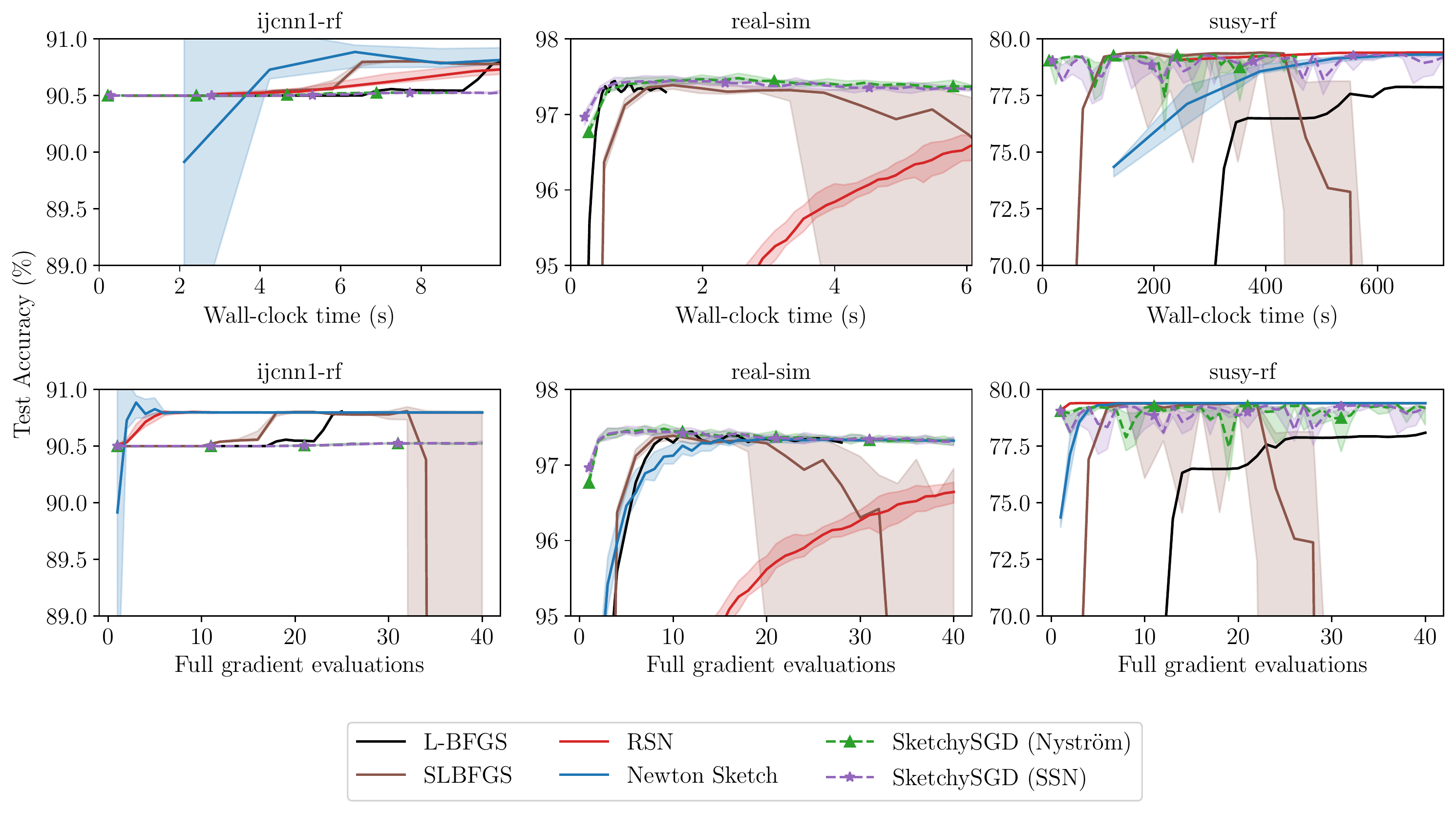}
    \caption{Test accuracies of quasi-Newton methods (L-BFGS, SLBFGS, RSN, Newton Sketch) on $l_2$-regularized logistic regression.}
    \label{fig:performance_som_logistic_test_acc}
\end{figure}

\begin{figure}[htbp]
    \centering
    \includegraphics[scale=0.5]{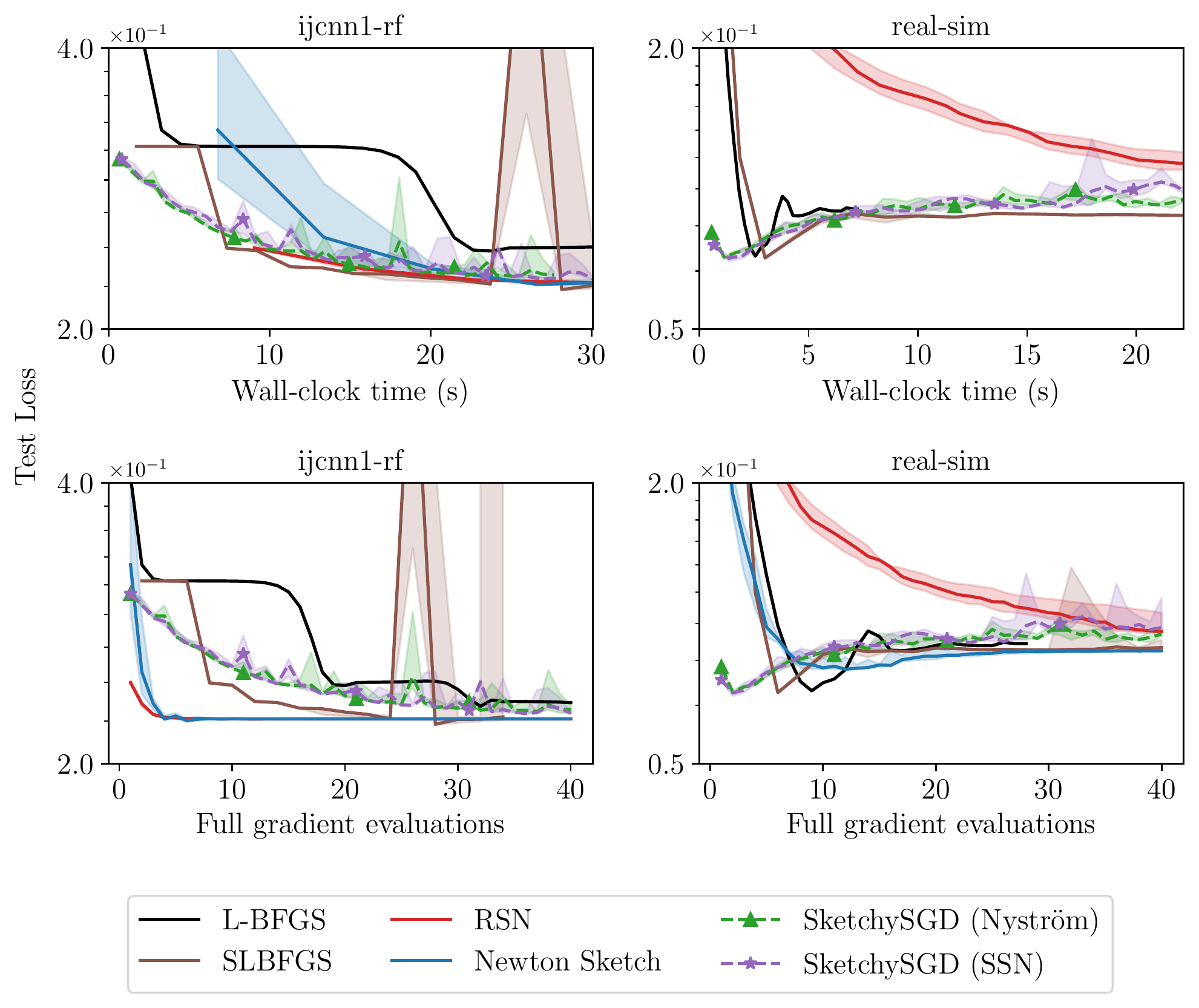}
    \caption{Test losses of quasi-Newton methods (L-BFGS, SLBFGS, RSN, Newton Sketch) on $l_2$-regularized logistic regression with augmented datasets.}
    \label{fig:performance_som_logistic_scaled_test_loss}
\end{figure}

\begin{figure}[htbp]
    \centering
    \includegraphics[scale=0.5]{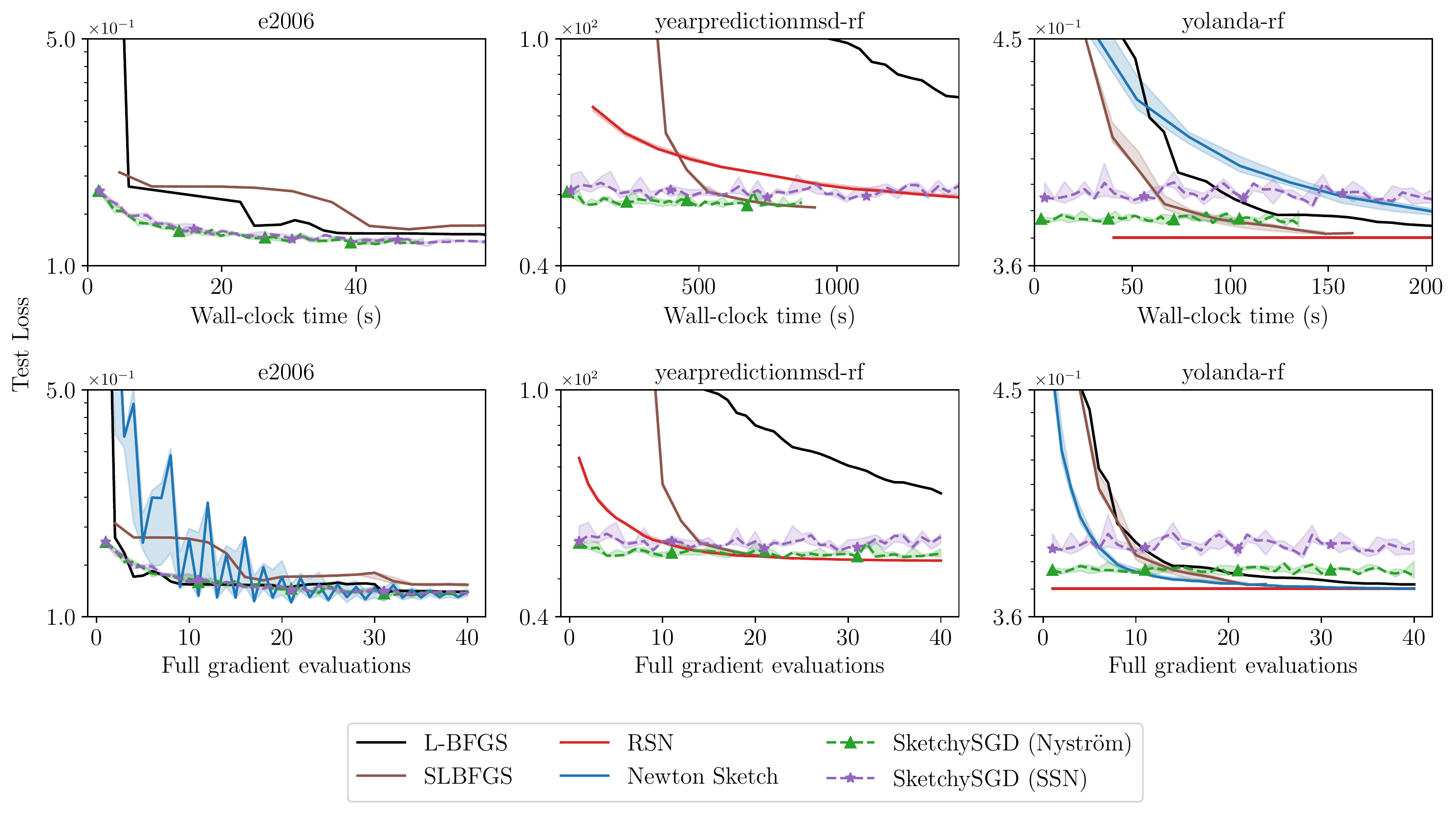}
    \caption{Test losses of quasi-Newton methods (L-BFGS, SLBFGS, RSN, Newton Sketch) on ridge regression with augmented datasets.}
    \label{fig:performance_som_least_squares_scaled_test_loss}
\end{figure}

\begin{figure}[htbp]
    \centering
    \includegraphics[scale=0.5]{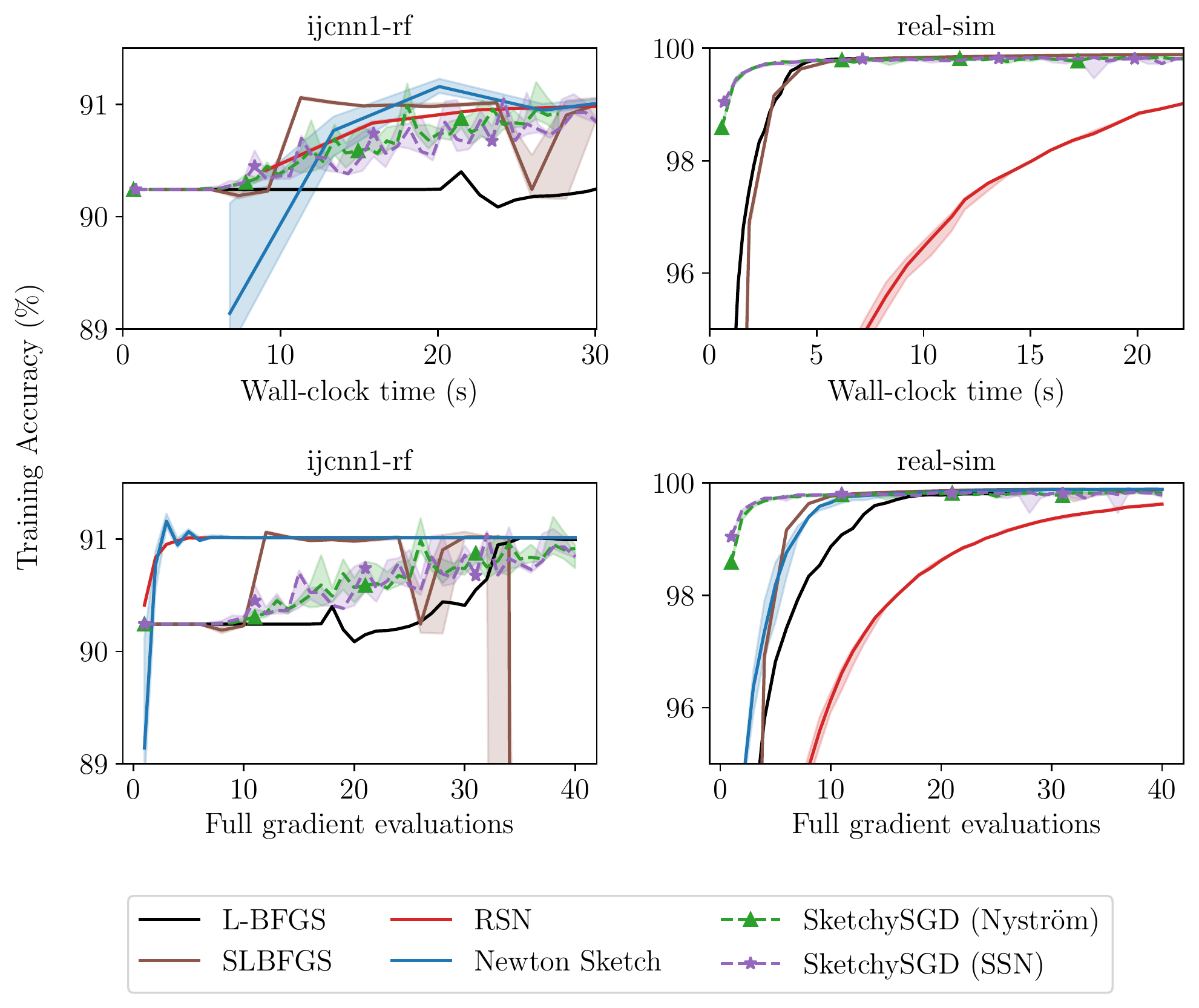}
    \caption{Training accuracies of quasi-Newton methods (L-BFGS, SLBFGS, RSN, Newton Sketch) on $l_2$-regularized logistic regression with augmented datasets.}
    \label{fig:performance_som_logistic_scaled_train_acc}
\end{figure}

\begin{figure}[htbp]
    \centering
    \includegraphics[scale=0.5]{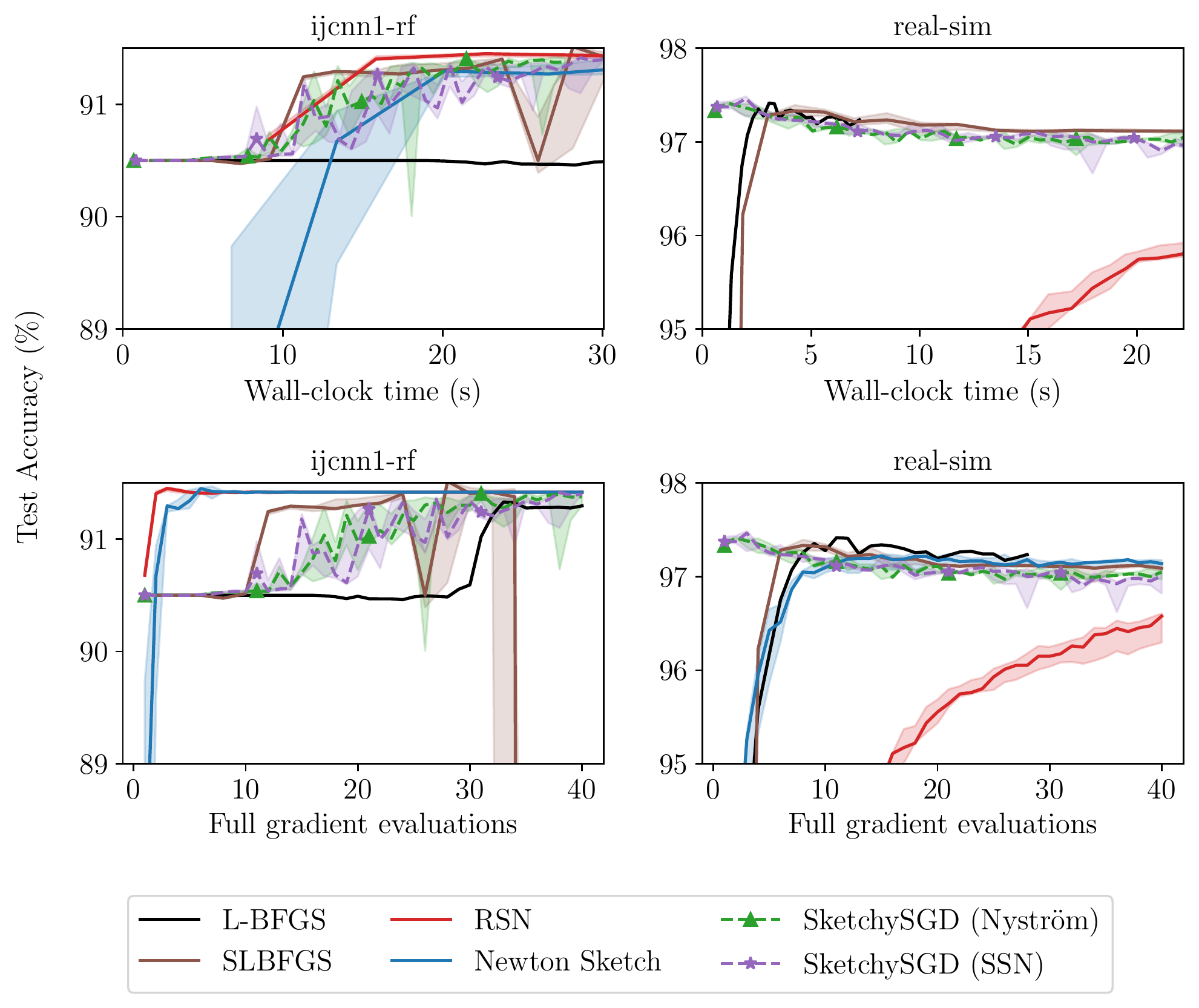}
    \caption{Test accuracies of quasi-Newton methods (L-BFGS, SLBFGS, RSN, Newton Sketch) on $l_2$-regularized logistic regression with augmented datasets.}
    \label{fig:performance_som_logistic_scaled_test_acc}
\end{figure}

\paragraph{Comparisons to PCG}
\Cref{fig:performance_pcg_least_squares_test_loss,fig:performance_pcg_least_squares_scaled_test_loss} display the test losses corresponding to \Cref{subsection:performance_pcg}, where we compare SketchySGD to several variants of PCG.

\begin{figure}[htbp]
    \centering
    \includegraphics[scale=0.5]{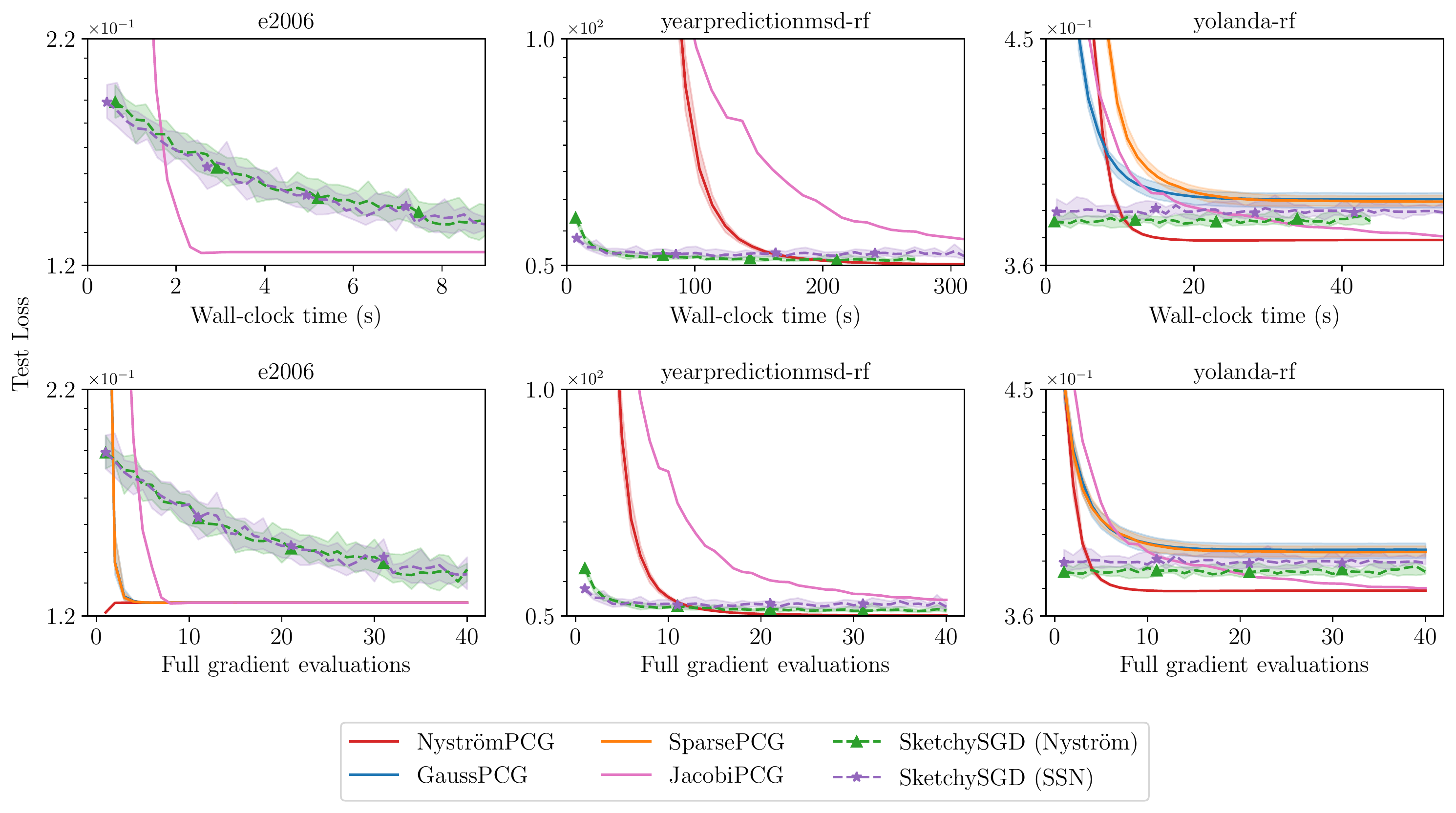}
    \caption{Test losses of PCG (Jacobi, Nystr\"{o}m, sketch-and-precondition w/ Gaussian and sparse embeddings) on ridge regression.}
    \label{fig:performance_pcg_least_squares_test_loss}
\end{figure}

\begin{figure}[htbp]
    \centering
    \includegraphics[scale=0.5]{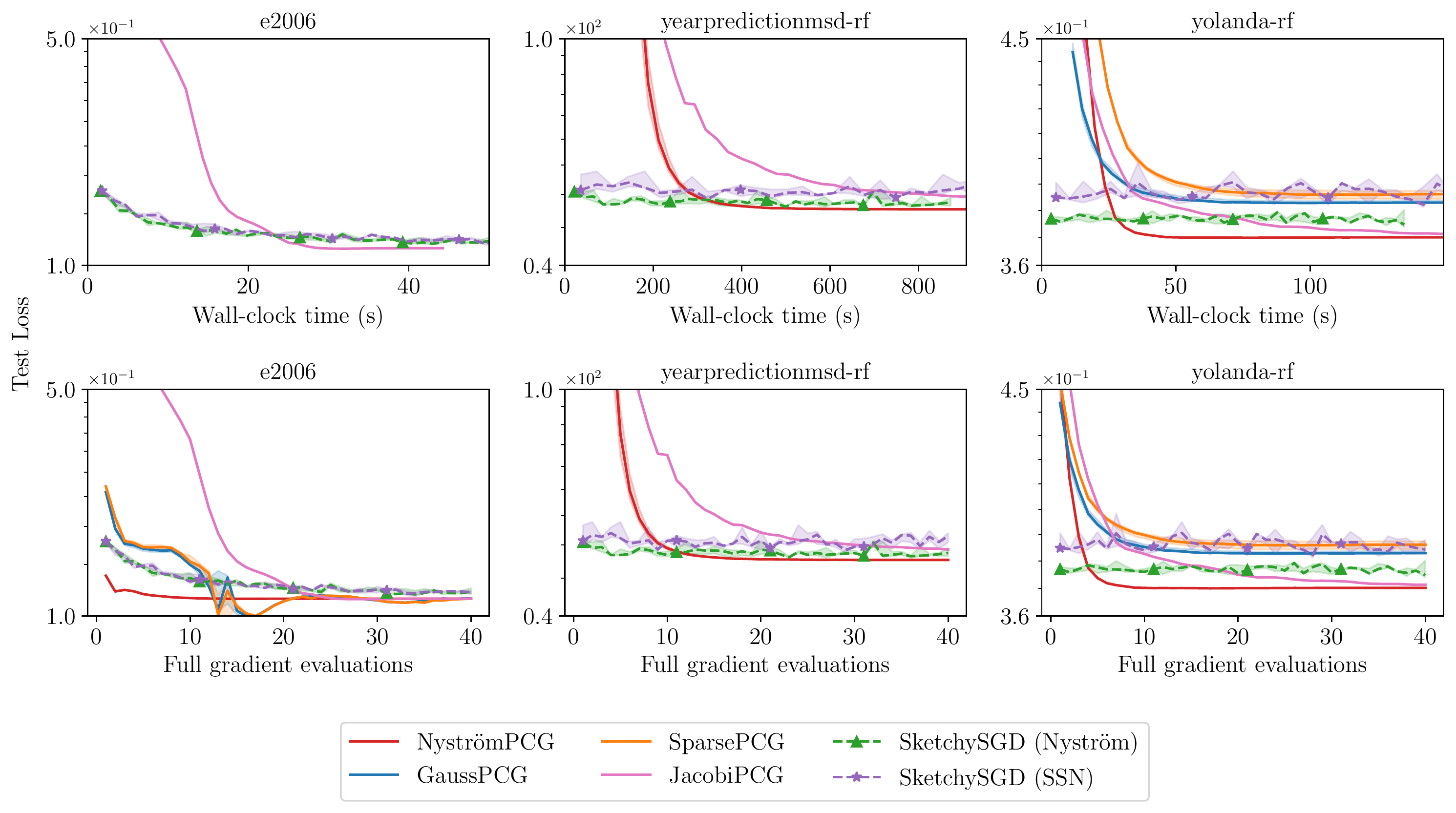}
    \caption{Test losses of PCG (Jacobi, Nystr\"{o}m, sketch-and-precondition w/ Gaussian and sparse embeddings) on ridge regression with augmented datasets.}
    \label{fig:performance_pcg_least_squares_scaled_test_loss}
\end{figure}

\paragraph{Large-scale logistic regression}
\Cref{fig:higgs_auto_test_acc,fig:higgs_tuned_test_acc} display the test accuracies corresponding to \Cref{subsection:large_scale}, where we compare SketchySGD to competitor methods with both default and tuned hyperparameters on a large-scale version of the HIGGS dataset.

\begin{figure}[htbp]
    \centering
    \includegraphics[scale=0.5]{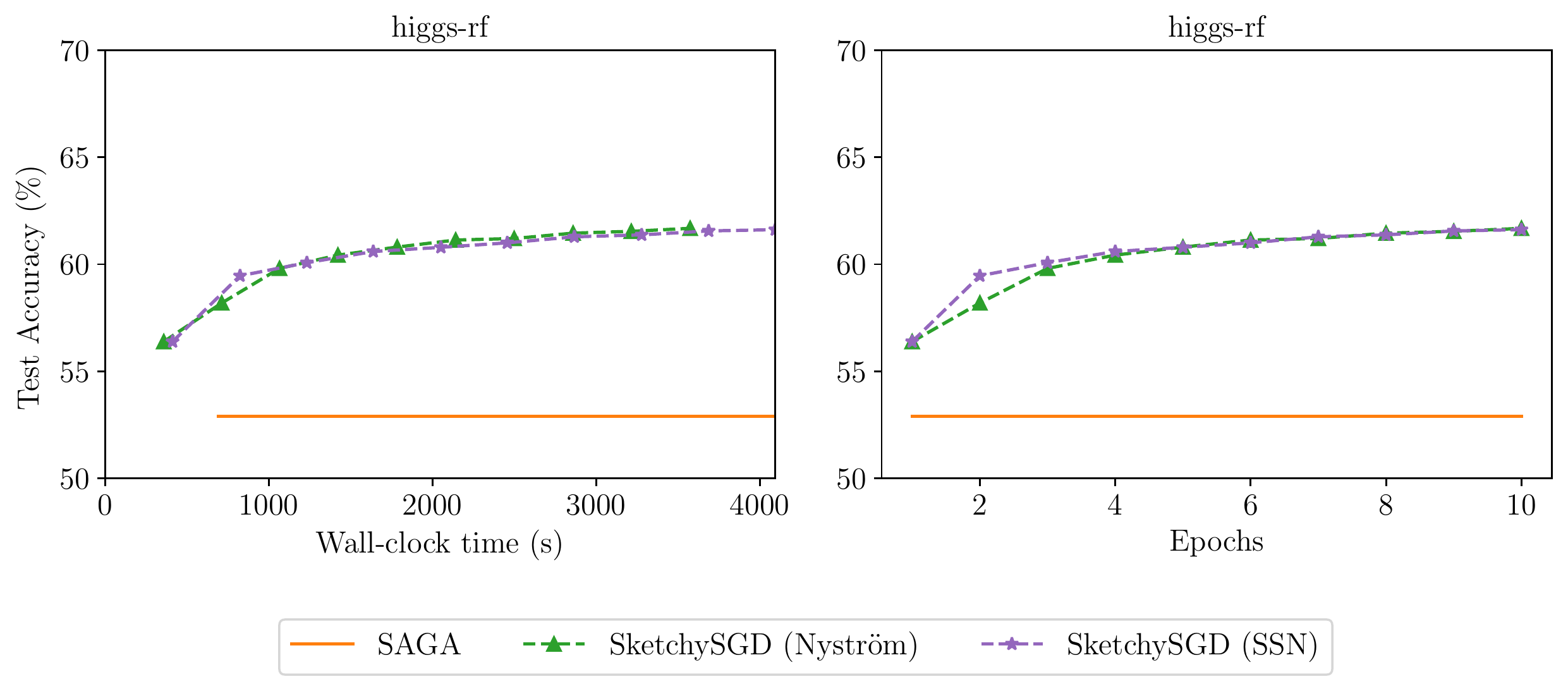}
    \caption{Comparison of test accuracies between SketchySGD and SAGA with default learning rate.}
    \label{fig:higgs_auto_test_acc}
\end{figure}

\begin{figure}[htbp]
    \centering
    \includegraphics[scale=0.5]{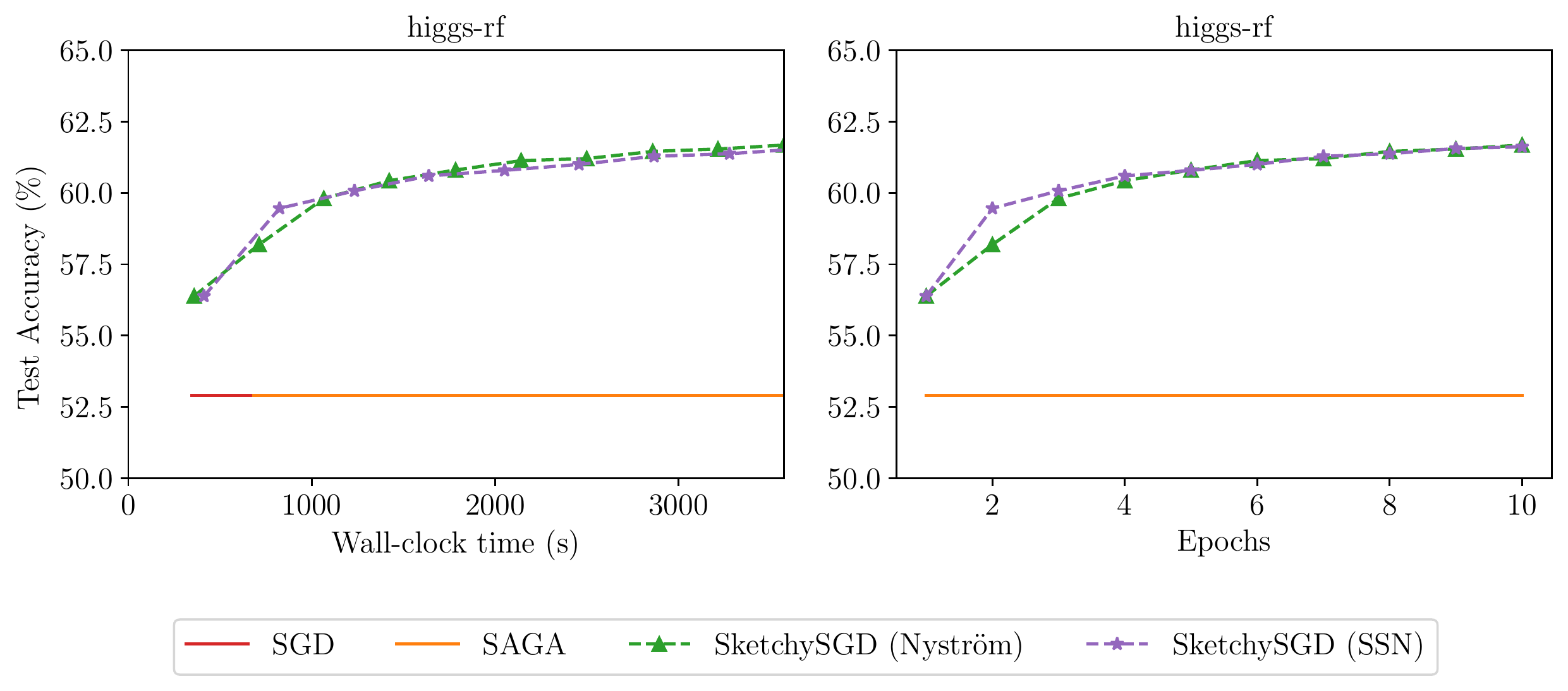}
    \caption{Comparison of test accuracies between SketchySGD, SGD and SAGA with tuned learning rates.}
    \label{fig:higgs_tuned_test_acc}
\end{figure}

\paragraph{Tabular deep learning}
\Cref{fig:deep_learning_train_loss,fig:deep_learning_test_loss,fig:deep_learning_train_acc} display the training loss/test loss/training accuracy plots corresponding to \Cref{subsection:performance_dl}, where we compare SketchySGD to popular deep learning optimizers.

\begin{figure}[htbp]
    \centering
    \includegraphics[scale=0.5]{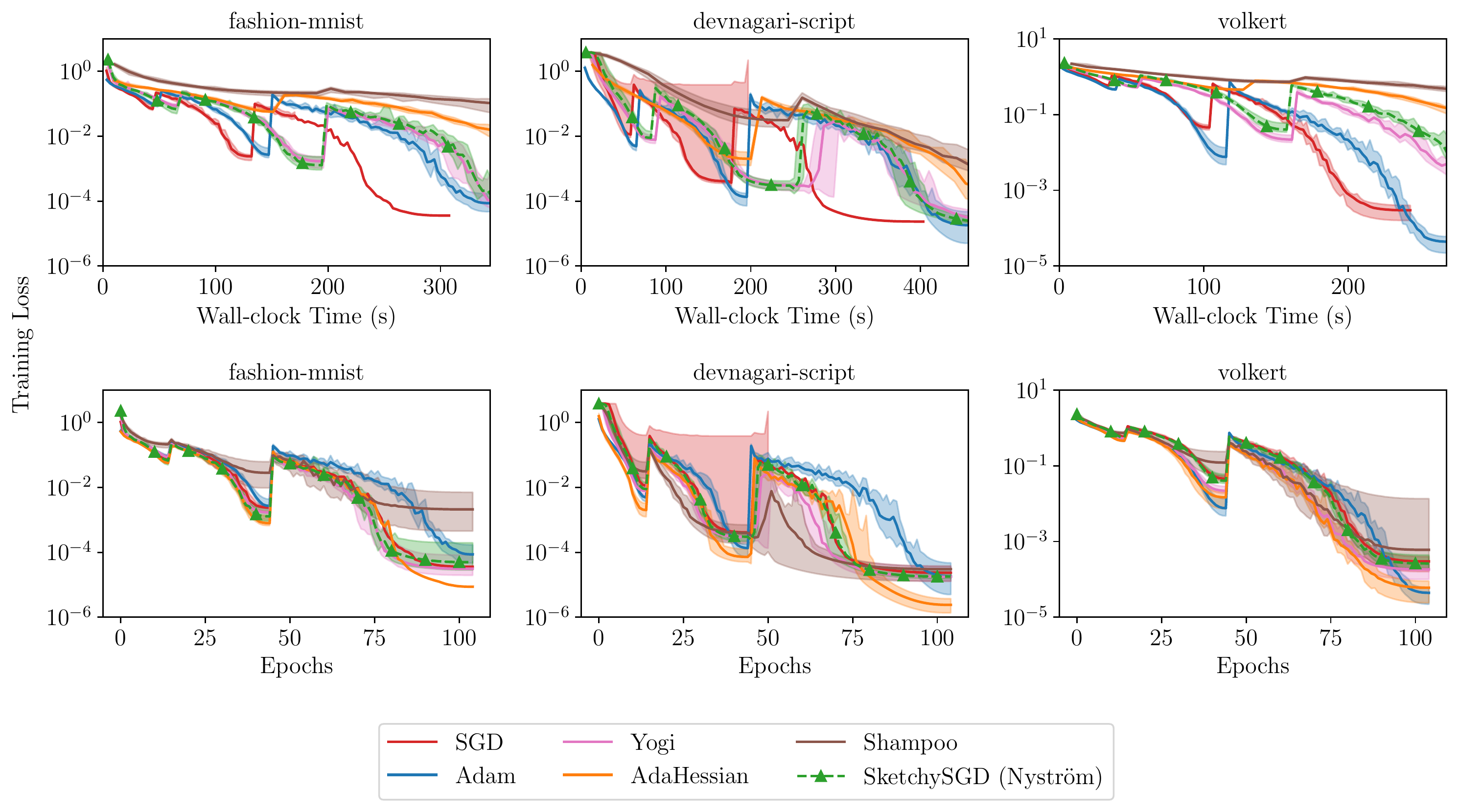}
    \caption{Training losses for SketchySGD and competitor methods on tabular deep learning tasks.}
    \label{fig:deep_learning_train_loss}
\end{figure}

\begin{figure}[htbp]
    \centering
    \includegraphics[scale=0.5]{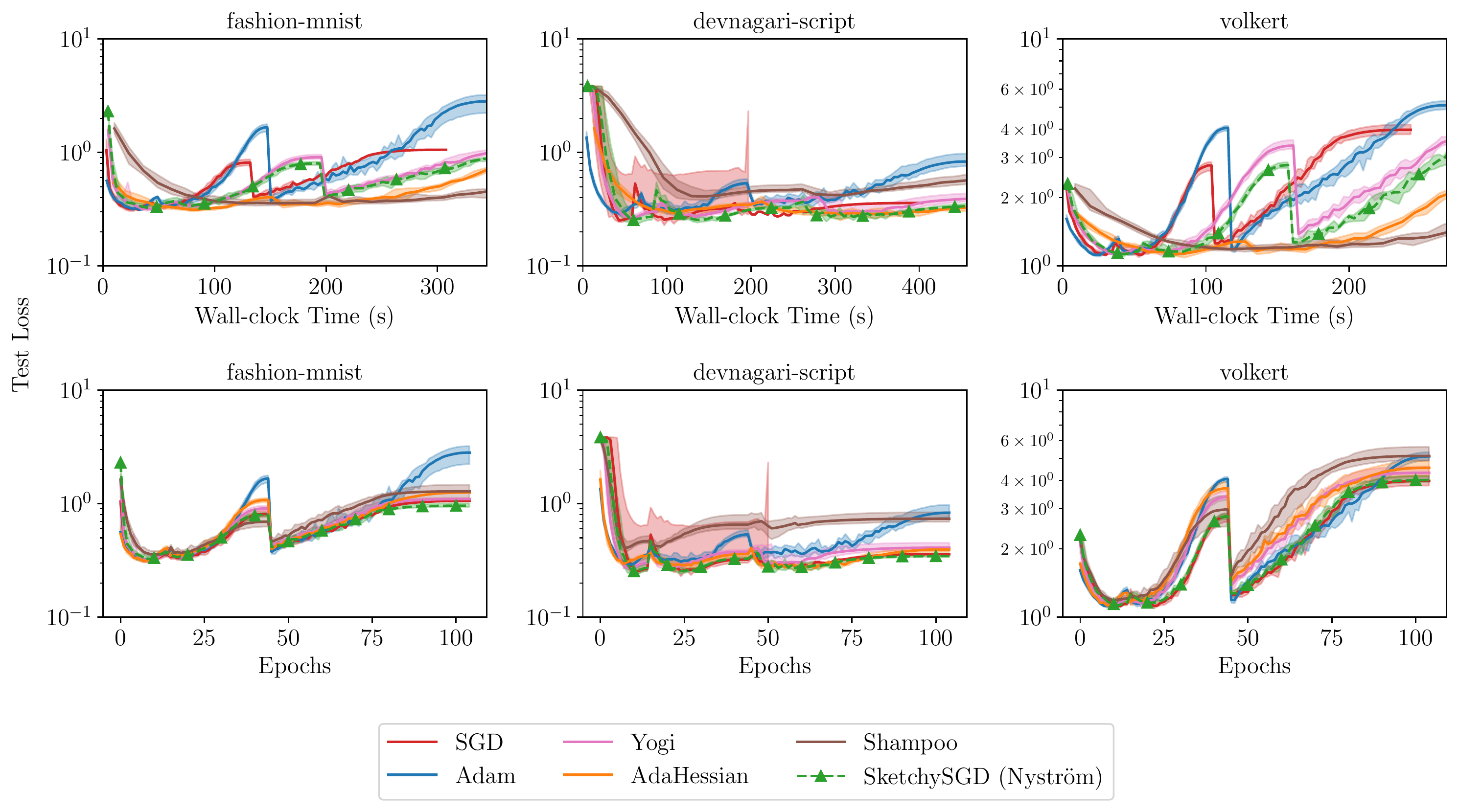}
    \caption{Test losses for SketchySGD and competitor methods on tabular deep learning tasks.}
    \label{fig:deep_learning_test_loss}
\end{figure}

\begin{figure}[htbp]
    \centering
    \includegraphics[scale=0.5]{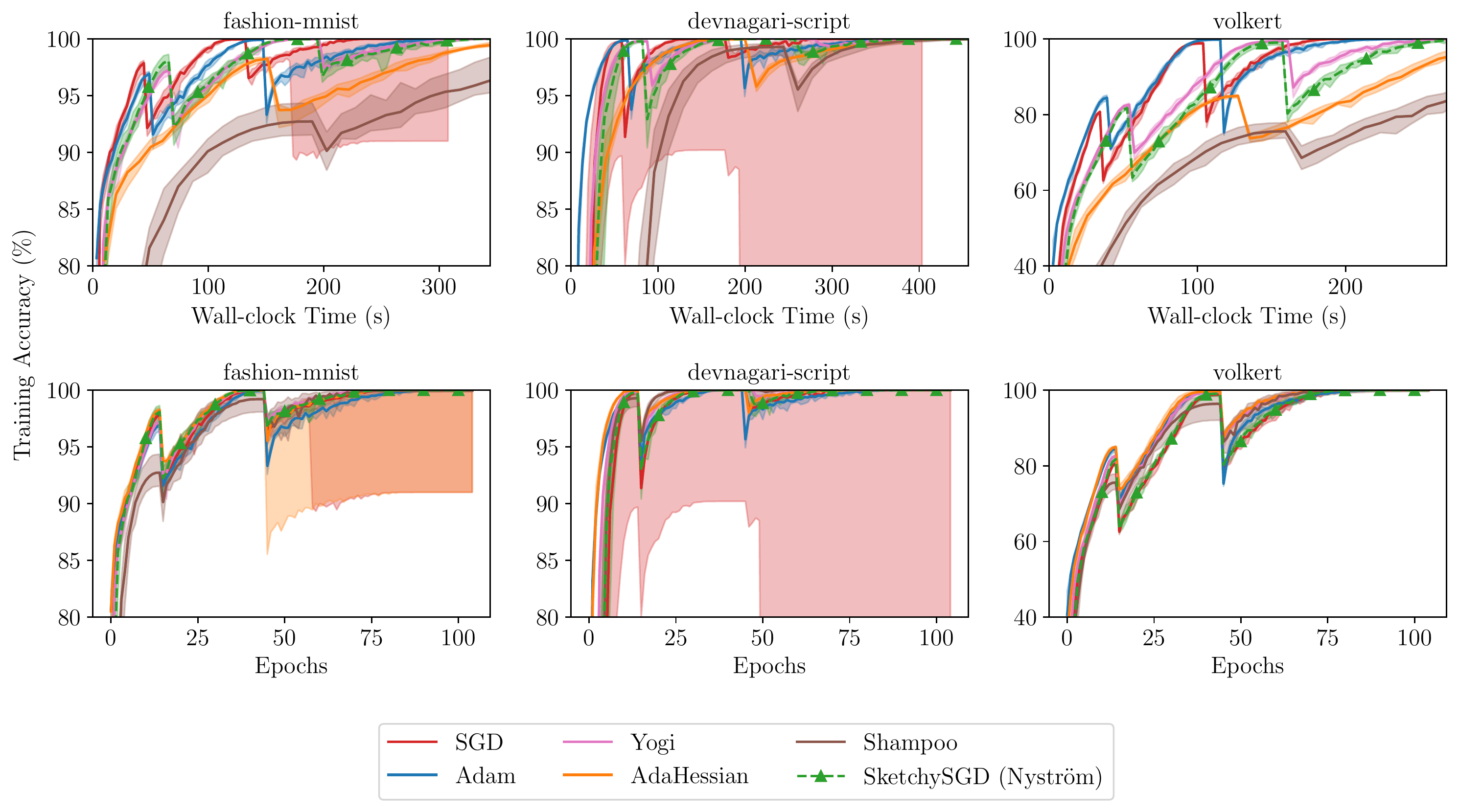}
    \caption{Training accuracies for SketchySGD and competitor methods on tabular deep learning tasks.}
    \label{fig:deep_learning_train_acc}
\end{figure}
\clearpage

\else

\section*{Acknowledgments}
We would like to acknowledge helpful discussions with John Duchi, Michael Mahoney, Mert Pilanci, and Aaron Sidford.

\bibliographystyle{siamplain}
\bibliography{references}

\fi
\end{document}